\documentclass[twoside,11pt,reqno]{amsart}
\usepackage{amsmath,amssymb,amscd,mathrsfs,stmaryrd,wasysym,epic}
\usepackage{latexsym,amsthm,bbm}


\makeatletter
\hfuzz 5pt
\vfuzz 2pt

\textheight 218mm
\textwidth 135mm

\raggedbottom

\@addtoreset{equation}{section}
\@addtoreset{figure}{section}

\setcounter{tocdepth}{2}

\newtheorem{Proposition}{Proposition}[section]
\newtheorem{Lemma}[Proposition]{Lemma}
\newtheorem{Theorem}[Proposition]{Theorem}

\newtheorem{Corollary}[Proposition]{Corollary}

\newtheorem{Remark}[Proposition]{Remark}

\newtheorem{Example}[Proposition]{Example}

\newbox\squ  
\setbox\squ=\hbox{\vrule width.6pt
   \vbox{\hrule height.6pt width.4em\kern1ex
         \hrule height.6pt}%
   \vrule width.6pt}

\def\b{\mathbbm{b}}
\def\b{\mathtt{b}}

\def\f{{\mathbf{f}}}

\def\hgt{{\operatorname{ht}}}

\def\x{x}

\def\KP{\operatorname{KP}}
\def\MP{\operatorname{MP}}
\def\mp{\operatorname{mp}}

\def\dtau{\tau}
\def\dx{x}

\def\Rep#1{\text{{$\operatorname{Rep}$}(}#1{)}}
\def\Proj#1{\text{{$\operatorname{Proj}$}(}#1{)}}

\def\k{\Bbbk}
\def\k{\mathbb{K}}
\def\NH{N\!\!\:H}
\def\W{\langle I \rangle}

\def\X{M}

\def\hgt{\operatorname{ht}}
\def\deg{\operatorname{deg}}

\def\op{\operatorname{op}}

\def\height{\operatorname{ht}}

\def\id{\operatorname{id}}

\def\Q{{\mathbb Q}}
\def\Z{{\mathbb Z}}
\def\A{{\mathscr A}}
\def\AA{{\mathbb A}}
\def\N{{\mathbb N}}

\def\rad{{\operatorname{rad}\:}}
\def\H{{\mathrm H}}

\def\Ind{{\operatorname{ind}}}
\def\res{{\operatorname{res}}}
\def\Res{{\operatorname{res}}}
\def\im{{\operatorname{im}\:}}
\def\soc{{\operatorname{soc}\:}}

\def\eps{{\varepsilon}}

\def\phi{{\varphi}}
\def\phi{{\varphi}}

\def\al{{\alpha}}

\def\@underbar#1{\settowidth{\@tempdimb}{#1}\@tempdimb=0.8\@tempdimb
                   \ooalign{#1\crcr
                         \hfil\rule[-.5mm]{\@tempdimb}{.4pt}\hfil}}

\def\bi{\text{\boldmath$i$}}

\def\bed{\text{\boldmath$i$}}

\def\bj{\text{\boldmath$j$}}
\def\bk{\text{\boldmath$k$}}

\def\END{{\operatorname{End}}}
\def\hom{{\operatorname{hom}}}
\def\HOM{{\operatorname{Hom}}}
\def\EXT{{\operatorname{Ext}}}
\def\ext{{\operatorname{ext}}}

\def\DIM{\operatorname{Dim}}
\def\CH{\operatorname{Ch}}

\begin{document}

\title[Khovanov-Lauda-Rouquier algebras]{Homological properties of
finite type Khovanov-Lauda-Rouquier algebras}

\author{Jonathan Brundan, Alexander Kleshchev and Peter J. McNamara}

\address{Department of Mathematics, University of Oregon, Eugene, OR 97403, USA}
\email{brundan@uoregon.edu}

\address{Department of Mathematics, University of Oregon, Eugene, OR 97403, USA}
\email{klesh@uoregon.edu}

\address{Department of Mathematics, Stanford University, Stanford, CA
  94305, USA }
\email{petermc@math.stanford.edu}

\thanks{2010 {\it Mathematics Subject Classification}: 16E05, 16S38, 17B37.}
\thanks{Research of the first two authors
supported in part by NSF grant no. DMS-1161094.
The second author also acknowledges support from the Humboldt Foundation.}

\begin{abstract}
We give an algebraic construction of
{\em standard modules}---infinite dimensional modules categorifying the PBW basis of
the underlying quantized enveloping algebra---for
Khovanov-Lauda-Rouquier algebras in all
finite types. 
This allows us to prove in an elementary way
that these algebras satisfy the homological properties of an ``affine
quasi-hereditary algebra.'' 
In simply-laced types these properties were established originally by
Kato via a geometric approach. We also construct some Koszul-like projective resolutions of
standard modules corresponding to multiplicity-free positive roots.
\end{abstract}

\maketitle

\section{Introduction}

Working over $\Q(q)$ for an indeterminate $q$,
let $\f$ be the quantized enveloping algebra associated to
a maximal nilpotent subalgebra of a
finite dimensional complex semisimple 
Lie algebra $\mathfrak{g}$.
It is naturally $Q^+$-graded
\begin{equation*}
\f = \bigoplus_{\alpha
  \in Q^+} \f_\alpha,
\end{equation*}
where $Q^+$ 
denotes $\N$-linear combinations of 
the simple roots $\{\alpha_i\:|\:i \in I\}$.
Moreover $\f$ is equipped with several distinguished bases, including 
Lusztig's {\em canonical basis} (Kashiwara's lower global crystal base)
and various {\em PBW bases}, one for each choice $\prec$ of convex
ordering of the set $R^+$ of positive roots.
Passing to dual bases
with respect to Lusztig's
form $(\cdot,\cdot)$ on $\f$, 
we obtain the {\em dual canonical basis} (Kashiwara's upper global crystal base)
and some {\em dual PBW bases}.
See \cite{Lu}, \cite{Lubook} and \cite{Ka}.
Lusztig's approach gives a {categorification} of $\f$ in terms of certain categories of sheaves on a quiver variety. Multiplication on $\f$ comes from
Lusztig's induction functor, and in simply-laced types 
the canonical basis arises from the irreducible perverse sheaves in
these categories.

In 2008
Khovanov and Lauda \cite{KL,KL2} and Rouquier \cite{R}
introduced for any field $\k$ a (locally unital) graded $\k$-algebra 
\begin{equation*}
H = \bigoplus_{\alpha \in Q^+} H_\alpha,
\end{equation*}
known as
the 
{\em Khovanov-Lauda-Rouquier
  algebra} (KLR for short).
Let $\Proj{H}$ be the additive category of finitely
generated graded 
projective left $H$-modules.
We make the split Grothendieck group
$[\Proj{H}]$ of this category
into a $\Z[q,q^{-1}]$-algebra,
with
multiplication arising from the
induction product $\circ$ on modules over the KLR algebra
and
action of $q$
induced by 
upwards degree shift.
Khovanov and Lauda showed that $\Proj{H}$ also provides a
categorification of $\f$: there is a
unique algebra isomorphism
\begin{equation*}
\gamma: \f \stackrel{\sim}{\rightarrow}
\Q(q)
\otimes_{\Z[q,q^{-1}]} [\Proj{H}],
\qquad
\theta_i \mapsto [H_{\alpha_i}],
\end{equation*}
where $\theta_i$ is the generator of $\f$ corresponding to simple
root $\alpha_i$.
In simply-laced types for $\k$ of characteristic zero,
Rouquier \cite{R2}  and Varagnolo and Vasserot \cite{VV} have
shown further that this algebraic categorification of $\f$ 
is equivalent to Lusztig's geometric one;
in particular $\gamma$ maps 
the canonical basis of $\f$ to
the basis for $[\Proj{H}]$ arising from the 
isomorphism classes of graded self-dual 
indecomposable projective modules.
See also the work of Maksimau \cite{Mak} where the geometric
realization of KLR algebras
has been extended to fields positive characteristic.

The setup can also be dualized. 
Let $\Rep{H}$ be the abelian category of all finite
dimensional graded left $H$-modules.
Its Grothendieck group $[\Rep{H}]$ is again a $\Z[q,q^{-1}]$-algebra.
Taking a dual map to $\gamma$ yields another algebra isomorphism
\begin{equation*}
\gamma^*:\Q(q) \otimes_{\Z[q,q^{-1}]} [\Rep{H}] 
\stackrel{\sim}{\rightarrow} \f.
\end{equation*}
In simply-laced types with $\operatorname{char} \k = 0$,
this sends the basis for $[\Rep{H}]$ arising from isomorphism
classes of graded self-dual irreducible $H$-modules
to the dual canonical basis for $\f$.
In general, the graded self-dual irreducible
$H$-modules still yield a basis for $\f$, but this basis
can be different from the dual
canonical basis;  for an example in type G$_2$ in
characteristic zero see \cite{T}; for an example in type A
in positive characteristic see \cite{W}
and also Example~\ref{willcex} below.
However
several people (e.g. \cite{KOH}) have observed that it
is always a {\em perfect basis} in the sense of 
Berenstein and Kazhdan
\cite{BK}.
This implies for any ground field that the irreducible $H$-modules are
parametrized in a canonical way by Kashiwara's crystal $B(\infty)$ associated
to $\f$, a result established originally by Lauda and Vazirani
\cite{LV} without using the theory of perfect bases.

Using the geometric approach of Varagnolo and Vasserot, hence
for simply-laced types over fields of characteristic zero only, 
Kato \cite{Kato} has explained further how to lift the PBW and dual PBW bases of
$\f$ to certain graded modules $\{\widetilde{E}_b\:|\: b \in
B(\infty)\}$ and $\{E_b\:|\:b \in B(\infty)\}$
over KLR algebras.
We refer to these
modules as {\em standard} and {\em proper standard} modules, respectively,
motivated by the similarity to the theory of properly stratified
algebras \cite{Dlab}.
Kato establishes that each proper standard module $E_b$ has irreducible
head $L_b$, and the modules $\{L_b\:|\:b \in
B(\infty)\}$ give a complete set of graded self-dual irreducible
$H$-modules. The
standard module $\widetilde{E}_b$ is infinite dimensional, and should be
viewed informally
as a ``maximal self-extension'' of 
the finite dimensional proper standard module $E_b$.
Kato's work shows in particular that each of the algebras 
$H_\alpha$ 
has finite global dimension.

More recently in \cite{Mac}, the third author
has found
a purely algebraic way
to 
introduce proper standard modules,
similar in spirit to the approach via
Lyndon words developed in \cite{KR2}
but more general as it makes sense for an arbitrary choice of
the convex ordering $\prec$.
It produces in the end the same collection of
proper standard modules as above but indexed instead by the
set $\KP$ of {\em Kostant partitions}, i.e. non-increasing sequences
$\lambda = (\lambda_1 \succeq \cdots \succeq \lambda_l)$ of positive
roots. 
Switching to this notation, we henceforth denote the {proper standard
  module} 
corresponding to $\lambda$ by 
$\bar{\Delta}(\lambda)$. This module has irreducible head $L(\lambda)$, and
the modules $\{L(\lambda)\:|\:\lambda \in \KP\}$ give a complete
set of graded self-dual irreducible $H$-modules.
Moreover there is a partial order $\preceq$ on $\KP$
with respect to which
the {\em decomposition matrix}
$([\bar\Delta(\lambda):L(\mu)])_{\lambda, \mu \in \KP}$ is
unitriangular, i.e.
$$
[\bar\Delta(\lambda):L(\lambda)] = 1,
\qquad
[\bar\Delta(\lambda): L(\mu)] = 0
\text{ for $\mu \not\preceq\lambda$}.
$$
All of this theory works also for non-simply-laced types and 
ground fields $\k$ of positive
characteristic. 
Finally \cite{Mac} gives a purely algebraic way to 
compute the global dimension
of $H_\alpha$: in all cases it is equal to 
the height of $\alpha \in Q^+$.

Letting $P(\lambda)$ denote the projective cover of $L(\lambda)$,
the standard module $\Delta(\lambda)$ corresponding to $\lambda$ may
be defined as
$$
\Delta(\lambda) := P(\lambda) \bigg/ 
\sum_{\phantom{q^n}\mu\not\preceq\lambda\phantom{q^n}}
\:\:\sum_{f:P(\mu) \rightarrow P(\lambda)} \im f.
$$
Taking graded duals, we also have the {\em costandard module} $\nabla(\lambda) := \Delta(\lambda)^\circledast$
and the {\em proper costandard module} $\bar\nabla(\lambda) := \bar\Delta(\lambda)^\circledast$.
For simply-laced types in characteristic zero, Kato showed that these modules
satisfy various homological properties familiar from the
theory of quasi-hereditary algebras. Perhaps the most important of
these is the following:
$$
\EXT^d_H(\Delta(\lambda), \bar\nabla(\mu)) \cong
\left\{\begin{array}{ll}
\k&\text{if $d = 0$ and $\lambda = \mu$,}\\
0&\text{otherwise.}
\end{array}
\right.
$$
There are many pleasant consequences. For example, one can deduce
that the projective module $P(\lambda)$ has a finite filtration with
sections of the form $\Delta(\mu)$ and multiplicities satisfying
{\em BGG reciprocity}:
$$
[P(\lambda):\Delta(\mu)] = [\bar\Delta(\mu):L(\lambda)].
$$
The main purpose of this article is to explain an elementary approach to
the proof of these homological properties starting from the results of
\cite{Mac}. Our results apply to all finite types and fields
$\k$ of
arbitrary characteristic.
In fact we will see that the formal characters of the standard and 
proper standard modules do not depend on the characteristic of the
ground field (unlike for either $P(\lambda)$ or $L(\lambda)$).

The basic idea is to exploit a different definition of
the standard module $\Delta(\lambda)$.
To start with, we construct {\em root
modules} $\Delta(\alpha)$ for each $\alpha \in R^+$ by taking an
inverse limit of some iterated self-extensions
of the irreducible module $L(\alpha)$; these modules categorify
Lusztig's root vectors $r_\alpha \in \f$.
A key new observation is that the endomorphism algebra
of a product $\Delta(\alpha)^{\circ m}$ of $m$ copies of
$\Delta(\alpha)$ is
isomorphic to the {\em nil Hecke algebra} $\NH_m$.
Hence we can define the {\em divided power module}
$\Delta(\alpha^m)$ by using a primitive idempotent in the nil Hecke algebra to project to 
an indecomposable direct summand of $\Delta(\alpha)^{\circ m}$.
For $\lambda =
(\gamma_1^{m_1},\dots,\gamma_s^{m_s})$ with $\gamma_1 \succ \cdots
\succ \gamma_s$ we then show that
$$
\Delta(\lambda) \cong \Delta(\gamma_1^{m_1}) \circ \cdots \circ
\Delta(\gamma_s^{m_s}),
$$
and proceed  to derive the homological properties by applying generalized
Frobenius reciprocity;
see Theorem~\ref{shp} for the final result.

We also explain an alternative way to construct the
root module $\Delta(\alpha)$ by induction on height. 
For simple $\alpha$ the
root module $\Delta(\alpha)$ is just the regular module $H_\alpha$.
Then for a non-simple positive root $\alpha$, we pick a {\em minimal
  pair} $(\beta,\gamma)$ for $\alpha$ in the sense 
of \cite{Mac} (in particular 
$\beta$ and $\gamma$ are positive roots summing to $\alpha$)
and show in Theorem~\ref{inj3} that there exists a short exact sequence
$$
0 \longrightarrow q^{-\beta\cdot\gamma} \Delta(\beta) \circ \Delta(\gamma)
\stackrel{\phi}{\longrightarrow} \Delta(\gamma) \circ \Delta(\beta) \longrightarrow
[p_{\beta,\gamma}+1]
\Delta(\alpha) \longrightarrow 0,
$$
where $p_{\beta,\gamma}$ is the largest integer $p$
such that $\beta - p \gamma$ is a root, and $[n]$ denotes 
the quantum integer.
The map $\phi$ in this short exact sequence is defined explicitly,
so $\Delta(\alpha)$ could instead
be {defined} recursively in terms of its cokernel.

For multiplicity-free positive roots ($=$ all positive roots in type
A) the above short exact sequences can be assembled into some explicit
projective resolutions
of the root modules, which can be viewed as a variation
on the classical Koszul resolution from commutative algebra; see Theorem~\ref{expres}.
The first non-trivial example comes from the highest root $\alpha$
in type A$_3$. Adopting the notation of Example~\ref{eg1} below, our resolution of $\Delta(\alpha)$ in
this special case produces an exact sequence
$$
0 \longrightarrow q^2 H_\alpha 1_{321}
\stackrel{(-\tau_{1}\tau_{2}\:\:\:\tau_{2})}{\longrightarrow}
q H_\alpha 1_{213}
\oplus
q H_\alpha 1_{312}
\stackrel{\binom{\tau_1}{\tau_1\tau_2}}{\longrightarrow}
H_\alpha 1_{123}
\longrightarrow \Delta(\alpha)
\longrightarrow 0,
$$
where we view elements of the direct sum as row vectors and the maps
are defined by right multiplication by the given matrices.

This article supersedes the preprint \cite{BrK} which considered
simply-laced types only. We also point out that Theorem~\ref{as} below
proves the length two conjecture formulated in \cite[Conjecture 2.16]{BrK}.

\vspace{2mm}
\noindent
{\em Conventions.}
By a module 
over a $\Z$-graded algebra $H$, we {\em always} mean a graded left
$H$-module;
likewise all submodules, quotient modules, and so on are graded.
We write $\rad V$ (resp.\ $\soc V$) 
for the intersection of all maximal submodules 
(resp. the sum of all irreducible submodules) of $V$. 
We write $q$ for the upwards degree shift functor:
if $V = \bigoplus_{n \in \Z} V_n$ 
then 
$qV$ 
has $(qV)_n := V_{n-1}$.
More generally, given a formal Laurent series $f(q) = \sum_{n \in \Z} f_n
q^n$ with coefficients $f_n \in \N$,
$f(q) V$ denotes $\bigoplus_{n \in \Z} q^n V^{\oplus f_n}$.
For modules $U$ and $V$,
we write
$\hom_H(U, V)$
for homogeneous $H$-module homomorphisms, reserving 
$\HOM_H(U, V)$ for the graded vector space
$\bigoplus_{n \in \Z} \HOM_H(U, V)_n$
where
$$
\HOM_H(U, V)_n := \hom_H(q^n U, V) = \hom_H(U, q^{-n}V).
$$
We define $\ext^d_H(U,V)$ and
$\EXT^d_H(U,V)$ similarly.
If $V$ is a locally finite dimensional graded vector space,
its graded dimension is $$
\DIM V := \sum_{n \in \Z} (\dim V_n) q^n.
$$
For formal Laurent series $f(q) = \sum_{n \in \Z} f_n q^n$ and $g(q) =
\sum_{n \in \Z} g_n q^n$, we write $f(q) \leq g(q)$ if $f_n \leq g_n$
for all $n \in \Z$.
We need the following generality several times.

\begin{Lemma}\label{mittagleffler}
Let $H$ be a $\Z$-graded algebra which is locally finite dimensional
and bounded below.
Fix $d > 0$ and let $U$ and $V$ be finitely generated $H$-modules.
If $\ext^d_H(U, L) = 0$ for all irreducible subquotients $L$ of $V$, then
$\ext^d_{H}(U,V)=0$.
\end{Lemma}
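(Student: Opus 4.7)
The plan is to reduce to the finite dimensional case by an inverse limit argument based on the graded truncations $V^{\geq n} := \bigoplus_{j\geq n} V_j$, which (after shifting so that $H$ is non-negatively graded) are submodules of $V$. Because $V$ is bounded below and locally finite dimensional, each quotient $V/V^{\geq n}$ is finite dimensional and $V = \varprojlim_n V/V^{\geq n}$ in the category of graded modules, since every graded component stabilises once $n$ is sufficiently large. All composition factors of $V/V^{\geq n}$ and of the further subquotients $V^{\geq n}/V^{\geq n+1}$ are irreducible subquotients of $V$, so the hypothesis of the lemma applies.

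I would first settle the case when $V$ is finite dimensional by induction on $\dim V$: picking a maximal submodule $V' \subsetneq V$, the long exact sequence attached to $0 \to V' \to V \to V/V' \to 0$ gives $\ext^d_H(U,V) = 0$ because both $\ext^d_H(U, V') = 0$ (inductive hypothesis) and $\ext^d_H(U, V/V') = 0$ (hypothesis applied to the irreducible $V/V'$). This immediately yields $\ext^d_H(U, V/V^{\geq n}) = 0$ and $\ext^d_H(U, V^{\geq n}/V^{\geq n+1}) = 0$ for every $n$. Feeding the latter into the long exact sequence attached to $0 \to V^{\geq n}/V^{\geq n+1} \to V/V^{\geq n+1} \to V/V^{\geq n} \to 0$ shows the transition map $\ext^{d-1}_H(U, V/V^{\geq n+1}) \twoheadrightarrow \ext^{d-1}_H(U, V/V^{\geq n})$ is surjective, so the inverse system $\{\ext^{d-1}_H(U, V/V^{\geq n})\}_n$ satisfies the Mittag-Leffler condition.

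Now fix a graded projective resolution $P_\bullet \to U$. Projectivity of each $P_i$ gives short exact sequences upon applying $\hom_H(P_i,-)$ to $0 \to V^{\geq n} \to V \to V/V^{\geq n} \to 0$, and one has $\hom_H(P_i, V) = \varprojlim_n \hom_H(P_i, V/V^{\geq n})$ with surjective transition maps; this identification uses only that $V$ is bounded below, so the relevant graded component of each $\hom_H(P_i, V/V^{\geq n})$ stabilises in the inverse system, even though $P_i$ need not be finitely generated. The standard Milnor (Mittag-Leffler) short exact sequence for cohomology of an inverse limit of complexes then yields
$$
0 \to {\varprojlim}^1 \ext^{d-1}_H(U, V/V^{\geq n}) \to \ext^d_H(U, V) \to \varprojlim \ext^d_H(U, V/V^{\geq n}) \to 0,
$$
whose outer terms vanish by the previous paragraph. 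Hence $\ext^d_H(U, V) = 0$.

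The main obstacle is not any single clever step, but the identification $\hom_H(P_i, V) = \varprojlim_n \hom_H(P_i, V/V^{\geq n})$ and the applicability of the Milnor sequence: since we do not assume $H$ is Noetherian, $P_i$ need not be finitely generated, so the identification requires a careful degree-by-degree check. Once that is in place, the vanishing of both flanking terms of the Milnor sequence is automatic from the finite dimensional case.
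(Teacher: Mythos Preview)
Your proof is correct and follows essentially the same route as the paper: reduce to the finite dimensional case by induction on length, then handle infinite dimensional $V$ via the inverse limit $V=\varprojlim V/V^{\geq n}$, establish the Mittag-Leffler condition on $\ext^{d-1}$ from the long exact sequence for $0\to V^{\geq n}/V^{\geq n+1}\to V/V^{\geq n+1}\to V/V^{\geq n}\to 0$, and conclude with the Milnor $\varprojlim^1$-sequence. The only difference is packaging: the paper cites \cite[Theorem~3.5.8, Proposition~3.5.7]{Wei} directly and uses the filtration by submodules generated in degree~$\geq r$, whereas you shift $H$ to be non-negatively graded, take the literal truncations $V^{\geq n}$, and spell out the Milnor sequence via a projective resolution --- your extra care about $\hom_H(P_i,V)=\varprojlim_n\hom_H(P_i,V/V^{\geq n})$ is warranted and correctly handled by the degree-by-degree stabilisation you describe.
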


\begin{proof}
If $V$ is finite dimensional this is an easy induction exercise 
using the long exact sequence.
Now assume that $V$ is infinite dimensional. 
The assumptions imply that $V$ has an exhaustive filtration
$V = V_0 \supseteq V_1 \supseteq \cdots$ in which
each $V / V_r$ is finite dimensional; for example one can let
$V_r$ be the submodule generated by all homogeneous vectors of degree $\geq r$ in $V$.
Then we have that $V = \varprojlim(V/V_r)$. 
By \cite[Theorem 3.5.8]{Wei}, there is
a short exact sequence
$$
0 \longrightarrow
{\varprojlim}^1 \ext^{d-1}_{A}(U, V/V_r) \longrightarrow \ext^d_{A}(U,V)
\longrightarrow \varprojlim \ext^{d}_{A}(U, V/V_r) \longrightarrow 0.
$$
The last term is zero, so
we just need to show that $\varprojlim^1 \ext^{d-1}_{A}(U, V/V_r)=0$.
This follows by \cite[Proposition 3.5.7]{Wei} if we can show that the
tower 
$\left(\ext^{d-1}_{H}(U, V/V_r)\right)$ satisfies the Mittag-Leffler condition, i.e. the
natural map $ \ext^{d-1}_{H}(U, V/V_{r+1}) \rightarrow  \ext^{d-1}_{H}(U, V/V_r)$ is
surjective for each $r \geq 0$. 
This follows on applying $\hom_{H}(U, -)$ to
the short exact sequence $0  \rightarrow V_r/V_{r+1}  \rightarrow V/V_{r+1}
\rightarrow V/V_r  \rightarrow 0$.
\end{proof}

\section{KLR algebras}

We begin by collecting some basic facts about the representation theory of
finite type KLR algebras. 
The discussion of the contravariant form on proper standard modules
in $\S$\ref{sscontra} is new.

\subsection{The twisted bialgebra $\f$}
Let $\mathfrak{g}$ be a finite dimensional complex semisimple Lie
algebra.
Let 
$R$ be the root system
of $\mathfrak{g}$ with respect to some Cartan subalgebra, $R^+ \subset
R$ be a set of positive roots, and
$\{\alpha_i\:|\:i \in I\}$ be the corresponding simple roots.
Let $Q := \bigoplus_{i \in I} \Z \alpha_i$ be the {\em root lattice},
$Q^+ := \bigoplus_{i \in I} \N \alpha_i$, and 
define the {\em height} of $\alpha  = \sum_{i \in I} c_i \alpha_i \in
Q^+$
from $\height(\alpha) := \sum_{i \in I} c_i$.
Let
$$
Q \times Q \rightarrow \Z, \qquad
(\alpha,\beta) \mapsto \alpha\cdot\beta
$$
be a positive definite symmetric bilinear form
normalized so that $d_i := \alpha_i \cdot \alpha_i
/ 2$ is a positive integer for each $i \in I$ and $\sum_{i \in I} d_i$ as small as
possible; in particular this means that $d_i = 1$ for all $i$ if
$\mathfrak{g}$ is simply-laced. Then for $\alpha \in R^+$ we let $d_\alpha := \alpha\cdot
\alpha / 2$.
The {\em Cartan matrix} is the matrix $C = (c_{i,j})_{i,j \in I}$
defined from $c_{i,j} := \frac{1}{d_i} \alpha_i \cdot \alpha_j$.
Finally we have the {\em Weyl group} $W$, which is the subgroup of
$GL(Q)$ generated by the simple reflections $\{s_i\:|\:i \in I\}$
defined from $s_i(\beta) := \beta - \frac{1}{d_i}(\alpha_i \cdot \beta) \alpha_i$.

Now let $q$ be an
 indeterminate 
and $\AA := \Q(q)$.
For $n \in \Z$
let $[n]$ be the quantum
integer $(q^n-q^{-n}) / (q-q^{-1})$.
Assuming $n \geq 0$ let
$[n]^! := [n][n-1]\cdots [1]$ be the quantum factorial.
More generally, for $i \in I$ (resp.\ $\alpha \in R^+$)
let $[n]_i$ and $[n]_i^!$ (resp.\ $[n]_\alpha$ and $[n]_\alpha^!$)
denote the quantum integer and quantum factorial with $q$
replaced by $q_i := q^{d_i}$ (resp.\ $q_\alpha := q^{d_\alpha}$).
Let $\f$ be the free associative $\AA$-algebra
on generators $\{\theta_i\:|\:i \in I\}$
subject to the {\em quantum Serre relations}
\begin{equation*}
\sum_{r+s=1 - c_{i,j}}
(-1)^r
\theta_i^{(r)}\theta_j \theta_i^{(s)} = 0
\end{equation*}
for all $i, j \in I$ and $r \geq 1$,
where $\theta_i^{(r)}$ denotes the {\em divided power} $\theta_i^r /
[r]_i^!$.

There is a $Q^+$-grading $\f = \bigoplus_{\alpha \in Q^+} \f_\alpha$
defined so that $\theta_i$ is in degree $\alpha_i$.
Viewing $\f \otimes \f$ as an algebra 
with multiplication $(a \otimes b)(c \otimes d) := q^{-\beta \cdot \gamma} ac
\otimes bd$ for $a \in \f_\alpha, b \in \f_\beta, c \in \f_\gamma$ and
$d \in \f_\delta$,
there is a unique algebra homomorphism
\begin{equation}\label{twistedco}
r:\f \rightarrow \f \otimes \f,
\qquad
\theta_i\mapsto \theta_i \otimes 1 + 1 \otimes \theta_i
\end{equation}
making $\f$ into a twisted bialgebra.
In his book \cite[$\S$1.2.5, $\S$33.1.2]{Lubook}, Lusztig shows
further that $\f$ possesses a unique non-degenerate
symmetric bilinear form $(\cdot,\cdot)$ such that
$$
(1,1) = 1, 
\qquad
(\theta_i, \theta_j) = \frac{\delta_{i,j}}{1-q_i^{2}},
\qquad
(ab, c) = (a \otimes b, r(c))
$$
for all $i,j \in I$ and $a,b,c \in \f$; on the right hand side of the last equation
 $(\cdot,\cdot)$ is the product form on $\f \otimes \f$
defined from $(a \otimes b, c \otimes d) := (a,c) (b,d)$.
Note here that our $q$ is Lusztig's $v^{-1}$.

Let $\A := \Z[q,q^{-1}] \subset \AA$. Lusztig's $\A$-form $\f_\A$ for $\f$ is the $\A$-subalgebra of
$\f$ generated by all $\theta_i^{(r)}$.
Also let $\f_\A^*$ be 
the dual of $\f_\A$ with respect to the form $(\cdot,\cdot)$, i.e.
$\f_\A^* := \left\{y \in \f\:\big|\:(x,y)\in\A\text{ for all }x \in
  \f_\A\right\}$.
It is another $\A$-subalgebra of $\f$.
Moreover, both $\f_\A$ and $\f_\A^*$ are free as $\A$-modules, and we can identify
$$
\f = \AA \otimes_\A \f_\A = \AA \otimes_\A \f_\A^*.
$$
The field $\AA$ possesses a unique automorphism called the
{\em bar involution} such that $\overline{q} = q^{-1}$.
With respect to this involution, 
let $\b:\f \rightarrow \f$
be the anti-linear algebra automorphism
such that $\b(\theta_i) = \theta_i$ for all $i \in I$.
Also let 
$\b^*:\f \rightarrow \f$
be the adjoint anti-linear
map to $\b$ 
with respect to Lusztig's form, so $\b^*$ is
defined from 
$(x, \b^*(y)) = \overline{(\b(x), y)}$
for any $x, y \in \f$.
The maps $\b$ and $\b^*$ preserve $\f_\A$ and $\f_\A^*$, respectively.

Next let $\langle I \rangle$ be the free monoid on $I$, that is, the set
of all {\em words} $\bi = i_1 \cdots i_n$ for $n \geq 0$ and $i_1,\dots,i_n
\in I$ with multiplication given by concatenation of words.
For  a word $\bi = i_1 \cdots i_n$ of length $n$
and a permutation $w \in S_n$,  
we let
\begin{align*}
|\bi| &:= \alpha_{i_1}+\cdots+\alpha_{i_n},
&w(\bi)&:= i_{w^{-1}(1)} \cdots i_{w^{-1}(n)},\\
\theta_\bi &:= \theta_{i_1}\cdots \theta_{i_n},
&
\deg(w;\bi) &:= -\sum_{\substack{1 \leq j < k \leq n\\ w(j) > w(k)}}
\alpha_{i_j}\cdot\alpha_{i_k}.
\end{align*}
Setting
$\langle I \rangle_\alpha := \big\{\bi \in \langle I
\rangle\:\big|\:|\bi|=\alpha\big\},$
the monomials $\big\{\theta_\bi\:\big|\:\bi \in\langle I \rangle_\alpha\big\}$
span $\f_\alpha$.
The {\em quantum shuffle algebra} is
the free $\A$-module $\A \W = \bigoplus_{\alpha \in Q^+} \A \W_\alpha$
on basis $\W$, viewed as an $\A$-algebra via the 
{\em shuffle product} $\circ$
defined on words $\bi$ and $\bj$ of lengths $m$ and $n$, respectively, by
\begin{equation}\label{shuffle}
\bi \circ \bj := \sum_{\substack{w \in S_{m+n} \\ w(1) < \cdots <
    w(m)\\w(m+1) < \cdots < w(m+n)}}
q^{\deg(w;\bi\bj)} w(\bi\bj).
\end{equation}
As observed originally by Green \cite{Green} and Rosso \cite{Rosso0},  
there is an injective $\A$-algebra homomorphism
\begin{equation}\label{charc}
\CH:\f_\A^* \rightarrow 
\A \W, \qquad
x \mapsto \sum_{\bi \in \W} (\theta_\bi, x) \bi.
\end{equation}
This intertwines the anti-linear involution $\b^*$ on $\f_\A^*$
with the bar involution on $\A\W$, which is defined from
$\overline{\sum_{\bi \in \W} a_\bi \bi} := \sum_{\bi \in \W}
\overline{a}_\bi \bi$.
Using (\ref{shuffle}), one checks further that $\overline{\bi \circ \bj}
= q^{|\bi|\cdot|\bj|} \bj \circ \bi$.
Hence
\begin{equation}\label{bstar}
\b^*(xy) = q^{\beta\cdot \gamma} \b^*(y) \b^*(x)
\end{equation}
for $x \in \f_\beta$ and $y \in \f_\gamma$.
Using this and induction on height, it follows
that
\begin{equation}\label{zstar}
\b^*(x) = (-1)^n q^{d_\alpha+d_{i_1}+\cdots+d_{i_n}} \b(\sigma(x))
\end{equation}
for $x \in \f_\alpha$ and $\alpha = \alpha_{i_1}+\cdots+\alpha_{i_n}
\in Q^+$,
where $\sigma:\f \rightarrow \f$ is the algebra anti-automorphism
such that $\sigma(\theta_i) = \theta_i$ for each $i \in I$.

\subsection{The KLR algebra}\label{newSeccat}
Fix now a field $\k$.
Also choose signs $\eps_{i,j}$ for all $i,j \in I$ with $c_{i,j}
< 0$  so that $\eps_{i,j}\eps_{j,i} = -1$.
For $\alpha \in Q^+$ of height $n$, the {\em KLR algebra} $H_\alpha$
is the associative, unital $\k$-algebra
defined by generators 
$$
\{1_\bi\:|\:\bi \in \langle I \rangle_\alpha\}
\cup\{\x_1,\dots,\x_n\}
\cup\{\tau_1,\dots,\tau_{n-1}\}
$$
subject only to the following relations:
\begin{itemize}
\item$\x_k \x_l = \x_l \x_k$;
\item
the elements $\left\{1_\bi\:|\:\bi \in \langle I \rangle_\alpha\right\}$
are mutually orthogonal idempotents whose sum is the identity
$1_\alpha \in H_\alpha$;
\item
$\x_k 1_\bi = 1_\bi\x_k$ and 
$\tau_k 1_\bi = 1_{(k\:k\!+\!1)(\bi)}
\tau_k$;
\item 
$(\tau_k \x_l -\x_{(k\:k\!+\!1)(l)} \tau_k)1_\bi 
=\left\{
\begin{array}{ll}
\phantom{-}1_\bi&\text{if $i_k = i_{k+1}$ and $l=k+1$},\\
-1_\bi&\text{if $i_k = i_{k+1}$ and $l=k$},\\
\phantom{-}0&\text{otherwise};
\end{array}\right.$
\item
$
\tau_k^2 1_\bi = 
\left\{
\begin{array}{ll}
0&\text{if $i_k=i_{k+1}$,}\\
\eps_{i_k,i_{k+1}}\big({x_k}^{-c_{i_k,i_{k+1}}}-{x_{k+1}}^{-c_{i_{k+1},i_k}}\big)1_\bi&\text{if
  $c_{i_k,i_{k+1}}< 0$,}\\
1_\bi&\text{otherwise;}
\end{array}\right.
$
\item $\tau_k \tau_l = \tau_l \tau_k$ if $|k-l|>1$;
\item 
$(\tau_{k+1} \tau_{k} \tau_{k+1} -
  \tau_{k}\tau_{k+1}\tau_{k}) 1_\bi
=$\\
\phantom{\hspace{15mm}}$\left\{
\begin{array}{ll}
\displaystyle\sum_{r+s=-1-c_{i_k,i_{k+1}}}
\!\!\!\!\!\!\!
\eps_{i_k,i_{k+1}} x_k^r x_{k+2}^s 1_\bi&\text{if
  $c_{i_k,i_{k+1}} < 0$ and $i_k =
  i_{k+2}$,}\\
\hspace{9mm}0&\text{otherwise.}
\end{array}\right.
$
\end{itemize}
The algebra $H_\alpha$ is $\Z$-graded with $1_\bi$ in
degree zero, $\x_k 1_\bi$ in degree $2 d_{i_k}$ and 
$\tau_k 1_\bi$ in degree $-\alpha_{i_k} \cdot \alpha_{i_{k+1}}$.
There is also an anti-automorphism $T:H_\alpha \rightarrow H_\alpha$
which fixes all the generators.

Here are
a few other basic facts about the structure of these algebras established in
\cite{KL, KL2} or \cite{R}.
Fix once and for all a reduced expression for each $w \in S_n$
  and let $\tau_w$ be the corresponding product of the
  $\tau$-generators
of $H_\alpha$. Note that $\tau_w 1_\bi$ is of degree $\deg(w;\bi)$.
The monomials
\begin{equation}\label{basisthm}
\{\x_1^{k_1} \cdots \x_n^{k_n} \tau_w  1_\bi\:|\:w \in S_n,
k_1,\dots,k_n \geq 0, \bi \in \W_\alpha\}
\end{equation} 
give a basis for
$H_\alpha$.
In particular, $H_\alpha$ is locally finite dimensional and bounded
below.
There is also an explicit description of the center $Z(H_\alpha)$,
from which it follows that $H_\alpha$ is free of
finite rank as a module over its center; forgetting the grading the
rank is $(n!)^2$.

For $m \geq 1$ and $i \in I$,
the KLR algebra $H_{m\alpha_i}$ is identified with the
{\em nil Hecke algebra} $\NH_m$, that is, the algebra with generators
$x_1,\dots,x_m$ and 
$\tau_1,\dots,\tau_{m-1}$ subject to the following relations:
$x_i x_j = x_j x_i$; $\tau_i x_j = x_j \tau_i$ for $j \neq
i,i+1$; $\tau_i x_{i+1} = x_i \tau_i + 1$; $x_{i+1} \tau_i
= \tau_i x_i + 1$; $\tau_i^2 = 0$;
and the usual type A braid relations amongst
$\tau_1,\dots,\tau_{m-1}$.
It is well known
that the nil Hecke algebra is a matrix algebra over its center;
see e.g. \cite[$\S$2]{R2} for a recent exposition.
Moreover,
writing $w_{[1,m]}$ for the longest element of $S_m$,
the degree zero 
element
\begin{equation}\label{idemp}
e_m := \x_2 \x_3^2 \cdots \x_m^{m-1} \tau_{w_{[1,m]}}
\end{equation} 
is a primitive idempotent, hence
$P(\alpha_i^m) := q_i^{\frac{1}{2}m(m-1)}H_{m\alpha_i} e_m$ is an indecomposable projective $H_{m\alpha_i}$-module.
The degree shift here has been chosen so that 
irreducible head $L(\alpha_i^m)$ of $P(\alpha_i^m)$ has graded dimension $[m]_i^!$.
Thus 
$H_{m\alpha_i}\cong [m]^!_i P(\alpha_i^m)$ as a left module.

For
$\beta,\gamma \in Q^+$, there is an evident
non-unital algebra
embedding $H_\beta \otimes H_\gamma \hookrightarrow H_{\beta+\gamma}$.
We denote the image of the identity
$1_\beta \otimes 1_\gamma \in H_\beta \otimes H_\gamma$
by $1_{\beta,\gamma} \in H_{\beta+\gamma}$. Then for
an $H_{\beta+\gamma}$-module $U$ and
an $H_\beta \otimes H_\gamma$-module $V$,
we set
\begin{equation*}
\Res^{\beta+\gamma}_{\beta,\gamma} U := 1_{\beta,\gamma} U,\qquad
\Ind^{\beta+\gamma}_{\beta,\gamma} V := H_{\beta+\gamma} 1_{\beta,\gamma} \otimes_{H_\beta
  \otimes H_\gamma} V,
\end{equation*}
which are naturally $H_\beta \otimes H_\gamma$- and
$H_{\beta+\gamma}$-modules, respectively.
These definitions extend in an obvious way to situations where there
are more than two tensor factors.
The following Mackey-type theorem is of crucial importance. 

\begin{Theorem}\label{mackey}
Suppose we are given $\beta,\gamma,\beta',\gamma' \in Q^+$
of heights $m,n,m',n'$, respectively, such that
$\beta+\gamma = \beta'+\gamma'$.
Setting $k := \min(m,n,m',n')$, let
$\{1 = w_0 < \cdots < w_k\}$ 
be the set of minimal length $S_{m'} \times S_{n'} \backslash
S_{m+n} / S_m \times S_n$-double coset representatives ordered
via the Bruhat order.
For any $H_\beta \otimes H_\gamma$-module $V$,
there is a filtration
$$
0 = V_{-1}\subseteq V_0 \subseteq V_1 \subseteq\cdots\subseteq V_k = 
\Res^{\beta'+\gamma'}_{\beta',\gamma'} \circ \Ind^{\beta+\gamma}_{\beta,\gamma} (V)
$$
defined by $V_j := \sum_{i=0}^j\sum_{w \in (S_{m'} \times S_{n'}) w_i (S_m \times
  S_n)} 1_{\beta',\gamma'}\tau_w 1_{\beta,\gamma}\otimes V$.
Moreover there is a unique isomorphism of $H_{\beta'} \otimes
H_{\gamma'}$-modules
\begin{align*}
V_j / V_{j-1}
&\stackrel{\sim}{\rightarrow}
\bigoplus_{\beta_1,\beta_2,\gamma_1,\gamma_2}
 q^{-\beta_2 \cdot \gamma_1}\Ind^{\beta',\gamma'}_{\beta_1,
\gamma_1,\beta_2, \gamma_2} \circ I^* \circ
\Res^{\beta,\gamma}_{\beta_1,\beta_2,\gamma_1, \gamma_2} (V),\\
1_{\beta',
\gamma'} \tau_{w_j} 1_{\beta,\gamma}
\otimes 
v + V_{j-1}  &\mapsto \sum_{\beta_1,\beta_2,\gamma_1,\gamma_2}
1_{\beta_1,\gamma_1,\beta_2,\gamma_2} \otimes 1_{\beta_1,\beta_2,\gamma_1,\gamma_2} v,
\end{align*}
where 
$I:
H_{\beta_1} \otimes H_{\gamma_1}\otimes H_{\beta_2}
\otimes H_{\gamma_2}\stackrel{\sim}{\rightarrow}
H_{\beta_1} \otimes H_{\beta_2}\otimes H_{\gamma_1}
\otimes H_{\gamma_2}$ is the obvious isomorphism, and
the sums are taken over all
$\beta_1,\beta_2,\gamma_1,\gamma_2 \in Q^+$
such that 
$\beta_1+\beta_2 = \beta, \gamma_1+\gamma_2 = \gamma,
\beta_1+\gamma_1 = \beta', \beta_2+\gamma_2 = \gamma'$
and $\min(\operatorname{ht}(\beta_2), \operatorname{ht}(\gamma_1)) =
j$:
$$
\begin{picture}(100,98)
\put(-25,45){$w_j=$}
\put(11,15){$_{\beta_1}$}
\put(34,15){$_{\beta_2}$}
\put(61,15){$_{\gamma_1}$}
\put(86,15){$_{\gamma_2}$}
\put(11,75){$_{\beta_1}$}
\put(30,75){$_{\gamma_1}$}
\put(57,75){$_{\beta_2}$}
\put(86,75){$_{\gamma_2}$}
\put(7,10){$\underbrace{\hspace{16mm}}_\beta$}
\put(58 ,10){$\underbrace{\hspace{16mm}}_\gamma$}
\put(7,80){$\overbrace{\hspace{12.5mm}}^{\beta'}$}
\put(48 ,80){$\overbrace{\hspace{19mm}}^{\gamma'}$}
\put(10,20){\line(0,1){50}}
\put(20,20){\line(0,1){50}}
\put(30,20){\line(2,5){20}}
\put(40,20){\line(2,5){20}}
\put(50,20){\line(2,5){20}}
\put(60,20){\line(-3,5){30}}
\put(70,20){\line(-3,5){30}}
\put(80,20){\line(0,1){50}}
\put(90,20){\line(0,1){50}}
\put(100,20){\line(0,1){50}}
\end{picture}
$$
\end{Theorem}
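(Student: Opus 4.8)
The plan is to prove the statement first at the level of the $(H_{\beta'}\otimes H_{\gamma'},\,H_\beta\otimes H_\gamma)$-bimodule $B := 1_{\beta',\gamma'} H_{\beta+\gamma} 1_{\beta,\gamma}$, and then apply $-\otimes_{H_\beta\otimes H_\gamma} V$; this is legitimate because $\Res^{\beta'+\gamma'}_{\beta',\gamma'}\circ\Ind^{\beta+\gamma}_{\beta,\gamma}(V) = B\otimes_{H_\beta\otimes H_\gamma}V$ by the very definitions of $\Ind$ and $\Res$, and because the subquotients of the bimodule filtration we construct turn out to be projective as right $H_\beta\otimes H_\gamma$-modules, so that each step of the filtration splits on the right and the functor $-\otimes_{H_\beta\otimes H_\gamma}V$ keeps it exact. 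The indexing is governed by a purely combinatorial fact: the $(S_{m'}\times S_{n'},\,S_m\times S_n)$-double cosets in $S_{m+n}$ are in bijection with the $2\times 2$ matrices of non-negative integers having row sums $(m',n')$ and column sums $(m,n)$; such a matrix is determined by any one of its entries, so there are exactly $k+1$ cosets, their minimal-length representatives $1 = w_0 < w_1 < \cdots < w_k$ having the wiring diagrams drawn in the statement, with $\min(\operatorname{ht}(\beta_2),\operatorname{ht}(\gamma_1)) = j$ and $\tau_{w_j}1_{\beta,\gamma}$ in degree $\deg(w_j;\cdot) = -\beta_2\cdot\gamma_1$ for the $j$-th coset.

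First I would use the basis~(\ref{basisthm}) of each $H_{\beta+\gamma}1_\bi$ to write down an explicit spanning set of $B$ --- the elements $1_{\beta',\gamma'}\,(\text{monomial in the }\x\text{'s})\,\tau_w 1_\bi$ that survive both idempotent truncations --- and group it according to the double coset of $w$. Then I would show that
$$
B_j \ :=\ \sum_{i=0}^j\ \sum_{w\,\in\,(S_{m'}\times S_{n'})\,w_i\,(S_m\times S_n)} (H_{\beta'}\otimes H_{\gamma'})\,\tau_w\,1_{\beta,\gamma}
$$
is a sub-bimodule; under the identification $B\otimes_{H_\beta\otimes H_\gamma}V = \Res\circ\Ind(V)$ these are precisely the $V_j$ of the statement. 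The content of this step is a straightening lemma: left-multiplying $1_{\beta',\gamma'}\tau_w 1_{\beta,\gamma}$ by a KLR generator $\x_l$ or $\tau_k$ of $H_{\beta'}\otimes H_{\gamma'}$ and reducing via the defining relations (commuting $\x$'s past $\tau$'s, the quadratic relation $\tau_k^2 1_\bi = \cdots$, and the braid-type relations --- all of which strictly shorten the $\tau$-word while possibly introducing polynomial corrections $\x_a^r\x_b^s$) produces only terms $1_{\beta',\gamma'}\,(\text{poly})\,\tau_{w'}1_{\beta,\gamma}$ with $w'$ in the same double coset or in a Bruhat-lower one. This rests on the standard fact that $\bigcup_{i\le j}(S_{m'}\times S_{n'})w_i(S_m\times S_n)$ is closed under passing to Bruhat-smaller elements and under $w\mapsto sw$ for $s\in S_{m'}\times S_{n'}$ whenever this shortens $w$.

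Second I would identify $B_j/B_{j-1}$. Factoring each $w$ in the $j$-th coset uniquely as $w = u\,w_j\,v$ with $u\in S_{m'}\times S_{n'}$, $v\in S_m\times S_n$, $\ell(w)=\ell(u)+\ell(w_j)+\ell(v)$ and $v$ of minimal length in its coset, the leading $\tau$-terms supported on the $j$-th coset are freely spanned, over the parabolic subalgebra $H_{\beta_1}\otimes H_{\gamma_1}\otimes H_{\beta_2}\otimes H_{\gamma_2}$, by the elements $1_{\beta',\gamma'}\tau_u\tau_{w_j}\tau_v\,(\text{monomial in the }\x\text{'s})\,1_{\beta,\gamma}$: the $u$-part realizes $\Ind^{\beta',\gamma'}_{\beta_1,\gamma_1,\beta_2,\gamma_2}$, the $v$-part together with the polynomial generators to the right of $w_j$ realizes $1_{\beta_1,\beta_2,\gamma_1,\gamma_2}(H_\beta\otimes H_\gamma)$, i.e. $\Res^{\beta,\gamma}_{\beta_1,\beta_2,\gamma_1,\gamma_2}$, the factor $w_j$ itself contributes the isomorphism $I$ interchanging the middle two tensor factors and the degree shift $q^{-\beta_2\cdot\gamma_1}$, and summing over all admissible $(\beta_1,\beta_2,\gamma_1,\gamma_2)$ with $\min(\operatorname{ht}(\beta_2),\operatorname{ht}(\gamma_1)) = j$ yields the stated direct sum. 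The explicit formula in the statement is then exactly the assertion that $1_{\beta',\gamma'}\tau_{w_j}1_{\beta,\gamma}\otimes v$ maps to the distinguished generator $\sum 1_{\beta_1,\gamma_1,\beta_2,\gamma_2}\otimes 1_{\beta_1,\beta_2,\gamma_1,\gamma_2}v$; well-definedness and bijectivity follow by comparing the two spanning sets of $B_j/B_{j-1}$, each a basis by~(\ref{basisthm}) applied to all the algebras in sight, while compatibility with the left action of $H_{\beta'}\otimes H_{\gamma'}$ is the straightening computation again and compatibility with the right $H_\beta\otimes H_\gamma$-action is immediate.

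The main obstacle is the straightening lemma of the first step. Because the KLR relations --- unlike in a group or Hecke algebra --- feed polynomial terms back in when a $\tau$ is squared or a braid move is applied, one has to check that these corrections, and the accompanying shortening of the $\tau$-word, never move a term into a strictly Bruhat-higher double coset; this forces a careful combinatorial analysis of how the generators of the parabolic $S_{m'}\times S_{n'}$ interact with the minimal double-coset representatives. Once this is established, both the sub-bimodule property of the $B_j$ and the leading-term identification of the subquotients in the second step reduce to bookkeeping with the basis theorem~(\ref{basisthm}) and the degree conventions of $\S$\ref{newSeccat}.
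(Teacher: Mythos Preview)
Your proposal is correct and follows essentially the same approach as the paper's cited reference \cite[Proposition~2.18]{KL}: filter the bimodule $1_{\beta',\gamma'}H_{\beta+\gamma}1_{\beta,\gamma}$ by double cosets using the basis theorem~(\ref{basisthm}), verify via the KLR relations that the filtration is by sub-bimodules (the straightening step), and identify each free subquotient with the appropriate induced-restricted bimodule before tensoring with $V$. The paper itself gives no further detail beyond this citation, so your sketch is in fact more explicit than what appears here.
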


\begin{proof}
This follows as in \cite[Proposition 2.18]{KL}. 
\end{proof}

\subsection{The categorification theorem}
Let $\Rep{H_\alpha}$ denote the abelian category of finite dimensional
$H_\alpha$-modules
and set 
$$
\Rep{H} := \bigoplus_{\alpha \in Q^+} \Rep{H_\alpha}.
$$
This is a graded $\k$-linear monoidal category with respect to the induction
product
$U \circ V := \Ind_{\beta,\gamma}^{\beta+\gamma}
(U \boxtimes V)$ for $U \in \Rep{H_\beta}$ and $V \in
\Rep{H_\gamma}$.
Let $[\Rep{H}] = \bigoplus_{\alpha \in Q^+} [\Rep{H_\alpha}]$
denote its Grothendieck ring, which we make into an $\A$-algebra 
so that $q [V] = [q V]$.
Dually, we have the additive category $\Proj{H_\alpha}$ of finitely
generated projective $H_\alpha$-modules and set
$$
\Proj{H} := \bigoplus_{\alpha \in Q^+} \Proj{H_\alpha}.
$$
Again this is a graded $\k$-linear monoidal category with respect to
the induction product, and again the split Grothendieck group
$[\Proj{H}] = \bigoplus_{\alpha \in Q^+} [\Proj{H_\alpha}]$ is
naturally an $\A$-algebra.
Moreover there is a non-degenerate pairing
\begin{equation*}
(\cdot,\cdot):[\Proj{H}] \times [\Rep{H}] \rightarrow
\A
\end{equation*}
defined on $P \in \Proj{H_\alpha}$ and $V \in \Rep{H_\beta}$
by declaring that 
$$
([P], [V]) := 
\left\{
\begin{array}{ll}
\DIM T^*(P) \otimes_{H_\alpha} V&\text{if $\beta = \alpha$,}\\
0&\text{otherwise,}
\end{array}\right.
$$
where $T^*(P)$ denotes $P$ viewed as a right module via the
anti-automorphism $T$.
Finally there are dualities $\circledast$ on $\Rep{H_\alpha}$ and $\#$
on $\Proj{H_\alpha}$ 
inducing antilinear involutions on the Grothendieck groups.
These are defined from $V^\circledast := \HOM_{\k}(V, \k)$
and $P^\# := \HOM_{H_\alpha}(P, H_\alpha)$, respectively, both viewed as left modules
via $T$;
more generally $V^\circledast$ makes
good sense for any $V$ that is locally finite dimensional.

\begin{Theorem}[Khovanov-Lauda]\label{ct}
There is a unique adjoint pair of $\A$-algebra isomorphisms
$$
\gamma:\f_\A \stackrel{\sim}{\rightarrow} [\Proj{H}],
\qquad
\gamma^*:[\Rep{H}] \stackrel{\sim}{\rightarrow}
\f_\A^*
$$
such that
$\gamma(\theta_i^{(n)})=[P(\alpha_i^n)]$. Under these isomorphisms,
the antilinear involutions $\b$ and $\b^*$ on $\f_\A$ and $\f_\A^*$
correspond to the dualities $\#$ and $\circledast$, respectively.
\end{Theorem}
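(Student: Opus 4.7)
The plan is to define $\gamma$ on divided-power generators and produce the inverse via characters. Set $\gamma(\theta_i^{(n)}) := [P(\alpha_i^n)]$ and check that this extends to an $\A$-algebra homomorphism $\f_\A \to [\Proj{H}]$ by verifying the quantum Serre relations in the Grothendieck ring. The required module-level isomorphisms categorifying Serre can be constructed directly, but the cleaner route is to pass to characters where they reduce to the known Serre identities in the shuffle algebra $\A\W$ (see the next paragraph); since characters separate classes in $[\Proj{H}]$ once the pairing is shown non-degenerate, this suffices.

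To produce $\gamma^*$, use the graded dimensions of weight spaces: send $[V] \in [\Rep{H_\alpha}]$ to the unique element whose image under $\CH$ is $\sum_{\bi \in \W_\alpha} \DIM(1_\bi V)\, \bi$. That this lies in $\CH(\f_\A^*) \subseteq \A\W$ (rather than merely in $\A\W$) is the crucial point, and it is where Theorem~\ref{mackey} is essential: the Mackey filtration combined with Lusztig's identity $(ab,c) = (a \otimes b, r(c))$ forces the characters of induced modules to multiply via the shuffle product (\ref{shuffle}), so $\gamma^*$ is a well-defined $\A$-algebra homomorphism. Frobenius reciprocity, which gives $([P \circ P'], [V]) = ([P] \otimes [P'], [\Res V])$, then expresses that $\gamma$ and $\gamma^*$ are adjoint with respect to Lusztig's form and the pairing of Grothendieck groups.

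Bijectivity follows by a pincer argument. Surjectivity of $\gamma$ holds because the basis theorem (\ref{basisthm}) yields $H_\alpha = \bigoplus_\bi H_\alpha 1_\bi$, and each $H_\alpha 1_\bi$ is a direct summand of $H_{\alpha_{i_1}} \circ \cdots \circ H_{\alpha_{i_n}}$, so the image of $\gamma$ contains a generating set of $[\Proj{H_\alpha}]$. Injectivity of $\gamma$ then follows from non-degeneracy of Lusztig's form together with the analogous surjectivity of $\gamma^*$. Finally, compatibility with the involutions is checked on generators: the degree shift $q_i^{m(m-1)/2}$ built into (\ref{idemp}) was chosen precisely so that $P(\alpha_i^n)^\# \cong P(\alpha_i^n)$, and $\b(\theta_i^{(n)}) = \theta_i^{(n)}$ by definition; the statement for $\b^*$ and $\circledast$ then follows by adjunction. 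I expect the main obstacle to be the bookkeeping of degree shifts, particularly the verification that characters of induced modules multiply via the twisted shuffle product with exactly the power of $q$ dictated by the twisted comultiplication (\ref{twistedco}) on $\f \otimes \f$.
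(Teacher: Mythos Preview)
The paper does not prove this theorem at all: its proof consists entirely of citations to \cite[\S 3]{KL}, \cite[Theorem~8]{KL2}, and \cite[Theorem~4.4]{KR2}. It is quoted as a foundational result due to Khovanov and Lauda, not something established within the paper.

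Your proposal, by contrast, is an outline of an actual proof, and it is broadly in the spirit of the original Khovanov--Lauda argument. A couple of points are worth tightening. First, your verification of the Serre relations in $[\Proj{H}]$ via characters is slightly circular as written: characters live on $[\Rep{H}]$, so what you really mean is that the Cartan pairing $[\Proj{H}] \times [\Rep{H}] \to \A$ is non-degenerate on the left (a standard fact, since indecomposable projectives are detected by their composition multiplicities), and hence it suffices to pair the Serre combination with every $[V]$. Second, the claim that $\CH[V]$ lands in $\CH(\f_\A^*)$ rather than merely $\A\W$ is indeed the heart of the matter; in the original argument this follows because the image of $\CH$ is the subalgebra of $\A\W$ generated by the single letters, and multiplicativity of $\gamma^*$ (via Mackey, as you say) shows that characters of simples lie in that subalgebra. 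Your pincer argument for bijectivity and your treatment of the dualities on generators are both fine. But since the paper simply cites the result, the appropriate comparison is that you are reconstructing the cited proof rather than offering an alternative to anything the paper does.
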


\begin{proof}
See \cite[$\S$3]{KL} and \cite[Theorem 8]{KL2} for the statement about $\gamma$. The dual
statement is implicit in \cite{KL}; see also \cite[Theorem 4.4]{KR2}.
\end{proof}

Henceforth we will {\em
identify} $\f_\A$ with $[\Proj{H}]$ and $\f_\A^*$ with $[\Rep{H}]$
according to Theorem~\ref{ct}.
Any $H_\alpha$-module $V$
admits a decomposition 
into {\em word spaces}
$V = \bigoplus_{\bi \in \W_\alpha} 1_\bi V$.
Then the {\em character} of $V \in \Rep{H_\alpha}$ is the formal sum
\begin{equation}
\CH V = \sum_{\bi \in \W_\alpha} (\DIM 1_\bi V) \bi \in \A \W_\alpha.
\end{equation}
As $(\theta_\bi, [V]) = ([H_\alpha 1_\bi], [V]) = \DIM 1_\bi H_\alpha
\otimes_{H_\alpha} V = \DIM 1_\bi V$, we have that $\CH V = \CH [V]$,
where $\CH$ on the right hand side is the injective map from (\ref{charc}).
Also note the following, which is the module-theoretic
analogue of (\ref{bstar}).

\begin{Lemma}\label{lvl}
For $U \in \Rep{H_\beta}$ and $V \in \Rep{H_\gamma}$, there is a
natural isomorphism
$(U \circ V)^\circledast \cong q^{\beta\cdot\gamma} V^\circledast \circ U^\circledast.$
\end{Lemma}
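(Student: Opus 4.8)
The plan is to realize the duality $\circledast$ on the induction product concretely and match it up with an explicit isomorphism of modules. Recall that $U \circ V = \Ind_{\beta,\gamma}^{\beta+\gamma}(U \boxtimes V) = H_{\beta+\gamma} 1_{\beta,\gamma} \otimes_{H_\beta \otimes H_\gamma} (U \boxtimes V)$, and that $\circledast$ is $\HOM_\k(-,\k)$ made into a left module via the anti-automorphism $T$ of $H_{\beta+\gamma}$. So the first step is to compute $(U \circ V)^\circledast = \HOM_\k\!\big(H_{\beta+\gamma} 1_{\beta,\gamma} \otimes_{H_\beta\otimes H_\gamma} (U \boxtimes V),\, \k\big)$ and compare it, as a left $H_{\beta+\gamma}$-module, with $V^\circledast \circ U^\circledast = H_{\beta+\gamma} 1_{\gamma,\beta} \otimes_{H_\gamma \otimes H_\beta}(V^\circledast \boxtimes U^\circledast)$; note the order of the factors has reversed, which is the source of both the $q^{\beta\cdot\gamma}$ shift and the swap.

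**Second**, the cleanest route is via adjunction rather than by chasing bases. For any $H_{\beta+\gamma}$-module $W$, homomorphisms $V^\circledast \circ U^\circledast \to W$ are, by Frobenius reciprocity for $\Ind \dashv \Res$, the same as $H_\gamma \otimes H_\beta$-homomorphisms $V^\circledast \boxtimes U^\circledast \to \Res_{\gamma,\beta}^{\beta+\gamma} W$. On the other hand, $\hom_{H_{\beta+\gamma}}(W, (U\circ V)^\circledast) = \hom_{H_{\beta+\gamma}}(U\circ V, W^\circledast)$ by the standard adjunction for $\circledast$ between graded-finite-dimensional modules, which by Frobenius reciprocity again is $\hom_{H_\beta\otimes H_\gamma}(U \boxtimes V, \Res^{\beta+\gamma}_{\beta,\gamma} W^\circledast)$. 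So I would show both functors $W \mapsto \hom(V^\circledast \circ U^\circledast, W)$ and $W \mapsto \hom(W, (U\circ V)^\circledast)$ are corepresentable/representable by the same object, using that $\Res W^\circledast \cong (\Res W)^\circledast$ compatibly (restriction commutes with $\k$-duality since $1_{\beta,\gamma}$ is fixed by $T$) together with the elementary identity $(A \boxtimes B)^\circledast \cong A^\circledast \boxtimes B^\circledast$ for outer tensor products over $H_\beta \otimes H_\gamma$. Comparing the two chains of natural isomorphisms produces the claimed iso; the degree shift $q^{\beta\cdot\gamma}$ enters from the grading on the embedding $H_\gamma\otimes H_\beta \hookrightarrow H_{\beta+\gamma}$ versus $H_\beta\otimes H_\gamma \hookrightarrow H_{\beta+\gamma}$, i.e. from $\deg(w;\bi)$-type twists in the shuffle, exactly as in the proof of \eqref{bstar} at the level of characters.

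**Alternatively**, a hands-on proof: pick the standard basis $\{\tau_w 1_{\beta,\gamma}\}$ (with $w$ ranging over minimal-length coset representatives for $S_{m+n}/S_m\times S_n$) for $H_{\beta+\gamma}1_{\beta,\gamma}$ as a right $H_\beta\otimes H_\gamma$-module, write down the dual basis, and use the anti-automorphism $T$ (which fixes generators, so $T(\tau_w 1_{\beta,\gamma}) = 1_{\beta,\gamma}\tau_{w^{-1}}$ up to reordering the reduced word) to transport the left action; the map $1_{\gamma,\beta}\tau_{w_0^{-1}}\otimes(\phi\boxtimes\psi) \mapsto \big(v\otimes u \mapsto \phi(v)\psi(u)\big)$, where $w_0$ is the longest coset representative realizing the swap $\beta,\gamma \leadsto \gamma,\beta$, should be the required isomorphism once one checks $H_{\beta+\gamma}$-linearity on generators.

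**The main obstacle** will be bookkeeping the grading shift and the swap simultaneously: getting the precise power of $q$ to come out as $\beta\cdot\gamma$ (not, say, $\beta\cdot\gamma$ with an off-by-sign, or an extra $d_\alpha$ term) requires care with the degree conventions on $\tau_w 1_\bi$ and with how $T$ interacts with the grading, and with the fact that $\circledast$ on a shifted module satisfies $(q^n V)^\circledast = q^{-n}V^\circledast$. I expect everything else to be formal: the adjunction-based argument reduces the statement to the associativity of Frobenius reciprocity plus the trivial duality for outer tensor products, so no hard input beyond Theorem~\ref{mackey}-style combinatorics is needed, and indeed the identity is simply the categorification of \eqref{bstar} under $\CH$, providing a built-in sanity check on the exponent.
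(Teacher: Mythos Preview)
The paper does not prove this lemma at all; it simply cites \cite[Theorem 2.2(2)]{LV}. So your proposal goes further than the paper does, and the comparison is really between your sketch and the Lauda--Vazirani argument.

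Your Approach 2 (the hands-on one) is essentially the \cite{LV} argument and would work. The map is built from the longest $S_m \times S_{n-m}$--coset representative (the block swap), and the degree shift $q^{\beta\cdot\gamma}$ arises because $\tau_w 1_{\gamma,\beta}$ for that $w$ has degree exactly $-\beta\cdot\gamma$. Your description of the source of the shift (``from the grading on the embedding'') is a bit imprecise---both parabolic embeddings are degree-preserving---but the underlying combinatorics is right.

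Your Approach 1, however, has a genuine gap. You propose to compare the functors $W \mapsto \hom(V^\circledast \circ U^\circledast, W)$ and $W \mapsto \hom(W, (U\circ V)^\circledast)$; but the first is covariant in $W$ and the second contravariant, so Yoneda gives no comparison between their (co)representing objects. To run a Yoneda argument you would need to compute $\hom(W, q^{\beta\cdot\gamma}V^\circledast \circ U^\circledast)$, and for that the left-adjoint Frobenius reciprocity $\Ind \dashv \Res$ is useless: you need $\Res \dashv \Coind$ together with an identification $\Coind_{\gamma,\beta} \cong q^{-\beta\cdot\gamma}\Ind_{\gamma,\beta}$ (after a swap). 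That identification is precisely the nontrivial content of the lemma---it is the statement that $H_\alpha 1_{\gamma,\beta}$ and $\HOM_{H_\beta\otimes H_\gamma}(1_{\beta,\gamma}H_\alpha, H_\beta\otimes H_\gamma)$ are isomorphic as graded bimodules up to a shift and a flip---so invoking it here is circular. The adjunction route can be made to work, but only by proving that bimodule isomorphism directly, which collapses back to your Approach 2.
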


\begin{proof}
This is \cite[Theorem 2.2(2)]{LV}.
\end{proof}

\subsection{PBW and dual PBW bases}\label{pbws}
A {\em convex ordering} on $R^+$ is a total order $\prec$
such that
$$
\beta,\gamma, 
\beta+\gamma \in R^+, \beta \prec \gamma \quad\Rightarrow \quad
\beta \prec \beta + \gamma \prec \gamma.
$$
By \cite{Papi}, there is a bijection between convex orderings of 
$R^+$ and reduced expressions for the longest element $w_0$ of $W$:
given a reduced expression
$w_0 = s_{i_1} \cdots s_{i_N}$ the corresponding 
convex ordering on $R^+$ is given by
$$
\alpha_{i_1} \prec
s_{i_1}(\alpha_{i_2})
\prec
s_{i_1} s_{i_2}(\alpha_{i_3})
\prec\cdots\prec s_{i_1} \cdots s_{i_{N-1}}(\alpha_{i_N}).
$$
We assume henceforth that such a convex ordering/reduced expression
has been specified. The following lemma is very useful.

\begin{Lemma}\label{l1}
Suppose we are given positive roots $\alpha, \beta_1,\dots,\beta_k, \gamma_1,\dots,\gamma_l$
such that $\beta_i \preceq \alpha \preceq \gamma_j$
for all $i$ and $j$.
We have that 
$\beta_1+\cdots+\beta_k = \gamma_1+\cdots + \gamma_l$
if and only if $k=l$
 and
$\beta_1=\cdots=\beta_k=\gamma_1=\cdots=\gamma_l = \alpha$.
\end{Lemma}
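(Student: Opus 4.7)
The plan is to exploit the bijection from \cite{Papi} (recalled just before the lemma) between convex orders on $R^+$ and reduced expressions for $w_0$ in order to translate the equality $\sum\beta_i = \sum\gamma_j$ into a transparent positivity statement after applying a well-chosen element of $W$.

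List the positive roots as $\delta_1 \prec \cdots \prec \delta_N$ in the fixed convex order, where $\delta_k := s_{i_1}\cdots s_{i_{k-1}}(\alpha_{i_k})$, and let $m$ be the index with $\alpha = \delta_m$. Set $w := s_{i_1}\cdots s_{i_m}$. A standard computation gives $w^{-1}(\alpha) = s_{i_m}(\alpha_{i_m}) = -\alpha_{i_m}$, while the inversion set of $w$ is $N(w) = \{\delta_1,\dots,\delta_m\} = \{\beta \in R^+ : \beta \preceq \alpha\}$. Consequently $w^{-1}$ sends every $\beta \in R^+$ with $\beta \preceq \alpha$ to a negative root, and every $\beta \in R^+$ with $\beta \succ \alpha$ to a positive root.

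I then apply $-w^{-1}$ to both sides of $\beta_1 + \cdots + \beta_k = \gamma_1 + \cdots + \gamma_l$. The left hand side becomes $\sum_i (-w^{-1}(\beta_i))$, a sum of positive roots. On the right, separate the summands with $\gamma_j = \alpha$ (each contributing $\alpha_{i_m}$) from those with $\gamma_j \succ \alpha$ (each contributing a negative root). Setting $\ell := \#\{j : \gamma_j = \alpha\}$ and rearranging yields
$$
\sum_i (-w^{-1}(\beta_i)) \;+\; \sum_{j : \gamma_j \succ \alpha} w^{-1}(\gamma_j) \;=\; \ell\,\alpha_{i_m},
$$
a sum of positive roots equal to a nonnegative integer multiple of the simple root $\alpha_{i_m}$. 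Since every positive root expands with nonnegative integer coefficients in the simple root basis, each summand on the left must itself equal $\alpha_{i_m}$. This forces $w^{-1}(\beta_i) = -\alpha_{i_m}$, whence $\beta_i = \alpha$ for all $i$; and it rules out the existence of any $\gamma_j \succ \alpha$, because such a $\gamma_j$ would satisfy $\gamma_j = w(\alpha_{i_m}) = -\alpha \notin R^+$. So every $\beta_i$ and every $\gamma_j$ equals $\alpha$, and comparing $k\alpha = l\alpha$ gives $k = l$.

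The main obstacle is the identification $N(w) = R^+_{\preceq \alpha}$, which is essentially the content of Papi's theorem; once this is in hand, everything else reduces to clean positivity bookkeeping inside $Q^+$.
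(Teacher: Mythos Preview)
Your proof is correct and follows essentially the same strategy as the paper: both arguments apply a prefix of the reduced expression for $w_0$ and use the fact (Bourbaki/Papi) that its inversion set is exactly $\{\beta \in R^+ : \beta \preceq \alpha\}$. Your version is slightly more streamlined than the paper's, which splits into two cases ($k' \geq l'$ versus $k' \leq l'$) and uses two different Weyl group elements; you avoid this by exploiting the additional observation that $w^{-1}(\alpha) = -\alpha_{i_m}$ is the negative of a \emph{simple} root, so that the rearranged equation forces every positive-root summand to equal $\alpha_{i_m}$.
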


\begin{proof}
Suppose that $\beta_1+\cdots+\beta_k = \gamma_1+\cdots+\gamma_l$.
We may assume for suitable $0 \leq k' \leq k$ and $0 \leq l' \leq l$
that 
$\beta_i = \alpha$ for $1 \leq i \leq k'$,
$\beta_i \prec \alpha$ for $k'+1 \leq i \leq k$ and
$\gamma_i = \alpha$ for $1 \leq i \leq l'$,
$\gamma_i \succ \alpha$ for $l'+1 \leq i \leq l$.
Then we need to show that  $k=k'=l' = l$.
Assume the convex ordering corresponds to reduced expression
$w_0 = s_{i_1} \cdots s_{i_N}$ as above. 
Then $\alpha = s_{i_1} \cdots s_{i_{j-1}} (\alpha_{i_j})$
for a unique $1 \leq j \leq N$.
If $k' \geq l'$,
let $w := s_{i_j} \cdots s_{i_1}$.
From $\beta_1+\cdots+\beta_k = \gamma_1+\cdots+\gamma_l$, we deduce
that
$$
(k'-l') w(\alpha) + w(\beta_{k'+1})+\cdots+w(\beta_k) =
w(\gamma_{l'+1}) + \cdots + w(\gamma_l).
$$
By \cite[Ch. VI, $\S$6, Cor. 2]{Bou},
the set of positive roots sent to negative roots by $w$ is the set
$\{\alpha' \in R^+\:|\:\alpha' \preceq \alpha\}$.
Hence the left hand side of the above equation 
is a sum of negative roots and the right hand side
is a sum of positive roots. 
So both sides are zero and we deduce that $k=k'= l' = l$.
For the case $k' \leq l'$, argue in a similar way
with $w:=s_{i_{j-1}} \cdots s_{i_1}$, so that the set of
positive roots sent to negative by $w$ is $\{\alpha' \in
R^+\:|\:\alpha' \prec \alpha\}$.
\end{proof}

Corresponding to the chosen convex ordering/reduced expression, 
Lusztig has introduced {\em root vectors}
$\{r_\alpha\:|\:\alpha \in R^+\}$ in $\f$ via a
certain braid group action. The definition uses the
embedding of $\f$ into the
full quantum group $U_q(\mathfrak{g})$ 
so we only summarize it briefly: 
we take the positive embedding $\f \hookrightarrow U_q(\mathfrak{g}),
x \mapsto x^+$ defined from
$\theta_i^+ := E_i$ and use the braid group generators
$T_i := T_{i,+}''$ from \cite[$\S$37.1.3]{Lubook} (recalling our $q$
is Lusztig's $v^{-1}$);
then for $\alpha \in R^+$ the root element $r_\alpha$ is the unique
element of $\f$ such that
\begin{equation*}
r_\alpha^+ = 
T_{i_1} \cdots T_{i_{j-1}}(E_{i_j})
\end{equation*}
if $\alpha = s_{i_1} \cdots s_{i_{j-1}}(\alpha_{i_j}).$
For example, in type $A_2$ with $I = \{1,2\}$ and fixed reduced expression
$w_0 = s_1 s_2 s_1$, so that
$\alpha_1 \prec \alpha_1+\alpha_2 \prec \alpha_2$,
we have that
$r_{\alpha_1} = \theta_1,
r_{\alpha_1+\alpha_2} = \theta_1
\theta_2 - q \theta_2 \theta_1,
r_{\alpha_2} = \theta_2.$
 Also introduce the {\em dual root vector}
\begin{equation}\label{drv}
r_\alpha^* := (1-q_\alpha^{2}) r_\alpha.
\end{equation}
The normalization here ensures that $r_\alpha^*$ is invariant under $\b^*$, as can be checked directly using
(\ref{zstar}) and the formulae in \cite[$\S$37.2.4]{Lubook}.

A {\em Kostant partition} of $\alpha \in Q^+$
is a sequence
$\lambda = (\lambda_1,\dots,\lambda_l)$ of positive roots
such that $\lambda_1 \succeq\cdots\succeq \lambda_l$ and
$\lambda_1+\cdots+\lambda_l = \alpha$.
Denote the set of all Kostant partitions of $\alpha$ by
$\KP(\alpha)$.
For $\lambda = (\lambda_1,\dots,\lambda_l) \in
\KP(\alpha)$, let
$m_\beta(\lambda)$ denote the multiplicity of $\beta \in R^+$
as a part of $\lambda$.
Also set
$\lambda_k' := \lambda_{l+1-k}$ for $k=1,\dots,l$.
Then define a partial order $\preceq$ on $\KP(\alpha)$
so that $\lambda \prec \mu$ if and only if both of the following hold:
\begin{itemize}
\item 
$\lambda_1 = \mu_1,\dots,\lambda_{k-1} = \mu_{k-1}$ and
$\lambda_k \prec \mu_k$ for some $k$ such that $\lambda_k$ and $\mu_k$
are both defined;
\item
$\lambda_1' = \mu_1',\dots,\lambda_{k-1}' = \mu_{k-1}'$ and
$\lambda_k'\succ \mu_k'$ for some $k$ such that $\lambda_k'$ and $\mu_k'$
are both defined.
\end{itemize}
This ordering was introduced in \cite[$\S$3]{Mac}, and the 
following lemmas were noted already there
(at least implicitly).

\begin{Lemma}\label{l4}
For $\alpha \in R^+$ and $m \geq 1$, the 
Kostant partition $(\alpha^m)$ is the 
unique smallest element of $\KP(m\alpha)$.
\end{Lemma}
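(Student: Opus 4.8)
The plan is to show that for any Kostant partition $\mu = (\mu_1 \succeq \cdots \succeq \mu_l) \in \KP(m\alpha)$ with $\mu \neq (\alpha^m)$, we have $(\alpha^m) \prec \mu$. First I would dispose of the case $\mu = (\alpha^m)$: this is the assertion that $(\alpha^m)$ itself is an element of $\KP(m\alpha)$, which is immediate since $\alpha \in R^+$ and $\alpha + \cdots + \alpha = m\alpha$. So assume $\mu \neq (\alpha^m)$. Since $\mu$ is a sequence of positive roots summing to $m\alpha$ and $\alpha$ itself is a positive root, I want to locate $\alpha$ relative to the parts of $\mu$ and invoke Lemma~\ref{l1}.

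The key observation is that $\mu$ cannot have all parts equal to $\alpha$ unless $\mu = (\alpha^m)$ (note $\mu$ has exactly $m$ parts in that case, forced by the sum being $m\alpha$ and each part being $\alpha$). Hence there is at least one part $\mu_k$ with $\mu_k \neq \alpha$, so by convexity (totality of $\prec$) either some part is $\succ \alpha$ or some part is $\prec \alpha$. Now I claim both must occur. Indeed, write the parts that are $\succ \alpha$, those equal to $\alpha$, and those $\prec \alpha$; if, say, no part were $\prec \alpha$, then all parts satisfy $\mu_k \succeq \alpha$, and summing we would have (after also writing $m\alpha = \alpha + \cdots + \alpha$ as a sum of $m$ copies of $\alpha$, each $\preceq \alpha$) an identity $\mu_1 + \cdots + \mu_l = \alpha + \cdots + \alpha$ with all left-hand parts $\succeq \alpha$ and all right-hand parts $\preceq \alpha$; Lemma~\ref{l1} then forces all parts on both sides to equal $\alpha$, contradicting $\mu \neq (\alpha^m)$. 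Symmetrically no part can be $\succ \alpha$ fails. Wait — more carefully: Lemma~\ref{l1} directly gives that $\{\mu_i \succeq \alpha : \text{all } i\}$ together with $\sum \mu_i = \sum_{j=1}^m \alpha$ forces $l = m$ and all $\mu_i = \alpha$. So the assumption $\mu \neq (\alpha^m)$ already implies some $\mu_i \prec \alpha$; applying the same argument with all inequalities reversed (all $\mu_i \preceq \alpha$ would force $\mu = (\alpha^m)$) shows some $\mu_i \succ \alpha$. Thus $\mu_1 \succ \alpha$ (the largest part) and $\mu_l \prec \alpha$ (the smallest part).

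It remains to check that these two facts yield $(\alpha^m) \prec \mu$ in the order of~\cite{Mac} recalled above. For the first bullet: the first parts of $(\alpha^m)$ and $\mu$; since $\mu_1 \succ \alpha = (\alpha^m)_1$, the very first coordinate already satisfies $(\alpha^m)_1 \prec \mu_1$, so take $k = 1$. For the second bullet: comparing the reversed sequences, $(\alpha^m)' = (\alpha^m)$ while $\mu_1' = \mu_l \prec \alpha = (\alpha^m)_1'$, i.e. $(\alpha^m)_1' \succ \mu_1'$, so again $k = 1$ works. Hence $(\alpha^m) \prec \mu$, and since $\mu$ was arbitrary, $(\alpha^m)$ is the unique minimum. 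The only real content is the two applications of Lemma~\ref{l1}, and the main (minor) obstacle is simply being careful that "all parts $\succeq \alpha$" combined with the sum condition is exactly the hypothesis of Lemma~\ref{l1} with $\beta_i$ the parts of $\mu$ and $\gamma_j = \alpha$ for $j = 1, \dots, m$ — which it is, since each $\mu_i \succeq \alpha$ and each $\gamma_j = \alpha \preceq \alpha$.
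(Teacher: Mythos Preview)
Your proof is correct and follows essentially the same approach as the paper: both arguments use Lemma~\ref{l1} to show that a Kostant partition of $m\alpha$ whose parts all lie on one side of $\alpha$ (all $\preceq \alpha$ or all $\succeq \alpha$) must equal $(\alpha^m)$. The only cosmetic difference is that the paper phrases it as the contrapositive (assuming $\lambda \not\succ (\alpha^m)$ and deducing $\lambda = (\alpha^m)$), whereas you argue directly that $\mu \neq (\alpha^m)$ forces $\mu_1 \succ \alpha$ and $\mu_l \prec \alpha$, hence $(\alpha^m) \prec \mu$.
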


\begin{proof}
Suppose that $\lambda = (\lambda_1,\dots,\lambda_l) \in \KP(m\alpha)$
satisfies $\lambda \not\succ (\alpha^m)$. Then we either have that $\lambda_1 \preceq
\alpha$ or that $\lambda_1' \succeq \alpha$.
In the former case, $\lambda_k \preceq\alpha$ for all
$k$, while in the latter $\lambda_k \succeq\alpha$ for all $k$.
Either way,
applying Lemma~\ref{l1} to the equality $\lambda_1+\cdots+\lambda_l =
\alpha + \cdots + \alpha$ ($m$ times), we deduce that $\lambda =
(\alpha^m)$.
\end{proof}

\begin{Lemma}\label{l3}
For $\alpha \in R^+$,
suppose that $\lambda \in \KP(\alpha)$ is minimal such that $\lambda
\succ (\alpha)$.
Then $\lambda$ has two parts, i.e. $\lambda = (\beta,\gamma)$ for
positive roots
$\beta \succ \alpha \succ \gamma$.
\end{Lemma}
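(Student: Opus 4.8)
The plan is to reduce the statement to the observation that $(\alpha)$ is the \emph{unique} minimal element of the poset $\KP(\alpha)$, and then to eliminate Kostant partitions with three or more parts by a ``merging'' move that produces a strictly smaller partition.

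\emph{Step 1: $(\alpha)=\min\KP(\alpha)$.} If $\mu=(\mu_1,\dots,\mu_l)\in\KP(\alpha)$ has $l\ge 2$ parts, then $\mu_1\succ\alpha$ and $\mu_l\prec\alpha$. Indeed, if $\mu_1\preceq\alpha$ then every part of $\mu$ is $\preceq\alpha$, and applying Lemma~\ref{l1} to the identity $\mu_1+\cdots+\mu_l=\alpha$ (taking the $\gamma$-side of that lemma to be the single root $\alpha$) forces $l=1$; the inequality $\mu_l\prec\alpha$ is symmetric. Since $(\alpha)$ consists of a single part, unwinding the definition of $\preceq$ now shows $(\alpha)\prec\mu$ whenever $\mu\ne(\alpha)$, so $(\alpha)$ is the minimum. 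Consequently a $\lambda$ which is minimal subject to $\lambda\succ(\alpha)$ is an atom of $\KP(\alpha)$, and by the same computation $\lambda=(\lambda_1,\dots,\lambda_l)$ with $l\ge 2$ and $\lambda_1\succ\alpha\succ\lambda_l$; it remains only to show $l\le 2$.

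\emph{Step 2: eliminating $l\ge 3$.} Suppose $l\ge 3$. Since $\lambda_1,\dots,\lambda_l$ are positive roots summing to the root $\alpha$ and $l\ge 2$, we have $\lambda_i+\lambda_j\in R^+$ for some $i\ne j$. (Otherwise $(\lambda_m,\lambda_{m'})\ge 0$ for all $m\ne m'$, because two positive roots with negative pairing sum to a positive root; but then $(\alpha,\alpha)\ge\sum_m(\lambda_m,\lambda_m)\ge 2l\ge 6$, which forces $\alpha$ to be a long root of type $G_2$ with the $\lambda_m$ three mutually orthogonal short roots, impossible in rank two.) As $2\lambda_i\notin R^+$ we get $\lambda_i\ne\lambda_j$, say $\lambda_i\succ\lambda_j$, and then convexity gives $\lambda_j\prec\lambda_i+\lambda_j\prec\lambda_i$. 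Let $\mu$ be the Kostant partition of $\alpha$ obtained from $\lambda$ by replacing the two parts $\lambda_i,\lambda_j$ by the single part $\lambda_i+\lambda_j$ and re-sorting into non-increasing order. Using only $\lambda_j\prec\lambda_i+\lambda_j\prec\lambda_i$ one verifies both defining conditions for $\mu\prec\lambda$: the first by comparing $\lambda$ and $\mu$ from the left, the second from the right. Since $\mu$ still has $l-1\ge 2$ parts it is $\ne(\alpha)$, so $(\alpha)\prec\mu\prec\lambda$, contradicting the minimality of $\lambda$. Hence $l=2$, i.e.\ $\lambda=(\beta,\gamma)$ with $\beta\succ\alpha\succ\gamma$.

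The one genuinely fiddly point is the verification in Step~2 that the merged, re-sorted partition $\mu$ satisfies $\mu\prec\lambda$: one must identify precisely which slot $\lambda_i+\lambda_j$ occupies after re-sorting (it sits strictly below the slot of $\lambda_{i-1}$ and strictly above that of $\lambda_{j+1}$) and then run the left-to-right and right-to-left comparisons defining the order $\preceq$. Both inputs from root theory --- that $(\alpha)$ is the minimum of $\KP(\alpha)$, via Lemma~\ref{l1}, and that two of the parts of $\lambda$ sum to a root --- are short.
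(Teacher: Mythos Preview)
Your proof is correct, and it takes a genuinely different route from the paper's.

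The paper assumes $l\ge 3$ and invokes \cite[Lemma~2.1]{Mac} to partition $\{1,\dots,l\}$ into two blocks $J\sqcup K$ so that $\beta:=\sum_{j\in J}\lambda_j$ and $\gamma:=\sum_{k\in K}\lambda_k$ are both positive roots with $\beta\succ\gamma$. It then uses Lemma~\ref{l1} to show $(\beta,\gamma)\prec\lambda$, contradicting minimality. So the paper immediately drops all the way down to a two-part partition, at the cost of quoting an external structural lemma about root systems.

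You instead find just one pair $\lambda_i,\lambda_j$ with $\lambda_i+\lambda_j\in R^+$, via the elementary observation that if no such pair existed then all pairwise inner products would be non-negative, forcing $d_\alpha\ge l\ge 3$ and hence the impossible configuration of three mutually orthogonal short roots in $\mathrm{G}_2$. You then merge only these two parts to get an $(l-1)$-part partition $\mu$ with $(\alpha)\prec\mu\prec\lambda$. This is more self-contained (no appeal to \cite{Mac}) and the root-system input is minimal; the price is the somewhat fiddly verification that merging and re-sorting really gives $\mu\prec\lambda$ in the bilexicographic order, together with the separate disposal of the $\mathrm{G}_2$ edge case. Both arguments are short; yours trades an external citation for a direct inner-product estimate.
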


\begin{proof}
Suppose for a contradiction 
that $\lambda = (\lambda_1,\dots,\lambda_l)$ with $l \geq 3$.
By \cite[Lemma 2.1]{Mac},
we can partition the set $\{1,\dots,l\}$ as $J \sqcup K$ so that
$\beta := \sum_{j \in J} \lambda_j$ and $\gamma := \sum_{k \in K}
\lambda_k$
are positive roots with $\beta \succ \gamma$.
Each $\lambda_j$ is $\preceq \lambda_1$ hence by applying Lemma~\ref{l1}
to the equality $\sum_{j \in J} \lambda_j = \beta$ (taking $\alpha$
there to be the root $\lambda_1$)
we must have that $\beta \preceq \lambda_1$.
Moreover if it happens that $\beta = \lambda_1$
then $\gamma = \alpha - \beta = \lambda_2+\cdots+\lambda_l$
and we see similarly that $\gamma \preceq \lambda_2$.
As $l \geq 3$, this shows that either $\beta \prec \lambda_1$,
or $\beta = \lambda_1$ and $\gamma \prec \lambda_2$.
A similar argument shows that either $\gamma \succ \lambda_1'$,
or $\gamma = \lambda_1'$ and $\beta \succ \lambda_2'$.
Hence $(\beta,\gamma) \prec \lambda$.
But also we know that $(\beta,\gamma) \succ (\alpha)$ by Lemma~\ref{l4}.
So this
contradicts the minimality of $\lambda$.
\end{proof}

Let $\KP := \bigcup_{\alpha \in Q^+} \KP(\alpha)$.
For $\lambda = (\lambda_1,\dots,\lambda_l) \in \KP$,
we set
\begin{equation}\label{rlambda}
r_\lambda := r_{\lambda_1} \cdots r_{\lambda_l} / [\lambda]^!,
\qquad
r_\lambda^* := q^{s_\lambda} r_{\lambda_1}^* \cdots r_{\lambda_l}^*,
\end{equation}
where
\begin{equation*}
[\lambda]^! := \prod_{\beta\in R^+} [m_\beta(\lambda)]^!_\beta,\qquad
s_\lambda := 
\sum_{\beta \in R^+} \frac{d_\beta}{2} m_\beta(\lambda) (m_\beta(\lambda)-1).
\end{equation*}
The following key result is due to Lusztig;
it gives us the {\em PBW} and {\em dual PBW bases} for $\f$ arising
from the given convex ordering $\prec$.

\begin{Theorem}[Lusztig]\label{pbw}
The monomials 
$\left\{r_{\lambda}\:|\:\lambda \in \KP\right\}$
and 
$\left\{r^*_{\lambda}\:|\:\lambda \in \KP\right\}$
give a pair of dual bases for the free $\A$-modules 
$\f_\A$ and $\f_\A^*$, respectively.
\end{Theorem}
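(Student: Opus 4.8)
The plan is to deduce this from Lusztig's book \cite{Lubook}; the only genuinely new content is the bookkeeping needed to match our normalizations with his. Two facts are quoted from there: the integral PBW theorem, and the orthogonality of the PBW basis with respect to Lusztig's form, together with an explicit formula for the self-pairings.

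First I recall the integral PBW theorem in the following shape: for the fixed reduced expression $w_0 = s_{i_1}\cdots s_{i_N}$, the products obtained by multiplying the divided powers $r_\beta^{(c)}$---one factor for each $\beta \in R^+$, taken in the convex order, with exponents $c \geq 0$---form an $\A$-basis of $\f_\A$. Unwinding definitions, for $\lambda \in \KP(\alpha)$ the element $r_\lambda = r_{\lambda_1}\cdots r_{\lambda_l}/[\lambda]^!$ is precisely such a monomial: since $[\lambda]^! = \prod_{\beta\in R^+}[m_\beta(\lambda)]_\beta^!$ we have $r_\lambda = \prod_{\beta\in R^+} r_\beta^{(m_\beta(\lambda))}$ with the factors taken in convex order. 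Matching conventions is harmless here: Lusztig's $v$ is our $q^{-1}$, and our parts are listed in \emph{decreasing} convex order, which at worst forces one to quote Lusztig's statements for the reversed reduced word. Hence $\{r_\lambda\:|\:\lambda \in \KP(\alpha)\}$ is an $\A$-basis of $(\f_\A)_\alpha$ for every $\alpha$, which is the first half of the assertion.

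For the dual-basis claim, note that $(\f_\alpha, \f_\beta) = 0$ unless $\alpha = \beta$ (immediate by induction from the axioms for the form), so it suffices to fix $\alpha$ and prove $(r_\lambda, r_\mu^*) = \delta_{\lambda,\mu}$ for $\lambda, \mu \in \KP(\alpha)$. Since each $r_\mu^*$ is a nonzero scalar multiple of $r_{\mu_1}\cdots r_{\mu_l}$, hence of $r_\mu$, this is equivalent to orthogonality of the PBW basis (which gives $(r_\lambda, r_\mu^*) = 0$ for $\lambda \neq \mu$) together with the single normalization identity $(r_\lambda, r_\lambda^*) = 1$. Orthogonality is due to Lusztig; the mechanism is the triangularity of the coproduct on convex-ordered monomials---schematically $r(r_\beta) = r_\beta \otimes 1 + 1 \otimes r_\beta + \sum x \otimes y$ with $\deg x$ a sum of roots $\prec \beta$ and $\deg y$ a sum of roots $\succ \beta$---combined with the adjunction $(ab,c) = (a\otimes b, r(c))$ and, crucially, Lemma~\ref{l1}, which prevents $m\beta$ (for $\beta$ the largest part of $\lambda$) from being rewritten as any other $\N$-combination of the parts occurring in $\lambda$ or in $\mu$; one then peels off the largest root one factor at a time, inducting on $\height(\alpha)$. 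The same argument reduces $(r_{\lambda_1}\cdots r_{\lambda_l}, r_{\lambda_1}\cdots r_{\lambda_l})$ to $\prod_{\beta\in R^+}(r_\beta^{m_\beta(\lambda)}, r_\beta^{m_\beta(\lambda)})$, and a direct computation---reducing by the same coproduct analysis to the single-root self-pairing $(r_\beta, r_\beta) = (1-q_\beta^2)^{-1}$ and the $q$-binomial expansion of the coproduct of $r_\beta^m$---gives $(r_\beta^m, r_\beta^m) = q_\beta^{-m(m-1)/2}[m]_\beta^!(1-q_\beta^2)^{-m}$. Assembling these, the factor $[\lambda]^!$ from $r_\lambda$, the factors $(1-q_\beta^2)^{m_\beta(\lambda)}$ hidden in the $r_\beta^*$, and the power $q^{s_\lambda}$ cancel exactly, yielding $(r_\lambda, r_\lambda^*) = 1$.

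Finally, since $\{r_\lambda\}$ is an $\A$-basis of the $\A$-free module $\f_\A$ and, by the very definition of $\f_\A^*$, the form restricts to a perfect pairing $\f_\A \times \f_\A^* \to \A$, the dual basis $\{r_\lambda^*\}$ automatically lies in $\f_\A^*$ and is an $\A$-basis of it, completing the proof. The main obstacle is the orthogonality statement and the self-pairing formula: these rest on the Levendorskii--Soibelman-type straightening relations among the root vectors and on the compatibility of the braid operators $T_i$ with Lusztig's form, which I would quote from \cite{Lubook} rather than reprove. Everything else is routine, if somewhat fiddly, normalization bookkeeping.
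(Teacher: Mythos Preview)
Your approach is essentially the same as the paper's: both reduce to citations from Lusztig's book, with the paper pointing specifically to \cite[Corollary 41.1.4(b), Proposition 41.1.7, Proposition 38.2.3, Lemma 1.2.8(b)]{Lubook}. Your write-up is more expansive in that you spell out the normalization bookkeeping verifying $(r_\lambda, r_\lambda^*) = 1$, which the paper leaves implicit in its citations; this is a helpful addition, and your computation of the self-pairing $(r_\beta^m, r_\beta^m)$ is correct.
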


\begin{proof}
This follows from \cite[Corollary 41.1.4(b)]{Lubook},
\cite[Proposition 41.1.7]{Lubook},
\cite[Proposition 38.2.3]{Lubook}
and \cite[Lemma 1.2.8(b)]{Lubook}.
\end{proof}

\subsection{Proper standard modules}
The next results are taken from \cite[$\S$3]{Mac}, which generalizes \cite{KR2}.
Note that our conventions for the ordering $\preceq$ are consistent with
the notation in \cite{KR2}; the ordering in \cite{Mac} is the opposite
of the ordering here.
The modules $L(\alpha)$ in the following theorem are called {\em
cuspidal modules} in \cite{KR2, Mac}.

\begin{Theorem}\label{mac1}
For $\alpha \in R^+$ there is a unique (up to isomorphism) 
irreducible $H_\alpha$-module $L(\alpha)$ such that $[L(\alpha)] =
r_\alpha^*$.
Moreover,
for any $m \geq 1$, the module
$L(\alpha^m) := q_\alpha^{\frac{1}{2}m(m-1)}L(\alpha)^{\circ m}$
is irreducible.
\end{Theorem}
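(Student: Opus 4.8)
The assertion is \cite[\S3]{Mac} (which extends \cite{KR2}), so I only sketch the strategy.

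\emph{Uniqueness} (of $L(\alpha)$ and of $L(\alpha^m)$) is immediate: the classes of the graded irreducible $H_\alpha$-modules, one chosen from each shift-orbit, are linearly independent in $[\Rep{H_\alpha}]$, so an irreducible is determined by its class.

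\emph{Existence of $L(\alpha)$.} The plan is to study $r_\alpha^*$ through the injective character homomorphism $\CH\colon\f_\A^*\hookrightarrow\A\W$ of (\ref{charc}), which turns $\circ$ into the shuffle product and intertwines $\b^*$ with the bar involution of $\A\W$. By Lemma~\ref{l4} with $m=1$, $(\alpha)$ is the $\preceq$-minimal Kostant partition of $\alpha$. Using convexity of $\prec$ (Lemmas~\ref{l1} and~\ref{l3}) one attaches to each positive root $\beta$ a ``good word'' $\bi_\beta\in\W_\beta$ --- roughly, one that does not factor as a nontrivial shuffle of good words for roots occurring earlier in the convex order --- in such a way that the good words biject with the shift-orbits of graded irreducibles, an irreducible $L$ being recoverable from its lexicographically largest word. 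One then shows that $\CH(r_\alpha^*)$ has a unique lex-largest word, that it equals $\bi_\alpha$ and occurs with coefficient $1$, and more precisely that $\CH(r_\alpha^*)=\CH L(\alpha)$ for the irreducible $L(\alpha)$ with largest word $\bi_\alpha$; injectivity of $\CH$ then gives $[L(\alpha)]=r_\alpha^*$. (Equivalently: one parametrizes the graded self-dual irreducibles of $H_\alpha$ by $\KP(\alpha)$, shows that the transition matrix between $\{r_\lambda^*\}$ and the corresponding classes is $\preceq$-unitriangular, and evaluates at the minimal partition $(\alpha)$.)

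\emph{The ``moreover''.} First, $r_\alpha^*$ is $\b^*$-invariant by (\ref{drv}), so $L(\alpha)^\circledast$ is an irreducible of class $\b^*(r_\alpha^*)=r_\alpha^*$, whence $L(\alpha)^\circledast\cong L(\alpha)$ by the uniqueness just proved. Iterating Lemma~\ref{lvl} gives $(L(\alpha)^{\circ m})^\circledast\cong q^{m(m-1)d_\alpha}L(\alpha)^{\circ m}$, so $L(\alpha^m):=q_\alpha^{\frac{1}{2}m(m-1)}L(\alpha)^{\circ m}$ is $\circledast$-self-dual, with class $q_\alpha^{\frac{1}{2}m(m-1)}(r_\alpha^*)^m=r_{(\alpha^m)}^*$ by (\ref{rlambda}). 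It remains to see that $L(\alpha)^{\circ m}$ is indecomposable with head occurring once as a composition factor; granting this, $\circledast$-self-duality forces the head and socle to coincide with a self-dual irreducible of multiplicity one, so $L(\alpha^m)$ is irreducible (and then $r_{(\alpha^m)}^*$ is a simple class, in accordance with $(\alpha^m)$ being the $\preceq$-minimal Kostant partition of $m\alpha$ by Lemma~\ref{l4}). Indecomposability is obtained by applying the Mackey theorem (Theorem~\ref{mackey}) to $\Res^{m\alpha}_{\alpha,\dots,\alpha}L(\alpha)^{\circ m}$: the (semi)cuspidality of $L(\alpha)$ pins down which layers of the resulting filtration survive, and Frobenius reciprocity then identifies $\END_{H_{m\alpha}}(L(\alpha)^{\circ m})$ as a local ring concentrated in non-positive degrees. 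The multiplicity-one statement is a highest-word computation in the shuffle algebra applied to $\CH(L(\alpha)^{\circ m})=\CH(L(\alpha))^{\circ m}$ (or it may be read off from the minimality of $(\alpha^m)$).

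\emph{Main obstacle.} The hardest part is the combinatorial core of the existence step --- controlling $\CH(r_\alpha^*)$ finely enough to identify its lex-largest word and its coefficient, which is exactly where convexity of $\prec$ (Lemmas~\ref{l1}, \ref{l3}, \ref{l4}) is used; a secondary technical difficulty is carrying the Mackey/semicuspidality bookkeeping through uniformly in $m$.
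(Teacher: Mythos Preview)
Both the paper and you simply cite \cite{Mac}, so at the level of the paper's proof you are aligned. Your additional sketch is broadly in the right spirit, but two points deserve comment.

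First, your existence argument leans on the language of ``good words'' and lex-largest words. That is the setup of \cite{KR2}, which is specific to Lyndon orderings; the point of \cite{Mac} (and hence of the present paper) is that everything works for an \emph{arbitrary} convex ordering, where there is no lexicographic structure to appeal to. In \cite{Mac} the module $L(\alpha)$ is singled out by its cuspidality property (what appears here as Lemma~\ref{maclem}) rather than by a distinguished word, and the identity $[L(\alpha)]=r_\alpha^*$ comes from the unitriangularity of Theorem~\ref{mac2} together with the minimality of $(\alpha)$ in $\KP(\alpha)$ (Lemma~\ref{l4}). Your parenthetical ``equivalently'' paragraph is closer to the actual argument than your main paragraph.

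Second, your route to irreducibility of $L(\alpha^m)$ via ``indecomposable $+$ self-dual $+$ head of multiplicity one'' is more circuitous than necessary and has a gap: locality of $\END_{H_{m\alpha}}(L(\alpha)^{\circ m})$ gives indecomposability, but indecomposable modules need not have irreducible head (already for the Kronecker quiver there are indecomposables whose head is $S^{\oplus 2}$), so you have not established that the head is a single simple. The argument in \cite{Mac} avoids this entirely: Theorems~\ref{mac1} and~\ref{mac2} are proved together, and once the triangularity of Theorem~\ref{mac2} is in hand, Lemma~\ref{l4} says $(\alpha^m)$ is $\preceq$-minimal in $\KP(m\alpha)$, so $\rad\bar\Delta(\alpha^m)=0$ and $L(\alpha^m)=\bar\Delta(\alpha^m)$ is irreducible on the nose. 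Your parenthetical ``or it may be read off from the minimality of $(\alpha^m)$'' is in fact the whole argument, not an afterthought.
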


\begin{proof}
The existence of $L(\alpha)$ is the first part of \cite[Theorem
3.1]{Mac}.
The second part is \cite[Lemma 3.4]{Mac}.
\end{proof}

Suppose we are given $\alpha \in Q^+$ and $\lambda = (\lambda_1,\dots,\lambda_l)\in \KP(\alpha)$.
Define
the {\em proper standard module}
\begin{equation}
\bar\Delta(\lambda) :=
q^{s_\lambda}
L(\lambda_1)\circ\cdots\circ L(\lambda_l).
\end{equation}
It is immediate from Theorem~\ref{mac1} and the definition (\ref{rlambda})
that
$[\bar\Delta(\lambda)] = r_\lambda^*$,
i.e. proper standard modules categorify the dual PBW basis.
Let
\begin{equation*}
L(\lambda) := \bar\Delta(\lambda) / \rad \bar\Delta(\lambda).
\end{equation*}
The following theorem asserts in particular that this is a self-dual
irreducible module.

\begin{Theorem}\label{mac2}
For $\alpha \in Q^+$
the modules $\{L(\lambda)\:|\:\lambda \in \KP(\alpha)\}$
give a complete set of pairwise inequivalent $\circledast$-self-dual
irreducible $H_\alpha$-modules.
Moreover, for any $\lambda \in \KP(\alpha)$, all composition factors
of $\rad \bar\Delta(\lambda)$ 
are of the form $q^n L(\mu)$ for $\mu \prec \lambda$ and
$n \in \Z$.
\end{Theorem}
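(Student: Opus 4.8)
The plan is to deduce the whole statement from one ``unitriangularity'' input and then collect the remaining assertions formally. Concretely, I would first establish that $\bar\Delta(\lambda)$ has \emph{irreducible head} $L(\lambda):=\bar\Delta(\lambda)/\rad\bar\Delta(\lambda)$, that $[\bar\Delta(\lambda):L(\lambda)]=1$ in degree $0$, and that $[\bar\Delta(\lambda):q^nL(\mu)]=0$ whenever $\mu\not\prec\lambda$ and $(\mu,n)\neq(\lambda,0)$. The ``moreover'' clause is then immediate: the composition factors of $\rad\bar\Delta(\lambda)$ are exactly those of $\bar\Delta(\lambda)$ other than the top $L(\lambda)$, so each is of the form $q^nL(\mu)$ with $\mu\prec\lambda$. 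Inverting the unitriangular matrix gives $[L(\lambda)]=r_\lambda^*+\sum_{\mu\prec\lambda}d_{\lambda\mu}(q)\,r_\mu^*$ with $d_{\lambda\mu}\in\A$; since $\{r_\mu^*\:|\:\mu\in\KP(\alpha)\}$ is an $\A$-basis of $\f_\A^*=[\Rep{H_\alpha}]$ (Theorems~\ref{ct} and~\ref{pbw}), the classes $[L(\lambda)]$ are $\A$-linearly independent with distinct $\preceq$-maximal terms, so the $L(\lambda)$, $\lambda\in\KP(\alpha)$, are pairwise non-isomorphic even after a degree shift. They exhaust the irreducibles by a rank count: the classes of the $\circledast$-self-dual irreducibles, one from each degree-shift orbit, span $[\Rep{H_\alpha}]$ over $\A$ (Jordan--H\"older) and are $\A$-independent (injectivity of $\CH$), so there are $\operatorname{rank}_\A[\Rep{H_\alpha}]=\dim_\AA\f_\alpha=|\KP(\alpha)|$ of them --- as many as we have produced; hence every irreducible is $q^dL(\mu)$ for some $d,\mu$. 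In particular $L(\lambda)^\circledast\cong q^dL(\mu)$; but $\circledast$ corresponds to $\b^*$ (Theorem~\ref{ct}), and since each $r_\beta^*$ is $\b^*$-invariant it follows from~(\ref{bstar}) and the Levendorskii--Soibelman-type reordering relations for dual PBW monomials (standard Lusztig theory) that $\b^*$ is unitriangular on the dual PBW basis, $\b^*(r_\nu^*)=r_\nu^*+(\text{strictly }\prec\nu)$; so $[L(\lambda)^\circledast]=\b^*[L(\lambda)]=[L(\lambda)]+(\text{strictly }\prec\text{-lower})$, and comparing with $q^d[L(\mu)]$ forces $\mu=\lambda$, $d=0$, i.e.\ $L(\lambda)$ is $\circledast$-self-dual.

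For the unitriangularity input itself I would use $\bar\Delta(\lambda)=q^{s_\lambda}L(\lambda_1)\circ\cdots\circ L(\lambda_l)$, so that $\CH\bar\Delta(\lambda)=q^{s_\lambda}\CH L(\lambda_1)\circ\cdots\circ\CH L(\lambda_l)=r_\lambda^*$ in the shuffle algebra (Theorem~\ref{mac1}, (\ref{rlambda})). The essential ingredient, which I would take from \cite[Theorem 3.1]{Mac}, is the cuspidality of each $L(\beta)$ --- $\Res_{\gamma,\delta}L(\beta)=0$ unless $\gamma$ is a sum of positive roots all $\preceq\beta$ and $\delta$ a sum of positive roots all $\succeq\beta$ --- together with the existence of a distinguished highest word $\bi_\beta$ for $L(\beta)$ with one-dimensional word space. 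Setting $\bi_\lambda:=\bi_{\lambda_1}\cdots\bi_{\lambda_l}$, a computation in the shuffle algebra (using the decreasing arrangement $\lambda_1\succeq\cdots\succeq\lambda_l$, cuspidality, and Lemma~\ref{l1} --- this is where convexity of $\prec$ enters) shows that $\bi_\lambda$ occurs in $\CH L(\mu)$ only for $\mu\preceq\lambda$, occurs in $\CH\bar\Delta(\lambda)$ with a controlled coefficient, and occurs in $\CH L(\lambda)$ so as to account for all of $1_{\bi_\lambda}\bar\Delta(\lambda)$; this pins down $[\bar\Delta(\lambda):L(\lambda)]=1$ in degree $0$, that every other composition factor is $q^nL(\mu)$ with $\mu\prec\lambda$, and that $L(\lambda)$ is the whole head. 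Equivalently and more conceptually, one computes $\hom_{H_\alpha}(\bar\Delta(\lambda),\bar\Delta(\mu))$ via generalized Frobenius reciprocity and the iterated Mackey filtration (Theorem~\ref{mackey}): cuspidality annihilates every section except the ``diagonal'' one, which by Schur's lemma contributes $\END(L(\lambda_1))\otimes\cdots\otimes\END(L(\lambda_l))=\k$ in degree $0$ when $\mu=\lambda$ and $0$ when $\mu\not\preceq\lambda$, giving $\END(\bar\Delta(\lambda))=\k$ and the simple head directly. This entire analysis is carried out in \cite[\S 3]{Mac}, generalizing \cite{KR2}.

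The main obstacle is precisely this combinatorial core --- extracting from the iterated Mackey filtration (equivalently, from the shuffle expansion of $r_\lambda^*$) both the vanishing of the off-diagonal contributions and the fact that the diagonal contribution is exactly one-dimensional. One has to keep careful track of the tuples of positive roots arising as ``shapes'' of Mackey sections, apply the cuspidality vanishing to each restricted tensor factor, and then invoke Lemma~\ref{l1} --- hence the convexity of $\prec$ --- to eliminate all configurations but one; the rest of the argument above is routine bookkeeping with Grothendieck groups. I would reproduce this step from \cite[\S 3]{Mac} (which in turn generalizes \cite{KR2}) rather than re-deriving it from scratch.
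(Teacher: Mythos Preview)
Your proposal is correct and aligns with the paper: the paper's own proof is simply the citation ``This is \cite[Theorem 3.1]{Mac},'' and what you have written is an accurate outline of the argument carried out there (cuspidality plus Mackey/shuffle analysis to get unitriangularity, then formal Grothendieck-group bookkeeping for inequivalence, exhaustion, and self-duality). One small imprecision: in non-simply-laced types the distinguished word space $1_{\bi_\beta}L(\beta)$ need not be one-dimensional (its graded dimension is $\kappa_\beta$, cf.\ Lemma~\ref{rtheory}), but this does not affect the unitriangularity argument, and since you explicitly defer the details to \cite[\S 3]{Mac} this is harmless.
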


\begin{proof}
This is \cite[Theorem 3.1]{Mac}.
\end{proof}

We remark further that all irreducible modules of KLR algebras are absolutely
irreducible by \cite[Corollary 3.9]{KL}, i.e. their endomorphism algebras are
isomorphic to $\k$.
For $\lambda \in \KP$, we denote the projective cover of $L(\lambda)$ by
$P(\lambda)$.
Also introduce the {\em proper costandard module}
\begin{equation}
\bar\nabla(\lambda) := \bar\Delta(\lambda)^\circledast.
\end{equation}
It is immediate from Theorem~\ref{mac2} that $\bar\nabla(\lambda)$ has
socle $L(\lambda)^\circledast \cong L(\lambda)$.
Let us also record the key lemma (known by the first two authors as
``McNamara's Lemma'') 
at the heart of the proof of both of
the above theorems.

\begin{Lemma}\label{maclem}
Suppose we are given $\alpha \in R^+$
and $\beta,\gamma \in Q^+$ with $\beta+\gamma=\alpha$.
If $\Res^{\alpha}_{\beta,\gamma} L(\alpha) \neq 0$
then $\beta$ is a sum of positive roots $\preceq \alpha$ and $\gamma$ is a sum
of positive roots $\succeq \alpha$.
\end{Lemma}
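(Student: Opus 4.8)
The plan is to translate the statement into a fact about Lusztig's comultiplication $r$ and then argue combinatorially; one may assume $\beta,\gamma\neq 0$, the other cases being trivially true. Under the Khovanov--Lauda identification of $[\Rep{H}]$ with $\f_\A^*$ (Theorem~\ref{ct}) and the compatibility $\CH V=\CH[V]$ of module and algebra characters, for words $\bi\in\W_\beta$ and $\bj\in\W_\gamma$ one has $\DIM 1_{\bi\bj}L(\alpha)=(\theta_\bi\theta_\bj,r_\alpha^*)=(\theta_\bi\otimes\theta_\bj,r(r_\alpha^*))$, using $[L(\alpha)]=r_\alpha^*$ (Theorem~\ref{mac1}) and the defining property $(ab,c)=(a\otimes b,r(c))$ of Lusztig's form. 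Since $\{\theta_\bi\mid\bi\in\W_\beta\}$ spans $\f_\beta$ and $(\cdot,\cdot)$ is non-degenerate, and since $r_\alpha^*=(1-q_\alpha^2)r_\alpha$, this yields
\[
\Res^{\alpha}_{\beta,\gamma}L(\alpha)\neq 0\quad\Longleftrightarrow\quad\text{the }\f_\beta\otimes\f_\gamma\text{-component of }r(r_\alpha)\text{ is non-zero.}
\]
Writing $\f_{\prec\alpha}$ (resp.\ $\f_{\succ\alpha}$) for the span of the PBW monomials $r_\mu$ all of whose parts are $\prec\alpha$ (resp.\ $\succ\alpha$), it therefore suffices to prove the \emph{comultiplication triangularity}
\[
r(r_\alpha)\ \in\ r_\alpha\otimes 1\ +\ 1\otimes r_\alpha\ +\ \textstyle\sum_{\beta+\gamma=\alpha,\ \beta,\gamma\neq 0}\ (\f_\beta\cap\f_{\prec\alpha})\otimes(\f_\gamma\cap\f_{\succ\alpha}),
\]
because its $(\beta,\gamma)$-component with $\beta,\gamma\neq 0$ is then non-zero only if $\f_\beta\cap\f_{\prec\alpha}\neq 0$ and $\f_\gamma\cap\f_{\succ\alpha}\neq 0$, i.e.\ only if $\beta$ is a sum of positive roots $\prec\alpha$ and $\gamma$ a sum of positive roots $\succ\alpha$.

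I would prove this triangularity by induction on $\hgt(\alpha)$, the case $\alpha=\alpha_i$ being immediate since $r(\theta_i)=\theta_i\otimes 1+1\otimes\theta_i$ is primitive. For the inductive step $\alpha$ is not simple, so fix a minimal pair $(\beta_0,\gamma_0)$ for $\alpha$ in the sense of \cite{Mac}, so that $\beta_0\prec\alpha\prec\gamma_0$, $\beta_0+\gamma_0=\alpha$, and $\hgt(\beta_0),\hgt(\gamma_0)<\hgt(\alpha)$. A Levendorskii--Soibelman straightening argument, combined with Lemma~\ref{l1} and the partition lemma \cite[Lemma~2.1]{Mac} used in the proof of Lemma~\ref{l3}, shows that $(\alpha)$ is the only Kostant partition of $\alpha$ all of whose parts lie strictly between $\beta_0$ and $\gamma_0$, whence $r_{\beta_0}r_{\gamma_0}-q^{-\beta_0\cdot\gamma_0}r_{\gamma_0}r_{\beta_0}=c\,r_\alpha$ for some non-zero scalar $c$. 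Now apply the algebra homomorphism $r$ to this identity, substitute the inductive expressions for $r(r_{\beta_0})$ and $r(r_{\gamma_0})$, and expand in the twisted algebra $\f\otimes\f$. The ``diagonal'' contributions reassemble into $c\,r_\alpha\otimes 1+c\,1\otimes r_\alpha$; among the pure cross terms built from $r_{\beta_0}\otimes 1$, $1\otimes r_{\beta_0}$, $r_{\gamma_0}\otimes 1$, $1\otimes r_{\gamma_0}$, the ``wrong-order'' ones ($r_{\gamma_0}\otimes r_{\beta_0}$) cancel in pairs---this is exactly why one brackets rather than multiplies---while the ``right-order'' one is a multiple of $r_{\beta_0}\otimes r_{\gamma_0}$, which lies in $(\f_{\beta_0}\cap\f_{\prec\alpha})\otimes(\f_{\gamma_0}\cap\f_{\succ\alpha})$ since $\beta_0\prec\alpha\prec\gamma_0$; the remaining terms involve the inductively controlled ``middle'' pieces of $r(r_{\beta_0})$ and $r(r_{\gamma_0})$ and must be brought back into $\f_{\prec\alpha}\otimes\f_{\succ\alpha}$ by further straightening and cancellation.

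That last bookkeeping step is the main obstacle---which is why this is ``McNamara's Lemma''. After substitution some factors are products of root vectors for roots merely known to be $\succ\beta_0$ (not yet $\succ\alpha$) or merely $\prec\gamma_0$, and one must verify that after all cancellation and re-straightening no PBW monomial survives in a forbidden graded component; convexity of $\prec$ is what forces the intermediate roots produced by each straightening to stay on the correct side of $\alpha$, and Lemma~\ref{l1} is invoked repeatedly to exclude the degenerate coincidences in which a sum of roots lying entirely on one side of $\alpha$ could equal a sum of roots on the other side. An alternative, essentially equivalent route bypasses the quantum group and runs the same induction on the module side: using the Mackey filtration (Theorem~\ref{mackey}) one analyses the interaction of $\Res^{\alpha}_{\beta,\gamma}$ with the induction products $L(\mu)\circ L(\nu)$ through which $L(\alpha)$ necessarily factors, and shows---again by induction on height, using cuspidality of the lower-height factors together with convexity and Lemma~\ref{l1}---that every Mackey layer landing in a forbidden $(\beta,\gamma)$ vanishes. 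Either way the combinatorial content is concentrated in Lemma~\ref{l1} and in the convexity of the chosen ordering.
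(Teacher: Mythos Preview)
Your proposal has two genuine problems.

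First, a circularity. Your opening move invokes Theorem~\ref{mac1} to identify $[L(\alpha)]=r_\alpha^*$ and thereby translate the question into one about $r(r_\alpha)$. But the paper states explicitly that Lemma~\ref{maclem} is ``at the heart of the proof'' of Theorems~\ref{mac1} and~\ref{mac2}; in McNamara's development the cuspidality property of $L(\alpha)$ is established \emph{first} and then used to show that the cuspidal modules categorify the dual root vectors, not the other way around. So you cannot appeal to $[L(\alpha)]=r_\alpha^*$ here without begging the question. One could in principle break the circle by proving the comultiplication triangularity for $r(r_\alpha)$ purely on the quantum-group side (e.g.\ via Lusztig's braid group action, which is how such statements are usually obtained) and only afterwards connecting to modules, but that is not the argument you give.

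Second, and more seriously, the inductive step is not carried out. You yourself flag the obstacle: after substituting the inductive hypotheses into $r(r_{\gamma_0})r(r_{\beta_0})-q^{-\beta_0\cdot\gamma_0}r(r_{\beta_0})r(r_{\gamma_0})$ and expanding in the twisted tensor algebra, the ``pure'' cross terms behave well, but the terms involving the middle pieces $X_{\beta_0}\in\f_{\prec\beta_0}\otimes\f_{\succ\beta_0}$ and $X_{\gamma_0}\in\f_{\prec\gamma_0}\otimes\f_{\succ\gamma_0}$ do not obviously land in $\f_{\prec\alpha}\otimes\f_{\succ\alpha}$. For instance, a summand $(r_{\gamma_0}\otimes 1)\cdot(a\otimes b)$ with $a\in\f_{\prec\beta_0}$ has first tensor factor $r_{\gamma_0}a$, and $a$ may well involve root vectors for roots strictly between $\alpha$ and $\beta_0$; straightening $r_{\gamma_0}a$ into PBW order produces monomials with parts on both sides of $\alpha$, and there is no evident pairing with a term from the other product that cancels them. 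Your assertion that ``convexity\dots forces the intermediate roots\dots to stay on the correct side of $\alpha$'' is exactly what has to be proved, and no mechanism is supplied. (A minor point: in the paper's conventions a minimal pair $(\beta,\gamma)\in\MP(\alpha)$ has $\beta\succ\alpha\succ\gamma$, the reverse of what you wrote.) The Mackey-theoretic ``alternative route'' you mention at the end is closer in spirit to the argument in \cite{Mac}, but you have not carried that out either; in the paper the lemma is simply quoted as \cite[Lemma~3.2]{Mac}.
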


\begin{proof}
This is \cite[Lemma 3.2]{Mac}.
\end{proof}

Here are some further consequences.

\begin{Lemma}\label{pl}
For $\alpha \in R^+$ and $m \geq 1$, 
we have that $$
\left[\Res^{m\alpha}_{\alpha,\dots,\alpha} L(\alpha^m)\right] = [m]^!_\alpha
\left[L(\alpha)^{\boxtimes m}\right].
$$
\end{Lemma}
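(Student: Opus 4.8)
The plan is to compute the character of $\Res^{m\alpha}_{\alpha,\dots,\alpha} L(\alpha^m)$ directly from the definition $L(\alpha^m) = q_\alpha^{\frac12 m(m-1)} L(\alpha)^{\circ m}$ and the Mackey theorem, and then identify it with $[m]^!_\alpha [L(\alpha)^{\boxtimes m}]$. First I would consider the iterated restriction of an $m$-fold induction product: by Theorem~\ref{mackey} (applied repeatedly, or in its multi-factor form), $\Res^{m\alpha}_{\alpha,\dots,\alpha}\bigl(L(\alpha)^{\circ m}\bigr)$ has a filtration whose sections are built from $\Res^{\alpha}_{\beta_i,\gamma_i} L(\alpha)$-type pieces shuffled together. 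The key point is that $m\alpha$ is being chopped into $m$ pieces each \emph{equal} to $\alpha$, so in every term that appears we are forced to restrict each copy of $L(\alpha)$ along some decomposition $\alpha = \beta + \gamma$ into positive-root-sums. By Lemma~\ref{maclem}, a nonzero contribution requires $\beta$ to be a sum of positive roots $\preceq \alpha$ and $\gamma$ a sum of positive roots $\succeq\alpha$; combined with the constraint coming from reassembling the pieces into $m$ blocks each of weight exactly $\alpha$, Lemma~\ref{l1} forces every such $\beta$ or $\gamma$ that is nonzero to equal $\alpha$ itself. In other words, the only surviving terms are those in which each copy of $L(\alpha)$ is restricted "trivially" (one block gets all of $\alpha$, the others get $0$).

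Next I would count these surviving terms. After discarding the vanishing pieces, the Mackey filtration degenerates into a sum indexed by the ways of interleaving the $m$ indivisible blocks, i.e. by the double cosets $S_m \backslash S_m / (S_1)^m$ — equivalently by all of $S_m$ — each contributing a copy of $L(\alpha)^{\boxtimes m}$ up to a grading shift $q^{\deg(w;\cdots)}$. Summing the shifts $\sum_{w\in S_m} q^{(\text{something involving } \alpha\cdot\alpha)}$ and combining with the normalizing shift $q_\alpha^{\frac12 m(m-1)}$ in the definition of $L(\alpha^m)$, the Poincaré polynomial $\sum_{w\in S_m} q^{\pm\ell(w)\,d_\alpha}$ collapses to the quantum factorial $[m]^!_\alpha$ (this is the standard identity $\sum_{w\in S_m} q^{2\ell(w)} = q^{m(m-1)/2}[m]^!$ after the shift, specialized with $q\mapsto q_\alpha$). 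This yields $\bigl[\Res^{m\alpha}_{\alpha,\dots,\alpha} L(\alpha^m)\bigr] = [m]^!_\alpha\,[L(\alpha)^{\boxtimes m}]$ in the Grothendieck group.

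I expect the main obstacle to be bookkeeping the grading shifts correctly: the Mackey theorem introduces shifts $q^{-\beta_2\cdot\gamma_1}$ and the reassembly introduces shifts $q^{\deg(w;\cdot)}$, and one must verify these combine with $q_\alpha^{\frac12 m(m-1)}$ to produce exactly $[m]^!_\alpha$ and not some twisted variant. A clean way to sidestep most of this is to argue at the level of characters using the shuffle algebra description of $\mathrm{Ch}$: since $\mathrm{Ch}$ is an algebra homomorphism $\f_\A^* \to \A\langle I\rangle$ and restriction to $H_{\alpha,\dots,\alpha}$ corresponds to (an appropriate component of) the iterated coproduct $r$, the identity $[\Res\,L(\alpha^m)] = [m]^!_\alpha [L(\alpha)^{\boxtimes m}]$ becomes a statement purely about $r_\alpha^*$ in $\f$, namely that the relevant component of $r(r_\alpha^{* m})$ equals $[m]^!_\alpha\, (r_\alpha^*)^{\otimes m}$ up to the right power of $q$; this in turn follows from $[L(\alpha^m)] = r_{(\alpha^m)}^* = q^{s_{(\alpha^m)}}(r_\alpha^*)^m$ (Theorem~\ref{mac1}, (\ref{rlambda})), the twisted-coproduct formula (\ref{twistedco}), and Lemma~\ref{l1} to kill all mixed terms. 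Either route works; I would present the representation-theoretic one as primary and invoke the shuffle-algebra identity as the computational engine.
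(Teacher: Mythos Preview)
Your proposal is correct and uses the same essential ingredients as the paper (the Mackey theorem, Lemma~\ref{maclem}, and Lemma~\ref{l1}), but organizes the computation differently. The paper proceeds by induction on $m$: it writes $L(\alpha^m) = q_\alpha^{m-1} L(\alpha) \circ L(\alpha^{m-1})$, applies the two-factor Mackey theorem to $\Res^{m\alpha}_{\alpha,(m-1)\alpha}$, and uses Lemmas~\ref{maclem} and~\ref{l1} to see that only two sections survive, contributing $q_\alpha^{m-1}$ and $q_\alpha^{-1}[m-1]_\alpha$ respectively; the identity $q_\alpha^{m-1} + q_\alpha^{-1}[m-1]_\alpha = [m]_\alpha$ then closes the induction. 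Your approach instead attacks the full $m$-fold restriction at once, arguing that the surviving Mackey sections are indexed by permutation matrices (equivalently by $S_m$) and summing the grading shifts via the Poincar\'e polynomial of $S_m$. The paper's route has the practical advantage that it only invokes the two-factor Mackey theorem as stated in Theorem~\ref{mackey}, and the grading-shift bookkeeping reduces to a single trivial identity; your direct route is conceptually cleaner but requires either a multi-factor Mackey statement or a careful iteration, and you should spell out the column-by-column induction showing that the only nonvanishing Mackey matrices $(\beta_{ij})$ are permutation matrices (your sketch of this step via Lemmas~\ref{maclem} and~\ref{l1} is correct but compressed). Your shuffle-algebra alternative is also valid and is essentially the Grothendieck-group shadow of the same argument.
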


\begin{proof}
It suffices to show that
$\big[\Res^{m\alpha}_{\alpha, (m-1)\alpha} L(\alpha^m)\big] = [m]_\alpha 
\big[L(\alpha)
\boxtimes L(\alpha^{m-1})\big]$
for $m \geq 2$.
For this we apply Theorem~\ref{mackey},
noting that $L(\alpha^m) = q_\alpha^{(m-1)}L(\alpha) \circ L(\alpha^{m-1})$. 
To understand the non-zero sections in
the Mackey filtration, we need to 
find all quadruples $(\beta_1,\beta_2,\gamma_1,\gamma_2)$
such that $\beta_1+\beta_2 = \beta_1+\gamma_1 = \alpha, 
\gamma_1+\gamma_2 = \beta_2+\gamma_2 = (m-1)\alpha$, 
$\Res^{\alpha}_{\beta_1,\beta_2} L(\alpha) \neq 0$
and $\Res^{(m-1) \alpha}_{\gamma_1,\gamma_2} L(\alpha^{m-1}) \neq 0$.
By Lemma~\ref{maclem} and Mackey, 
both $\beta_1$ and $\gamma_1$ are sums of
positive roots
$\preceq \alpha$.
Since $\beta_1+\gamma_1 = \alpha$, 
we deduce using 
Lemma~\ref{l1}
that either $\beta_1 = 0$ or $\gamma_1 = 0$.
This analysis shows that there are just two non-zero sections in the
Mackey filtration. The bottom non-zero section is obviously equal in the Grothendieck group
to $q_\alpha^{(m-1)} \left[L(\alpha) \circ
L(\alpha^{m-1})\right]$.
Also using some induction on $m$,
the top non-zero section 
contributes
$
q_\alpha^{-1} [m-1]_\alpha
\left[L(\alpha) \circ L(\alpha^{m-1})\right]$.
Finally observe that
$q_\alpha^{(m-1)} + q_\alpha^{-1} [m-1]_\alpha = [m]_\alpha.$
\end{proof}

\begin{Lemma}\label{sc}
Suppose we are given $\alpha \in Q^+$ and $\lambda =
(\lambda_1,\dots,\lambda_l)
\in \KP(\alpha)$.
Let $\Res^\alpha_\lambda$ denote the functor
$\Res^\alpha_{\lambda_1,\dots,\lambda_l}$.
Then
$$
\left[\Res^\alpha_{\lambda} \bar\Delta(\lambda)\right]
= [\lambda]^! \left[L(\lambda_1) \boxtimes\cdots\boxtimes L(\lambda_l)\right],
$$
Moreover for any $\mu \not\preceq \lambda$
we have that
$\Res^\alpha_\mu \bar\Delta(\lambda) = 0$.
\end{Lemma}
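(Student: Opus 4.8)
The plan is to establish the two statements separately, in each case reducing the relevant restriction of an induction product to the combinatorics of the convex order via McNamara's Lemma~\ref{maclem} and Lemma~\ref{l1}. Throughout I use the reformulation $\bar\Delta(\lambda)\cong L(\gamma_1^{m_1})\circ\cdots\circ L(\gamma_s^{m_s})$, where $\gamma_1\succ\cdots\succ\gamma_s$ are the distinct parts of $\lambda$ occurring with multiplicities $m_1,\dots,m_s$; this is immediate from the definitions together with $L(\gamma^m)=q_\gamma^{m(m-1)/2}L(\gamma)^{\circ m}$ and $s_\lambda=\sum_t d_{\gamma_t}\binom{m_t}{2}$. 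I also record the following elementary fact $(\star)$: if $W=L(\nu_1)\circ\cdots\circ L(\nu_k)$ (up to a degree shift) and $\Res_{\eta,\zeta}W\neq 0$, then $\eta$ is a sum of positive roots each $\preceq\max_i\nu_i$ and $\zeta$ is a sum of positive roots each $\succeq\min_i\nu_i$. Indeed $\CH W$ is, up to a monomial, the shuffle product of the $\CH L(\nu_i)$, so a word $\bi$ in its support is a shuffle of words $\bj^{(i)}\in\Supp\CH L(\nu_i)$; splitting $\bi$ into a prefix of weight $\eta$ and a suffix of weight $\zeta$ splits each $\bj^{(i)}$ into a prefix $\bj^{(i)}_1$ and a suffix $\bj^{(i)}_2$, and applying Lemma~\ref{maclem} to $1_{\bj^{(i)}_1\bj^{(i)}_2}L(\nu_i)\neq 0$ bounds $|\bj^{(i)}_1|$ and $|\bj^{(i)}_2|$ as required.

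For the first statement I claim first that $[\Res^\alpha_{m_1\gamma_1,\dots,m_s\gamma_s}\bar\Delta(\lambda)]=[L(\gamma_1^{m_1})\boxtimes\cdots\boxtimes L(\gamma_s^{m_s})]$, and prove this by peeling off the leftmost factor. Writing $\bar\Delta(\lambda)=L(\gamma_1^{m_1})\circ\bar\Delta(\nu)$ with $\nu:=(\gamma_2^{m_2},\dots,\gamma_s^{m_s})$ and applying Theorem~\ref{mackey} to $\Res^\alpha_{m_1\gamma_1,\,\alpha-m_1\gamma_1}\bar\Delta(\lambda)$, a nonzero section of the Mackey filtration requires $\Res_{\beta_1,\beta_2}L(\gamma_1^{m_1})\neq 0$ and $\Res_{\beta_2,\gamma_2}\bar\Delta(\nu)\neq 0$ with $\beta_1+\beta_2=m_1\gamma_1$. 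By $(\star)$ the first condition forces $\beta_2$ to be a sum of positive roots $\succeq\gamma_1$, and the second forces $\beta_2$ to be a sum of positive roots $\preceq\gamma_2\prec\gamma_1$; Lemma~\ref{l1} then gives $\beta_2=0$, so only the diagonal section survives and $[\Res^\alpha_{m_1\gamma_1,\,\alpha-m_1\gamma_1}\bar\Delta(\lambda)]=[L(\gamma_1^{m_1})\boxtimes\bar\Delta(\nu)]$. Induction on $s$ (equivalently on $\height\alpha$) proves the claim. Finally, restricting each factor $L(\gamma_t^{m_t})$ further from $H_{m_t\gamma_t}$ to $H_{\gamma_t}^{\otimes m_t}$ and invoking Lemma~\ref{pl} yields $[\Res^\alpha_\lambda\bar\Delta(\lambda)]=\prod_t[m_t]^!_{\gamma_t}\,[L(\lambda_1)\boxtimes\cdots\boxtimes L(\lambda_l)]=[\lambda]^!\,[L(\lambda_1)\boxtimes\cdots\boxtimes L(\lambda_l)]$.

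For the second statement it suffices to show that $\Res^\alpha_\mu\bar\Delta(\lambda)\neq 0$ (with $\mu\in\KP(\alpha)$) implies $\mu\preceq\lambda$. Unwinding the definition of $\preceq$, this follows from the two symmetric statements: (L) $\Res^\alpha_\mu\bar\Delta(\lambda)\neq 0$ implies that, reading $\mu$ and $\lambda$ from the left, at the first entry where they differ $\mu$'s entry is $\prec\lambda$'s entry (or $\mu=\lambda$); and (R) the mirror statement, reading from the right with $\prec$ replaced by $\succ$. Since these two lexicographic comparisons are total on $\KP(\alpha)$, (L) and (R) together force $\mu=\lambda$ or $\mu\prec\lambda$. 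I prove (L) by induction on $\height\alpha$. Choose a word $\bi$ in the support of $\CH\bar\Delta(\lambda)$ whose word space in $\Res^\alpha_\mu\bar\Delta(\lambda)$ is nonzero, so $\bi=\bi^{(1)}\cdots\bi^{(k)}$ with $|\bi^{(a)}|=\mu_a$; write $\bi$ as a shuffle of words $\bj^{(j)}\in\Supp\CH L(\lambda_j)$ and split each $\bj^{(j)}=\bj^{(j)}_1\cdots\bj^{(j)}_k$ along the blocks $\bi^{(1)},\dots,\bi^{(k)}$. Then $\mu_1=\sum_j|\bj^{(j)}_1|$, and by Lemma~\ref{maclem} each $|\bj^{(j)}_1|$ is a sum of positive roots $\preceq\lambda_j\preceq\lambda_1$, so $\mu_1$ is a sum of positive roots $\preceq\lambda_1$ and hence $\mu_1\preceq\lambda_1$ by convexity of the order. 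If $\mu_1\prec\lambda_1$ we are done; if $\mu_1=\lambda_1$, Lemma~\ref{l1} forces this sum to consist of a single root equal to $\lambda_1$, so exactly one $\bj^{(j_0)}$ lies entirely in the block $\bi^{(1)}$ (with $\lambda_{j_0}=\lambda_1$) and all other $\bj^{(j)}$ lie in $\bi^{(2)}\cdots\bi^{(k)}$. Hence $\bi^{(2)}\cdots\bi^{(k)}$ lies in the support of the shuffle product of the $\CH L(\lambda_j)$ for $j\neq j_0$, i.e.\ in $\Supp\CH\bar\Delta(\nu)$ for $\nu=(\lambda_2,\dots,\lambda_l)$, so $\Res^{\alpha-\lambda_1}_{\mu_2,\dots,\mu_k}\bar\Delta(\nu)\neq 0$; the inductive hypothesis then gives the required lexicographic inequality for $(\mu_2,\dots,\mu_k)$ against $(\lambda_2,\dots,\lambda_l)$, and therefore for $\mu$ against $\lambda$. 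Statement (R) is proved in the same way using the complementary half of Lemma~\ref{maclem}.

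The main obstacle is the combinatorial bookkeeping that eliminates the off-diagonal contributions: the surviving Mackey sections in the first part and the shuffles with ``mixed'' blocks in the second. In both cases the crux is that a weight which is simultaneously a sum of positive roots $\preceq\delta$ and a sum of positive roots $\succeq\delta'$ with $\delta'\prec\delta$ must be zero, by Lemma~\ref{l1}; but to be able to apply this one must first pass through the observation $(\star)$, because McNamara's Lemma~\ref{maclem} directly controls only the restrictions of the cuspidal modules $L(\alpha)$ for a single positive root $\alpha$, not those of $\bar\Delta(\lambda)$ or of $L(\alpha^m)$. One must also treat carefully the degenerate cases of (L) and (R)---repeated parts, sequences of unequal length, the base of the induction---and confirm that the passage to $\bar\Delta(\nu)$ strictly decreases $\height\alpha$.
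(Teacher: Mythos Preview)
Your argument is correct and essentially unpacks what the paper delegates to the citation of \cite[Lemma~3.3]{Mac} together with Lemma~\ref{pl}. The ingredients and structure---Mackey analysis combined with Lemmas~\ref{maclem} and~\ref{l1} to kill the off-diagonal sections, then Lemma~\ref{pl} for the factor $[\lambda]^!$, and an induction on height using the two lexicographic refinements of $\preceq$ for the vanishing statement---are exactly those used in that reference (the minor overloading of the symbol $\gamma_2$ in your Mackey paragraph is harmless once disambiguated).
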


\begin{proof}
This follows from \cite[Lemma 3.3]{Mac}
and Lemma~\ref{pl}.
\end{proof}

\subsection{The contravariant form and Williamson's counterexample}\label{sscontra}
The results in this subsection are 
not needed in the remainder of
the article but are of independent interest. 
Throughout we fix $\alpha \in Q^+$ of height $n$.

\begin{Lemma}\label{sca}
For $\lambda \in \KP(\alpha)$
there is a unique (up to scalars) non-zero bilinear form $\langle\cdot,\cdot\rangle$ on
$\bar\Delta(\lambda)$
such that \begin{equation}\label{cprop}
\langle hv, v'\rangle = \langle v, T(h) v'\rangle
\end{equation} 
for all $v, v' \in
\bar\Delta(\lambda)$
and $h \in H_\alpha$.
The radical of this bilinear form is the unique maximal
submodule of $\bar\Delta(\lambda)$.
Moreover, for $\bi, \bi' \in \W_\alpha$ and $m, m' \in \Z$,
we have that $\langle 1_\bi
\bar\Delta(\lambda)_m,
1_{\bi'} \bar\Delta(\lambda)_{m'}\rangle = 0$ 
unless $\bi = \bi'$ and $m+m' = 0$.
\end{Lemma}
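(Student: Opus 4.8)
The plan is to construct the form $\langle\cdot,\cdot\rangle$ on $\bar\Delta(\lambda)$ as the pullback along the canonical map $\bar\Delta(\lambda) \twoheadrightarrow L(\lambda)$ of a contravariant form on $L(\lambda)$, and to obtain uniqueness from a general principle about such forms on modules with simple head (or simple socle). First I would recall that $\bar\Delta(\lambda) = q^{s_\lambda} L(\lambda_1) \circ \cdots \circ L(\lambda_l)$ has graded-self-dual composition factors $L(\mu)$ with $\mu \preceq \lambda$, each irreducible being absolutely irreducible by \cite[Corollary 3.9]{KL}. A bilinear form satisfying (\ref{cprop}) is the same datum as an $H_\alpha$-module homomorphism $\bar\Delta(\lambda) \to \bar\Delta(\lambda)^\circledast = \bar\nabla(\lambda)$, the graded dual being formed via the anti-automorphism $T$ which fixes the generators and hence matches (\ref{cprop}). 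So the space of such forms (in degree $0$) is $\hom_{H_\alpha}(\bar\Delta(\lambda), \bar\nabla(\lambda))$.

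Next I would compute this Hom-space. Since $\bar\Delta(\lambda)$ has simple head $L(\lambda)$ (by Theorem~\ref{mac2}, or rather by definition of $L(\lambda)$ together with that theorem showing it is irreducible and self-dual), and dually $\bar\nabla(\lambda)$ has simple socle $L(\lambda)$, any nonzero homomorphism $\bar\Delta(\lambda) \to \bar\nabla(\lambda)$ must have image meeting the socle, hence containing $L(\lambda)$, and its kernel cannot contain the head; a standard argument then shows any two nonzero such maps are proportional. More carefully: a nonzero map $f$ has $\operatorname{im} f$ with simple socle $L(\lambda)$ and (being a quotient of $\bar\Delta(\lambda)$) simple head $L(\lambda)$; the composite $\bar\Delta(\lambda) \to \operatorname{im} f \hookrightarrow \bar\nabla(\lambda)$ is determined up to scalar by where it sends a generator of the head, using absolute irreducibility of $L(\lambda)$, so $\dim \hom_{H_\alpha}(\bar\Delta(\lambda),\bar\nabla(\lambda)) \le 1$. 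Existence of a nonzero map is immediate: compose $\bar\Delta(\lambda) \twoheadrightarrow L(\lambda) \cong L(\lambda)^\circledast \hookrightarrow \bar\nabla(\lambda)$. This gives the form, and its radical is precisely the kernel of $\bar\Delta(\lambda) \twoheadrightarrow L(\lambda)$, which is $\rad\bar\Delta(\lambda)$, the unique maximal submodule (again by Theorem~\ref{mac2} and the definition of $L(\lambda)$).

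Finally, the orthogonality statement is a weight/grading argument. Since $T$ fixes each idempotent $1_\bi$ and each $\x_k$, it is degree-preserving and fixes $1_\bi$; so the pairing $\bar\Delta(\lambda) \times \bar\Delta(\lambda) \to \k$ corresponding to a degree-$0$ homomorphism $\bar\Delta(\lambda) \to \bar\nabla(\lambda)$ satisfies $\langle 1_\bi v, v'\rangle = \langle v, 1_\bi v'\rangle$ and vanishes on components of nonzero total degree. Writing $v = 1_\bi v$, $v' = 1_{\bi'} v'$ we get $\langle v, v'\rangle = \langle v, 1_\bi v'\rangle = \langle v, 1_\bi 1_{\bi'} v'\rangle$, which is zero unless $\bi = \bi'$; and the map $\bar\Delta(\lambda) \to \bar\nabla(\lambda) = \HOM_\k(\bar\Delta(\lambda),\k)$ being homogeneous of degree $0$ forces $\langle \bar\Delta(\lambda)_m, \bar\Delta(\lambda)_{m'}\rangle = 0$ unless $m + m' = 0$, since $\bar\nabla(\lambda)_m = \HOM_\k(\bar\Delta(\lambda)_{-m},\k)$. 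The main obstacle, such as it is, is the uniqueness/$\dim \le 1$ step: one must be a little careful that $\bar\Delta(\lambda)$ really has simple head equal to $L(\lambda)$ and that the absolute irreducibility is used correctly, but all of this is supplied by Theorem~\ref{mac2} and \cite[Corollary 3.9]{KL}, so no genuinely new input is needed.
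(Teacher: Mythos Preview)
Your approach is essentially identical to the paper's: you identify contravariant forms with elements of $\HOM_{H_\alpha}(\bar\Delta(\lambda),\bar\nabla(\lambda))$, then use the simple head of $\bar\Delta(\lambda)$ and simple socle of $\bar\nabla(\lambda)$ to see this space is one-dimensional, from which all three claims follow. The paper's proof is terser but makes exactly the same moves; your only slip is framing the correspondence as being with degree-zero homomorphisms, but since your head/socle argument applies equally well to the full graded $\HOM$ (and indeed shows it is concentrated in degree zero), the conclusion is unaffected.
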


\begin{proof}
There is an isomorphism 
from $\HOM_{H_\alpha}(\bar\Delta(\lambda),
\bar\nabla(\lambda))$
to the space of bilinear forms on
$\bar\Delta(\lambda)$
with the
property (\ref{cprop}), mapping
$f:\bar\Delta(\lambda) \rightarrow \bar\nabla(\lambda)$
to the form $\langle v, v'\rangle := f(v)(v')$.
Moreover
$\HOM_{H_\alpha}(\bar\Delta(\lambda), \bar\nabla(\lambda))$
is one-dimensional, indeed, it is spanned by a homogeneous
homomorphism that sends the head of $\bar\Delta(\lambda)$ 
onto the socle of $\bar\nabla(\lambda)$.
The existence and uniqueness of the contravariant form follow at
once. The last two parts of the lemma are immediate consequences too.
\end{proof}

The next lemma is useful when trying to compute the contravariant 
form on $\bar\Delta(\lambda)$ in practice.
Recall for $\lambda = (\lambda_1,\dots,\lambda_l) \in \KP(\alpha)$
that $$
\bar\Delta(\lambda) = 
q^{s_\lambda}
H_\alpha 1_\lambda 
\otimes_{H_\lambda} 
(L(\lambda_1) \boxtimes\cdots\boxtimes L(\lambda_l)),$$ 
where
$H_\lambda := H_{\lambda_1} \otimes\cdots\otimes H_{\lambda_l}$ with
identity $1_\lambda$. Letting $S_\lambda$ denote the parabolic
subgroup
$S_{\height(\lambda_1)}\times\cdots\times S_{\height(\lambda_l)}$
of $S_n$ and 
$D_\lambda$ be the set of minimal length $S_n / S_\lambda$-coset representatives,
any element of 
$\bar\Delta(\lambda)$ is a sum of vectors of the form 
$\tau_w 1_\lambda \otimes (v_1 \otimes\cdots\otimes v_l)$
for $w \in D_\lambda$ and $v_i \in L(\lambda_i)$.

\begin{Lemma}\label{formua}
For $\lambda = (\lambda_1,\dots,\lambda_l) \in \KP(\alpha)$,
let $x=x^{-1}$ be the longest element of $D_\lambda$ such that
$\tau_x 1_\lambda = 1_\lambda \tau_x$, and let $y=y^{-1}$ be the longest
element of $S_l$ such that $\lambda_{y(i)} = \lambda_i$ for each $i=1,\dots,l$.
The contravariant form $\langle\cdot,\cdot\rangle$ on
$\bar\Delta(\lambda)$ satisfies
$$
\langle \tau_w 1_\lambda \otimes (v_1 \otimes \cdots \otimes v_l),
\tau_{w'}1_\lambda \otimes (v_1' \otimes \cdots \otimes v_l')\rangle
= 
\delta_{w^{-1} w',x} \langle v_{y(1)}, v_{1}'\rangle_{_{\!1}} \cdots \langle v_{y(l)}, v_{l}'\rangle_{_{\!l}}
$$
for all $w,w' \in D_\lambda$ and $v_i, v_i' \in L(\lambda_i)$,
where $\langle\cdot,\cdot\rangle_{_{\!i}}$ is some choice of 
(non-degenerate) contravariant
form on each
$L(\lambda_i)$.
\end{Lemma}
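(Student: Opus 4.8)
The plan is to verify that the expression on the right-hand side of the displayed identity, extended bilinearly in the $v_i$'s and the $v_i'$'s, defines a non-zero bilinear form on $\bar\Delta(\lambda)$ satisfying the contravariance property \eqref{cprop}; by the uniqueness part of Lemma~\ref{sca} this form must then be a scalar multiple of $\langle\cdot,\cdot\rangle$, and the scalar can be absorbed into the (so far unspecified) normalisation of the forms $\langle\cdot,\cdot\rangle_{_{\!i}}$. Since $H_\alpha 1_\lambda$ is free as a right $H_\lambda$-module on $\{\tau_w 1_\lambda : w\in D_\lambda\}$, the vectors $\tau_w 1_\lambda\otimes(v_1\otimes\cdots\otimes v_l)$ with $w\in D_\lambda$ and each $v_i$ in a fixed basis of $L(\lambda_i)$ really do form a basis of $\bar\Delta(\lambda)$, so the right-hand side defines an unambiguous bilinear form $\langle\cdot,\cdot\rangle'$; and it is not identically zero, since with $w'=wx$ one uses the non-degeneracy of the $\langle\cdot,\cdot\rangle_{_{\!i}}$ to make the product non-zero. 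As a preliminary I would also record the combinatorial identification of $x$: it is the permutation of $\{1,\dots,n\}$ that, for each maximal run of equal parts of $\lambda$, reverses the corresponding consecutive blocks of strands (preserving the order inside each block and the order of the runs), so that $x$ sends the $i$-th block of any $\lambda$-shaped word to its $y(i)$-th block.

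The substance of the proof is then the verification of \eqref{cprop} for $\langle\cdot,\cdot\rangle'$, where it is enough to let $h$ run over the generators $1_\bi$, $x_k$, $\tau_k$, all of which are fixed by $T$. The case $h=1_\bi$ is word-space bookkeeping: both sides vanish unless $\bi$ is the word of the relevant basis vector, and when the product $\langle v_{y(1)},v_1'\rangle_{_{\!1}}\cdots\langle v_{y(l)},v_l'\rangle_{_{\!l}}$ is non-zero the contravariance of each $\langle\cdot,\cdot\rangle_{_{\!i}}$ forces the word of $v_{y(i)}$ to equal that of $v_i'$, whence (using the description of $x$) the two word conditions coincide. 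For $h=x_k$ one commutes $x_k$ past $\tau_w 1_\lambda$ using $(\tau_j x_l-x_{s_j(l)}\tau_j)1_\bi\in\{0,\pm 1_\bi\}$; the leading term produces $\tau_w 1_\lambda\otimes x_{w^{-1}(k)}v$ with $x_{w^{-1}(k)}\in H_\lambda$, and matching the two sides comes down to the contravariance of the relevant factor form together with the fact that $x$ carries the polynomial generator of the block containing a given strand to that of its image block (the two blocks involved carrying the same factor form). The remaining lower-order terms are supported on permutations of strictly smaller length, so by the constraint $\delta_{w^{-1}w',x}$ they cannot pair non-trivially at the same time as the leading term.

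I expect the decisive, and hardest, step to be the case $h=\tau_k$: passing $\tau_k$ through $\tau_w 1_\lambda$ via the braid relations and the quadratic relations for $\tau_k^2 1_\bi$ generates many correction terms built from polynomials in the $x_j$'s and from idempotents, and one must show that, apart from the contribution of the one "correct" permutation, every term is either supported on a strictly shorter permutation or lands in a word space on which both sides of the desired identity vanish --- with the signs $\eps_{i,j}$ and the explicit shape of $\tau_k^2 1_\bi$ being matched against the contravariance of the $\langle\cdot,\cdot\rangle_{_{\!i}}$ inside the block where the collision happens. This is an intricate but essentially mechanical bookkeeping argument, and it is the only place where the precise choice of $x$ and $y$ is genuinely used. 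Once \eqref{cprop} holds for the non-zero form $\langle\cdot,\cdot\rangle'$, the uniqueness in Lemma~\ref{sca} finishes the argument. (Alternatively one could proceed by induction on $l$, writing $\bar\Delta(\lambda)\cong q^{d}L(\lambda_1)\circ\bar\Delta((\lambda_2,\dots,\lambda_l))$ for a suitable shift $d$ and using the Mackey theorem to express the form on a product in terms of the form on $L(\lambda_1)$, the inductively known form on $\bar\Delta((\lambda_2,\dots,\lambda_l))$, and the order-reversing intertwiner; but the combinatorial heart is unchanged.)
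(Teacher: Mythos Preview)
Your approach is genuinely different from the paper's, and the difference is instructive. The paper does not verify contravariance on generators at all. Instead it identifies $\bar\nabla(\lambda)$ with the coinduced module $q^{-s_\lambda}\HOM_{H_\lambda}(1_\lambda H_\alpha, L')$ where $L'=L(\lambda_1)\boxtimes\cdots\boxtimes L(\lambda_l)$, so that by adjunction
\[
\HOM_{H_\alpha}(\bar\Delta(\lambda),\bar\nabla(\lambda))\cong \HOM_{H_\lambda}(\Res^\alpha_\lambda \bar\Delta(\lambda),\, q^{-s_\lambda}L').
\]
The Mackey filtration of $\Res^\alpha_\lambda \bar\Delta(\lambda)$ (already analysed in Lemma~\ref{sc}) has top section $q^{-s_\lambda}L'$, and the projection onto it is the explicit map
\[
\bar f\bigl(1_\lambda \tau_w 1_\lambda \otimes (v_1\otimes\cdots\otimes v_l)\bigr)=\delta_{w,x}\, v_{y(1)}\otimes\cdots\otimes v_{y(l)}.
\]
Transporting $\bar f$ back through the adjunction yields $f$, and unwinding the definition of the form as $\langle v,v'\rangle=f(v)(v')$ gives the stated formula in one line. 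No relation-checking is needed; contravariance is automatic because $f$ is an honest $H_\alpha$-map.

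Your route is in principle legitimate, but as written it has a gap: the $\tau_k$ case is only asserted to be ``intricate but essentially mechanical,'' and even the $x_k$ case is not fully justified. When $w^{-1}w'\neq x$, the leading terms on both sides vanish, but you must still match the lower-order corrections from commuting $x_k$ (or $\tau_k$) through $\tau_w$ on the left against those from commuting through $\tau_{w'}$ on the right; these live at different $w'',w'''$ and it is not a one-line argument that they agree. For $\tau_k$ the situation is worse: depending on whether $s_k w$ stays in $D_\lambda$ or falls into $wS_\lambda$, and on the word space, one picks up either a new coset representative, a $\tau_k^2$-type polynomial, or an $H_\lambda$-action on the tensor factors, and all of these cases must be reconciled with the analogous decomposition on the other side. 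This can probably be pushed through, but it is substantially more labour than the paper's argument, which gets the $H_\alpha$-equivariance for free from the adjunction and relegates the only real computation to identifying the top Mackey section.
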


\begin{proof}
Let $L' := L(\lambda_1) \boxtimes\cdots \boxtimes L(\lambda_l)$ for
short, and denote the product of the forms $\langle\cdot,\cdot\rangle_{_{\!i}}$
for $i=1,\dots,l$ by $\langle\cdot,\cdot\rangle'$, which is a non-degenerate
form on $L'$.
Recall from the proof of Lemma~\ref{sca} that the contravariant
form on $\bar\Delta(\lambda)$ is defined
from $\langle v, v' \rangle := f(v)(v')$ where
$f:\bar\Delta(\lambda) \rightarrow \bar\nabla(\lambda)$ is a non-zero
homomorphism.
We can identify
$\bar\nabla(\lambda) = \HOM_{\k}(\bar\Delta(\lambda), \k)$ 
with the coinduced module
$
q^{-s_\lambda} \HOM_{H_\lambda}(1_\lambda H_\alpha, L')
$ so that
$\theta:1_\lambda H_\alpha\rightarrow L'$
is identified
with the functional
$\bar\Delta(\lambda)\mapsto \k, h 1_\lambda \otimes v \mapsto
\langle \theta(1_\lambda T(h)), v\rangle'$.
Then by adjointness of restriction and coinduction we get a canonical isomorphism
\begin{equation}\label{theiso}
\HOM_{H_\alpha}(\bar \Delta(\lambda), \bar\nabla(\lambda))
\cong
\HOM_{H_\lambda}(\Res^\alpha_\lambda \bar\Delta(\lambda),
q^{-s_\lambda}L').
\end{equation}
Now we observe as in the proof of Lemma~\ref{sc} 
that the top non-zero section in the Mackey filtration of
$\Res^\alpha_\lambda \bar \Delta(\lambda)$ is isomorphic
to $
q^{-s_\lambda} L'$.
In this way, we obtain an explicit homomorphism
$\bar f:\Res^\alpha_\lambda \bar\Delta(\lambda) \rightarrow
q^{-s_\lambda} L'$
such that $$
\bar f (1_\lambda \tau_w 1_\lambda \otimes (v_1 \otimes
\cdots \otimes v_l)) = \delta_{w,x} v_{y(1)} \otimes\cdots\otimes
v_{y(l)}
$$
for $w \in D_\lambda$ and $v_i \in L(\lambda_i)$.
Then choose
$f:\bar\Delta(\lambda) \rightarrow \bar\nabla(\lambda)$
so that it corresponds to $\bar f$ under (\ref{theiso}).
This means that
$$
\langle h 1_\lambda \otimes v, h' 1_\lambda \otimes v'
\rangle = 
f(h 1_\lambda \otimes v)(h'1_\lambda \otimes v') = 
\langle \bar f(1_\lambda T(h')h
1_\lambda \otimes v), v' \rangle',
$$
using our earlier identification of $\bar{\nabla}(\lambda)$ with
$q^{-s_\lambda} 
\HOM_{H_\lambda}(1_\lambda H_\alpha, L')$ for the second equality.
The lemma follows from the last two displayed formulae.
\end{proof}

Using this we can prove that the contravariant form is symmetric.

\begin{Lemma}
For each $\lambda \in \KP(\alpha)$ the form $\langle\cdot,\cdot\rangle$ on 
$\bar\Delta(\lambda)$ is symmetric.
\end{Lemma}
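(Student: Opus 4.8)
The plan is to reduce, via the explicit formula of Lemma~\ref{formua}, to a statement about the cuspidal modules $L(\alpha)$, $\alpha\in R^+$, and then settle those. The starting observation is that the \emph{transposed} form $\langle v,v'\rangle^\top:=\langle v',v\rangle$ on $\bar\Delta(\lambda)$ again satisfies the contravariance property~(\ref{cprop}): since the anti-automorphism $T$ fixes all the generators of $H_\alpha$, the automorphism $T^2$ does too, hence $T^2=\id$, and then for $h\in H_\alpha$ one has $\langle hv,v'\rangle^\top=\langle v',hv\rangle=\langle T(h)v',v\rangle=\langle v,T(h)v'\rangle^\top$, the middle equality being~(\ref{cprop}) applied to $T(h)$. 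By the uniqueness assertion of Lemma~\ref{sca}, $\langle\cdot,\cdot\rangle^\top=c\,\langle\cdot,\cdot\rangle$ for some $c\in\k^\times$, and transposing twice gives $c^2=1$. If $\operatorname{char}\k=2$ this already forces $c=1$; otherwise $c=\pm1$ and it suffices to rule out the alternating case, i.e.\ to exhibit a single $v$ with $\langle v,v\rangle\ne0$.

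Next I would induct on $\operatorname{ht}(\alpha)$, the case $\operatorname{ht}(\alpha)=1$ being trivial since then $\bar\Delta(\lambda)=L(\alpha_i)=\k$ is one-dimensional. For the inductive step write $\lambda=(\lambda_1,\dots,\lambda_l)$. Rescaling one of the auxiliary forms $\langle\cdot,\cdot\rangle_i$ on $L(\lambda_i)$ rescales the right-hand side of the formula in Lemma~\ref{formua}, hence the contravariant form on $\bar\Delta(\lambda)$, which by Lemma~\ref{sca} is anyway only determined up to a scalar; so we may arrange that $\langle\cdot,\cdot\rangle_i=\langle\cdot,\cdot\rangle_j$ whenever $\lambda_i=\lambda_j$. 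If $l\ge2$, then every part $\lambda_i$ has height strictly less than $\operatorname{ht}(\alpha)$, so by the inductive hypothesis (applied to $\bar\Delta((\lambda_i))=L(\lambda_i)$, noting the form there is unique up to scalar by Lemma~\ref{sca}) each $\langle\cdot,\cdot\rangle_i$ is symmetric. In the formula of Lemma~\ref{formua} the factor $\delta_{w^{-1}w',x}$ is symmetric in $(w,w')$ because $x=x^{-1}$, and, using $y=y^{-1}$, the reindexing $j=y(i)$ (under which $\lambda_{y(i)}=\lambda_i$, so $\langle\cdot,\cdot\rangle_{y(i)}=\langle\cdot,\cdot\rangle_i$) together with the symmetry of the $\langle\cdot,\cdot\rangle_i$ shows that $\prod_i\langle v_{y(i)},v_i'\rangle_i$ is unchanged under interchanging the primed and unprimed arguments. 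Hence $\langle\cdot,\cdot\rangle$ is symmetric on $\bar\Delta(\lambda)$ in this case (with no restriction on $\operatorname{char}\k$).

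There remains the case $l=1$, i.e.\ $\lambda=(\alpha)$ with $\alpha\in R^+$ non-simple and $\bar\Delta(\lambda)=L(\alpha)$ the cuspidal module. Here I would use the distinguished word $\bi_\alpha\in\W_\alpha$ supplied by the theory of cuspidal modules (\cite{KR2}, and \cite{Mac} for arbitrary convex orderings), for which $\dim_\k 1_{\bi_\alpha}L(\alpha)=1$. Since $L(\alpha)$ is $\circledast$-self-dual, $\dim 1_{\bi_\alpha}L(\alpha)_d=\dim 1_{\bi_\alpha}L(\alpha)_{-d}$ for every $d$, and as the total dimension is $1$ the space $1_{\bi_\alpha}L(\alpha)$ must lie entirely in degree $0$. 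The form $\langle\cdot,\cdot\rangle$ on $L(\alpha)$ is non-degenerate (its radical is $\rad L(\alpha)=0$), and by the word-space orthogonality in Lemma~\ref{sca} it restricts to a non-degenerate, hence non-zero, form on the one-dimensional space $1_{\bi_\alpha}L(\alpha)_0$; any nonzero $v$ there then satisfies $\langle v,v\rangle\ne0$, which excludes $c=-1$.

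I expect the last step to be the main obstacle. The one-dimensionality of a distinguished word space of a cuspidal module is standard for the Lyndon orderings of \cite{KR2}, and for multiplicity-free positive roots—in particular for all positive roots in type $\mathrm A$, cf.\ Theorem~\ref{expres}—the module $L(\alpha)$ is itself one-dimensional, so the claim is immediate; but establishing it uniformly for an arbitrary convex ordering, especially for the non-multiplicity-free roots in the non-simply-laced types where $L(\alpha)$ is genuinely higher-dimensional, is where the real content lies and where I would need to invoke (or reprove) the relevant structural results of \cite{Mac}. An alternative route for those cases would be a direct analysis of the contravariant form on $L(\alpha)$ via the restriction results underlying Lemma~\ref{maclem}, which pin down the extremal word spaces of $L(\alpha)$ explicitly.
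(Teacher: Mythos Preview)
Your overall framework---induct on height, use the uniqueness in Lemma~\ref{sca} to get $\langle\cdot,\cdot\rangle^\top=c\,\langle\cdot,\cdot\rangle$ with $c^2=1$, and reduce via Lemma~\ref{formua} to the cuspidal case $\lambda=(\alpha)$---is exactly the paper's, and your treatment of the case $l\ge2$ is a correct explicit unpacking of what the paper compresses into ``Lemma~\ref{formua} reduces us to the case that $\lambda=(\alpha)$.'' Your $c=\pm1$ observation is also what lies behind the paper's terse final sentence ``Since $L(\alpha)$ is irreducible, this implies it is symmetric on the entire module.''

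The gap is exactly where you suspect, and your proposed remedy does not work. The claim that $L(\alpha)$ has a one-dimensional word space is false in general: in $\mathrm G_2$ with the ordering of Example~\ref{G2ex}, the cuspidal module $L(2\alpha_1+\alpha_2)$ has character $(q+q^{-1})\cdot 112$, so its \emph{only} word space is two-dimensional and sits in degrees $\pm1$. No homogeneous $v$ has $\langle v,v\rangle\ne0$, and for $v=v_1+v_{-1}$ one gets $\langle v,v\rangle=\langle v_1,v_{-1}\rangle+\langle v_{-1},v_1\rangle$, which vanishes precisely in the alternating case---so the argument is circular. (This is not a question of the ordering or of finding a better word: by Lemma~\ref{rtheory} the distinguished word space has graded dimension $\kappa_\alpha$, and $\kappa_\alpha\ne1$ whenever $p_{\beta,\gamma}>0$ occurs along the recursion, cf.~(\ref{castab}).)

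The paper's fix for the cuspidal case is close in spirit to the ``alternative route'' you mention at the end but uses a specific restriction: choose $i\in I$ and $m\ge1$ maximal with $\Res^\alpha_{\alpha-m\alpha_i,\,m\alpha_i}L(\alpha)\ne0$. By \cite[Lemma~3.7]{KL} this restriction is then \emph{irreducible}, isomorphic to $L(\mu)\boxtimes L(\alpha_i^m)$ for some $\mu\in\KP(\alpha-m\alpha_i)$. The form $\langle\cdot,\cdot\rangle$ on $L(\alpha)$, restricted to this subspace, is a non-degenerate $(H_{\alpha-m\alpha_i}\otimes H_{m\alpha_i})$-contravariant form on an absolutely irreducible outer tensor product, hence is a product of contravariant forms on the two factors. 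Both factors have strictly smaller height, so by the induction hypothesis both forms are symmetric, and therefore so is their product. This gives symmetry on a non-zero subspace (indeed a sum of word spaces), forcing $c=1$.
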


\begin{proof}
We proceed by induction on $\height(\alpha)$.
The base case 
$\alpha = \alpha_i$ 
is trivial as $L(\alpha_i)$ is
one-dimensional.
For the induction step, 
Lemma~\ref{formua} reduces us to the case
that 
$\lambda = (\alpha)$ for some non-simple $\alpha \in R^+$.
Pick $i \in I$ and 
$m \geq 1$ 
such that 
$\Res^{\alpha}_{\alpha - m \alpha_i, m \alpha_i} L(\alpha) \neq 0$,
and either $\alpha-(m+1)\alpha_i \notin Q^+$ or 
$\Res^{\alpha}_{\alpha -(m+1)\alpha_i, (m+1) \alpha_i} L(\alpha) = 0$.
By general theory,
$\Res^{\alpha}_{\alpha - m \alpha_i, m \alpha_i} L(\alpha) \cong
L(\mu) \boxtimes L(\alpha_i^m)$ for some $\mu \in
\KP(\alpha-m\alpha_i)$; see e.g. \cite[Lemma 3.7]{KL}.
By Lemma~\ref{sca}, 
the restriction of the form $\langle\cdot,\cdot\rangle$ to this
copy of
$L(\mu) \boxtimes L(\alpha_i^m)$ is a non-degenerate contravariant
form in the obvious sense, 
hence it is the product of contravariant forms
on $L(\mu)$ and $L(\alpha_i^m)$. Both of these are already known to be
symmetric by induction.
This shows the form is symmetric on restriction to some
non-zero word space of $L(\alpha)$. Since $L(\alpha)$ is
irreducible, this implies it is symmetric on the entire module.
\end{proof}

Now we recall a conjecture from \cite[Conjecture 7.3]{KR2} asserting
in finite type that
the 
formal character of an irreducible $H_\alpha$-module $L(\lambda)$ does not depend
on the 
characteristic $p$ of the ground field $\k$.
Using geometric techniques, 
Williamson \cite{W} has recently shown
that this is 
false, and the question of finding a satisfactory bound
on $p$ remains open.
The smallest counterexample found by Williamson (motivated
by a counterexample constructed by Kashiwara and Saito in \cite{KS}
to a much older but geometrically related conjecture
of Kazhdan and Lusztig) is as follows. 

\begin{Example}\label{willcex}\rm
Assume we are in type A$_5$.
Index the simple roots in the usual way
$1 \text{---} 2 \text{---} 3 \text{---} 4 \text{---} 5$
and choose the signs $\eps_{i,j}$ in the definition of the KLR algebra
so that $\eps_{1,2} = \eps_{2,3} =
\eps_{3,4} = \eps_{4,5} = +$.
The positive roots 
are $\alpha_{i,j} :=
\alpha_i+\alpha_{i+1}+\cdots+\alpha_j$
for $1 \leq i \leq j \leq 5$. 
Let $\prec$ be the convex ordering 
defined so that $\alpha_{i,j} \prec \alpha_{k,l}$ if either $i < k$, or
$i=k$ and $j < l$; cf. Example~\ref{eg1} below.
For this choice the module $L(\alpha_{i,j})$ is trivial to construct explicitly:
it is the one-dimensional module spanned
by a degree zero
vector belonging to the $i (i+1) \cdots j$-word space.
Let 
\begin{align*}
\lambda&:=(\al_{4,5},\al_{4,5},\al_3,\al_3,\al_{2,4},\al_{2,4},
\al_{1,2},\al_{1,2}),
\\
\bi &:=
4534234523123412.
\end{align*}
We claim that $1_\bi L(\lambda)_0$ has dimension $2$ if
$\operatorname{char} \k = 2$ and dimension 3 in all other characteristics.
To see this we compute the rank of the contravariant form
on $1_\bi \bar\Delta(\lambda)_0$.
Let $v$ span
$L(\al_{4,5})^{\boxtimes 2} \boxtimes L(\al_3)^{\boxtimes 2}
\boxtimes L(\al_{2,4})^{\boxtimes 2} \boxtimes
L(\al_{1,2})^{\boxtimes 2}$.
Adopting all the notation from Lemma~\ref{formua}, 
the vectors $\{\tau_w 1_\lambda \otimes v\:|\:w \in D_\lambda\}$ give
a basis for $\bar\Delta(\lambda)$ with $1_\lambda \otimes v$ of degree
$4$ and $\tau_x 1_\lambda \otimes v$ of degree $-4$.
We normalize the contravariant form on $\bar\Delta(\lambda)$ so that
$\langle 1_\lambda \otimes v, \tau_x 1_\lambda \otimes v \rangle = -1$.
Let 
\begin{align*}
a &:= \tau_3 \tau_7 \tau_6 \tau_5 \tau_4 \tau_9 \tau_8 \tau_7 \tau_6
\tau_{12} \tau_{11} \tau_{13} \tau_{12},\\
b &:= \tau_3 \tau_7 \tau_6 \tau_5 \tau_4 \tau_{12} \tau_{11} \tau_{10}
\tau_9
\tau_8 \tau_7 \tau_6 \tau_{13} \tau_{12},\\
c_1 &:= \tau_2 \tau_1 \tau_3 \tau_2,\\
c_2 &:= \tau_5,\\
c_3 &:= \tau_9 \tau_8 \tau_7 \tau_{10} \tau_9 \tau_8 \tau_{11}
\tau_{10} \tau_9,\\
c_4 &:= \tau_{14} \tau_{13} \tau_{15} \tau_{14}.
\end{align*}
We have that 
$c_1c_2c_3c_4 1_\lambda \otimes v = \tau_x 1_\lambda \otimes v$ (as
should be clear on drawing the appropriate diagrams), and
$1_\bi \bar\Delta(\lambda)_0$ is 5-dimensional with basis
$$
a c_1c_2c_3c_4 1_\lambda \otimes v, b c_2 c_3 c_4 1_\lambda \otimes v,
b c_1 c_3 c_4 1_\lambda \otimes v,
b c_1 c_2 c_4 1_\lambda \otimes v,
b c_1 c_2 c_3 1_\lambda \otimes v.
$$
Using Lemma~\ref{formua} and making some explicit but lengthy straightening calculations,
one can then check that the Gram matrix of the contravariant form on
$1_\bi \bar\Delta(\lambda)_0$ with respect to this basis is 
$$
\left(
\begin{array}{rrrrr}
0&1&1&1&1\\
1&0&0&0&1\\
1&0&0&0&1\\
1&0&0&0&1\\
1&1&1&1&0
\end{array}
\right).
$$
It remains to compute the rank of this matrix.
\end{Example}

\section{Standard modules}\label{secsm}

We continue to work with a fixed choice of convex ordering $\prec$ on $R^+$.
In this section we will give an elementary definition first of 
{\em root modules} $\Delta(\alpha)$ categorifying the root elements
$r_\alpha$, then of {\em standard modules}
$\Delta(\lambda)$ 
categorifying 
the PBW basis elements $r_\lambda$.
We show that standard modules satisfy homological
properties analogous to the standard modules of a quasi-hereditary
algebra, hence in simply-laced types they
are isomorphic to the modules $\widetilde{E}_b$ constructed using Saito
reflection functors in
\cite[$\S$4]{Kato}.

\subsection{Root modules}
We fix a positive root $\alpha$ throughout the subsection.
All the good homological properties of KLR algebras proved in this
article stem from the following key observation made in \cite[$\S$4]{Mac}.

\begin{Theorem}\label{source}
Recall that $L(\alpha)$ is the cuspidal module associated to the
positive root $\alpha$.
For $d \geq 2$ we have that
$$
\EXT^1_{H_\alpha}(L(\alpha), L(\alpha)) \cong
  q_\alpha^{-2} \k,\qquad
\EXT^d_{H_\alpha}(L(\alpha), L(\alpha)) = 0.
$$
\end{Theorem}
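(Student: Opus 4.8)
The plan is to reduce the statement to a computation of self-extensions of the cuspidal module $L(\alpha)$ by exploiting the cuspidality of $L(\alpha)$ together with the geometry/combinatorics of the convex ordering. First I would recall that, because $L(\alpha)$ is cuspidal, its restriction to any proper parabolic $H_{\beta}\otimes H_{\gamma}$ with $\beta+\gamma=\alpha$ and $\beta,\gamma\neq 0$ vanishes unless $\beta$ is a sum of roots $\preceq\alpha$ and $\gamma$ a sum of roots $\succeq\alpha$ (Lemma~\ref{maclem}); combined with Lemma~\ref{l1}, this severely constrains which word spaces $1_\bi L(\alpha)$ are nonzero. The key idea is to build a projective resolution of $L(\alpha)$ in which the terms are controlled by these word spaces: one takes the projective cover $P(\alpha)\twoheadrightarrow L(\alpha)$, and then shows by induction on height/degree that $\rad P(\alpha)$ (and each successive syzygy) has composition factors $q^n L(\mu)$ only for $\mu\succ\alpha$ or $\mu=\alpha$ with strictly negative degree shift. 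Using Lemma~\ref{mittagleffler} and the fact that $H_\alpha$ is locally finite dimensional and bounded below, $\EXT^d_{H_\alpha}(L(\alpha),L(\alpha))$ is then computed by applying $\hom_{H_\alpha}(-,L(\alpha))$ to this resolution.

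Concretely, I would proceed as follows. Step one: analyze $\hom_{H_\alpha}(P,L(\alpha))$ for projectives $P$ appearing in the resolution, reducing everything to multiplicities $[\cdot : L(\alpha)]$ in various syzygies. Step two: identify the first syzygy $\Omega L(\alpha) = \rad P(\alpha)$ and show that $[\rad P(\alpha):q^n L(\alpha)]$ is nonzero only for $n<0$, and that in degree $n=-2$ (in the $q_\alpha$-normalization, i.e. $q_\alpha^{-2}$) there is exactly one such factor, contributing the $q_\alpha^{-2}\k$ in $\EXT^1$. This step uses the explicit description of $\bar\Delta(\alpha)=L(\alpha)$ when $\alpha$ is simple, and for non-simple $\alpha$ it uses the minimal-pair short exact sequence $0\to q^{-\beta\cdot\gamma}\Delta(\beta)\circ\Delta(\gamma)\to\Delta(\gamma)\circ\Delta(\beta)\to[p_{\beta,\gamma}+1]\Delta(\alpha)\to 0$ referred to in the introduction (Theorem~\ref{inj3}) — or, staying purely within \cite{Mac}, the structure of $\rad\bar\Delta(\alpha)$ together with self-duality. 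Step three: show all higher syzygies $\Omega^d L(\alpha)$ for $d\geq 2$ have \emph{no} composition factor isomorphic to any degree shift of $L(\alpha)$; this is where the convexity of $\prec$ does the real work, forcing the "new" factors at each stage to lie strictly above $\alpha$ in the ordering or to move only in the appropriate degree direction, so that $L(\alpha)$ cannot recur. Then $\ext^d_{H_\alpha}(L(\alpha),L(\alpha))=0$ for $d\geq 2$, and Lemma~\ref{mittagleffler} upgrades this to $\EXT^d$ once one checks the grading boundedness needed to pass between $\ext$ and $\EXT$.

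The main obstacle, I expect, is Step three: controlling the composition factors of \emph{all} higher syzygies and ruling out the reappearance of $L(\alpha)$. The naive worry is that even if $L(\alpha)$ does not appear in $\Omega^2 L(\alpha)$ directly, extensions among the $L(\mu)$ with $\mu\succ\alpha$ could, after several homological steps, produce $L(\alpha)$ again. The way around this is a bookkeeping argument combining two monovariants: the partial order $\preceq$ on Kostant partitions (factors move strictly up, by Theorem~\ref{mac2} applied iteratively to the proper standard modules filtering each projective) and the internal grading degree (each syzygy step shifts degrees strictly downward relative to where $L(\alpha)$ sits, by the degree-zero normalization of $L(\alpha)$ and positivity of the grading on $\tau$- and $x$-generators in the relevant word spaces). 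One shows that after the first step the combination of "$\mu\succ\alpha$ or (same $\mu$ but strictly lower degree)" is preserved and strictly decreasing, so $q^0 L(\alpha)$ — and indeed any shift $q^n L(\alpha)$ realizing a nonzero $\hom$ — can never occur in $\Omega^d L(\alpha)$ for $d\geq 2$. I would carry out this monovariant argument carefully, as it is the crux; the remaining steps are then routine applications of the long exact sequence and the Mittag--Leffler lemma already proved.
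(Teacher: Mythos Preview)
Your proposal has a genuine circularity problem. In Step two you invoke Theorem~\ref{inj3} (the short exact sequence $0\to q^{-\beta\cdot\gamma}\Delta(\beta)\circ\Delta(\gamma)\to\Delta(\gamma)\circ\Delta(\beta)\to[p_{\beta,\gamma}+1]\Delta(\alpha)\to 0$), but the root modules $\Delta(\alpha)$ themselves are constructed in Lemma~\ref{fe} and Theorem~\ref{rm} \emph{using} Theorem~\ref{source} as the starting input; everything in $\S\S$\ref{ssdp}--\ref{sses} depends on it. Your fallback ``staying purely within \cite{Mac}, the structure of $\rad\bar\Delta(\alpha)$'' does not help either, since for a positive root $\alpha$ one has $\bar\Delta(\alpha)=L(\alpha)$ and hence $\rad\bar\Delta(\alpha)=0$. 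More broadly, knowing anything useful about $\rad P(\alpha)$ (needed for Step one and Step two) amounts to BGG reciprocity (Corollary~\ref{bgg}), which again rests on Theorem~\ref{source}.

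Even setting circularity aside, Step three is where the argument would actually fail. The claimed monovariant---that composition factors of successive syzygies either move strictly up in $\preceq$ or strictly down in degree---is asserted but not proved, and there is no a priori reason it should hold: for $\mu\succ(\alpha)$ the projective $P(\mu)$ can have composition factors $L(\nu)$ with $\nu$ arbitrary relative to $(\alpha)$, and the grading on $H_\alpha$ is \emph{not} non-negative (the generators $\tau_k 1_\bi$ have degree $-\alpha_{i_k}\cdot\alpha_{i_{k+1}}$, which is negative when $i_k=i_{k+1}$). So your ``positivity of the grading'' claim is false, and the degree bookkeeping collapses.

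The paper's route is entirely different and avoids these issues: it does not analyze syzygies of $L(\alpha)$ at all. Instead it quotes that the statement is a formal consequence of $H_\alpha$ having finite global dimension (via the argument in the second half of the proof of \cite[Proposition~4.5]{Mac}), and then imports finite global dimension from elsewhere---Kato \cite{Kato} for simply-laced characteristic zero, \cite[Theorem~4.6]{Mac} for non-simply-laced types, and the appendix (Corollary~\ref{fgd}) for simply-laced positive characteristic. In that last case the appendix first proves Theorem~\ref{source} directly for one specific convex ordering by an explicit construction (Lemma~\ref{a3}, Theorem~\ref{con}), bootstraps to finite global dimension, and then observes that global dimension is independent of the ordering.
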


\begin{proof}
As explained in the second half of the proof of \cite[Proposition 4.5]{Mac},
this is a consequence of the finiteness of the
global dimension of $H_\alpha$. The latter property is established in
simply-laced types for $\k$ of characteristic zero in \cite[Corollary 2.9]{Kato}
and in non-simply-laced types for $\k$ of arbitrary characteristic in
\cite[Theorem 4.6]{Mac}. This leaves us with simply-laced types in positive
characteristic. 
These can be treated by the same argument used in the
non-simply-laced case in \cite{Mac}. However some
additional computations are needed which we postpone
to the appendix; see Corollary~\ref{fgd}.
\end{proof}

Our first application is to the construction of root modules.

\begin{Lemma}\label{fe}
For $n \geq 0$, 
there exist unique (up to isomorphism)
indecomposable $H_\alpha$-modules $\Delta_n(\alpha)$ 
with $\Delta_0(\alpha) = 0$ such that
there are short exact sequences
\begin{align}\label{ses1}
0 \longrightarrow
q_\alpha^{2 (n-1)} L(\alpha)
\stackrel{i_n}{\longrightarrow}
&\Delta_{n}(\alpha) \stackrel{p_{n}}{\longrightarrow} \Delta_{n-1}(\alpha)
\longrightarrow 0,\\
0 \longrightarrow
q_\alpha^{2} \Delta_{n-1}(\alpha)
\stackrel{j_n}{\longrightarrow}
&\Delta_{n}(\alpha) \stackrel{q_n}{\longrightarrow} L(\alpha)
\longrightarrow 0.\label{ses2}
\end{align}
Moreover the following hold for all $n \geq 1$:
\begin{itemize}
\item[(1)] $\displaystyle[\Delta_n(\alpha)] = \frac{1-q_\alpha^{2n}}{1-q_\alpha^2}
  [L(\alpha)]$;
\item[(2)] $\Delta_n(\alpha)$ is a cyclic module with irreducible head isomorphic to
  $L(\alpha)$ and socle isomorphic to $q_\alpha^{2(n-1)}
  L(\alpha)$;
\item[(3)]
the map $i_n^*: \EXT^1_{H_\alpha}(\Delta_n(\alpha), L(\alpha)) \rightarrow
\EXT^1_{H_\alpha}(q_\alpha^{2(n-1)} L(\alpha), L(\alpha))$ induced by
$i_{n}$ is an isomorphism,
hence 
$\EXT^1_{H_\alpha}(\Delta_n(\alpha), L(\alpha)) \cong
  q_\alpha^{-2n}\k$;
\item[(4)]
$\EXT^d_{H_\alpha}(\Delta_n(\alpha), L(\alpha)) = 0$ for all $d \geq 2$.
\end{itemize}
\end{Lemma}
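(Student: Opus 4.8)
The plan is to construct the modules $\Delta_n(\alpha)$ by induction on $n$, with $\Delta_0(\alpha):=0$ and $\Delta_1(\alpha):=L(\alpha)$; for these the assertions are trivial, using Theorem~\ref{source} for (3) and (4). For $n\ge 2$ I would assume $\Delta_{n-1}(\alpha)$ has been built with all the stated properties. By (3) at level $n-1$ the group $\EXT^1_{H_\alpha}\bigl(\Delta_{n-1}(\alpha),\,q_\alpha^{2(n-1)}L(\alpha)\bigr)$ is one-dimensional over $\k$, concentrated in degree $0$, and I would let (\ref{ses1}) be a non-split extension representing a generator; this defines $\Delta_n(\alpha)$ together with $i_n$ and $p_n$. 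Then (1) is immediate from (\ref{ses1}) by induction; in particular $\Delta_n(\alpha)$ is finite-dimensional with composition factors $q_\alpha^{2k}L(\alpha)$, $0\le k\le n-1$, each occurring once.

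Properties (3), (4), and the head part of (2) would all follow by applying $\HOM_{H_\alpha}(-,L(\alpha))$ to (\ref{ses1}) and feeding in Theorem~\ref{source}. The one observation that makes this work is that $p_n^*\colon\EXT^1_{H_\alpha}(\Delta_{n-1}(\alpha),L(\alpha))\to\EXT^1_{H_\alpha}(\Delta_n(\alpha),L(\alpha))$ vanishes: its source is spanned by the class of (\ref{ses1}) itself, and the pullback of (\ref{ses1}) along its own surjection $p_n$ is split by $\id_{\Delta_n(\alpha)}$. Granting this, the long exact sequence shows $\EXT^d_{H_\alpha}(\Delta_n(\alpha),L(\alpha))=0$ for $d\ge 2$ (using (4) at level $n-1$) and that $i_n^*$ is an isomorphism onto $\EXT^1_{H_\alpha}\bigl(q_\alpha^{2(n-1)}L(\alpha),L(\alpha)\bigr)\cong q_\alpha^{-2n}\k$, which gives (3) and (4). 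Reading the same sequence in low degrees, the connecting map $\HOM_{H_\alpha}\bigl(q_\alpha^{2(n-1)}L(\alpha),L(\alpha)\bigr)\to\EXT^1_{H_\alpha}(\Delta_{n-1}(\alpha),L(\alpha))$ is a map of one-dimensional spaces in the same degree which is onto because $p_n^*$ kills $\EXT^1$, hence an isomorphism; so $\HOM_{H_\alpha}(\Delta_n(\alpha),L(\alpha))\cong\HOM_{H_\alpha}(\Delta_{n-1}(\alpha),L(\alpha))=\k$, concentrated in degree $0$. Thus $\Delta_n(\alpha)$ has irreducible head $L(\alpha)$, and is therefore cyclic and indecomposable.

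The remaining work, and the hardest part, is to produce (\ref{ses2}) and compute the socle; I would assume $n\ge 3$, the case $n=2$ being trivial since then (\ref{ses2}) coincides with (\ref{ses1}). Let $q_n\colon\Delta_n(\alpha)\twoheadrightarrow\Delta_n(\alpha)/\rad\Delta_n(\alpha)=L(\alpha)$. Using (\ref{ses1}) and the inductive identification $\rad\Delta_{n-1}(\alpha)\cong q_\alpha^2\Delta_{n-2}(\alpha)$, one checks $\rad\Delta_n(\alpha)=p_n^{-1}\bigl(\rad\Delta_{n-1}(\alpha)\bigr)$ and that it fits in a short exact sequence $0\to q_\alpha^{2(n-1)}L(\alpha)\to\rad\Delta_n(\alpha)\to q_\alpha^2\Delta_{n-2}(\alpha)\to 0$. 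By (3) at level $n-2$, $\EXT^1_{H_\alpha}\bigl(q_\alpha^2\Delta_{n-2}(\alpha),\,q_\alpha^{2(n-1)}L(\alpha)\bigr)$ is one-dimensional in degree $0$, so it suffices to show this extension is non-split; then $\rad\Delta_n(\alpha)\cong q_\alpha^2\Delta_{n-1}(\alpha)$, giving (\ref{ses2}), and $\soc\Delta_n(\alpha)=\soc\bigl(\rad\Delta_n(\alpha)\bigr)\cong q_\alpha^2\soc\Delta_{n-1}(\alpha)\cong q_\alpha^{2(n-1)}L(\alpha)$ by induction (using $\soc\Delta_n(\alpha)\subseteq\rad\Delta_n(\alpha)$, as $\Delta_n(\alpha)$ is indecomposable and non-simple). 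If instead the extension split, $\rad\Delta_n(\alpha)=q_\alpha^{2(n-1)}L(\alpha)\oplus M$ with $M\cong q_\alpha^2\Delta_{n-2}(\alpha)$; then $\Delta_n(\alpha)/M$ has irreducible head $L(\alpha)$ and exactly two composition factors, in degree-shifts $0$ and $2(n-1)d_\alpha$, so it is a non-split self-extension of $L(\alpha)$ giving a nonzero class in $\EXT^1_{H_\alpha}\bigl(L(\alpha),q_\alpha^{2(n-1)}L(\alpha)\bigr)$ in degree $0$ --- but by Theorem~\ref{source} that group is concentrated in degree-shift $2(n-2)d_\alpha\ne 0$, a contradiction. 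Finally, uniqueness: any indecomposable family with $\Delta_0(\alpha)=0$ satisfying (\ref{ses1}) must, by (3) and indecomposability, realise $\Delta_n(\alpha)$ as a nonzero (hence generating) class in the one-dimensional $\EXT^1_{H_\alpha}(\Delta_{n-1}(\alpha),q_\alpha^{2(n-1)}L(\alpha))$, so agrees up to isomorphism with the module built above. I expect the main obstacle to be exactly this last paragraph --- ruling out the split case, where one must exploit that $\EXT^1_{H_\alpha}(L(\alpha),L(\alpha))$ lies in a single degree, not merely that it is one-dimensional.
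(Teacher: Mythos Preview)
Your proof is correct and follows the same inductive scaffold as the paper: set $\Delta_0(\alpha)=0$, $\Delta_1(\alpha)=L(\alpha)$, build $\Delta_n(\alpha)$ as the unique non-split extension coming from the one-dimensional $\EXT^1$, and read off (1)--(4) from the long exact sequence for $\HOM_{H_\alpha}(-,L(\alpha))$ applied to (\ref{ses1}). Your observation that $p_n^*$ vanishes on $\EXT^1$ because the pullback of (\ref{ses1}) along its own surjection splits is exactly dual to the paper's observation that the map $f=i_n^*$ on $\HOM$ vanishes because (\ref{ses1}) is non-split; these are equivalent via exactness.

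The one place you diverge is in establishing the socle and (\ref{ses2}). The paper first proves the socle is irreducible by looking at the length-two \emph{submodule} $V=p_n^{-1}(\operatorname{im}i_{n-1})$ and using that $i_{n-1}^*$ is an isomorphism (so $V$ is non-split); it then builds $j_n$ separately by computing $\hom_{H_\alpha}(q_\alpha^2\Delta_{n-1}(\alpha),\Delta_n(\alpha))\cong\k$. You instead identify $\rad\Delta_n(\alpha)$ directly as an extension of $q_\alpha^2\Delta_{n-2}(\alpha)$ by $q_\alpha^{2(n-1)}L(\alpha)$ and rule out splitness by producing a hypothetical length-two \emph{quotient} $\Delta_n(\alpha)/M$ that would force a nonzero class in $\EXT^1_{H_\alpha}(L(\alpha),L(\alpha))$ in a degree where Theorem~\ref{source} says there is none. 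Your route gives (\ref{ses2}) and the socle in one stroke (since $\rad\Delta_n(\alpha)\cong q_\alpha^2\Delta_{n-1}(\alpha)$ immediately yields both), at the cost of needing the \emph{degree} concentration of $\EXT^1_{H_\alpha}(L(\alpha),L(\alpha))$ rather than just its one-dimensionality; the paper's route uses only the isomorphism $i_{n-1}^*$ already established in (3). Both are clean.
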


\begin{proof}
Let $\Delta_0(\alpha) := 0$ and $\Delta_1(\alpha) := L(\alpha)$.
The properties (1)--(4) hold when $n=1$ by Theorem~\ref{source}.
Now suppose that $n \geq 2$ and that we have constructed
$\Delta_{n-1}(\alpha)$ satisfying the properties (1)--(4).
In particular by (3) we have that
$\EXT^1_{H_\alpha}(\Delta_{n-1}(\alpha), L(\alpha)) \cong
q_\alpha^{-2(n-1)}\k$,
so there exists a unique (up to isomorphism) module
$\Delta_{n}(\alpha)$ fitting into a non-split short exact sequence of
the form (\ref{ses1}).
We must prove that $\Delta_n(\alpha)$ also satisfies the properties
(1)--(4) (hence by (2) it is indecomposable) and that there exists a short
exact sequence of the form (\ref{ses2}). Property (1) is immediate
from (\ref{ses1}) and the induction hypothesis.

Applying $\HOM_{H_\alpha}(-, L(\alpha))$ to (\ref{ses1}) using
Theorem~\ref{source} and the induction hypothesis, we deduce that (4) holds. Moreover there is an exact sequence
\begin{multline*}
0 \longrightarrow \k \longrightarrow
\HOM_{H_\alpha}(\Delta_{n}(\alpha), L(\alpha))
\stackrel{f}{\longrightarrow}
q_\alpha^{-2(n-1)} \k 
\longrightarrow
q_\alpha^{-2(n-1)} \k\\
\longrightarrow
\EXT^1_{H_\alpha}(\Delta_{n}(\alpha), L(\alpha))
\stackrel{i_n^*}{\longrightarrow} \EXT^1_{H_\alpha}(q_\alpha^{2(n-1)} L(\alpha), L(\alpha))
\longrightarrow 0.
\end{multline*}
The map $f$ is zero, for otherwise there exists a non-zero
homogeneous homomorphism
$\Delta_n(\alpha) \rightarrow q_\alpha^{2(n-1)} L(\alpha)$, which is a contradiction as (\ref{ses1}) is non-split.
Hence $\HOM_{H_\alpha}(\Delta_n(\alpha), L(\alpha)) \cong \k$.
Since all composition factors of $\Delta_{n}(\alpha)$ are of the form
$L(\alpha)$ (up to shift) by (1), this shows $\Delta_{n}(\alpha)$
has
irreducible head $L(\alpha)$.
Then we see that $\iota_n^*$ is an isomorphism proving the first half of (3); the
second half of (3) is immediate from Theorem~\ref{source}.

To complete the proof of (2), it remains to 
compute the socle of
$\Delta_{n}(\alpha)$.
The image of the non-trivial extension represented by
(\ref{ses1})
under the map $i_{n-1}^*$ is the extension represented by
\begin{equation*}
0 \longrightarrow q_\alpha^{2(n-1)} L(\alpha)
\stackrel{i_n}{\longrightarrow} V \stackrel{p_n}{\longrightarrow} q_\alpha^{2(n-2)}
L(\alpha)\longrightarrow 0
\end{equation*}
where $V := p_n^{-1}(\im i_{n-1})$.
Since $i^*_{n-1}$ is an isomorphism by the induction hypothesis, this
extension is non-trivial, hence $q_\alpha^{2(n-2)} L(\alpha)$ does not
appear in the socle of $V$.
If the socle of $\Delta_{n}(\alpha)$ is not irreducible then, in view of
(\ref{ses1}) and the induction hypothesis, its socle must be isomorphic to
$q_\alpha^{2(n-1)} L(\alpha) \oplus
q_\alpha^{2(n-2)} L(\alpha)$,
contradicting the previous sentence.
Hence $\soc \Delta_{n}(\alpha) \cong q_\alpha^{2(n-1)} L(\alpha)$.

Finally we prove the existence of the short exact sequence (\ref{ses2}).
This is just the same as (\ref{ses1}) if $n=2$, so assume $n \geq 3$.
Applying $\hom_{H_\alpha}(q_\alpha^2 \Delta_{n-1}(\alpha), -)$ to
(\ref{ses1}) and using induction, we deduce that
$\hom_{H_\alpha}(q_\alpha^2 \Delta_{n-1}(\alpha), \Delta_{n}(\alpha))
\cong \k.$ 
Let $j_n: q_\alpha^2 \Delta_{n-1}(\alpha) \rightarrow
\Delta_{n}(\alpha)$ be any non-zero homogeneous homomorphism.
It must be injective since it is injective on $\soc q_\alpha^2
\Delta_n(\alpha)$ thanks to (1)--(2). 
Moreover $\operatorname{coker} j_n \cong L(\alpha)$ by (1).
\end{proof}

This shows that there is an inverse system
$\Delta_0(\alpha)\stackrel{p_1}{\twoheadleftarrow} \Delta_1(\alpha) \stackrel{p_2}{\twoheadleftarrow} \Delta_2(\alpha) \stackrel{p_3}{\twoheadleftarrow} \cdots$.
Define the {\em root module}
\begin{equation*}
\Delta(\alpha) := 
\varprojlim \Delta_n(\alpha).
\end{equation*}
Recalling (\ref{drv}), part (1) of the following theorem shows that
$\Delta(\alpha)$ categorifies the root vector $r_\alpha$.

\begin{Theorem}\label{rm}
There is
a short exact sequence
\begin{equation}\label{ses3}
0 \longrightarrow q_\alpha^2 \Delta(\alpha)
\stackrel{j}{\longrightarrow}
\Delta(\alpha) \longrightarrow L(\alpha) \longrightarrow 0.
\end{equation}
Moreover:
\begin{itemize}
\item[(1)]
$\Delta(\alpha)$ is a cyclic module with $[\Delta(\alpha)] = [L(\alpha)] /
(1-q_\alpha^2)$;
\item[(2)] $\Delta(\alpha)$ has irreducible head isomorphic to
  $L(\alpha)$;
\item[(3)] 
we have that 
$\EXT^d_{H_\alpha}(\Delta(\alpha), V) =0$ 
for $d \geq 1$ and
any finitely generated $H_\alpha$-module $V$ with
  all irreducible subquotients isomorphic to $L(\alpha)$ (up to
  degree shift);
\item[(4)]
$\END_{H_\alpha}(\Delta(\alpha)) \cong \k[x]$ for $x$ in
degree $2d_\alpha$.
\end{itemize}
\end{Theorem}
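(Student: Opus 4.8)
The strategy is to transport the structure across the inverse limit $\Delta(\alpha) = \varprojlim\Delta_n(\alpha)$ using Lemma~\ref{fe} and Theorem~\ref{source}. The first observation is that, because the character of $q_\alpha^{2(n-1)}L(\alpha)$ is supported in ever-higher degrees, in each fixed degree the transition maps $p_n$ are eventually isomorphisms; hence $\Delta(\alpha)$ is locally finite-dimensional and bounded below, every $\ker(\Delta(\alpha)\to\Delta_n(\alpha))$ vanishes in any fixed degree for $n\gg 0$, and any $H_\alpha$-homomorphism from $\Delta(\alpha)$ into a finite-dimensional module factors through some $\Delta_n(\alpha)$. To produce~(\ref{ses3}): after rescaling the maps $q_n$ and $j_n$ of Lemma~\ref{fe} --- legitimate because the $\Delta_n(\alpha)$ are uniserial with all composition factors shifts of $L(\alpha)$, so the relevant $\hom$-spaces are one-dimensional in the degrees that occur --- the short exact sequences~(\ref{ses2}) assemble into an inverse system of short exact sequences whose left-hand transition maps are surjective; since $\varprojlim^1$ then vanishes, $\varprojlim$ is exact and yields~(\ref{ses3}). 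For part (1), compatible homogeneous generators $v_n$ of $\Delta_n(\alpha)$ of the lowest degree of $L(\alpha)$ exist (any lift of a generator along $p_n$ is again a generator, as $p_n$ is an isomorphism on heads), and $v := (v_n)$ generates $\Delta(\alpha)$ by the vanishing of the kernels noted above; the class identity $(1-q_\alpha^2)[\Delta(\alpha)] = [L(\alpha)]$ is then immediate from~(\ref{ses3}) in the completed Grothendieck group.

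For part (2): by the factorization remark, $\HOM_{H_\alpha}(\Delta(\alpha), L(\mu)) = \varinjlim_n\HOM_{H_\alpha}(\Delta_n(\alpha), L(\mu))$ for each $\mu\in\KP(\alpha)$, and since every $\Delta_n(\alpha)$ has irreducible head $L(\alpha)$ this is $\k$ in degree $0$ when $\mu=(\alpha)$ and $0$ otherwise. Because irreducible $H_\alpha$-modules are absolutely irreducible, $\Delta(\alpha)$ therefore possesses a unique maximal submodule, with quotient $L(\alpha)$.

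Part (3) is the crux. By Lemma~\ref{mittagleffler} (applied with degree shifts to pass between $\ext$ and $\EXT$), it suffices to show $\EXT^d_{H_\alpha}(\Delta(\alpha), L(\alpha)) = 0$ for all $d\geq 1$. Applying $\HOM_{H_\alpha}(-,L(\alpha))$ to~(\ref{ses3}) and using Theorem~\ref{source} (which kills $\EXT^{\geq 2}_{H_\alpha}(L(\alpha),L(\alpha))$), the long exact sequence gives $\EXT^d_{H_\alpha}(\Delta(\alpha),L(\alpha)) \cong q_\alpha^{-2}\EXT^d_{H_\alpha}(\Delta(\alpha),L(\alpha))$ for $d\geq 2$, and for $d=1$ a surjection of $\EXT^1_{H_\alpha}(\Delta(\alpha),L(\alpha))$ onto its own $q_\alpha^{-2}$-shift. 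Since $\Delta(\alpha)$ is cyclic (part (1)) and $H_\alpha$ is Noetherian, $\Delta(\alpha)$ has a resolution by finitely generated graded projectives, so each $\EXT^d_{H_\alpha}(\Delta(\alpha),L(\alpha))$ is finite-dimensional; a finite-dimensional graded vector space that is isomorphic to, or merely surjects onto, its own positive shift is zero. The main obstacle is exactly the bookkeeping of these finiteness properties --- local finite-dimensionality of $\Delta(\alpha)$, Noetherianity of $H_\alpha$, finite-dimensionality of the Ext groups --- that makes the shift argument go through.

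For part (4): each $q_\alpha^{2n}\Delta(\alpha)$ is a finitely generated submodule of $\Delta(\alpha)$ with all irreducible subquotients shifts of $L(\alpha)$, so part (3) gives $\EXT^1_{H_\alpha}(\Delta(\alpha), q_\alpha^{2n}\Delta(\alpha)) = 0$; hence $\HOM_{H_\alpha}(\Delta(\alpha),-)$ applied to $0 \to q_\alpha^{2n}\Delta(\alpha) \to \Delta(\alpha) \to \Delta_n(\alpha) \to 0$ gives a short exact sequence $0 \to q_\alpha^{2n}\END_{H_\alpha}(\Delta(\alpha)) \to \END_{H_\alpha}(\Delta(\alpha)) \to \HOM_{H_\alpha}(\Delta(\alpha),\Delta_n(\alpha)) \to 0$ of graded vector spaces. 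With $n=1$ and part (2) this gives $\DIM\END_{H_\alpha}(\Delta(\alpha)) = (1-q_\alpha^2)^{-1}$, so $\END_{H_\alpha}(\Delta(\alpha))$ is one-dimensional in each degree $2jd_\alpha$ ($j\geq 0$) and vanishes in all others. Finally, the monomorphism $j$ of~(\ref{ses3}), viewed as a self-map of $\Delta(\alpha)$, is a nonzero degree-$2d_\alpha$ endomorphism $x$; its powers $x^j$ are injective, hence nonzero, hence span $\END_{H_\alpha}(\Delta(\alpha))$, whence $\END_{H_\alpha}(\Delta(\alpha)) = \k[x]$ with $x$ in degree $2d_\alpha$.
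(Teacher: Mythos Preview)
Your argument is correct and matches the paper's in all essential respects: both build (\ref{ses3}) by making the $j_n$ compatible with the transition maps $p_n$, read off (1)--(2) from the inverse-limit/stabilization picture, prove (3) via the shift argument on the long exact sequence of (\ref{ses3}) (you reduce first to $V=L(\alpha)$ and invoke finite-dimensionality, the paper keeps $V$ general and uses boundedness-below---same idea), and for (4) both identify $x=j$ and compare $\DIM\END_{H_\alpha}(\Delta(\alpha))$ with $\DIM\k[x]$. The only cosmetic differences are that the paper proves (1) before constructing (\ref{ses3}) rather than after, and bounds $\DIM\END_{H_\alpha}(\Delta(\alpha))$ directly from (1)--(2) rather than via your short exact sequence.
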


\begin{proof}
By definition of inverse limit, there are canonical maps $\pi_n:\Delta(\alpha)
\twoheadrightarrow \Delta_n(\alpha)$ such that $p_n \circ \pi_n =
\pi_{n-1}$ for each $n \geq 1$.
For $i \in \Z$ the dimension of the graded component $\Delta_n(\alpha)_i$ is
bounded independent of $n$, so the inverse system of vector spaces
$\Delta_0(\alpha)_i \twoheadleftarrow \Delta_1(\alpha)_i
\twoheadleftarrow\cdots$ stabilizes after finitely many terms. 
Since $\Delta(\alpha)_i = \varprojlim \Delta_n(\alpha)_i$,
we deduce that $\bigcap_{n \geq 0} \ker \pi_n = 0$.
Combined also with Lemma~\ref{fe}(1), it follows that
$[\Delta(\alpha)] = [L(\alpha)] / (1-q_\alpha^2)$.
Also $\Delta(\alpha)$ is cyclic because each $\Delta_n(\alpha)$ is
cyclic by Lemma~\ref{fe}(2).
This proves (1).

In view of (1), the head of $\Delta(\alpha)$ is 
isomorphic to a finite direct sum of copies of $q_\alpha^{2n} L(\alpha)$
for $n \geq 0$.
To deduce (2) we must show
$\hom_{H_\alpha}(\Delta(\alpha), L(\alpha)) \cong \k$
and $\hom_{H_\alpha}(\Delta(\alpha), q_\alpha^{2n} L(\alpha)) = 0$
for $n > 0$.
Suppose we are given a non-zero homogeneous
homomorphism 
$f:\Delta(\alpha) \rightarrow q_\alpha^{2n} L(\alpha)$
for some $n \geq 0$.
By (1), all irreducible subquotients of $\ker \pi_{n+1}$
are of the form $q_\alpha^{2m} L(\alpha)$ for $m > n$, 
hence $f$ factors through the quotient to induce
$\bar f:\Delta_{n+1}(\alpha) \rightarrow q_\alpha^{2n} L(\alpha)$.
It remains to apply Lemma~\ref{fe}(2).

Next we construct the short exact sequence (\ref{ses3}).
We claim that we can choose the injective homomorphisms $j_n$ in 
(\ref{ses2}) so that the following diagrams
commute for all $n \geq 1$:
$$
\begin{CD}
q_\alpha^2 \Delta_{n-1}(\alpha)&@<p_n<<&q_\alpha^2 \Delta_n(\alpha)\\
@V j_nVV&&@VV j_{n+1}V\\
\Delta_n(\alpha)&@<p_{n+1}<<&\Delta_{n+1}(\alpha).
\end{CD}
$$
This is automatic if $n=1$ (both compositions are zero), so assume $n
\geq 2$
and that we are given $j_n$.
The map $j_n \circ p_n$ is obviously non-zero, as is $p_{n+1} \circ
j_{n+1}$ because
$\soc \Delta_n(\alpha) \cong q_\alpha^{2(n-1)} L(\alpha)$.
Also by Lemma~\ref{fe}(1)--(2),
$\hom_{H_\alpha}(q_\alpha^2 \Delta_n(\alpha), \Delta_n(\alpha))$ is
one-dimensional.
Hence $j_{n+1}$ can be rescaled by a non-zero scalar if necessary to
ensure that the above
diagram commutes.
This proves the claim. Hence we get induced an injective homogeneous homomorphism
$j:q_\alpha^2 \Delta(\alpha) \rightarrow \Delta(\alpha)$ such that
$\pi_n \circ j = j_n \circ \pi_{n-1}$ for each $n \geq 1$.
By (1) we have that $\operatorname{coker} j \cong L(\alpha)$ and
(\ref{ses3}) follows.

To prove (3), let $V$ be as in its statement and take some fixed $d
\geq 1$.
By Theorem~\ref{source} and Lemma~\ref{mittagleffler}, 
we have that $\EXT^{d+1}_{H_\alpha}(L(\alpha), V) =0$.
Applying $\HOM_{H_\alpha}(-,V)$ to the short exact sequence
(\ref{ses3}), we
deduce the existence of a homogeneous surjection
$$
\EXT^d_{H_\alpha}(\Delta(\alpha), V) \twoheadrightarrow
q_\alpha^{-2} \EXT^d_{H_\alpha}(\Delta(\alpha), V).
$$
As $V$ and $\Delta(\alpha)$ are both finitely generated,
$\EXT^d_{H_\alpha}(\Delta(\alpha), V)$ is bounded below.
The only way to avoid a contradiction is if
$\EXT^d_{H_\alpha}(\Delta(\alpha), V)$ is zero.

Finally we prove (4). The map $j$ in (\ref{ses3}) can be viewed as an
injective endomorphism $x \in \END_{H_\alpha}(\Delta(\alpha))_{2d_\alpha}$.
It generates a free polynomial subalgebra $\k[x]$ of $\END_{H_\alpha}(\Delta(\alpha))$.
But we also know that $\DIM \END_{H_\alpha}(\Delta(\alpha))\leq 
\DIM \k[x]$ by (1)--(2). So we must have equality here.
\end{proof}

\begin{Corollary}\label{Equiv}
The functor 
$\HOM_{H_\alpha}(\Delta(\alpha), -)$ defines an equivalence
from
the category of finitely generated graded
$H_\alpha$-modules all of whose irreducible subquotients are isomorphic to
$L(\alpha)$ (up to a shift) to the category of finitely generated
graded $\k[x]$-modules (viewing $\k[x]$ as a graded algebra with $x$ in degree $2 d_\alpha$).
\end{Corollary}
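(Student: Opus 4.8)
The plan is to recognise $\Delta(\alpha)$ as a (compact) projective generator of the category $\mathcal{C}$ of finitely generated graded $H_\alpha$-modules all of whose irreducible subquotients are isomorphic to $L(\alpha)$ up to shift, and then deduce the equivalence by a standard graded Morita argument built around the explicit adjoint pair
\[
G := \Delta(\alpha)\otimes_{\k[x]}(-):\k[x]\text{-gmod}_{\mathrm{fg}}\longrightarrow \mathcal{C},
\qquad
F := \HOM_{H_\alpha}(\Delta(\alpha),-):\mathcal{C}\longrightarrow \k[x]\text{-gmod}_{\mathrm{fg}},
\]
where we use $\END_{H_\alpha}(\Delta(\alpha))\cong\k[x]$ from Theorem~\ref{rm}(4). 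First I would note that $H_\alpha$ is Noetherian (being finitely generated as a module over its centre, which is a finitely generated commutative $\k$-algebra), so that $\mathcal{C}$ is closed under subobjects, quotients and extensions inside the category of finitely generated graded $H_\alpha$-modules; in particular $\mathcal{C}$ is abelian. One checks the easy points that $\Delta(\alpha)\in\mathcal{C}$ (it is cyclic by Theorem~\ref{rm}(1), with all composition factors $\cong L(\alpha)$ up to shift), that $G$ and $F$ take values in the asserted categories, and that $(G,F)$ is an adjoint pair via tensor--hom adjunction, with $G$ right exact; moreover $F$ is exact on $\mathcal{C}$ because $\EXT^1_{H_\alpha}(\Delta(\alpha),M)=0$ for all $M\in\mathcal{C}$ by Theorem~\ref{rm}(3).

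Given Theorem~\ref{rm} there is no real difficulty, but the step that requires most care is the generator property: every $V\in\mathcal{C}$ is a quotient of a finite direct sum of shifts of $\Delta(\alpha)$. To see this, observe that $V/\rad V$ is a finite dimensional semisimple module, say $\bigoplus_{i=1}^{r} q^{m_i}L(\alpha)$. Since $\Delta(\alpha)$ has irreducible head $L(\alpha)$ by Theorem~\ref{rm}(2), each projection $q^{m_i}\Delta(\alpha)\twoheadrightarrow q^{m_i}L(\alpha)$ lifts along the surjection $V\twoheadrightarrow V/\rad V$, because $\EXT^1_{H_\alpha}(\Delta(\alpha),\rad V)=0$ by Theorem~\ref{rm}(3) (note $\rad V\in\mathcal{C}$ as $H_\alpha$ is Noetherian). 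Summing these lifts produces a map $\bigoplus_i q^{m_i}\Delta(\alpha)\to V$ which is surjective by graded Nakayama. Combining this with the Noetherian property, every $V\in\mathcal{C}$ has a presentation $P_1\to P_0\to V\to 0$ with $P_0,P_1$ finite direct sums of shifts of $\Delta(\alpha)$; applying the exact functor $F$ then exhibits $F(V)$ as the cokernel of a map of finitely generated free graded $\k[x]$-modules, so $F$ really does land in $\k[x]\text{-gmod}_{\mathrm{fg}}$.

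It remains to check that the unit $\eta\colon \mathrm{id}\Rightarrow FG$ and counit $\varepsilon\colon GF\Rightarrow\mathrm{id}$ are isomorphisms. Under the natural identifications $FG(\k[x])\cong\END_{H_\alpha}(\Delta(\alpha))\cong\k[x]$ and $GF(\Delta(\alpha))\cong\Delta(\alpha)\otimes_{\k[x]}\k[x]\cong\Delta(\alpha)$, both $\eta_{\k[x]}$ and $\varepsilon_{\Delta(\alpha)}$ are the identity; since $F$ and $G$ commute with finite direct sums and degree shifts, $\eta$ is an isomorphism on finitely generated free graded $\k[x]$-modules and $\varepsilon$ is an isomorphism on finite direct sums of shifts of $\Delta(\alpha)$. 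For a general $M\in\k[x]\text{-gmod}_{\mathrm{fg}}$, choose a presentation $Q_1\to Q_0\to M\to 0$ by finitely generated free graded $\k[x]$-modules; applying the right exact $G$ and then the exact $F$ gives an exact sequence $FGQ_1\to FGQ_0\to FGM\to 0$, and comparing with $Q_1\to Q_0\to M\to 0$ via $\eta$, the five lemma shows $\eta_M$ is an isomorphism. The same argument applied to the presentation $P_1\to P_0\to V\to 0$ of an arbitrary $V\in\mathcal{C}$, using right exactness of $G$ and exactness of $F$ on $\mathcal{C}$, shows $\varepsilon_V$ is an isomorphism. Hence $(G,F)$ is an adjoint equivalence, and in particular $F=\HOM_{H_\alpha}(\Delta(\alpha),-)$ is the required equivalence.
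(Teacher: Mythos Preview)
Your proof is correct and follows essentially the same approach as the paper: both recognise $\Delta(\alpha)$ as a projective generator of $\mathcal{C}$ (projectivity from Theorem~\ref{rm}(3), generation from Theorem~\ref{rm}(2)) with endomorphism algebra $\k[x]$ by Theorem~\ref{rm}(4), and then invoke graded Morita theory. The paper simply states these three ingredients and appeals to the standard equivalence, whereas you have carefully unwound the Morita argument by constructing the adjoint pair $(G,F)$ explicitly and verifying the unit and counit are isomorphisms via presentations and the five lemma; this is a valid and more self-contained elaboration of the same idea.
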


\begin{proof}
The given category of $H_\alpha$-modules is an abelian category.
By Theorem~\ref{rm}(3), $\Delta(\alpha)$ is a projective
generator.
Hence 
the category is equivalent via the given functor to the category of
finitely generated graded modules over
$\END_{H_\alpha}(\Delta(\alpha))$, which is
$\k[x]$ by Theorem~\ref{rm}(4).
\end{proof}

\begin{Corollary}\label{class}
Any finitely generated (graded) $H_\alpha$-module with all irreducible
subquotients
isomorphic to $L(\alpha)$ (up to a shift)
is a finite direct sum of degree-shifted copies of the
indecomposable modules
$\Delta_n(\alpha)\:(n \geq 1)$ and $\Delta(\alpha)$.
\end{Corollary}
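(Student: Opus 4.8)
The plan is to deduce this from Corollary~\ref{Equiv} together with the graded analogue of the structure theorem for finitely generated modules over the principal ideal domain $\k[x]$. Write $\cC$ for the abelian category of finitely generated $H_\alpha$-modules all of whose irreducible subquotients are isomorphic to $L(\alpha)$ up to a degree shift, and put $F := \HOM_{H_\alpha}(\Delta(\alpha),-)$. By Corollary~\ref{Equiv} the functor $F$ is an equivalence from $\cC$ to the category of finitely generated graded $\k[x]$-modules (with $x$ in degree $2d_\alpha$); moreover $F$ commutes with finite direct sums and, since $\HOM_{H_\alpha}(\Delta(\alpha), qV) \cong q\,\HOM_{H_\alpha}(\Delta(\alpha),V)$, it intertwines the degree-shift functors on the two sides. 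Hence it suffices to identify the objects of $\cC$ that correspond under $F$ to $\k[x]$ and to $\k[x]/(x^n)$, and then to invoke the fact that any finitely generated graded $\k[x]$-module is a finite direct sum of degree-shifted copies of $\k[x]$ and of $\k[x]/(x^n)$ for various $n \geq 1$. This last fact is elementary: a finitely generated graded torsion-free $\k[x]$-module is graded free, this free quotient splits off, and a finitely generated graded torsion $\k[x]$-module is finite dimensional over $\k$ with $x$ acting homogeneously and nilpotently, so it decomposes into ``graded Jordan blocks'' of the form $\k[x]/(x^n)$ up to a shift.

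The first identification is immediate: $F(\Delta(\alpha)) = \END_{H_\alpha}(\Delta(\alpha)) \cong \k[x]$ by Theorem~\ref{rm}(4), and by the very definition of the $\k[x]$-module structure on the values of $F$ this isomorphism carries the distinguished endomorphism $x$ (which is the map $j$ of (\ref{ses3})) to multiplication by $x$.

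For the modules $\Delta_n(\alpha)$, recall the canonical surjections $\pi_n:\Delta(\alpha)\twoheadrightarrow\Delta_n(\alpha)$ and the injections $j_n$ of (\ref{ses2}), which were chosen in the proof of Theorem~\ref{rm} so that $\pi_n\circ j = j_n\circ\pi_{n-1}$ for every $n\geq1$. Iterating this identity and using $\pi_0 = 0$ yields $\pi_n\circ j^n = 0$, so $\im(j^n)\subseteq\ker\pi_n$. Since $j$ is injective, Theorem~\ref{rm}(1) and Lemma~\ref{fe}(1) give $[\im(j^n)] = q_\alpha^{2n}[\Delta(\alpha)] = [\ker\pi_n]$, and as every graded component of $\Delta(\alpha)$ is finite dimensional we conclude $\im(j^n)=\ker\pi_n$. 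Therefore $\Delta_n(\alpha)\cong\Delta(\alpha)/\im(j^n) = \Delta(\alpha)\big/x^n\Delta(\alpha)$. Because $\Delta(\alpha)$ is a projective generator of $\cC$ by Theorem~\ref{rm}(3), the functor $F$ is exact on $\cC$; applying it to the short exact sequence $0\to q_\alpha^{2n}\Delta(\alpha)\stackrel{j^n}{\longrightarrow}\Delta(\alpha)\to\Delta_n(\alpha)\to0$ we get $F(\Delta_n(\alpha))\cong\k[x]/(x^n)$ up to the degree shift $q_\alpha^{2n}$.

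Finally, given any object $M$ of $\cC$, write the finitely generated graded $\k[x]$-module $F(M)$ as a finite direct sum of degree-shifted copies of $\k[x]$ and of the $\k[x]/(x^n)$; transporting this decomposition back through a quasi-inverse of $F$ and using the two identifications above exhibits $M$ as a finite direct sum of degree-shifted copies of $\Delta(\alpha)$ and of the $\Delta_n(\alpha)$ $(n\geq1)$, each of which is indecomposable by Lemma~\ref{fe}(2) and Theorem~\ref{rm}(2). The only step that requires genuine argument is the graded structure theorem for $\k[x]$-modules; everything else is formal, given the results already established. One could equally well extract uniqueness of such a decomposition from Krull--Schmidt, though that is not claimed here.
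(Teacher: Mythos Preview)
Your proof is correct and follows essentially the same route as the paper: invoke the equivalence of Corollary~\ref{Equiv}, use the classification of finitely generated graded $\k[x]$-modules, and transport back. You supply considerably more detail than the paper does---in particular the explicit verification that $\Delta_n(\alpha)\cong\Delta(\alpha)/x^n\Delta(\alpha)$ via the identity $\pi_n\circ j^n=0$ and a dimension count---whereas the paper simply asserts the correspondence. (One quibble: your remark ``up to the degree shift $q_\alpha^{2n}$'' is unnecessary; the cokernel of $x^n:q_\alpha^{2n}\k[x]\to\k[x]$ is exactly $\k[x]/(x^n)$ with $1$ in degree zero, so $F(\Delta_n(\alpha))\cong\k[x]/(x^n)$ without shift.)
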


\begin{proof}
The algebra $\k[x]$ is a principal ideal domain so its finitely generated
indecomposable
graded modules are isomorphic up to degree-shift to $\k[x] / (x^n)$ or
$\k[x]$.
Under the
equivalence from Corollary~\ref{Equiv}, these correspond to
$\Delta_n(\alpha)$ and $\Delta(\alpha)$, 
respectively.
\end{proof}

\subsection{Divided powers}\label{ssdp}
Throughout the subsection we fix $\alpha \in R^+$ of height $n$.
We are going to compute the endomorphism algebra of
$\Delta(\alpha)^{\circ m}$.
Choose a non-zero homogeneous vector $v_\alpha$ of minimal degree in
$\Delta(\alpha)$, so that 
$\Delta(\alpha)$ is generated as an $H_\alpha$-module by $v_\alpha$.
Also pick a non-zero endomorphism $x
\in \END_{H_\alpha}(\Delta(\alpha))_{2d_\alpha}$.
By Theorem~\ref{rm}(4) we have that
$\END_{H_\alpha}(\Delta(\alpha)) = \k[x]$,
so that $x$ is unique up to a scalar.

From the endomorphism $x$,  we obtain commuting 
endomorphisms $\dx_1,\dots,\dx_m
\in \END_{H_{m\alpha}}(\Delta(\alpha)^{\circ m})_{2d_\alpha}$
with
$\dx_i := \id^{\circ(i-1)} \circ x \circ \id^{\circ (m-i)}$.
Similarly, the endomorphism $\tau$ from the following lemma 
yields $\dtau_1,\dots,\dtau_{m-1} \in \END_{H_{m
    \alpha}}(\Delta(\alpha)^{\circ m})_{-2d_\alpha}$
with $\dtau_i := \id^{\circ (i-1)} \circ\tau\circ \id^{\circ(m-i-1)}$.
(It is a bit confusing here that we are using the same notation $x_i$
and $\tau_i$ for these endomorphisms as we use for the elements of the
KLR algebra, but hopefully it is clear from the context which
we mean.)

\begin{Lemma}\label{dpl1}
Let $w \in S_{2n}$ be the permutation 
mapping $(1,\dots,n, n+1,\dots,2n)$ to $(n+1,\dots,2n,1,\dots,n)$.
There is a unique 
$H_{2\alpha}$-module homomorphism
$$
\tau:
\Delta(\alpha) \circ \Delta(\alpha) \rightarrow \Delta(\alpha) \circ
\Delta(\alpha)
$$
of degree $-2d_\alpha$
such that $\tau(1_{\alpha,\alpha} \otimes (v_\alpha \otimes v_\alpha))
= \tau_w 1_{\alpha,\alpha} \otimes (v_\alpha \otimes v_\alpha)$.
\end{Lemma}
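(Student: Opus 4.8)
The plan is as follows. Since $\Delta(\alpha)$ is cyclic by Theorem~\ref{rm}(1), the module $\Delta(\alpha)\circ\Delta(\alpha)=\Ind^{2\alpha}_{\alpha,\alpha}(\Delta(\alpha)\boxtimes\Delta(\alpha))$ is cyclic, generated by $z:=1_{\alpha,\alpha}\otimes(v_\alpha\otimes v_\alpha)$. Hence any homomorphism out of it is determined by the image of $z$, which immediately gives uniqueness of $\tau$; moreover such a homomorphism is automatically homogeneous, of degree $-2d_\alpha$, because $w$ sends any word of content $(\alpha,\alpha)$ to another such word and $\deg(w;\bi\bj)=-|\bi|\cdot|\bj|=-2d_\alpha$ whenever $|\bi|=|\bj|=\alpha$. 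For existence I would use graded Frobenius reciprocity: a degree $-2d_\alpha$ homomorphism $\Delta(\alpha)\circ\Delta(\alpha)\to\Delta(\alpha)\circ\Delta(\alpha)$ is the same thing as a degree $-2d_\alpha$ homomorphism $\phi:\Delta(\alpha)\boxtimes\Delta(\alpha)\to\Res^{2\alpha}_{\alpha,\alpha}(\Delta(\alpha)\circ\Delta(\alpha))$, the bijection being $\tau(h\otimes v)=h\cdot\phi(v)$, so that $\tau(z)=\phi(v_\alpha\otimes v_\alpha)$. Note that $z':=\tau_w 1_{\alpha,\alpha}\otimes(v_\alpha\otimes v_\alpha)$ lies in $1_{\alpha,\alpha}(\Delta(\alpha)\circ\Delta(\alpha))=\Res^{2\alpha}_{\alpha,\alpha}(\Delta(\alpha)\circ\Delta(\alpha))$, so it suffices to construct a $\phi$ with $\phi(v_\alpha\otimes v_\alpha)=z'$.

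The key step is to analyse $\Res^{2\alpha}_{\alpha,\alpha}(\Delta(\alpha)\circ\Delta(\alpha))$ via the Mackey Theorem~\ref{mackey} with $\beta=\gamma=\beta'=\gamma'=\alpha$. Every composition factor of $\Delta(\alpha)$ is a shift of $L(\alpha)$ by Theorem~\ref{rm}(1), so a Mackey section indexed by $(\beta_1,\beta_2,\gamma_1,\gamma_2)$ can be nonzero only if both $\Res^\alpha_{\beta_1,\beta_2}L(\alpha)\neq 0$ and $\Res^\alpha_{\gamma_1,\gamma_2}L(\alpha)\neq 0$, hence by McNamara's Lemma~\ref{maclem} only if $\beta_1,\gamma_1$ are sums of positive roots $\preceq\alpha$ while $\beta_2,\gamma_2$ are sums of positive roots $\succeq\alpha$. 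Since $\beta_1+\gamma_1=\alpha=\beta_2+\gamma_2$, Lemma~\ref{l1} forces $(\beta_1,\beta_2,\gamma_1,\gamma_2)\in\{(\alpha,0,0,\alpha),(0,\alpha,\alpha,0)\}$, so only the two extreme sections of the Mackey filtration survive and they assemble into a short exact sequence
\[
0\longrightarrow \Delta(\alpha)\boxtimes\Delta(\alpha)\longrightarrow \Res^{2\alpha}_{\alpha,\alpha}\big(\Delta(\alpha)\circ\Delta(\alpha)\big)\longrightarrow q^{-2d_\alpha}\,\Delta(\alpha)\boxtimes\Delta(\alpha)\longrightarrow 0,
\]
in which the submodule is the image of the unit map $v\mapsto 1_{\alpha,\alpha}\otimes v$, and in which $z'$ maps to a generator of the top quotient, because the block transposition $w$ is precisely the longest double-coset representative occurring in Theorem~\ref{mackey}.

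Finally I would show this sequence splits. As $\Delta(\alpha)$ is finitely generated over $H_\alpha$ and $H_\alpha$ has finite global dimension, a Künneth computation over the field $\k$ expresses $\EXT^1_{H_\alpha\otimes H_\alpha}(\Delta(\alpha)\boxtimes\Delta(\alpha),\Delta(\alpha)\boxtimes\Delta(\alpha))$ as a sum of tensor products each having $\EXT^1_{H_\alpha}(\Delta(\alpha),\Delta(\alpha))$ as a factor, and this vanishes by Theorem~\ref{rm}(3). Hence the displayed sequence splits, and a splitting yields a degree $-2d_\alpha$ homomorphism $\phi:\Delta(\alpha)\boxtimes\Delta(\alpha)\to\Res^{2\alpha}_{\alpha,\alpha}(\Delta(\alpha)\circ\Delta(\alpha))$ whose composite with the quotient map is an isomorphism onto $q^{-2d_\alpha}\Delta(\alpha)\boxtimes\Delta(\alpha)$; in particular $\phi(v_\alpha\otimes v_\alpha)$, like $z'$, has nonzero image in the top quotient. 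The hard part is to pin $\phi(v_\alpha\otimes v_\alpha)$ to $z'$ exactly, not merely modulo the submodule: after rescaling $\phi$ so that $\phi(v_\alpha\otimes v_\alpha)$ and $z'$ have equal image in the quotient, their difference lies in the submodule $\Delta(\alpha)\boxtimes\Delta(\alpha)$, yet it is homogeneous of degree $2\deg(v_\alpha)-2d_\alpha$, which is strictly below the minimal degree $2\deg(v_\alpha)$ of $\Delta(\alpha)\boxtimes\Delta(\alpha)$ (recall $\Delta(\alpha)$ is bounded below with $v_\alpha$ of minimal degree, and $d_\alpha\geq 1$); hence the difference is $0$. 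Therefore $\tau(z)=\phi(v_\alpha\otimes v_\alpha)=z'$, as required. I expect the main obstacles to be the exact identification of the two surviving Mackey sections together with their grading shifts — this is what validates the concluding degree comparison — and the Künneth-type vanishing of $\EXT^1$.
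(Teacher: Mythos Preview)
Your proof is correct and follows essentially the same route as the paper: the Mackey analysis of $\Res^{2\alpha}_{\alpha,\alpha}(\Delta(\alpha)\circ\Delta(\alpha))$ yielding the two-term short exact sequence, the splitting via the vanishing of $\EXT^1$ from Theorem~\ref{rm}(3), and the degree comparison forcing the splitting to hit $\tau_w 1_{\alpha,\alpha}\otimes(v_\alpha\otimes v_\alpha)$ exactly. You are slightly more explicit than the paper in two places---you spell out the uniqueness via cyclicity, and you justify the $\EXT^1$ vanishing over $H_\alpha\otimes H_\alpha$ by a K\"unneth argument, whereas the paper simply invokes Theorem~\ref{rm}(3)---but these are elaborations of the same argument rather than a different one.
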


\begin{proof}
We apply the Mackey theorem to 
$\Res^{2\alpha}_{\alpha,\alpha} \Delta(\alpha) \circ \Delta(\alpha)$.
By exactly the same argument as in the proof of Lemma~\ref{pl}, there
are just two non-zero sections in the Mackey filtration, corresponding
to the double coset representatives $1$ and $w$.
We deduce that there is a short exact sequence
$$
0 \longrightarrow \Delta(\alpha) \boxtimes \Delta(\alpha)
\stackrel{f}{\longrightarrow} \Res^{2\alpha}_{\alpha,\alpha} \Delta(\alpha) \circ
\Delta(\alpha)
\stackrel{g}{\longrightarrow} q_\alpha^{-2}\Delta(\alpha) \boxtimes \Delta(\alpha)
\longrightarrow 0
$$
such that $f(v_\alpha \otimes v_\alpha) = 1_{\alpha,\alpha} \otimes (v_\alpha \otimes
v_\alpha)$ and
$g(\tau_w 1_{\alpha,\alpha} \otimes (v_\alpha \otimes v_\alpha)) = v_\alpha \otimes v_\alpha$.
By Theorem~\ref{rm}(3) we have that
$$
\EXT^1_{H_{\alpha,\alpha}}(\Delta(\alpha)\boxtimes\Delta(\alpha),
\Delta(\alpha)\boxtimes\Delta(\alpha)) = 0.
$$
So 
the short exact sequence splits.
Let $\bar g:q_\alpha^{-2}\Delta(\alpha) \boxtimes \Delta(\alpha)
\rightarrow \Res^{2\alpha}_{\alpha,\alpha} \Delta(\alpha) \circ
\Delta(\alpha)$
be the unique splitting.
Since $\im f = 1_{\alpha,\alpha} \otimes (\Delta(\alpha) \boxtimes
\Delta(\alpha))$
contains no non-zero vectors of degree $2 \deg(v_\alpha) - 2 d_\alpha$,
we must have that
$\bar g(v_\alpha \otimes v_\alpha)  = \tau_w 1_{\alpha,\alpha} \otimes
(v_\alpha \otimes v_\alpha)$.
Applying Frobenius reciprocity, $\bar g$ induces a map $\tau$
as in the statement of the lemma. 
\end{proof}

\begin{Lemma}\label{dpl2}
The endomorphisms $\dtau_1,\dots,\dtau_{m-1}
\in \END_{H_{m\alpha}}(\Delta(\alpha)^{\circ m})$
square to zero and satisfy the usual type $\operatorname{A}$ braid relations.
\end{Lemma}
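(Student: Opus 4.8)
The plan is to reduce the whole statement to the cases $m=2$ and $m=3$ and to treat those via Frobenius reciprocity together with the restriction structure of $\Delta(\alpha)^{\circ m}$. Since $\tau_i=\id^{\circ(i-1)}\circ\tau\circ\id^{\circ(m-i-1)}$ and $\circ$ is a bifunctor, $\tau_i^{2}=\id^{\circ(i-1)}\circ\tau^{2}\circ\id^{\circ(m-i-1)}$, so ``squares to zero'' follows once I know $\tau^{2}=0$ on $\Delta(\alpha)^{\circ 2}$. When $|i-j|>1$ the maps $\tau_i$ and $\tau_j$ are induced from morphisms supported on disjoint tensor slots, so $\tau_i\tau_j=\tau_j\tau_i$ by the interchange law for $\circ$. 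Finally, applying $\id^{\circ(i-1)}\circ(-)\circ\id^{\circ(m-i-2)}$ reduces the braid relation $\tau_i\tau_{i+1}\tau_i=\tau_{i+1}\tau_i\tau_{i+1}$ to its single instance $\tau_1\tau_2\tau_1=\tau_2\tau_1\tau_2$ on $\Delta(\alpha)^{\circ 3}$.

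For $\tau^{2}=0$ I would show that $\Delta(\alpha)^{\circ 2}$ has no nonzero endomorphism of degree $-4d_\alpha$ at all. The proof of Lemma~\ref{dpl1} gives $\Res^{2\alpha}_{\alpha,\alpha}\Delta(\alpha)^{\circ 2}\cong\Delta(\alpha)^{\boxtimes 2}\oplus q_\alpha^{-2}\Delta(\alpha)^{\boxtimes 2}$, so by Frobenius reciprocity
\[
\hom_{H_{2\alpha}}\bigl(\Delta(\alpha)^{\circ 2},q_\alpha^{4}\Delta(\alpha)^{\circ 2}\bigr)\cong\hom_{H_{\alpha,\alpha}}\bigl(\Delta(\alpha)^{\boxtimes 2},q_\alpha^{4}\Delta(\alpha)^{\boxtimes 2}\bigr)\oplus\hom_{H_{\alpha,\alpha}}\bigl(\Delta(\alpha)^{\boxtimes 2},q_\alpha^{2}\Delta(\alpha)^{\boxtimes 2}\bigr).
\]
Both summands vanish: $\Delta(\alpha)^{\boxtimes 2}$ is bounded below and generated over $H_{\alpha,\alpha}$ by the single vector $v_\alpha\otimes v_\alpha$ of minimal degree (the radical of $\Delta(\alpha)$ equals $q_\alpha^{2}\Delta(\alpha)$ by Lemma~\ref{fe}, hence lives in strictly higher degrees), so a degree-lowering homomorphism must annihilate that generator. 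As $\tau^{2}$ has degree $-4d_\alpha$, it is zero.

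For the braid relation I would set $v:=1_{\alpha,\alpha,\alpha}\otimes(v_\alpha\otimes v_\alpha\otimes v_\alpha)$, a cyclic generator of $\Delta(\alpha)^{\circ 3}$ (induction of a cyclic module is cyclic), and let $\sigma_1,\sigma_2\in H_{3\alpha}$ be the images of $\tau_w\otimes 1_\alpha$ and $1_\alpha\otimes\tau_w$, with $w\in S_{2n}$ the block transposition and $\tau_w\in H_{2\alpha}$ the element from Lemma~\ref{dpl1}. Unwinding the definition of $\circ$ on morphisms, $\tau_1(v)=\sigma_1 v$ and $\tau_2(v)=\sigma_2 v$, and since the $\tau_i$ are module maps this yields $\tau_1\tau_2\tau_1(v)=\sigma_1\sigma_2\sigma_1 v$ and $\tau_2\tau_1\tau_2(v)=\sigma_2\sigma_1\sigma_2 v$; as $v$ generates, it suffices to show these agree. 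Now $\sigma_1,\sigma_2$ are products of $\tau$-generators of $H_{3\alpha}$ along reduced words for the two transpositions $w_1,w_2$ of adjacent $n$-blocks in $S_{3n}$; since $\ell(w_1 w_2 w_1)=\ell(w_1)+\ell(w_2)+\ell(w_1)$ and $w_1 w_2 w_1=w_2 w_1 w_2$, both $\sigma_1\sigma_2\sigma_1$ and $\sigma_2\sigma_1\sigma_2$ are products along reduced words for the single block-reversal permutation, so by the defining relations of the KLR algebra their difference is a $\Z$-linear combination of terms $\tau_u$ (times a polynomial in the $x$-generators) with $\ell(u)$ strictly less than the length of the block reversal.

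It remains to see that such lower-order terms kill $v$, and the plan here is to use the decomposition
\[
\Res^{3\alpha}_{\alpha,\alpha,\alpha}\Delta(\alpha)^{\circ 3}\cong\bigoplus_{x\in S_{3}}q_\alpha^{-2\ell(x)}\Delta(\alpha)^{\boxtimes 3},
\]
which follows from Theorem~\ref{mackey}, the cuspidality argument of Lemma~\ref{pl} (only whole-block permutations contribute), and iterated vanishing of $\EXT^{1}$ from Theorem~\ref{rm}(3), which splits the Mackey filtration. Both $\tau_1\tau_2\tau_1(v)$ and $\tau_2\tau_1\tau_2(v)$ are homogeneous of degree $3\deg(v_\alpha)-6d_\alpha$, which is the lowest degree occurring in this direct sum and is attained only in the summand indexed by the longest element of $S_{3}$; hence both lie in that summand. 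On the other hand, a term $\tau_u$ (times a polynomial) with $\ell(u)$ below the length of the block reversal carries $v$ into the sum of the summands indexed by the shorter elements of $S_{3}$, since the permutation $u$ is too short to lie in the longest double coset. Therefore $(\sigma_1\sigma_2\sigma_1-\sigma_2\sigma_1\sigma_2)v$ lies simultaneously in the top summand and in the complement of it, so it is $0$, and the braid relation follows. The hard part is exactly this last step: the two composites are not visibly induced by equal elements of $H_{3\alpha}$ — the discrepancy is a genuine package of lower-order terms — and establishing the restriction decomposition cleanly (splitting the Mackey filtration in the threefold case) is the main technical input that makes those terms vanish on the cyclic vector.
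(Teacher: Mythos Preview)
Your argument is correct and follows the same outline as the paper: reduce to $m=2,3$, evaluate on the cyclic generator, and kill the relevant vector by a degree count against the Mackey decomposition of the restriction.

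Two small differences are worth noting. For $\tau^{2}=0$ the paper simply observes that $\tau^{2}$ sends the generator $1_{\alpha,\alpha}\otimes(v_\alpha\otimes v_\alpha)$ to $\tau_w^{2}1_{\alpha,\alpha}\otimes(v_\alpha\otimes v_\alpha)$, which has degree $2\deg(v_\alpha)-4d_\alpha$, strictly below the minimum of $1_{\alpha,\alpha}(\Delta(\alpha)^{\circ 2})$; your stronger claim $\END_{-4d_\alpha}=0$ is fine but unnecessary. For the braid relation the paper does not split the Mackey filtration as a module: it uses only the vector-space identity $S:=\sum_{w<w_0}\tau_w 1_{\alpha,\alpha,\alpha}\otimes\Delta(\alpha)^{\boxtimes 3}=\bigoplus_{w\in\{1,w_1,w_2,w_1w_2,w_2w_1\}}\tau_w 1_{\alpha,\alpha,\alpha}\otimes\Delta(\alpha)^{\boxtimes 3}$ (which is the basis theorem plus cuspidality), so every element of $S$ has degree $\geq 3\deg(v_\alpha)-4d_\alpha$ and the degree $3\deg(v_\alpha)-6d_\alpha$ difference must vanish. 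Your invocation of Theorem~\ref{rm}(3) to obtain a module-theoretic direct sum is valid but superfluous, and once you drop it your step ``both lie in the top summand'' becomes redundant---the paper's argument is just: the difference lies in $S$ and $S$ has no vectors of that degree.
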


\begin{proof}
For the quadratic relation, 
we need to show in the setup of Lemma~\ref{dpl1} that
$\tau^2 = 0$.
As a vector space, the Mackey theorem analysis from the proof of that lemma
tells us that
$$
1_{\alpha,\alpha}(\Delta(\alpha) \circ \Delta(\alpha))= 
1_{\alpha,\alpha} \otimes (\Delta(\alpha) \boxtimes \Delta(\alpha))\oplus
\tau_w 1_{\alpha,\alpha} \otimes (\Delta(\alpha) \boxtimes \Delta(\alpha)).
$$
Thus the vector $\tau_w 1_{\alpha,\alpha} \otimes (v_\alpha \otimes v_\alpha)$ is of
minimal degree in $1_{\alpha,\alpha}(\Delta(\alpha) \circ
\Delta(\alpha))$, namely, $2\deg(v_\alpha)-2d_\alpha$.
The vector $\tau_w^2 1_{\alpha,\alpha}\otimes (v_\alpha \otimes v_\alpha)$ is of strictly
smaller degree $2\deg(v_\alpha)-4d_\alpha$, hence it must be zero.
This shows that $\tau^2$ sends a generator of $\Delta(\alpha)
\circ \Delta(\alpha)$ to zero, hence $\tau^2 = 0$.

For the braid relations, the commuting ones are trivial from the
definitions.
For the length three braid relation,
it suffices to show that
$\dtau_1 \circ \dtau_2 \circ\dtau_1 = \dtau_2 \circ \dtau_1 \circ \dtau_2$
working in
$\END_{H_{3\alpha}}(\Delta(\alpha)\circ\Delta(\alpha)\circ\Delta(\alpha))$.
Let $w_1, w_2 \in S_{3n}$ be the permutations
mapping $(1,\dots,n,n+1,\dots,2n,2n+1,\dots,3n)$ to
$(n+1,\dots,2n,1,\dots,n,2n+1,\dots,3n)$ and
$(1,\dots,n,2n+1,\dots,3n,n+1,\dots,2n)$, respectively,
and set $w_0 := w_1 w_2 w_1 = w_2 w_1 w_2$.
By the defining relations for $H_{3\alpha}$, it is clear that
$(\tau_{w_2} \tau_{w_1} \tau_{w_2} - \tau_{w_2} \tau_{w_1} \tau_{w_2})
1_{\alpha,\alpha,\alpha} \otimes (v_\alpha \otimes v_\alpha \otimes
v_\alpha)$
lies in 
$$
S := \sum_{w < w_0} \tau_w 1_{\alpha,\alpha,\alpha}
\otimes (\Delta(\alpha) \boxtimes \Delta(\alpha) \boxtimes
\Delta(\alpha)).
$$
By the Mackey theorem, 
we have that
$$
S = \bigoplus_{w \in \{1,w_1,w_2,w_1w_2,w_2w_1\}}
\tau_w 1_{\alpha,\alpha,\alpha} \otimes (\Delta(\alpha) \boxtimes
\Delta(\alpha)\boxtimes\Delta(\alpha)).
$$
But the
vector 
$(\tau_{w_2} \tau_{w_1} \tau_{w_2} - \tau_{w_2} \tau_{w_1} \tau_{w_2})
1_{\alpha,\alpha,\alpha} \otimes (v_\alpha \otimes v_\alpha \otimes
v_\alpha)$ is of degree $3\deg(v_\alpha) - 6d_\alpha$, while all the vectors in $S$ are of
degree $\geq 3\deg(v_\alpha)-4d_\alpha$.
Hence this vector is zero, and we have shown that 
the endomorphisms $\dtau_2\circ\dtau_1\circ\dtau_2$ and $\dtau_1\circ\dtau_2\circ\dtau_1$ agree on
the generator $1_{\alpha,\alpha,\alpha} \otimes (v_\alpha\otimes
v_\alpha \otimes v_\alpha)$. Hence they are equal.
\end{proof}

In view of Lemma~\ref{dpl2}, we get well-defined endomorphisms
$\dtau_w$ of $\Delta(\alpha)^{\circ m}$ for each $w \in S_m$,
defined as usual from any reduced expression for $w$.
(This creates further ambiguity with the elements of the KLR
algebra with the same name, but this is only temporary.)

\begin{Lemma}\label{dpl3}
The endomorphisms
$\{\dtau_w \circ\dx_m^{k_m}\circ \cdots \circ \dx_1^{k_1} \:|\:w \in S_m, k_1,\dots,k_m
\geq 0\}$
give a basis for $\END_{H_{m\alpha}}(\Delta(\alpha)^{\circ m})$.
\end{Lemma}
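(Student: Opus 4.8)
The plan is to establish the statement via a graded‑dimension count combined with a triangularity argument. Throughout write $g := 1_{\alpha,\dots,\alpha}\otimes(v_\alpha\otimes\cdots\otimes v_\alpha)$ for the cyclic generator of $\Delta(\alpha)^{\circ m} = \Ind^{m\alpha}_{\alpha,\dots,\alpha}(\Delta(\alpha)^{\boxtimes m})$; this is a generator because $\Delta(\alpha)$ is cyclic by Theorem~\ref{rm}(1), and consequently an endomorphism of $\Delta(\alpha)^{\circ m}$ is determined by its value on $g$. The two facts I will assemble are: (a) $\DIM\END_{H_{m\alpha}}(\Delta(\alpha)^{\circ m}) = (\sum_{w\in S_m}q_\alpha^{-2\ell(w)})(1-q_\alpha^2)^{-m}$, which equals both $\DIM\NH_m$ and $\sum_{w\in S_m}\sum_{k_1,\dots,k_m\geq 0}\DIM(\dtau_w \circ\dx_m^{k_m}\circ\cdots\circ\dx_1^{k_1})$, the latter because $\dtau_w$ has degree $-2d_\alpha\ell(w)$ and each $\dx_i$ has degree $2d_\alpha$; and (b) the endomorphisms $\dtau_w \circ\dx_m^{k_m}\circ\cdots\circ\dx_1^{k_1}$ are linearly independent. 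Clearly (a) and (b) together force them to be a basis.

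For (a), Frobenius reciprocity gives a degree‑preserving isomorphism $\END_{H_{m\alpha}}(\Delta(\alpha)^{\circ m}) \cong \HOM_{H_{\alpha,\dots,\alpha}}(\Delta(\alpha)^{\boxtimes m}, \Res^{m\alpha}_{\alpha,\dots,\alpha}\Delta(\alpha)^{\circ m})$. Iterating the Mackey theorem and applying Lemma~\ref{maclem} and Lemma~\ref{l1} exactly as in the proof of Lemma~\ref{pl} — since $\alpha$ is a positive root, at each merge step only the overlaps $0$ and $\alpha$ survive — one sees that $\Res^{m\alpha}_{\alpha,\dots,\alpha}\Delta(\alpha)^{\circ m}$ has a filtration by $H_{\alpha,\dots,\alpha}$-submodules $0 = V_{-1}\subseteq V_0\subseteq\cdots\subseteq V_{\binom{m}{2}}$, compatible with the length order on $S_m$, whose $m!$ subquotients are the modules $q_\alpha^{-2\ell(w)}\Delta(\alpha)^{\boxtimes m}$ for $w\in S_m$ (with $V_a/V_{a-1}$ the sum of those with $\ell(w) = a$). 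Each such subquotient is finitely generated with all irreducible subquotients of the form $q^n L(\alpha)^{\boxtimes m}$; and $\EXT^1_{H_{\alpha,\dots,\alpha}}(\Delta(\alpha)^{\boxtimes m}, V) = 0$ for any such finitely generated $V$, by Theorem~\ref{rm}(3), Lemma~\ref{mittagleffler} and a Künneth argument for outer tensor products, exactly as in the proof of Lemma~\ref{dpl1}. Applying $\HOM_{H_{\alpha,\dots,\alpha}}(\Delta(\alpha)^{\boxtimes m}, -)$ to the filtration therefore yields a filtration of the $\HOM$-space with subquotients $q_\alpha^{-2\ell(w)}\END_{H_{\alpha,\dots,\alpha}}(\Delta(\alpha)^{\boxtimes m})$. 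A second Künneth argument, together with the same reasoning used for Theorem~\ref{rm}(4), identifies $\END_{H_{\alpha,\dots,\alpha}}(\Delta(\alpha)^{\boxtimes m})$ with the polynomial algebra $\k[x_1,\dots,x_m]$, each $x_i$ in degree $2d_\alpha$, of graded dimension $(1-q_\alpha^2)^{-m}$. This gives the value of $\DIM\END_{H_{m\alpha}}(\Delta(\alpha)^{\circ m})$ claimed in (a).

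For (b), I work with the same filtration. Write $g_{w'} := \tau_{\hat{w'}}1_{\alpha,\dots,\alpha}\otimes(v_\alpha\otimes\cdots\otimes v_\alpha)$ for $w'\in S_m$, where $\hat{w'}\in S_{mn}$ is the block permutation attached to $w'$; then $V_a$ is generated over $H_{\alpha,\dots,\alpha}$ by the $g_{w'}$ with $\ell(w')\leq a$, and $g_1 = g$. Since the $\dx_i$ and $\dtau_i$ are $H_{m\alpha}$-linear, a short computation with the straightening relations of $H_{m\alpha}$ shows $\dx_i(V_a)\subseteq V_a$ and $\dtau_i(V_a)\subseteq V_{a+1}$, so $(\dtau_w \circ\dx_m^{k_m}\circ\cdots\circ\dx_1^{k_1})(V_0)\subseteq V_{\ell(w)}$. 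Using Lemma~\ref{dpl1} and induction along a reduced word (degree considerations kill the would‑be lower terms), one checks that $\dtau_w(g) = g_w$. Writing $\dx_m^{k_m}\circ\cdots\circ\dx_1^{k_1}(g) = h g$ for a suitable $h \in H_{\alpha,\dots,\alpha}$ and commuting $h$ past $\tau_{\hat w}$ modulo shorter $\tau$'s, one then finds that $(\dtau_w \circ\dx_m^{k_m}\circ\cdots\circ\dx_1^{k_1})(g)$ is, modulo $V_{\ell(w)-1}$, equal to $\tau_{\hat w}1_{\alpha,\dots,\alpha}\otimes(x^{k_{w^{-1}(1)}}v_\alpha\otimes\cdots\otimes x^{k_{w^{-1}(m)}}v_\alpha)$. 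Because $x$ is injective (it is the map $j$ of (\ref{ses3})) the vectors $\{x^{j_1}v_\alpha\otimes\cdots\otimes x^{j_m}v_\alpha : j_i\geq 0\}$ are linearly independent in $\Delta(\alpha)^{\boxtimes m}$, so for each fixed $w$ these ``leading terms'' (as the $k_i$ vary) are linearly independent in $V_{\ell(w)}/V_{\ell(w)-1}$, and those attached to distinct $w$ of the same length lie in distinct direct summands there. A standard triangularity argument, ordering $S_m$ by length, now gives linear independence of the whole family; combined with (a), this proves the lemma.

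The main obstacle is the Mackey‑theoretic bookkeeping in (a) and (b): verifying that $\Res^{m\alpha}_{\alpha,\dots,\alpha}\Delta(\alpha)^{\circ m}$ filters into exactly $m!$ shifted copies of $\Delta(\alpha)^{\boxtimes m}$ with the filtration taken compatible with the length order on $S_m$, that the $V_a$ are generated by the stated vectors $g_{w'}$, and that $\dx_i$ preserves each $V_a$ while $\dtau_i$ raises the filtration by at most one — all while tracking the $q^{-\beta_2\cdot\gamma_1}$ shifts through the iterated Mackey filtration. The Künneth‑type identities for $\HOM$ and $\EXT$ of outer tensor products also need a little care since $\Delta(\alpha)$ is infinite dimensional, but it is cyclic, bounded below and locally finite dimensional, so they go through degree by degree.
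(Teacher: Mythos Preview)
Your proof is correct and follows the same two-step scheme as the paper: establish linear independence of the given endomorphisms by looking at their values on the cyclic generator $g$, and bound the graded dimension of the endomorphism algebra via Frobenius reciprocity and the Mackey filtration.

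The one notable difference is in part (a). You compute $\DIM\END_{H_{m\alpha}}(\Delta(\alpha)^{\circ m})$ \emph{exactly}, by showing the Mackey filtration of $\Res^{m\alpha}_{\alpha,\dots,\alpha}\Delta(\alpha)^{\circ m}$ has $m!$ sections $q_\alpha^{-2\ell(w)}\Delta(\alpha)^{\boxtimes m}$ and then invoking the Ext-vanishing of Theorem~\ref{rm}(3) (plus a K\"unneth argument) so that $\HOM(\Delta(\alpha)^{\boxtimes m},-)$ is additive over the filtration. The paper takes a shortcut: since $\Delta(\alpha)^{\boxtimes m}$ has irreducible head $L(\alpha)^{\boxtimes m}$, one has the crude inequality
\[
\DIM\HOM_{H_{\alpha,\dots,\alpha}}(\Delta(\alpha)^{\boxtimes m},\Res^{m\alpha}_{\alpha,\dots,\alpha}\Delta(\alpha)^{\circ m})
\leq
[\Res^{m\alpha}_{\alpha,\dots,\alpha}\Delta(\alpha)^{\circ m}:L(\alpha)^{\boxtimes m}],
\]
and the right-hand side is computed directly from Lemma~\ref{pl} and $[\Delta(\alpha)]=[L(\alpha)]/(1-q_\alpha^2)$, giving the same value $\sum_{w\in S_m} q_\alpha^{-2\ell(w)}/(1-q_\alpha^2)^m$. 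This upper bound, together with linear independence, already forces the claimed basis, so the Ext-vanishing and K\"unneth steps can be dispensed with. Your argument for (b), on the other hand, is a careful unpacking of what the paper states in one sentence (``they produce linearly independent vectors when applied to $g$''); the triangularity with respect to the length filtration on $S_m$ is exactly the mechanism behind that sentence.
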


\begin{proof}
These endomorphisms are linearly independent because they produce
linearly independent vectors when applied to $1_{\alpha,\dots,\alpha} \otimes (v_\alpha \otimes
\cdots \otimes v_\alpha)$.
It remains to show that
$$
\DIM 
\END_{H_{m\alpha}}(\Delta(\alpha)^{\circ m})
\leq \sum_{w \in S_m} \frac{q_\alpha^{-2\ell(w)}}{(1-q_\alpha^2)^m}.
$$
As $\Delta(\alpha)^{\boxtimes m}$ has head isomorphic to $L(\alpha)^{\boxtimes m}$
and $[\Delta(\alpha)^{\circ m}] = [L(\alpha)^{\circ m}] / (1-q_\alpha^2)^m$, 
we have by Frobenius reciprocity and Lemma~\ref{pl} that
\begin{align*}
\DIM 
\END_{H_{m\alpha}}(\Delta(\alpha)^{\circ m})
&= 
\DIM 
\HOM_{H_{\alpha,\dots,\alpha}}(\Delta(\alpha)^{\boxtimes m},
\Res^{m\alpha}_{\alpha,\dots,\alpha} \Delta(\alpha)^{\circ m})\\
&\leq 
[\Res^{m\alpha}_{\alpha,\dots,\alpha} \Delta(\alpha)^{\circ
  m}:L(\alpha)^{\boxtimes m}]\\
&=
[\Res^{m\alpha}_{\alpha,\dots,\alpha} L(\alpha)^{\circ m}:L(\alpha)^{\boxtimes m}] / (1-q_\alpha^2)^m\\
&=
q_\alpha^{-\frac{1}{2} m(m-1)}[m]_\alpha^! / (1-q_\alpha^2)^m.
\end{align*}
By the formula for the Poincar\'e polynomial of $S_m$ this
is 
$\sum_{w \in S_m} \frac{q_\alpha^{-2\ell(w)}}{(1-q_\alpha^2)^m}$.
\end{proof}

\begin{Lemma}\label{dpl4}
There is a {\em unique} choice for $x
\in \END_{H_\alpha}(\Delta(\alpha))_{2d_\alpha}$ 
such that the following relations hold: 
$\dtau_i \circ\dx_j = \dx_j \circ \tau_i$ for $j \neq i, i+1$,
$\dtau_i \circ \dx_{i+1} = \dx_i \circ\tau_i+1$ and
$\dx_{i+1} \circ \dtau_i = \dtau_i \circ \dx_i+1$.
\end{Lemma}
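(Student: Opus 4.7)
\emph{Proof Plan.}
The relations $\dtau_i \circ \dx_j = \dx_j \circ \dtau_i$ for $j \neq i, i+1$ are automatic: $\dtau_i$ and $\dx_j$ are built by applying operations to disjoint non-adjacent tensor factors. The remaining two families of relations are local to the $i$-th and $(i{+}1)$-th factors, so it suffices to fix $m=2$, write $\dtau := \dtau_1$, and prove that there is a unique $x \in \END_{H_\alpha}(\Delta(\alpha))_{2d_\alpha}$ such that
\[
\dtau \circ \dx_2 - \dx_1 \circ \dtau = \id_{\Delta(\alpha)\circ \Delta(\alpha)}, \qquad \dx_2 \circ \dtau - \dtau \circ \dx_1 = \id_{\Delta(\alpha)\circ \Delta(\alpha)}.
\]

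By Lemma~\ref{dpl3}, $\END(\Delta(\alpha)\circ\Delta(\alpha))$ is a free right $\k[\dx_1,\dx_2]$-module with basis $\{\id, \dtau\}$, and its degree-$0$ subspace has basis $\{\id,\, \dtau\dx_1,\, \dtau\dx_2\}$. So there exist unique scalars $p, q_1, q_2 \in \k$ with
\[
\dx_1 \circ \dtau \;=\; p \cdot \id + q_1 \cdot \dtau\dx_1 + q_2 \cdot \dtau\dx_2,
\]
and the lemma reduces to showing $q_1 = 0$, $q_2 = 1$, and that $p$ can be made equal to $-1$ by rescaling $x$.

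I would pin down the $q_i$'s by applying $\dx_1 \circ \dtau$ to the canonical generator $v = 1_{\alpha,\alpha} \otimes (v_\alpha \otimes v_\alpha)$ and decomposing the result using the $H_{\alpha,\alpha}$-module splitting
\[
\Res^{2\alpha}_{\alpha,\alpha}\bigl(\Delta(\alpha)\circ\Delta(\alpha)\bigr) \;\cong\; \Delta(\alpha)\boxtimes\Delta(\alpha) \;\oplus\; q_\alpha^{-2}\,\Delta(\alpha)\boxtimes\Delta(\alpha)
\]
coming from the proof of Lemma~\ref{dpl1}, with embedding $f$ into the first summand $A$ and splitting $\bar g$ into the second summand $B$. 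Picking $h_x \in H_\alpha$ with $x(v_\alpha) = h_x\,v_\alpha$, the $H_{2\alpha}$-linearity of $\dtau$ together with $\dx_j(v) = ({\cdot})_j \cdot v$ gives
\[
\dtau\dx_2(v) = (1\otimes h_x)\,\tau_\sigma \otimes (v_\alpha \otimes v_\alpha), \qquad \dx_1\dtau(v) = \tau_\sigma(h_x \otimes 1) \otimes (v_\alpha \otimes v_\alpha).
\]
Commuting $h_x \otimes 1$ past $\tau_\sigma$ via the KLR relations yields a sum $(1\otimes h_x)\tau_\sigma + (\text{corrections involving }\tau_w,\, w<\sigma)$. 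The leading term exactly matches $\dtau\dx_2(v)$, which identifies the coefficient $q_2 = 1$. Because the Mackey filtration of $\Res(\Delta(\alpha)\circ\Delta(\alpha))$ has \emph{no} nonzero intermediate sections (only the $w=1$ and $w=\sigma$ double cosets contribute, by the same argument used in the proof of Lemma~\ref{dpl1}), the entirety of the correction terms, upon being tensored against $v_\alpha \otimes v_\alpha$, lands in the bottom summand $A$, contributing nothing to $B$. This forces $q_1 = 0$ and identifies $p$ with the resulting element of $A \cong \Delta(\alpha)\boxtimes\Delta(\alpha)$, which must be a scalar multiple of $v_\alpha \otimes v_\alpha$ by a degree count ($p$ lives in the minimum-degree space, which is one-dimensional).

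Finally, since $\END_{H_\alpha}(\Delta(\alpha))_{2d_\alpha} = \k$ by Theorem~\ref{rm}(4), $x$ is unique up to a nonzero scalar, and under $x \mapsto \lambda x$ one has $p \mapsto \lambda p$ while $q_1, q_2$ are invariant. Thus $p \neq 0$ (verified for instance by specializing to the simple case $\alpha = \alpha_i$, where the whole computation reduces to nil-Hecke relations in $NH_2 = H_{2\alpha_i} = \END(\Delta(\alpha_i)\circ\Delta(\alpha_i))^{\mathrm{op}}$), and we rescale to make $p = -1$, yielding the first relation. The computation of $\dx_2 \circ \dtau - \dtau \circ \dx_1$ is entirely symmetric and gives the second relation under the same normalization.

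The main obstacle is the verification that the KLR correction terms contribute purely to the $A$-summand, i.e.\ that $q_1 = 0$. This is not a formal consequence of Lemma~\ref{dpl3} alone and requires the specific vanishing of intermediate Mackey sections together with a careful bookkeeping of how $\tau_w$ for $1 < w < \sigma$ acts after being tensored against $v_\alpha \otimes v_\alpha$.
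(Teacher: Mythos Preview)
Your overall strategy matches the paper's: reduce to $m=2$, show that $\theta_+ := \dtau\dx_2 - \dx_1\dtau$ and $\theta_- := \dtau\dx_1 - \dx_2\dtau$ are scalar multiples of the identity, then rescale $x$. Your argument that the KLR correction terms land in the bottom Mackey piece $A$ (giving $q_1=0$, $q_2=1$) is correct and is exactly how the paper establishes that $\theta_\pm$ are scalars $c_\pm$. So the part you flag as ``the main obstacle'' is in fact fine.

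There are, however, two genuine gaps.

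\textbf{(i) The nonvanishing $p\neq 0$.} Your justification---``verified for instance by specializing to the simple case $\alpha=\alpha_i$''---is invalid: the scalar $p$ (equivalently $c_+$) depends on $\alpha$, and checking one root says nothing about another. The paper's argument is by contradiction and uses a nontrivial input you never invoke: if $c_+ = c_- = 0$ then $\dtau$ commutes (up to swap) with $\dx_1,\dx_2$, hence preserves $\im\dx_1+\im\dx_2$ and descends to a nonzero nilpotent endomorphism of $\Delta(\alpha)^{\circ 2}/(\im\dx_1+\im\dx_2)\cong L(\alpha)\circ L(\alpha)$. This contradicts the irreducibility of $L(\alpha)\circ L(\alpha)$ (Theorem~\ref{mac1}). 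Without that irreducibility result there is no reason $p$ should be nonzero for general $\alpha$.

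\textbf{(ii) Compatibility of the two relations.} Your ``entirely symmetric'' claim glosses over the fact that you obtain \emph{two} scalars $c_+$ and $c_-$, and a single rescaling of $x$ must normalize both simultaneously. One must check $c_+ = -c_-$. The paper does this by computing $\dtau\dx_1\dx_2 - \dx_1\dx_2\dtau$ in two ways, obtaining $(c_++c_-)\dx_2 = (c_++c_-)\dx_1$; linear independence of $\dx_1,\dx_2$ (Lemma~\ref{dpl3}) then forces $c_++c_-=0$.
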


\begin{proof}
The commuting relations are automatic.
For the remaining relations, it suffices to
show working in
$\END_{H_{2\alpha}}(\Delta(\alpha)\circ\Delta(\alpha))$
that the (unique up to scalars) endomorphism $x
\in \END_{H_\alpha}(\Delta(\alpha))_{2 d_\alpha}$ 
can be chosen so that
$\tau \circ \dx_{2} = \dx_1 \circ\tau+1$ and
$\dx_{2} \circ \tau = \tau \circ \dx_1+1$.
Consider the endomorphisms
$$
\theta_+ := \tau \circ \dx_2 - \dx_1 \circ \tau,
\qquad
\theta_- := \tau \circ \dx_1 - \dx_2 \circ \tau.
$$
They are of degree zero and map
$1_{\alpha,\alpha} \otimes (v_\alpha \otimes v_\alpha)$ into
$1_{\alpha,\alpha} \otimes (\Delta(\alpha)\boxtimes \Delta(\alpha))$,
so we deduce from Lemma~\ref{dpl3} that
$\theta_{\pm} = c_\pm$
for some scalars 
$c_{\pm} \in \k$.
We have that
\begin{align*}
\tau x_1 x_2 - x_1 x_2 \tau &= 
 (x_2 \tau + c_-) x_2 - x_2 (\tau x_2 - c_+)
= (c_-+c_+)x_2,\\
\tau x_1 x_2 - x_1 x_2 \tau &= 
(x_1 \tau + c_+)x_1 - x_1 (\tau x_1 - c_-)=(c_++c_-) x_1.
\end{align*}
Since $x_1$ and $x_2$ are linearly independent this implies that
$c_-=-c_+$,
and we have proved that $c_+ = -c_- =c$ for some $c \in \k$.
It remains to show that $c \neq 0$, for then we can replace $x$ by
$x / c$ and get that $\theta_+ = 1, \theta_- = -1$ as required.
Suppose for a contradiction that $c = 0$.
Then $\tau \circ \dx_1 = \dx_2 \circ \tau$ and
$\tau \circ \dx_2 = \dx_1 \circ \tau$.
This means that $\tau$ leaves invariant the submodule
$S := \im \dx_1 + \im \dx_2$
of $\Delta(\alpha) \circ \Delta(\alpha)$,
hence it induces a well-defined endomorphism $\bar \tau$ of the quotient
$\Delta(\alpha) \circ \Delta(\alpha) / S$ with $\bar \tau^2 = 0$.
But $\Delta(\alpha) \circ \Delta(\alpha) / S \cong L(\alpha) \circ
L(\alpha)$, and under this isomorphism $\bar \tau$ corresponds to an
endomorphism sending $1_{\alpha,\alpha} \otimes (\bar v_\alpha \otimes \bar
v_\alpha)$ to $\tau_w 1_{\alpha,\alpha} \otimes (\bar v_\alpha \otimes
\bar v_\alpha)$.
This shows that $\END_{H_{2\alpha}}(L(\alpha) \circ L(\alpha))$ is
more than one-dimensional, contradicting the irreducibility of this
module from Theorem~\ref{mac1}.
\end{proof}

Henceforth, we assume that the endomorphism $x$ 
has been normalized according to Lemma~\ref{dpl4}.
Recalling the definition of the nil Hecke algebra $\NH_m$ from $\S$\ref{newSeccat},
Lemmas~\ref{dpl2}--\ref{dpl4} show that there is a unique
algebra {\em isomorphism}
\begin{equation*}
\NH_m \stackrel{\sim}{\rightarrow}
\END_{H_{m\alpha}}(\Delta(\alpha)^{\circ m})^{\op},\qquad
x_i \mapsto \dx_i,
\tau_j \mapsto \dtau_j.
\end{equation*}
The $\op$ here means that we view $\Delta(\alpha)^{\circ
  m}$ as a {\em right} $\NH_m$-module, i.e. it is an $(H_{m\alpha},
\NH_m)$-bimodule (a convention which finally eliminates the confusion between
the elements
$x_i, \tau_j$ of $H_{m\alpha}$ and the
elements  of $\NH_m$ with the same name: they act on different sides).
Finally define
the {\em divided power module}
\begin{equation}\label{divided}
\Delta(\alpha^m) := q_\alpha^{\frac{1}{2}m(m-1)}
\Delta(\alpha)^{\circ m}e_m
\end{equation}
where $e_m \in \NH_m$ is the idempotent (\ref{idemp}).

\begin{Lemma}\label{dp}
We have that $\Delta(\alpha)^{\circ m} \cong [m]_\alpha^! \Delta(\alpha^m)$
as an $H_{m\alpha}$-module.
Moreover $\Delta(\alpha^m)$ has irreducible head
$L(\alpha^m)$, and in the Grothendieck group we have that
$[\Delta(\alpha^m)] = [L(\alpha^m)] / (1-q_\alpha^2)(1-q_\alpha^4)\cdots (1-q_\alpha^{2m})$.
\end{Lemma}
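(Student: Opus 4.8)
The plan is to exploit the isomorphism $\NH_m \stackrel{\sim}{\rightarrow} \END_{H_{m\alpha}}(\Delta(\alpha)^{\circ m})^{\op}$ established just above, which makes $\Delta(\alpha)^{\circ m}$ an $(H_{m\alpha}, \NH_m)$-bimodule. Recall from $\S$\ref{newSeccat} that $\NH_m$ is a matrix algebra over its center, and that $e_m$ is a primitive idempotent with $\NH_m \cong [m]_\alpha^! \, \NH_m e_m$ as a left $\NH_m$-module (this is exactly the statement that $H_{m\alpha_i} \cong [m]_i^! P(\alpha_i^m)$, with $q$ replaced by $q_\alpha$ and the degree shift by $q_\alpha^{\frac{1}{2}m(m-1)}$ built in). Tensoring the decomposition $\NH_m \cong [m]_\alpha^! \, \NH_m e_m$ on the left with the bimodule $\Delta(\alpha)^{\circ m}$ over $\NH_m$ gives
$$
\Delta(\alpha)^{\circ m} \;=\; \Delta(\alpha)^{\circ m} \otimes_{\NH_m} \NH_m \;\cong\; [m]_\alpha^! \,\bigl(\Delta(\alpha)^{\circ m} \otimes_{\NH_m} \NH_m e_m\bigr) \;=\; [m]_\alpha^! \,\Delta(\alpha)^{\circ m} e_m,
$$
which after inserting the degree shift $q_\alpha^{\frac{1}{2}m(m-1)}$ is precisely $\Delta(\alpha)^{\circ m} \cong [m]_\alpha^! \,\Delta(\alpha^m)$. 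This settles the first assertion.

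For the statement about the head, I would pass to the radical quotient. Since $\Delta(\alpha)$ has irreducible head $L(\alpha)$ by Theorem~\ref{rm}(2), and $L(\alpha^m) = q_\alpha^{\frac12 m(m-1)} L(\alpha)^{\circ m}$ is irreducible by Theorem~\ref{mac1}, one expects the surjection $\Delta(\alpha)^{\circ m} \twoheadrightarrow L(\alpha)^{\circ m}$ (induced by $\Delta(\alpha) \twoheadrightarrow L(\alpha)$ on each factor, using exactness of $\circ$ in each variable) to realize the head, so that $\head \Delta(\alpha)^{\circ m} \cong L(\alpha)^{\circ m} \cong [m]_\alpha^! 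L(\alpha^m)$ as an $H_{m\alpha}$-module. Combined with $\Delta(\alpha)^{\circ m} \cong [m]_\alpha^! \Delta(\alpha^m)$ and the Krull--Schmidt theorem, this forces $\head \Delta(\alpha^m) \cong L(\alpha^m)$. To justify that $\head \Delta(\alpha)^{\circ m} \cong L(\alpha)^{\circ m}$ one can argue that any irreducible quotient of $\Delta(\alpha)^{\circ m}$ is, by Frobenius reciprocity and Lemma~\ref{maclem}, a quotient of $L(\alpha)^{\circ m}$ on restriction to $H_{\alpha, \dots, \alpha}$; alternatively, observe that $\Delta(\alpha)^{\circ m}$ is generated by the single vector $1_{\alpha,\dots,\alpha}\otimes(v_\alpha\otimes\cdots\otimes v_\alpha)$ of minimal degree, and that $\rad(\Delta(\alpha)^{\circ m})$ is the kernel of the map to $L(\alpha)^{\circ m}$ since the latter has simple-by-simple structure controlled by the local finiteness of $H$.

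Finally, the formula for $[\Delta(\alpha^m)]$ follows by a Grothendieck-group computation: from $[\Delta(\alpha)] = [L(\alpha)]/(1-q_\alpha^2)$ in Theorem~\ref{rm}(1) and multiplicativity of $[\,\cdot\,]$ under $\circ$ we get $[\Delta(\alpha)^{\circ m}] = [L(\alpha)^{\circ m}]/(1-q_\alpha^2)^m = [m]_\alpha^!\,[L(\alpha^m)] \cdot q_\alpha^{-\frac12 m(m-1)}/(1-q_\alpha^2)^m$ after accounting for the shift relating $L(\alpha^m)$ to $L(\alpha)^{\circ m}$; dividing by $[m]_\alpha^!$ (valid since Grothendieck groups here are torsion-free $\A$-modules) and using the standard identity $q_\alpha^{-\frac12 m(m-1)} [m]_\alpha^! = \prod_{k=1}^m \frac{1-q_\alpha^{2k}}{1-q_\alpha^2}$ yields $[\Delta(\alpha^m)] = [L(\alpha^m)]/\prod_{k=1}^m(1-q_\alpha^{2k})$. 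The only mildly delicate point — the main obstacle — is keeping track of the degree shifts $q_\alpha^{\frac12 m(m-1)}$ and verifying that the identification of the head really is an isomorphism of $H_{m\alpha}$-modules (not merely on characters); both reduce to careful but routine bookkeeping with the conventions already fixed in $\S$\ref{newSeccat} and Theorems~\ref{rm} and~\ref{mac1}.
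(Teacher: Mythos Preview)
Your arguments for the decomposition $\Delta(\alpha)^{\circ m}\cong[m]_\alpha^!\,\Delta(\alpha^m)$ and for the Grothendieck group identity are correct and essentially identical to the paper's.

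The argument for the head, however, contains a genuine error. You write $L(\alpha)^{\circ m}\cong[m]_\alpha^!\,L(\alpha^m)$, but this is false: by definition $L(\alpha^m)=q_\alpha^{\frac12 m(m-1)}L(\alpha)^{\circ m}$ is a \emph{single} irreducible (Theorem~\ref{mac1}), whereas $[m]_\alpha^!\,L(\alpha^m)$ is a direct sum of $m!$ shifted copies. Consequently the obvious surjection $\Delta(\alpha)^{\circ m}\twoheadrightarrow L(\alpha)^{\circ m}$ cannot realize the head for $m\geq 2$: since $\Delta(\alpha)^{\circ m}\cong[m]_\alpha^!\,\Delta(\alpha^m)$ already decomposes into $m!$ nonzero summands, its head has at least $m!$ irreducible constituents, not one. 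The kernel of your surjection is strictly larger than the radical. Neither of your suggested justifications repairs this, because the target claim itself is wrong.

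The paper instead argues as follows. From the Grothendieck group formula (and irreducibility of $L(\alpha)^{\circ m}$) all composition factors of $\Delta(\alpha^m)$ are shifts of $L(\alpha^m)$, so it suffices to show $\DIM\HOM_{H_{m\alpha}}(\Delta(\alpha)^{\circ m},L(\alpha^m))=[m]_\alpha^!$. Frobenius reciprocity rewrites this as $\DIM\HOM(\Delta(\alpha)^{\boxtimes m},\Res^{m\alpha}_{\alpha,\dots,\alpha}L(\alpha^m))$; by Lemma~\ref{pl} the restriction has class $[m]_\alpha^!\,[L(\alpha)^{\boxtimes m}]$, and since $\Delta(\alpha)^{\boxtimes m}$ has irreducible head $L(\alpha)^{\boxtimes m}$ and vanishing $\EXT^1$ against it (Theorem~\ref{rm}(2),(3)), the Hom-dimension equals this multiplicity. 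Dividing by $[m]_\alpha^!$ then gives $\DIM\HOM(\Delta(\alpha^m),L(\alpha^m))=1$.
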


\begin{proof}
So far we have identified the endomorphism algebra
$\END_{H_{m\alpha}}(\Delta(\alpha)^{\circ m})^{\op}$ with $\NH_m$
(graded
so that $x_i$ is in degree $2d_\alpha$ and $\tau_i$ is in degree $-2d_\alpha$).
Since $\NH_m \cong [m]_\alpha^! P$ 
where $P:=q_\alpha^{\frac{1}{2}m(m-1)} \NH_m e_m$,
we deduce that
$$
\Delta(\alpha)^{\circ m}
=
\Delta(\alpha)^{\circ m} \otimes_{\NH_m} \NH_m
\cong [m]_\alpha^! \Delta(\alpha)^{\circ m} \otimes_{\NH_m} P
\cong [m]_\alpha^! \Delta(\alpha^m).
$$
The fact that
$[\Delta(\alpha^m)] = [L(\alpha^m)] / (1-q_\alpha^2)(1-q_\alpha^4)\cdots (1-q_\alpha^{2m})$
follows from this using
$[\Delta(\alpha)] = [L(\alpha)] / (1-q_\alpha^2)$ 
and 
$L(\alpha^m)
= q_\alpha^{\frac{1}{2} m(m-1)} 
L(\alpha)^{\circ m}$.
Finally to show that the head of $\Delta(\alpha^m)$ is $L(\alpha^m)$,
it suffices to show that
$$
\DIM \HOM_{H_{m\alpha}}(\Delta(\alpha)^{\circ m}, L(\alpha^m)) = [m]_\alpha^!,
$$
which follows from Theorem~\ref{rm}(2), 
Lemma~\ref{pl} and Frobenius reciprocity.
\end{proof}

Thus we have constructed a module $\Delta(\alpha^m)$ which is equal in
the Grothendieck group to the divided power $r_\alpha^{m} / [m]_\alpha^!$.
More generally, for a Kostant partition $\lambda \in \KP$, gather together its equal parts to write 
it as $(\gamma_1^{m_1},\dots,\gamma_s^{m_s})$
with $\gamma_1 \succ \cdots \succ \gamma_s$, then define the {\em
  standard module}
\begin{equation}\label{standard}
\Delta(\lambda) := \Delta(\gamma_1^{m_1}) \circ\cdots \circ
\Delta(\gamma_s^{m_s}).
\end{equation}
Recalling (\ref{rlambda}),
the following theorem implies in particular 
that
$[\Delta(\lambda)]= r_\lambda$.

\begin{Theorem}\label{shead}
For $\lambda = (\lambda_1,\dots,\lambda_l) \in \KP$ we have that
$$
\Delta(\lambda_1) \circ \cdots \circ \Delta(\lambda_l)
\cong
[\lambda]^!
\Delta(\lambda).
$$
Moreover $V_0 := \Delta(\lambda)$ has an exhaustive
filtration
$V_0 \supset V_1 \supset
V_2 \supset \cdots$
such that $V_0 / V_1 \cong \bar\Delta(\lambda)$ 
and all other sections 
of the form $q^{2m}\bar\Delta(\lambda)$ for $m > 0$.
Finally $\Delta(\lambda)$
has irreducible head isomorphic to $L(\lambda)$, and 
$$
[\Delta(\lambda)] = 
[\bar \Delta(\lambda)] \:\:\Big / \prod_{\substack{\beta \in R^+ \\ 1
    \leq r \leq m_\beta(\lambda)}} (1-q_\beta^{2r}).
$$
\end{Theorem}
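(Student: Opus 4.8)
\emph{Overview.} Part~(1) is immediate: writing $\lambda=(\gamma_1^{m_1},\dots,\gamma_s^{m_s})$ with $\gamma_1\succ\cdots\succ\gamma_s$, the product $\Delta(\lambda_1)\circ\cdots\circ\Delta(\lambda_l)$ is $\Delta(\gamma_1)^{\circ m_1}\circ\cdots\circ\Delta(\gamma_s)^{\circ m_s}$, and Lemma~\ref{dp} gives $\Delta(\gamma_i)^{\circ m_i}\cong[m_i]^!_{\gamma_i}\Delta(\gamma_i^{m_i})$; since $\circ$ distributes over finite direct sums and $[\lambda]^!=\prod_i[m_i]^!_{\gamma_i}$, this yields $\Delta(\lambda_1)\circ\cdots\circ\Delta(\lambda_l)\cong[\lambda]^!\Delta(\lambda)$. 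The substance of the theorem is the $\bar\Delta$-filtration of part~(2), from which parts~(3) and~(4) will fall out. The plan is to establish part~(2) first for ``divided power'' Kostant partitions $\lambda=(\gamma^m)$, and then for general $\lambda$ by inducing filtrations through the induction product.

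\emph{The divided power case $\lambda=(\gamma^m)$.} Here $\bar\Delta(\lambda)=L(\gamma^m)$ is irreducible, and I would prove the analogue of Corollaries~\ref{Equiv}--\ref{class}. Let $\mathcal C$ be the category of finitely generated $H_{m\gamma}$-modules all of whose composition factors are isomorphic to $q^nL(\gamma^m)$ for various $n$; it is abelian, closed under subquotients and extensions, with unique simple object $L(\gamma^m)$ (up to shift). I claim $\Delta(\gamma^m)$ is a projective generator of $\mathcal C$. It generates because its head is $L(\gamma^m)$ by Lemma~\ref{dp}. For projectivity it suffices, by Lemma~\ref{mittagleffler}, that $\EXT^d_{H_{m\gamma}}(\Delta(\gamma^m),L(\gamma^m))=0$ for $d\geq1$; since $\Delta(\gamma^m)$ is a direct summand of $\Delta(\gamma)^{\circ m}$ and $L(\gamma^m)$ is a shift of $\Ind^{m\gamma}_{\gamma,\dots,\gamma}(L(\gamma)^{\boxtimes m})$, the $(\Ind,\Res)$ adjunction (with $\Ind$ exact), Lemma~\ref{pl}, a second application of Lemma~\ref{mittagleffler}, the K\"unneth formula over $H_\gamma^{\otimes m}$, and Theorem~\ref{rm}(3) reduce this to $\EXT^{\geq1}_{H_\gamma}(\Delta(\gamma),L(\gamma))=0$, which is Theorem~\ref{rm}(3). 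Hence $\mathcal C$ is equivalent, via $\HOM_{H_{m\gamma}}(\Delta(\gamma^m),-)$, to the category of finitely generated graded modules over $E:=\END_{H_{m\gamma}}(\Delta(\gamma^m))$, with $\Delta(\gamma^m)\mapsto E$ and $L(\gamma^m)\mapsto E/E_{>0}$. Moreover $\Delta(\gamma^m)=q_\gamma^{m(m-1)/2}\Delta(\gamma)^{\circ m}e_m$ and $\END_{H_{m\gamma}}(\Delta(\gamma)^{\circ m})^{\op}\cong\NH_m$ from $\S$\ref{ssdp}, so $E\cong e_m\NH_me_m\cong Z(\NH_m)=\k[x_1,\dots,x_m]^{S_m}$, a graded polynomial ring on generators of positive degree. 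Transporting the $E_{>0}$-adic filtration of the regular module $E$ (exhaustive by graded Nakayama, with associated graded isomorphic to $E$ as a graded vector space) back through the equivalence gives an exhaustive filtration of $\Delta(\gamma^m)$ whose $k$-th section is a direct sum of shifts of $L(\gamma^m)$ of graded dimension $\DIM(E_{>0}^k/E_{>0}^{k+1})$; only $k=0$ contributes in degree $0$, and the multiplicities sum to $\DIM E=\prod_{r=1}^m(1-q_\gamma^{2r})^{-1}$, consistent with Lemma~\ref{dp}.

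\emph{The general case, and part~(4).} For arbitrary $\lambda$ I would induct on the number $s$ of distinct parts, writing $\Delta(\lambda)=\Delta(\gamma_1^{m_1})\circ\Delta(\lambda')$ with $\lambda'=(\gamma_2^{m_2},\dots,\gamma_s^{m_s})$ and $\gamma_1\succ$ every part of $\lambda'$. Applying the exact functor $-\circ\Delta(\lambda')$ (exact because $H_{m\gamma}1_{\beta,\gamma}$ is free over $H_\beta\otimes H_\gamma$) to the filtration of $\Delta(\gamma_1^{m_1})$ just constructed, and then refining each resulting section $q^n(L(\gamma_1^{m_1})\circ\Delta(\lambda'))$ by applying $L(\gamma_1^{m_1})\circ-$ to the inductively given filtration of $\Delta(\lambda')$ (with sections $q^{n'}\bar\Delta(\lambda')$), produces a filtration of $\Delta(\lambda)$ with sections $q^{n+n'}(L(\gamma_1^{m_1})\circ\bar\Delta(\lambda'))$. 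The crucial identity $L(\gamma_1^{m_1})\circ\bar\Delta(\lambda')=\bar\Delta(\lambda)$ follows from $L(\gamma_1^{m_1})=q_{\gamma_1}^{m_1(m_1-1)/2}L(\gamma_1)^{\circ m_1}$, the definition of $\bar\Delta$, the identity $q_{\gamma_1}^{m_1(m_1-1)/2}q^{s_{\lambda'}}=q^{s_\lambda}$, and the fact that $\gamma_1$ is not a part of $\lambda'$. So $\Delta(\lambda)$ acquires an exhaustive filtration (exhaustiveness passes through $\circ$ with a flat tensor factor, by a degreewise stabilization argument) with sections $q^{2m}\bar\Delta(\lambda)$, the section $\bar\Delta(\lambda)$ occurring exactly once and in degree $0$. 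Summing in the Grothendieck group, and noting that the filtration multiplicities multiply under $\circ$, gives $[\Delta(\lambda)]=[\bar\Delta(\lambda)]/\prod_{i}\prod_{r=1}^{m_i}(1-q_{\gamma_i}^{2r})=[\bar\Delta(\lambda)]/\prod_{\beta\in R^+,\,1\leq r\leq m_\beta(\lambda)}(1-q_\beta^{2r})$, which is part~(4).

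\emph{Part~(3), and the main obstacle.} For the head: the surjections $\Delta(\gamma_i^{m_i})\twoheadrightarrow L(\gamma_i^{m_i})$ induce $\Delta(\lambda)\twoheadrightarrow L(\gamma_1^{m_1})\circ\cdots\circ L(\gamma_s^{m_s})=\bar\Delta(\lambda)$, which has irreducible head $L(\lambda)$ by Theorem~\ref{mac2}, so $L(\lambda)$ is a quotient of $\mathrm{head}(\Delta(\lambda))$. Conversely, by part~(2) the composition factor $L(\lambda)$ occurs in $\Delta(\lambda)$ exactly once and in degree $0$, and any other summand $q^nL(\mu)$ of the (semisimple) head of $\Delta(\lambda)$ would, by $(\Ind,\Res)$ adjunction applied to $\Delta(\lambda)=\Delta(\gamma_1^{m_1})\circ(\cdots)$, force $\Res^{\alpha}_{m_1\gamma_1,\,\bullet}L(\mu)\neq0$; Lemma~\ref{maclem} together with Lemma~\ref{l1} then forces the largest part of $\mu$ to equal $\gamma_1$, and iterating (exactly as in the proof of Theorem~\ref{mac2} in \cite{Mac}) gives $\mu=\lambda$ and $n=0$. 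Hence $\mathrm{head}(\Delta(\lambda))\cong L(\lambda)$. The step I expect to be the real obstacle is the divided power case---specifically, identifying $\END_{H_{m\gamma}}(\Delta(\gamma^m))$ with the polynomial ring $\k[x_1,\dots,x_m]^{S_m}$ and proving $\Delta(\gamma^m)$ is projective in $\mathcal C$; this is where the nil Hecke structure of $\S$\ref{ssdp} is genuinely needed. Everything after that is bookkeeping with grading shifts, exactness of $\circ$, and the restriction lemmas.
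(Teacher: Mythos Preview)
Your proof is correct, but considerably more elaborate than the paper's, and the step you flag as the ``real obstacle'' is in fact unnecessary.

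\textbf{The filtration.} You identify $\END_{H_{m\gamma}}(\Delta(\gamma^m))\cong e_m\NH_m e_m\cong Z(\NH_m)=\k[x_1,\dots,x_m]^{S_m}$, set up a category equivalence, and transport the $E_{>0}$-adic filtration. This is correct and conceptually interesting, but the paper bypasses it entirely. Lemma~\ref{dp} already tells you that every composition factor of $\Delta(\gamma^m)$ is of the form $q_\gamma^{2k}L(\gamma^m)$ for $k\geq 0$, so a filtration with such sections exists immediately (for instance, apply the idempotent $e_m$ to the product filtration of $\Delta(\gamma)^{\circ m}$ arising from iterating (\ref{ses3})). One then combines via exactness of $\circ$ exactly as you do. No identification of the endomorphism ring, and no projectivity argument in $\mathcal C$, is required.

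\textbf{The head.} You argue via restriction and Lemmas~\ref{maclem}--\ref{l1} that any $L(\mu)$ in the head must satisfy $\mu=\lambda$. This can be made rigorous, but you are essentially reproving part of Lemma~\ref{sc}, and your sketch needs care (you implicitly use $\mu_1\preceq\gamma_1$, which itself comes from the filtration and Theorem~\ref{mac2}). The paper's route is shorter: once the filtration is in hand, any simple quotient of $\Delta(\lambda)$ must be a quotient of some section $q^{2m}\bar\Delta(\lambda)$ (the map vanishes on $V_k$ for $k\gg 0$ by degree considerations, then descend), hence is a shift of $L(\lambda)$ by Theorem~\ref{mac2}. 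Then one computes
\[
\DIM\HOM_{H_\alpha}(\Delta(\lambda),L(\lambda))
=\DIM\HOM\bigl(\Delta(\lambda_1)\boxtimes\cdots\boxtimes\Delta(\lambda_l),\Res^\alpha_\lambda L(\lambda)\bigr)\big/[\lambda]^!
=1
\]
directly by Frobenius reciprocity, Lemma~\ref{sc} and Theorem~\ref{rm}(2).

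What your approach buys is an explicit description of $\END(\Delta(\gamma^m))$ as a graded polynomial ring; what the paper's approach buys is brevity.
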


\begin{proof}
The isomorphism $\Delta(\lambda_1) \circ \cdots \circ \Delta(\lambda_l)
\cong
[\lambda]^!
\Delta(\lambda)$
and the Grothendieck group identity both follow from Lemma~\ref{dp}.
The existence of the filtration follows from Lemma~\ref{dp} and
exactness of induction.
Finally, to show that $\Delta(\lambda)$ has irreducible head, 
the 
filtration together with Theorem~\ref{mac2} implies that 
the only module that
could possibly appear with non-zero multiplicity 
in the head of $\Delta(\lambda)$ is $L(\lambda)$.
Now calculate using Frobenius reciprocity, Lemma~\ref{sc} and Lemma~\ref{dp}:
\begin{multline*}
\DIM \HOM_{H_\alpha}(\Delta(\lambda), L(\lambda))
=
\DIM
\HOM_{H_\alpha}(\Delta(\lambda_1)\circ\cdots\circ\Delta(\lambda_l), 
L(\lambda))\big /[\lambda]^!\\
=
\DIM
\HOM_{H_{\lambda_1}\otimes\cdots\otimes H_{\lambda_l}}(\Delta(\lambda_1)\boxtimes\cdots\boxtimes
\Delta(\lambda_l),L(\lambda_1)
\boxtimes\cdots\boxtimes L(\lambda_l))
= 1.
\end{multline*}
\end{proof}

\subsection{Standard homological properties}\label{shom}
In this subsection $\alpha \in Q^+$ is arbitrary.
The following theorem 
extends
the homological properties
proved originally in \cite[Theorem 4.12]{Kato} to non-simply-laced
types and to fields of positive characteristic.
Recall $\bar\nabla(\lambda) = \bar\Delta(\lambda)^{\circledast}$.

\begin{Theorem}\label{shp}
Suppose that $\lambda =(\lambda_1,\dots,\lambda_l) \in \KP(\alpha)$.
\begin{itemize}
\item[(1)] 
We have that
$\EXT^d_{H_\alpha}(\Delta(\lambda),V) = 0$
for all $d \geq 1$ and any finitely generated $H_\alpha$-module $V$
all of whose irreducible subquotients are 
of the form $q^nL(\mu)$ for $n \in \Z$
and $\mu \in \KP(\alpha)$ with $\mu\not\succ \lambda$.
\item[(2)]
We have that
$$
\DIM \EXT^d_{H_\alpha}(\Delta(\lambda), \bar\nabla(\mu)) =
\left\{
\begin{array}{ll}
1&\text{if $d=0$ and $\lambda=\mu$,}\\
0&\text{otherwise,}
\end{array}\right.
$$
for all $d \geq 0$ and $\mu \in \KP(\alpha)$.
\end{itemize}
\end{Theorem}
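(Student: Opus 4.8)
The plan is to deduce both parts from Lemma~\ref{mittagleffler}, the $\EXT$-level versions of Frobenius reciprocity and of the restriction--coinduction adjunction, Lemma~\ref{sc}, and Theorem~\ref{rm}(3). Throughout I will use that $\Ind^{\beta+\gamma}_{\beta,\gamma}$ and $\Res^{\beta+\gamma}_{\beta,\gamma}$ are exact and send (finitely generated) projectives to projectives --- both immediate from the basis theorem~(\ref{basisthm}) --- so that the adjunctions $(\Ind,\Res)$ and $(\Res,\coind)$ induce natural isomorphisms on $\EXT$-groups; and the identification $\bar\nabla(\mu)\cong q^{-s_\mu}\,\HOM_{H_\mu}\bigl(1_\mu H_\alpha,\,L(\mu_1)\boxtimes\cdots\boxtimes L(\mu_k)\bigr)$ with a coinduced module for $\mu=(\mu_1,\dots,\mu_k)\in\KP(\alpha)$, as already recorded in the proof of Lemma~\ref{formua}.

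\emph{Part (1).} Since $\preceq$ is a partial order, $\mu\not\succ\lambda$ means $\mu=\lambda$ or $\lambda\not\preceq\mu$. By Lemma~\ref{mittagleffler}, applied also to the degree shifts of $V$, it suffices to prove $\EXT^d_{H_\alpha}(\Delta(\lambda),L(\mu))=0$ for $d\geq1$ and each such $\mu$. Since $\Delta(\lambda_1)\circ\cdots\circ\Delta(\lambda_l)\cong[\lambda]^!\,\Delta(\lambda)$ by Theorem~\ref{shead}, it is enough to prove the same for $\Delta(\lambda_1)\circ\cdots\circ\Delta(\lambda_l)$; applying the derived version of Frobenius reciprocity I would rewrite this as
\[
\EXT^d_{H_\alpha}\!\bigl(\Delta(\lambda_1)\circ\cdots\circ\Delta(\lambda_l),\,L(\mu)\bigr)\;\cong\;\EXT^d_{H_{\lambda_1}\otimes\cdots\otimes H_{\lambda_l}}\!\bigl(\Delta(\lambda_1)\boxtimes\cdots\boxtimes\Delta(\lambda_l),\,\Res^\alpha_{\lambda_1,\dots,\lambda_l}L(\mu)\bigr).
\]
As $L(\mu)$ is a quotient of $\bar\Delta(\mu)$, the module $\Res^\alpha_{\lambda_1,\dots,\lambda_l}L(\mu)$ is a quotient of $\Res^\alpha_{\lambda_1,\dots,\lambda_l}\bar\Delta(\mu)$, which is $0$ by Lemma~\ref{sc} when $\lambda\not\preceq\mu$; so the $\EXT$-group then vanishes in all degrees. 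When $\mu=\lambda$, Lemma~\ref{sc} instead tells us that every composition factor of $\Res^\alpha_{\lambda_1,\dots,\lambda_l}\bar\Delta(\lambda)$, hence of its quotient $\Res^\alpha_{\lambda_1,\dots,\lambda_l}L(\lambda)$, is a degree shift of $L(\lambda_1)\boxtimes\cdots\boxtimes L(\lambda_l)$; by the K\"unneth formula the group $\EXT^d_{H_{\lambda_1}\otimes\cdots\otimes H_{\lambda_l}}(\boxtimes_i\Delta(\lambda_i),\,\boxtimes_i L(\lambda_i))$ is a direct sum of tensor products $\bigotimes_i\EXT^{d_i}_{H_{\lambda_i}}(\Delta(\lambda_i),L(\lambda_i))$ over $d_1+\cdots+d_l=d$, and each factor with $d_i\geq1$ vanishes by Theorem~\ref{rm}(3) since $\lambda_i$ is a positive root; so this group is $0$ for $d\geq1$. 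A straightforward d\'evissage over $H_{\lambda_1}\otimes\cdots\otimes H_{\lambda_l}$ (the target is finite dimensional) then finishes Part~(1).

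\emph{Part (2).} First suppose $\lambda\neq\mu$, so $\lambda\not\preceq\mu$ or $\mu\not\preceq\lambda$. If $\lambda\not\preceq\mu$, then $\Res^\alpha_{\lambda_1,\dots,\lambda_l}\bar\nabla(\mu)\cong(\Res^\alpha_{\lambda_1,\dots,\lambda_l}\bar\Delta(\mu))^\circledast=0$ by Lemma~\ref{sc} (restriction commuting with $\circledast$), and the Frobenius-reciprocity computation of Part~(1), with $\bar\nabla(\mu)$ replacing $L(\mu)$, gives $\EXT^d_{H_\alpha}(\Delta(\lambda),\bar\nabla(\mu))=0$ for all $d$. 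If instead $\mu\not\preceq\lambda$, I would use the coinduced description of $\bar\nabla(\mu)$ and the derived restriction--coinduction adjunction:
\[
\EXT^d_{H_\alpha}\!\bigl(\Delta(\lambda),\,\bar\nabla(\mu)\bigr)\;\cong\;q^{-s_\mu}\,\EXT^d_{H_{\mu_1}\otimes\cdots\otimes H_{\mu_k}}\!\bigl(\Res^\alpha_{\mu_1,\dots,\mu_k}\Delta(\lambda),\,L(\mu_1)\boxtimes\cdots\boxtimes L(\mu_k)\bigr).
\]
By Theorem~\ref{shead}, $\Delta(\lambda)$ is filtered with all sections degree shifts of $\bar\Delta(\lambda)$, so by Theorem~\ref{mac2} every composition factor of $\Delta(\lambda)$ is $q^nL(\nu)$ with $\nu\preceq\lambda$; since $\mu\not\preceq\lambda$ we get $\mu\not\preceq\nu$, hence $\Res^\alpha_{\mu_1,\dots,\mu_k}L(\nu)=0$ (Lemma~\ref{sc}, $L(\nu)$ a quotient of $\bar\Delta(\nu)$), hence $\Res^\alpha_{\mu_1,\dots,\mu_k}\Delta(\lambda)=0$ and the $\EXT$-group vanishes. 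Finally, for $\mu=\lambda$ take the short exact sequence $0\to L(\lambda)\to\bar\nabla(\lambda)\to\bar\nabla(\lambda)/L(\lambda)\to0$: every composition factor of the finite-dimensional module $\bar\nabla(\lambda)/L(\lambda)$ is $q^nL(\nu)$ with $\nu\prec\lambda$, and $\EXT^d_{H_\alpha}(\Delta(\lambda),L(\nu))=0$ for all $d$ (for $d\geq1$ by Part~(1), for $d=0$ because $\Delta(\lambda)$ has irreducible head $L(\lambda)\not\cong L(\nu)$); d\'evissage then kills $\EXT^\bullet_{H_\alpha}(\Delta(\lambda),\bar\nabla(\lambda)/L(\lambda))$, so the long exact sequence gives $\EXT^d_{H_\alpha}(\Delta(\lambda),\bar\nabla(\lambda))\cong\EXT^d_{H_\alpha}(\Delta(\lambda),L(\lambda))$, which is $\k$ for $d=0$ (as $\Delta(\lambda)$ has absolutely irreducible head $L(\lambda)$) and $0$ for $d\geq1$ by Part~(1).

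The routine cases are Part~(1) and the sub-case $\lambda\not\preceq\mu$ of Part~(2), where one just restricts the costandard object along the $\lambda$-parabolic and invokes Lemma~\ref{sc}. The delicate step is the sub-case $\lambda\prec\mu$ of Part~(2): here restricting $\bar\nabla(\mu)$ along the $\lambda$-parabolic does \emph{not} vanish, so one is forced to restrict $\Delta(\lambda)$ along the $\mu$-parabolic instead and feed in the coinduced-module description of $\bar\nabla(\mu)$. The two points needing genuine care are (a) making the derived restriction--coinduction adjunction precise --- this is where one uses that $\Res$ is exact and preserves projectives, which is why the argument is not symmetric in $\Ind$ and $\coind$ --- and (b) the composition-factor bookkeeping for $\Res^\alpha_{\mu_1,\dots,\mu_k}\Delta(\lambda)$ via the $\bar\Delta$-filtration of $\Delta(\lambda)$. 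Everything else is formal homological algebra.
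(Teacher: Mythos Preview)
Your proof is correct and uses the same toolkit as the paper (generalized Frobenius reciprocity, Lemma~\ref{sc}, Theorem~\ref{rm}(3), Lemma~\ref{mittagleffler}), but the organization differs in two places worth noting.

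In Part~(1) the paper first establishes vanishing for $V=\bar\Delta(\mu)$ and then deduces the case $V=L(\mu)$ by induction on~$\preceq$ (using that the composition factors of $\rad\bar\Delta(\mu)$ are $L(\nu)$ with $\nu\prec\mu$, hence still $\not\succ\lambda$). You bypass this induction by observing directly that $\Res^\alpha_\lambda L(\mu)$ is a quotient of $\Res^\alpha_\lambda\bar\Delta(\mu)$, which already vanishes or has the right composition factors by Lemma~\ref{sc}; this is a small but genuine simplification.

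In Part~(2) for $\mu\succ\lambda$ the paper passes by duality to $\EXT^d_{H_\alpha}(\bar\Delta(\mu),\nabla(\lambda))$ and then applies the $(\Ind,\Res)$ adjunction, landing on $\Res^\alpha_\mu\nabla(\lambda)=(\Res^\alpha_\mu\Delta(\lambda))^\circledast=0$. You instead invoke the coinduced description of $\bar\nabla(\mu)$ from Lemma~\ref{formua} and the derived $(\Res,\coind)$ adjunction to reach $\Res^\alpha_\mu\Delta(\lambda)=0$ directly. Both routes rest on the same vanishing $\Res^\alpha_\mu\Delta(\lambda)=0$ (which you justify via the exhaustive $\bar\Delta$-filtration of Theorem~\ref{shead}); your phrasing makes the required adjunction more explicit, while the paper's duality swap is terser but relies on the identification $\HOM(M,N^\circledast)\cong\HOM(N,M^\circledast)$ at the derived level. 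For $\mu=\lambda$ your d\'evissage via $0\to L(\lambda)\to\bar\nabla(\lambda)\to\bar\nabla(\lambda)/L(\lambda)\to0$ is exactly the argument implicit in the paper's appeal to Theorem~\ref{shead} and Part~(1).
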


\begin{proof}
(1) 
We first prove this in the special case that $V = \bar\Delta(\mu)$
for $\mu\not\succ\lambda$.
By Theorem~\ref{shead} we have that
$$
\DIM \EXT^d_{H_\alpha}(\Delta(\lambda), \bar\Delta(\mu))
= \DIM \EXT^d_{H_\alpha}(\Delta(\lambda_1)\circ\cdots\circ
\Delta(\lambda_l),
\bar\Delta(\mu)) / [\lambda]^!.
$$
By generalized Frobenius reciprocity and 
Lemma~\ref{sc}, this is zero unless $\lambda \preceq \mu$.
If $\lambda = \mu$ it equals
$$
\sum_{d_1+\cdots+d_l = d}
\left(\prod_{k=1}^l\DIM \EXT^{d_k}_{H_{\lambda_k}}
\left(\Delta(\lambda_k), L(\lambda_k)\right) \right),
$$
which is zero by Theorem~\ref{rm}(3).
Using this special case and arguing by induction
on the ordering $\preceq$, it is straightforward to deduce that
the result is also true if $V = L(\mu)$
for $\mu\not\succ\lambda$. Then the result for general $V$ follows by 
Lemma~\ref{mittagleffler}.

\noindent
(2) By dualizing Theorem~\ref{mac2},
$\bar\nabla(\mu)$ has irreducible socle isomorphic to $L(\mu)$ and all
its other composition factors are of the form $q^m L(\nu)$ for $\nu \prec
\mu$ and $m \in \Z$.
Now use Theorem~\ref{shead}
to deduce the result when $d=0$. 
If $d \geq 1$ and $\mu\not\succ\lambda$
then we have that
$\EXT^d_{H_\alpha}(\Delta(\lambda), \bar\nabla(\mu)) = 0$
by (1).
Finally if $d \geq 1$ and $\mu \succ \lambda$, it suffices
to show equivalently that $\EXT^d_{H_\alpha}(\bar\Delta(\mu),
\nabla(\lambda)) = 0$ where $\nabla(\lambda) := \Delta(\lambda)^\circledast$.
This follows from Lemma~\ref{sc} and generalized Frobenius reciprocity
once again.
\end{proof}

We say that an $H_\alpha$-module $V$ has a {\em $\Delta$-flag}
if there is a (finite!) filtration $V = V_0 \supset V_1 \supset \cdots \supset V_n = 0$
such that $V_{i} / V_{i-1} \cong q^{m_i} \Delta(\lambda_i)$ for each
$i=1,\dots,n$ and
some $m_i \in
\Z$, $\lambda_i \in \KP(\alpha)$.
Note then by Theorem~\ref{shp}(2) that
\begin{equation*}
[V:\Delta(\lambda)] := 
 \sum_{\substack{1 \leq i \leq n \\ \lambda_i = \lambda}} q^{m_i}
=
\overline{\DIM \HOM_{H_\alpha}(V, \bar\nabla(\lambda))}
\end{equation*}
for each $\lambda \in \KP(\alpha)$,
so that this multiplicity is well-defined 
independent of the particular choice of 
$\Delta$-flag.
The following theorem (and its proof) is analogous to a 
well-known result (and proof) in the context of quasi-hereditary
algebras;
see e.g. \cite[Proposition A2.2(iii)]{Donkin}.

\begin{Theorem}\label{wef}
Suppose for $\alpha \in Q^+$ that $V$ is a finitely generated $H_\alpha$-module
with $\EXT^1_{H_\alpha}(V, \bar\nabla(\mu)) = 0$ for all 
$\mu \in \KP(\alpha)$.
Then $V$ has a $\Delta$-flag.
\end{Theorem}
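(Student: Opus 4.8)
The plan is to imitate the classical argument for quasi-hereditary algebras (cf.\ \cite[Proposition~A2.2(iii)]{Donkin}), peeling standard modules off the top of $V$ one at a time. The one genuinely new feature is that our modules may be infinite-dimensional, so ``induction on length'' must be replaced by induction on a finite invariant. Since $\bar\nabla(\mu)=\bar\Delta(\mu)^\circledast$ is finite-dimensional (as $\bar\Delta(\mu)$ is, being a product of finite-dimensional irreducibles) and $\KP(\alpha)$ is finite, the graded $\HOM$-spaces $\HOM_{H_\alpha}(V,\bar\nabla(\mu))$ are finite-dimensional for finitely generated $V$, so
$$
m(V):=\sum_{\mu\in\KP(\alpha)}\dim_\k\HOM_{H_\alpha}(V,\bar\nabla(\mu))
$$
is a well-defined non-negative integer; by Theorem~\ref{shp}(2) and the multiplicity formula preceding the statement, $m(V)$ equals the number of sections of any $\Delta$-flag, should one exist. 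I would induct on $m(V)$, using also that $H_\alpha$ is Noetherian (being module-finite over its Noetherian centre), so that submodules of finitely generated modules remain finitely generated.

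\textbf{The clean part.} For the base case $m(V)=0$: if $V\neq 0$, then $V$ has a maximal submodule, hence a simple quotient, which by the classification of irreducibles is some $q^cL(\mu)$; composing with $q^cL(\mu)\cong q^c\soc\bar\nabla(\mu)\hookrightarrow q^c\bar\nabla(\mu)$ gives a nonzero element of $\HOM_{H_\alpha}(V,\bar\nabla(\mu))$, a contradiction. So $V=0$, which carries the empty $\Delta$-flag. For the inductive step with $m(V)\geq 1$, the aim is to produce a short exact sequence
$$
0\longrightarrow V'\longrightarrow V\longrightarrow q^c\Delta(\lambda)\longrightarrow 0
$$
for some $\lambda\in\KP(\alpha)$, $c\in\Z$. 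Granting this, applying $\HOM_{H_\alpha}(-,\bar\nabla(\mu))$ and using Theorem~\ref{shp}(2) (which gives $\HOM_{H_\alpha}(\Delta(\lambda),\bar\nabla(\mu))=\delta_{\lambda,\mu}\k$ concentrated in degree $0$, and $\EXT^d_{H_\alpha}(\Delta(\lambda),\bar\nabla(\mu))=0$ for $d\geq 1$) together with the hypothesis $\EXT^1_{H_\alpha}(V,\bar\nabla(\mu))=0$, the long exact sequence yields $\EXT^1_{H_\alpha}(V',\bar\nabla(\mu))=0$ for all $\mu$ and $\dim_\k\HOM_{H_\alpha}(V',\bar\nabla(\mu))=\dim_\k\HOM_{H_\alpha}(V,\bar\nabla(\mu))-\delta_{\lambda,\mu}$, so $m(V')=m(V)-1$. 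By the inductive hypothesis $V'$ has a $\Delta$-flag, and prepending $q^c\Delta(\lambda)$ gives one for $V$.

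\textbf{The crux.} Everything thus reduces to producing the quotient $V\twoheadrightarrow q^c\Delta(\lambda)$, and this is the step I expect to be the main obstacle. One chooses $\lambda$ to be a maximal element, in the convex order $\preceq$, among those $\mu\in\KP(\alpha)$ for which some $q^nL(\mu)$ is a composition factor of $V$, so that $V$ lies in the Serre subcategory $\mathcal{C}$ of finitely generated $H_\alpha$-modules all of whose composition factors are labelled by roots $\not\succ\lambda$. By Theorem~\ref{shp}(1), $\EXT^1_{H_\alpha}(\Delta(\lambda),W)=0$ for every $W\in\mathcal{C}$; since $\Delta(\lambda)\in\mathcal{C}$ has irreducible head $L(\lambda)$ by Theorem~\ref{shead}, it is the projective cover of $L(\lambda)$ in $\mathcal{C}$. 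It then suffices to (i) show that $L(\lambda)$ occurs in the head $V/\rad V$, giving a surjection $\pi\colon V\twoheadrightarrow q^cL(\lambda)$, and (ii) lift $\pi$ along $q^c\Delta(\lambda)\twoheadrightarrow q^cL(\lambda)$ to a homomorphism $V\to q^c\Delta(\lambda)$, which is then automatically surjective by Nakayama's lemma; the lifting (ii) exists as soon as $\EXT^1_{H_\alpha}(V,q^c\rad\Delta(\lambda))=0$, which by Lemma~\ref{mittagleffler} reduces to the vanishing of $\EXT^1_{H_\alpha}(V,q^nL(\nu))$ for the composition factors $q^nL(\nu)$ of $\rad\Delta(\lambda)$ (all having $\nu\preceq\lambda$). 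The hard point is that neither ``$L(\lambda)$ reaches the head of $V$'' nor ``$\EXT^1_{H_\alpha}(V,L(\nu))=0$ for $\nu\preceq\lambda$'' follows formally from the bare hypothesis $\EXT^1_{H_\alpha}(V,\bar\nabla(\mu))=0$ --- this is precisely where the quasi-hereditary content sits. My plan is to extract both statements by an auxiliary downward induction on $\preceq$ that plays the hypothesis against the composition structure of $\bar\nabla(\nu)$ (simple socle $L(\nu)$, all remaining factors strictly $\prec\nu$), using Theorem~\ref{shp}(1) and, to make that induction terminate, the finiteness of the global dimension of $H_\alpha$ (cf.\ Theorem~\ref{source} and \cite{Mac}).
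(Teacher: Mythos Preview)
Your overall strategy and your ``clean part'' are correct and match the paper exactly: induct on $m(V)=\ell(V)$, and at each step peel off a copy of some $\Delta(\lambda)$ from the top of $V$. The gap you correctly identify at the crux --- getting $L(\lambda)$ into the head of $V$ and proving $\EXT^1_{H_\alpha}(V,L(\nu))=0$ for $\nu\preceq\lambda$ --- is an artifact of your choice of $\lambda$, and it disappears entirely with the paper's choice.

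The paper takes $\lambda$ to be \emph{minimal} (not maximal) among those $\mu$ with $\HOM_{H_\alpha}(V,L(\mu))\neq 0$, i.e.\ minimal in the head of $V$. Then your point (i) is tautological. For your point (ii), take any $\mu\preceq\lambda$ and consider $0\to L(\mu)\to\bar\nabla(\mu)\to Q\to 0$; every composition factor of $Q$ is of the form $q^nL(\nu)$ with $\nu\prec\mu\preceq\lambda$, so by minimality of $\lambda$ in the head one has $\HOM_{H_\alpha}(V,Q)=0$, and the long exact sequence together with the hypothesis $\EXT^1_{H_\alpha}(V,\bar\nabla(\mu))=0$ yields $\EXT^1_{H_\alpha}(V,L(\mu))=0$ in one line. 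No auxiliary induction and no appeal to finite global dimension is needed. Your choice of $\lambda$ maximal among composition factors cannot be rescued by the downward induction you sketch: in that same long exact sequence the obstruction term is $\HOM_{H_\alpha}(V,Q)$, which is governed by the head of $V$ at labels strictly \emph{below} $\nu$ --- exactly the wrong direction for a downward induction on $\preceq$ to control.
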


\begin{proof}
Let $\ell(V) \in \N$ denote the sum of the dimensions of
$\HOM_{H_\alpha}(V, \bar\nabla(\lambda))$ for all $\lambda \in
\KP(\alpha)$; this makes sense because $V$ is finitely generated.
We proceed by induction on $\ell(V)$, the result being
trivial if $\ell(V) = 0$.
Suppose that $\ell(V) > 0$.
Let $\lambda$ be minimal such that $\HOM_{H_\alpha}(V, L(\lambda)) \neq 0$.
Then let $m \in \Z$ be minimal such that $\hom_{H_\alpha}(q^m V, L(\lambda)) \neq
0$.

We show in this paragraph that
$\EXT^1_{H_\alpha}(V, L(\mu)) = 0$ for all $\mu \preceq \lambda$.
There is a short exact sequence
$0 \rightarrow L(\mu) \rightarrow \bar\nabla(\mu) \rightarrow Q
\rightarrow 0$ where all composition factors of $Q$ are 
of the form $q^n L(\nu)$ for $n \in \Z$ and $\nu \prec \mu \preceq \lambda$.
By the minimality of $\lambda$,
$\HOM_{H_\alpha}(V, Q) = 0$.
Hence applying $\HOM_{H_\alpha}(V, -)$ to this short exact sequence, 
we obtain an exact sequence
$0 \rightarrow \EXT^1_{H_\alpha}(V, L(\mu)) \rightarrow \EXT^1_{H_\alpha}(V,
\bar\nabla(\mu)) = 0.$
We are done.

Now recall by Theorem~\ref{shead} that $\Delta(\lambda)$ has
irreducible head isomorphic to $L(\lambda)$.
In this paragraph, we show that there is a homogeneous surjection 
$q^m V \twoheadrightarrow \Delta(\lambda)$
by showing that
the natural map $\hom_{H_\alpha}(q^m V, \Delta(\lambda)) \rightarrow
\hom_{H_\alpha}(q^m V, L(\lambda))$ is surjective.
The long exact sequence
obtained by applying $\hom_{H_\alpha}(q^m V, -)$
to the short exact sequence
$0 \rightarrow \rad \Delta(\lambda) \rightarrow \Delta(\lambda) \rightarrow
L(\lambda) \rightarrow 0$ gives us an exact sequence
$$
\hom_{H_\alpha}(q^m V, \Delta(\lambda)) \rightarrow
\hom_{H_\alpha}(q^m V, L(\lambda)) \rightarrow
\ext^1_{H_\alpha}(q^m V, \rad \Delta(\lambda)).
$$
 Thus we are reduced to showing that $\ext^1_{H_\alpha}(q^m V, \rad
\Delta(\lambda)) = 0$.
This follows using the previous paragraph and
Lemma~\ref{mittagleffler},
noting 
by Theorems~\ref{mac2} and \ref{shead} that
all irreducible subquotients of $\rad \Delta(\lambda)$
are of the form $q^n L(\mu)$ for $n \in \Z$ and $\mu \preceq
\lambda$.

We have now proved that there is a
short exact sequence
$$
0 \rightarrow U \rightarrow V \rightarrow q^{-m} \Delta(\lambda)
\rightarrow 0
$$
for some submodule $U$ of $V$.
Applying $\HOM_{H_\alpha}(-, \bar\nabla(\mu))$
we get from the long exact sequence and Theorem~\ref{shp}(2)
that $\ell(U) < \ell(V)$ and
$\EXT^1_{H_\alpha}(U, \bar\nabla(\mu)) = 0$
for all $\mu \in \KP(\alpha)$.
Thus by induction $U$ has a $\Delta$-flag, hence so does $V$.
\end{proof}

As a corollary we obtain ``BGG reciprocity.''
For simply-laced types in characteristic zero, 
this was noted already in \cite[Remark 4.17]{Kato} (for convex orderings 
that are adapted to the orientation of the quiver).

\begin{Corollary}\label{bgg}
For any $\alpha \in Q^+$ and $\lambda \in \KP(\alpha)$,
the projective module $P(\lambda)$ has a $\Delta$-flag
with $[P(\lambda):\Delta(\mu)] = [\bar\Delta(\mu):L(\lambda)]$ 
(the latter notation denotes graded Jordan-H\"older multiplicity).
\end{Corollary}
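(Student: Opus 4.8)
The plan is to derive both assertions directly from Theorem~\ref{wef} and the multiplicity formula displayed just above it. First I would record that $P(\lambda)$ is a finitely generated projective $H_\alpha$-module. Indeed, choosing $\bi \in \W_\alpha$ with $1_\bi L(\lambda) \neq 0$ gives a surjection from a degree shift of the cyclic projective $H_\alpha 1_\bi$ onto $L(\lambda)$, and since $\Proj H_\alpha$ is Krull--Schmidt the projective cover $P(\lambda)$ occurs as a direct summand of this finitely generated projective. In particular $\EXT^1_{H_\alpha}(P(\lambda),\bar\nabla(\mu)) = 0$ for every $\mu \in \KP(\alpha)$, so Theorem~\ref{wef} applies and produces a $\Delta$-flag of $P(\lambda)$; its multiplicities are then well defined and, by the formula preceding Theorem~\ref{wef}, satisfy
$$
[P(\lambda):\Delta(\mu)] = \overline{\DIM \HOM_{H_\alpha}(P(\lambda),\bar\nabla(\mu))}.
$$

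It then remains to identify the right-hand side. Since $P(\lambda)$ is the projective cover of $L(\lambda)$ and every irreducible $H_\alpha$-module is absolutely irreducible, the functor $\HOM_{H_\alpha}(P(\lambda),-)$ is exact and, evaluated on any finite-dimensional module $M$, computes the graded Jordan--H\"older multiplicity: $\DIM \HOM_{H_\alpha}(P(\lambda),M) = [M:L(\lambda)]$. Applying this with $M = \bar\nabla(\mu)$, which is finite dimensional being the graded dual of the finite-dimensional module $\bar\Delta(\mu)$, gives $\DIM \HOM_{H_\alpha}(P(\lambda),\bar\nabla(\mu)) = [\bar\nabla(\mu):L(\lambda)]$. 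Finally, applying $\circledast$ to a composition series of $\bar\Delta(\mu)$ and using $L(\lambda)^\circledast \cong L(\lambda)$ from Theorem~\ref{mac2} together with $(q^n V)^\circledast \cong q^{-n} V^\circledast$, one obtains $[\bar\nabla(\mu):L(\lambda)] = \overline{[\bar\Delta(\mu):L(\lambda)]}$. Combining the three identities yields $[P(\lambda):\Delta(\mu)] = \overline{\overline{[\bar\Delta(\mu):L(\lambda)]}} = [\bar\Delta(\mu):L(\lambda)]$, as claimed.

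I do not anticipate a genuine obstacle: all of the homological content has already been packaged into Theorems~\ref{wef} and~\ref{shp}. The only points demanding a little care are the two passages through the bar involution, the standard fact that $\HOM_{H_\alpha}(P(\lambda),-)$ reads off the graded multiplicity of $L(\lambda)$ (which rests on absolute irreducibility of $L(\lambda)$ and on $P(\lambda)$ being a projective cover), and the minor but worth-stating observation that $P(\lambda)$ is finitely generated so that Theorem~\ref{wef} is applicable in the first place.
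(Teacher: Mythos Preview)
Your proof is correct and follows essentially the same route as the paper's: apply Theorem~\ref{wef} to $P(\lambda)$ to obtain a $\Delta$-flag, use the displayed multiplicity formula together with the identification $\DIM\HOM_{H_\alpha}(P(\lambda),\bar\nabla(\mu)) = [\bar\nabla(\mu):L(\lambda)]$, and then pass through the bar involution via $L(\lambda)^\circledast \cong L(\lambda)$. The only difference is that you spell out more carefully why $P(\lambda)$ is finitely generated and why $\HOM_{H_\alpha}(P(\lambda),-)$ computes graded multiplicities, which the paper leaves implicit.
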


\begin{proof}
Theorem~\ref{wef} immediately implies that $P(\lambda)$ has a $\Delta$-flag.
Moreover using also Theorem~\ref{shp}(2) we have that
$$
[P(\lambda):\Delta(\mu)] = 
\overline{\DIM\HOM_{H_\alpha}[P(\lambda), \bar\nabla(\mu)]}
= \overline{[\bar\nabla(\mu):L(\lambda)]}
= [\bar\Delta(\mu):L(\lambda)],
$$
as $L(\lambda)^\circledast \cong L(\lambda)$.
\end{proof}

\begin{Corollary}
For any $\alpha \in Q^+$ we have that
$$
\DIM H_\alpha = \!\!\sum_{\lambda \in \KP(\alpha)}
(\DIM \Delta(\lambda)) (\DIM \bar\Delta(\lambda))
= \!\!\sum_{\lambda \in \KP(\alpha)}
(\DIM \bar\Delta(\lambda))^2
\:\: \Big / 
\!\!\!\!\!\displaystyle\!\!
\prod_{\substack{\beta \in R^+ \\
      1 \leq r \leq m_\beta(\lambda)}}
\!\!\!(1-q_\beta^{2r}).
$$
\end{Corollary}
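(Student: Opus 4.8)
The plan is to apply Theorem~\ref{wef} to the free module $H_\alpha$ itself. Since $H_\alpha$ is cyclic (generated by $1_\alpha$) it is finitely generated, and since it is projective we have $\EXT^1_{H_\alpha}(H_\alpha, \bar\nabla(\mu)) = 0$ for every $\mu \in \KP(\alpha)$; hence Theorem~\ref{wef} shows that $H_\alpha$ admits a (finite) $\Delta$-flag. Taking graded dimensions along this flag gives
\[
\DIM H_\alpha = \sum_{\lambda \in \KP(\alpha)} [H_\alpha : \Delta(\lambda)]\,\DIM \Delta(\lambda),
\]
so the first equality will follow once we identify $[H_\alpha : \Delta(\lambda)] = \DIM \bar\Delta(\lambda)$.

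For this I would use the multiplicity formula recorded just before Theorem~\ref{wef}, namely $[H_\alpha : \Delta(\lambda)] = \overline{\DIM \HOM_{H_\alpha}(H_\alpha, \bar\nabla(\lambda))}$ (which rests on Theorem~\ref{shp}(2)), together with the graded isomorphism $\HOM_{H_\alpha}(H_\alpha, M) \cong M$ given by evaluation at $1_\alpha$, and the fact that graded duality reverses degrees, so that $\DIM \bar\nabla(\lambda) = \DIM \bar\Delta(\lambda)^\circledast = \overline{\DIM \bar\Delta(\lambda)}$. Combining these three observations yields $[H_\alpha : \Delta(\lambda)] = \overline{\overline{\DIM \bar\Delta(\lambda)}} = \DIM \bar\Delta(\lambda)$, hence $\DIM H_\alpha = \sum_{\lambda \in \KP(\alpha)} (\DIM \bar\Delta(\lambda))(\DIM \Delta(\lambda))$. (Alternatively one could decompose $H_\alpha \cong \bigoplus_\lambda (\DIM L(\lambda)) P(\lambda)$ as a left module and feed Corollary~\ref{bgg} into $\DIM H_\alpha = \sum_\lambda (\DIM L(\lambda))(\DIM P(\lambda))$; this gives the same answer but requires the extra, standard, input about the decomposition of the regular module.) The second equality is then a substitution: by Theorem~\ref{shead} one has the character identity $\CH \bar\Delta(\lambda) = \big(\prod_{\beta \in R^+,\, 1 \leq r \leq m_\beta(\lambda)}(1-q_\beta^{2r})\big)\CH \Delta(\lambda)$, and since the product on the right is a Laurent polynomial acting as a scalar on characters, passing to graded dimensions (summing over words) gives $\DIM \Delta(\lambda) = \DIM \bar\Delta(\lambda)\big/\prod_{\beta \in R^+,\, 1 \leq r \leq m_\beta(\lambda)}(1-q_\beta^{2r})$, which plugged into the first equality finishes the proof.

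I do not expect a genuine obstacle here: given Theorems~\ref{wef}, \ref{shead} and \ref{shp} the statement is essentially a bookkeeping exercise. The only points demanding care are that $H_\alpha$ and $\Delta(\lambda)$ are infinite-dimensional, so every graded dimension above must be read as a formal Laurent series bounded below; this is legitimate because $H_\alpha$ is locally finite dimensional and bounded below (hence so is every finitely generated module and every section of the flag), and because $\prod_\beta(1-q_\beta^{2r})$, having constant term $1$, is a unit in the ring of such series. One should also note that $\bar\Delta(\lambda)$, being an induction product of the finite-dimensional cuspidal modules $L(\lambda_i)$, is itself finite dimensional, so $\bar\nabla(\lambda) = \bar\Delta(\lambda)^\circledast$ is a genuine graded dual and the bar-involution bookkeeping in the previous paragraph is unambiguous.
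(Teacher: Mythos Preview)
Your proof is correct and follows essentially the same route as the paper: apply Theorem~\ref{wef} to $H_\alpha$, compute the $\Delta$-flag multiplicities via $\overline{\DIM \HOM_{H_\alpha}(H_\alpha,\bar\nabla(\lambda))} = \overline{\DIM \bar\nabla(\lambda)} = \DIM \bar\Delta(\lambda)$, and then invoke the last part of Theorem~\ref{shead} for the second equality. The paper's argument is just a terser version of what you wrote.
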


\begin{proof}
Again $H_\alpha$ has a $\Delta$-flag by Theorem ~\ref{wef}, so its dimension
is given by
\begin{align*}
\DIM H_\alpha &= 
\sum_{\lambda \in \KP(\alpha)}
(\DIM \Delta(\lambda))
(\overline{\DIM \HOM_{H_\alpha}(H_\alpha, \bar\nabla(\lambda))})\\
&=
\sum_{\lambda \in \KP(\alpha)}
(\DIM \Delta(\lambda))
(\overline{\DIM \bar\nabla(\lambda))})
=
\sum_{\lambda \in \KP(\alpha)}
(\DIM \Delta(\lambda))
(\DIM \bar\Delta(\lambda)).
\end{align*}
To deduce the second equality use the last part of Theorem~\ref{shead}.
\end{proof}

\begin{Corollary}\label{c}
For any $\lambda \in \KP$ we have that
\begin{align*}
\Delta(\lambda) &\cong
P(\lambda) \:\bigg/ 
\sum_{\phantom{q^n}\mu\not\preceq\lambda\phantom{q^n}}
\sum_{f:P(\mu)
  \rightarrow P(\lambda)} \im f,\\
\bar\Delta(\lambda) &
\cong P(\lambda) \:\bigg/ 
\sum_{\phantom{q^n}\mu\not\prec\lambda\phantom{q^n}}
\sum_{f:P(\mu)
  \rightarrow \rad P(\lambda)} \im f,
\end{align*}
summing over all (not necessarily homogeneous) homomorphisms $f$.
\end{Corollary}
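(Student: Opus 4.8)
The plan is to identify $N_\Delta:=\sum_{\mu\not\preceq\lambda}\sum_{f\colon P(\mu)\to P(\lambda)}\im f$ with the kernel $K$ of the projective cover map $\pi\colon P(\lambda)\twoheadrightarrow\Delta(\lambda)$, and $N_{\bar\Delta}:=\sum_{\mu\not\prec\lambda}\sum_{f\colon P(\mu)\to\rad P(\lambda)}\im f$ with the kernel $\bar K$ of $P(\lambda)\twoheadrightarrow\bar\Delta(\lambda)$, in each case by proving two containments. (These projections exist because $\Delta(\lambda)$ and $\bar\Delta(\lambda)$ have irreducible head $L(\lambda)$, by Theorems~\ref{shead} and~\ref{mac2}, and the same observation gives $K,\bar K\subseteq\rad P(\lambda)$.) For the containments $N_\Delta\subseteq K$ and $N_{\bar\Delta}\subseteq\bar K$ I would use the standard fact that $\HOM_{H_\alpha}(P(\mu),X)\neq 0$ if and only if $L(\mu)$ is a composition factor of $X$ (the nontrivial direction lifting the cover of $L(\mu)$ through an appropriate subquotient of $X$, using projectivity of $P(\mu)$). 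Since every composition factor of $\Delta(\lambda)$ has the form $q^nL(\mu)$ with $\mu\preceq\lambda$ --- by the $\bar\Delta$-flag of Theorem~\ref{shead} together with Theorem~\ref{mac2} --- every $f\colon P(\mu)\to P(\lambda)$ with $\mu\not\preceq\lambda$ satisfies $\pi\circ f=0$, so $\im f\subseteq K$. Likewise $\rad P(\lambda)/\bar K\cong\rad\bar\Delta(\lambda)$ has all composition factors of the form $q^nL(\mu)$ with $\mu\prec\lambda$ (Theorem~\ref{mac2}), so any $f\colon P(\mu)\to\rad P(\lambda)$ with $\mu\not\prec\lambda$ has $\im f\subseteq\bar K$.

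For the reverse containments the point is to control the heads $K/\rad K$ and $\bar K/\rad\bar K$. Using that $[\,K/\rad K:L(\mu)\,]=\dim\HOM_{H_\alpha}(K,L(\mu))$, and similarly for $\bar K$, which relies on absolute irreducibility of the $L(\mu)$ \cite[Corollary~3.9]{KL}, I would apply $\HOM_{H_\alpha}(-,L(\mu))$ to the short exact sequences $0\to K\to P(\lambda)\to\Delta(\lambda)\to 0$ and $0\to\bar K\to P(\lambda)\to\bar\Delta(\lambda)\to 0$. Because $P(\lambda)$ is projective and $K,\bar K\subseteq\rad P(\lambda)$ (so the maps out of $\HOM_{H_\alpha}(P(\lambda),L(\lambda))$ are zero), the long exact sequences give $\HOM_{H_\alpha}(K,L(\mu))\cong\EXT^1_{H_\alpha}(\Delta(\lambda),L(\mu))$ and $\HOM_{H_\alpha}(\bar K,L(\mu))\cong\EXT^1_{H_\alpha}(\bar\Delta(\lambda),L(\mu))$ for every $\mu$. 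By Theorem~\ref{shp}(1) the first group vanishes whenever $\mu\not\succ\lambda$, so every composition factor of $K/\rad K$ has the form $q^nL(\mu)$ with $\mu\succ\lambda$, in particular $\mu\not\preceq\lambda$. For $\bar K$ I additionally need $\EXT^1_{H_\alpha}(\bar\Delta(\lambda),L(\mu))=0$ for $\mu\prec\lambda$; I would get this from generalized Frobenius reciprocity for $\EXT$ --- valid since $\Ind$ is exact and left adjoint to the exact functor $\Res$, hence carries projective resolutions to projective resolutions --- which identifies this group (up to degree shift) with $\EXT^1_{H_\lambda}(L(\lambda_1)\boxtimes\cdots\boxtimes L(\lambda_l),\Res^\alpha_\lambda L(\mu))$, and the latter is zero because $\Res^\alpha_\lambda L(\mu)=0$: indeed $L(\mu)$ is a quotient of $\bar\Delta(\mu)$ and $\Res^\alpha_\lambda\bar\Delta(\mu)=0$ since $\lambda\not\preceq\mu$, by Lemma~\ref{sc}. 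Hence every composition factor of $\bar K/\rad\bar K$ has the form $q^nL(\mu)$ with $\mu\not\prec\lambda$.

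With the heads controlled, a graded Nakayama argument finishes the proof. If the image of $N_\Delta$ in the semisimple module $K/\rad K$ were a proper submodule, it would have a simple quotient $q^nL(\mu)$, and then $\mu\not\preceq\lambda$ by the previous paragraph; lifting the composite surjection $K\twoheadrightarrow K/\rad K\twoheadrightarrow q^nL(\mu)$ through the projective cover $q^nP(\mu)$ produces a homomorphism $q^nP(\mu)\to P(\lambda)$ whose image lies in $K$ but not in $\rad K$, and that image belongs to $N_\Delta$ since $\mu\not\preceq\lambda$ --- contradicting properness. Thus $K=N_\Delta+\rad K$, and since all the modules in sight are bounded below, graded Nakayama forces $K=N_\Delta$. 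The identical argument with $\rad P(\lambda)$ in place of $P(\lambda)$ and ``$\not\prec$'' in place of ``$\not\preceq$'' yields $\bar K=N_{\bar\Delta}$, completing both isomorphisms. The step I expect to demand the most care is the vanishing $\EXT^1_{H_\alpha}(\bar\Delta(\lambda),L(\mu))=0$ for $\mu\prec\lambda$: its analogue for standard modules is exactly Theorem~\ref{shp}(1), but the proper-standard version is not available off the shelf and must be produced via the Frobenius-reciprocity reduction and the vanishing of $\Res^\alpha_\lambda L(\mu)$.
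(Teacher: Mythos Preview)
Your proof is correct but follows a genuinely different route from the paper's. The paper exploits the $\Delta$-flag on $P(\lambda)$ established in Corollary~\ref{bgg}: since the top section of that (finite) flag is $\Delta(\lambda)$ and, by BGG reciprocity together with Theorem~\ref{mac2}, every other section is $q^m\Delta(\mu)$ with $\mu\succ\lambda$, one sees directly that $V_1$ equals $N_\Delta$ by lifting maps from the $P(\mu)$ onto the heads of the successive sections; the second isomorphism is then handled similarly after refining via the $\bar\Delta$-filtration of $\Delta(\lambda)$ from Theorem~\ref{shead}. In particular the paper never computes $\EXT^1(\bar\Delta(\lambda),L(\mu))$.

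Your approach instead bypasses the $\Delta$-flag entirely: you control the heads of $K$ and $\bar K$ via the isomorphisms $\HOM(K,L(\mu))\cong\EXT^1(\Delta(\lambda),L(\mu))$ and $\HOM(\bar K,L(\mu))\cong\EXT^1(\bar\Delta(\lambda),L(\mu))$, invoking Theorem~\ref{shp}(1) for the first and your own Frobenius-reciprocity reduction (via Lemma~\ref{sc}) for the second, and then finish with a lifting/Nakayama argument. This is more self-contained in that it does not rely on Corollary~\ref{bgg} or Theorem~\ref{wef}, and the vanishing $\EXT^1(\bar\Delta(\lambda),L(\mu))=0$ for $\mu\prec\lambda$ that you establish is a useful fact in its own right. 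The paper's argument is shorter precisely because it leans on the filtration machinery already in place. Two small points of exposition: in your contradiction argument the phrase ``it would have a simple quotient'' should refer to the cokernel $(K/\rad K)/\bar N$ rather than to $\bar N$ itself; and the Nakayama step is legitimate because $H_\alpha$ is Noetherian (being finite over its polynomial center), so $K$ and $\bar K$ are finitely generated.
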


\begin{proof}
For the first isomorphism, we fix a $\Delta$-flag $P(\lambda) = V_0 \supset
\cdots\supset V_n =0$.
As $P(\lambda)$ has irreducible head $L(\lambda)$,
the top section $V_0 / V_1$ must be isomorphic to $\Delta(\lambda)$,
while by Corollary~\ref{bgg} and Theorem~\ref{mac2} the other sections
are of the form $q^m \Delta(\mu)$ for $m\in \Z$ and $\mu \succ \lambda$.
Hence 
$\sum_{\mu\not\preceq\lambda}
\sum_{f:P(\mu)
  \rightarrow P(\lambda)} \im f$ is equal to $V_1$ and we are done.
The second isomorphism is proved in a similar way, using also 
Theorem~\ref{shead}.
\end{proof}

\begin{Remark}\rm
In simply-laced types with $\operatorname{char}\,\k = 0$,
Corollary~\ref{c} 
implies that our modules $\Delta(\lambda)$ and
$\bar\Delta(\lambda)$
coincide with the modules $\widetilde{E}_b$ and $E_b$
from \cite[Corollary 4.18]{Kato}
(for $b \in B(\infty)$ chosen so that $L(\lambda) \cong L_b$).
\end{Remark}

\section{Minimal pairs}\label{secmp}

In this section we show that the root modules $\Delta(\alpha)$
fit into some short exact sequences, giving an alternative
inductive way to deduce their properties. We apply this
to bound the projective dimension of standard modules, then construct
some projective resolutions of root modules. 
The key to the proofs is a useful recursive
formula for the root vectors $r_\alpha$.
This involves certain scale factors which were rather
mysterious before; cf. \cite{Lec}. 
As usual we work with a fixed convex ordering $\prec$ on $R^+$.

\subsection{Scale factors}\label{minp}
As in \cite{Mac}, we refer to the pairs $\lambda = (\beta,\gamma)$ from
the statement of Lemma~\ref{l3} 
as the {\em minimal pairs} for $\alpha \in R^+$.
Equivalently, a minimal pair for $\alpha$ is a pair $(\beta,\gamma)$
of positive roots with $\beta+\gamma=\alpha$ and
$\beta \succ \gamma$
such that there exists no other pair $(\beta',\gamma')$ of positive
roots with $\beta'+\gamma'=\alpha$ and $\beta \succ \beta' \succ
\alpha \succ \gamma' \succ \gamma$.
Let $\MP(\alpha)$ denote the set of all minimal pairs for $\alpha$.

For $\lambda = (\beta,\gamma) \in \MP(\alpha)$,
it is immediate from Theorem~\ref{mac2} and the minimality of
$\lambda$ that all composition 
factors of $\rad \bar\Delta(\lambda)$ are isomorphic 
to $L(\alpha)$ (up to degree shift). 
Since $\bar\Delta(\lambda) = L(\beta)\circ L(\gamma)$
and $(L(\beta)\circ L(\gamma))^\circledast \cong q^{\beta\cdot\gamma}L(\gamma) \circ
L(\beta)$
by
Lemma~\ref{lvl}, 
we deduce that there are short exact sequences
\begin{align}\label{sesonea}
0 \longrightarrow q^{-\beta\cdot\gamma}\X^\circledast &\longrightarrow L(\beta)\circ L(\gamma)
\longrightarrow L(\lambda) \longrightarrow 0,\\
0 \longrightarrow q^{-\beta\cdot\gamma} L(\lambda)
&\longrightarrow L(\gamma)\circ L(\beta)
\longrightarrow \X
\longrightarrow 0,\label{sestwoa}
\end{align}
where $\X := q^{-\beta\cdot\gamma}(\rad
\bar\Delta(\lambda))^\circledast$
is a finite dimensional module
with all 
composition factors isomorphic to $L(\alpha)$ (up to degree shift).
For $\beta,\gamma \in R$, let
$$
p_{\beta,\gamma} := \max\left(p \in \Z\:|\:\beta - p \gamma \in
  R\right).
$$

\begin{Lemma}\label{Cases}
For any $\alpha,\beta,\gamma \in R^+$ with $\beta+\gamma=\alpha$,
we have that
$$
d_\alpha(p_{\beta,\gamma} - \beta\cdot\gamma) = d_\beta d_\gamma
(p_{\beta,\gamma}+1),\qquad
[d_\alpha][p_{\beta,\gamma} - \beta\cdot\gamma] = [d_\beta][d_\gamma]
[p_{\beta,\gamma}+1].
$$
\end{Lemma}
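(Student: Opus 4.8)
The plan is to prove both identities by a short case analysis over the root system, after one algebraic simplification. First I would record the elementary relation $\beta\cdot\gamma = d_\alpha - d_\beta - d_\gamma$, obtained by expanding $\alpha\cdot\alpha = (\beta+\gamma)\cdot(\beta+\gamma)$ and dividing by two; substituting it into the first displayed equation and clearing terms shows that equation is equivalent to
$$
p_{\beta,\gamma}\,(d_\alpha - d_\beta d_\gamma) = (d_\alpha - d_\beta)(d_\alpha - d_\gamma).
$$
Since roots in distinct irreducible components of $R$ are orthogonal, the hypothesis $\beta+\gamma=\alpha$ forces $\alpha,\beta,\gamma$ into a single component, so I may assume $R$ is irreducible. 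Then, with our normalization, $d_\delta = 1$ for every short root $\delta$ and $d_\delta = r$ for every long root, where $r := d_l/d_s \in \{1,2,3\}$, and every root $\delta$ has $\delta\cdot\delta \in \{2, 2r\}$.

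Next I would cut the number of cases down using integrality of $\tfrac{2\beta\cdot\gamma}{\beta\cdot\beta}$ and $\tfrac{2\beta\cdot\gamma}{\gamma\cdot\gamma}$: writing $\beta\cdot\gamma = d_\alpha - d_\beta - d_\gamma$ and running through the possibilities for $(d_\beta,d_\gamma,d_\alpha)\in\{1,r\}^3$, only the following can occur --- (A) $d_\alpha = d_\beta = d_\gamma$; (B) $d_\beta = d_\gamma = 1$ and $d_\alpha = r$; (C) exactly one of $\beta,\gamma$ is long and the remaining two of $\alpha,\beta,\gamma$ are short. (For instance $(d_\beta,d_\gamma,d_\alpha)=(1,r,r)$ is excluded since then $\beta\cdot\gamma = -1$ and $\tfrac{2\beta\cdot\gamma}{\gamma\cdot\gamma}=-1/r\notin\Z$.) In each surviving case I would compute $p_{\beta,\gamma}$ from the root-string picture: the $\gamma$-string through $\beta$ is an unbroken string $\beta - p_{\beta,\gamma}\gamma, \dots, \beta + q\gamma$ with $q\ge 1$ (because $\beta+\gamma=\alpha$ is a root), and $p_{\beta,\gamma} - q = \tfrac{2\beta\cdot\gamma}{\gamma\cdot\gamma} = \tfrac{\beta\cdot\gamma}{d_\gamma}$. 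In every case both identities become trivial once $p_{\beta,\gamma}$ is known, so the substance lies entirely in this computation.

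The computations are short length estimates. In case (A) with $d_\alpha = d_\beta = d_\gamma = 1$ no value of $p_{\beta,\gamma}$ is needed: the reduced first identity reads $0=0$, and $\beta\cdot\gamma = -1$ makes the second read $[1][p_{\beta,\gamma}+1] = [1][1][p_{\beta,\gamma}+1]$. In case (A) with all three roots long, $(\beta-\gamma)\cdot(\beta-\gamma) = 6r \notin\{2,2r\}$, so $\beta-\gamma\notin R$ and $p_{\beta,\gamma}=0$; the two identities then read $r^2=r^2$ and $[r][r]=[r][r]$. In case (C), if $\gamma$ is long then $(\beta-\gamma)\cdot(\beta-\gamma) = 2+4r\notin\{2,2r\}$ gives $p_{\beta,\gamma}=0$ at once, while if $\beta$ is long then $p_{\beta,\gamma}-q = -r$ together with the bound $(\beta+q\gamma)\cdot(\beta+q\gamma) = 2(q^2-qr+r)\le 2r$ (valid since $\beta+q\gamma\in R$) forces $q\le r$; as also $q = p_{\beta,\gamma}+r\ge r$, we get $q = r$ and $p_{\beta,\gamma}=0$. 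Either way $d_\alpha$ equals one of $d_\beta,d_\gamma$, so the reduced first identity is $0=0$ and the second reduces to $[r]=[r]$. The one genuinely non-trivial case is (B): here $(\beta+2\gamma)\cdot(\beta+2\gamma) = 4r+2\notin\{2,2r\}$ forces $q=1$, whence $p_{\beta,\gamma} = 1+(r-2) = r-1$, so that $p_{\beta,\gamma} - \beta\cdot\gamma = 1$ and $p_{\beta,\gamma}+1 = r$, and the two identities collapse to $r = r$ and $[r]=[r]$.

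The main obstacle is purely bookkeeping: keeping the normalization of the bilinear form fixed throughout --- the identities are not invariant under rescaling the form, so one is really asserting a statement about the particular integers $d_\alpha, d_\beta, d_\gamma$ attached to roots --- correctly enumerating which length configurations $(d_\beta,d_\gamma,d_\alpha)$ actually arise, and getting the sign right in $p_{\beta,\gamma}-q = \tfrac{\beta\cdot\gamma}{d_\gamma}$. Once the list of cases is in hand, case (B) is the single place requiring an honest root-string computation; every remaining case collapses because either $d_\alpha$ coincides with $d_\beta$ or $d_\gamma$, making the reduced first identity vacuous, or because a near neighbour of $\beta$ in the $\gamma$-string would be too long to be a root.
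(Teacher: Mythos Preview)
Your argument is correct and follows essentially the same route as the paper: a case analysis on the length pattern $(d_\beta,d_\gamma,d_\alpha)$, computation of $p_{\beta,\gamma}$ in each case, and then a direct check of the two identities (using, as the paper does, that $\beta\cdot\gamma = d_\alpha - d_\beta - d_\gamma$). The only difference is cosmetic: the paper obtains the values of $p_{\beta,\gamma}$ ``by inspection of the rank two root systems,'' whereas you derive them via root-string length estimates, and your preliminary algebraic rewriting of the first identity as $p_{\beta,\gamma}(d_\alpha - d_\beta d_\gamma) = (d_\alpha - d_\beta)(d_\alpha - d_\gamma)$ lets several cases collapse to $0=0$ without computing $p_{\beta,\gamma}$ at all.
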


\begin{proof}
By inspection of the rank two root systems, 
we have 
that
\begin{equation}\label{castab}
p_{\beta,\gamma} = \left\{
\begin{array}{ll}
2&\text{if $d_\alpha =3$ and $d_\beta = d_\gamma=1$,}\\
1&\text{if $d_\alpha =2$ and $d_\beta = d_\gamma=1$,}\\
1&\text{if
 $d_\alpha=d_\beta=d_\gamma=1$ in a subsystem of type
$\mathrm{G}_2$},\\
0&\text{otherwise.}
\end{array}\right.
\end{equation}
Moreover in the last case we have that $d_\alpha =
\min(d_\beta,d_\gamma)$.
Now consider the four cases in turn,
noting also that
$\beta\cdot\gamma = d_\alpha-d_\beta-d_\gamma$.
\end{proof}

\begin{Theorem}\label{sf}
Let $(\beta,\gamma)$ be a minimal pair for $\alpha \in R^+$.
Then
\begin{align}\label{ball1}
r_\gamma r_\beta - q^{-\beta\cdot\gamma} r_\beta
  r_\gamma &= [p_{\beta,\gamma}+1]
r_\alpha,\\
r_\gamma^* r_\beta^* - q^{-\beta\cdot\gamma} r_\beta^* r_\gamma^*
&= q^{-p_{\beta,\gamma}}\big(1-q^{2(p_{\beta,\gamma}-\beta\cdot\gamma)}
\big) r_\alpha^*.\label{ball2}
\end{align}
\end{Theorem}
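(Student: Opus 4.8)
The plan is to reduce everything to explicit computations in rank-two root systems. First I would deduce (\ref{ball2}) from (\ref{ball1}). Since $r_\delta^* = (1-q_\delta^2)r_\delta$ for each $\delta \in \{\alpha,\beta,\gamma\}$, multiplying (\ref{ball1}) by $(1-q_\beta^2)(1-q_\gamma^2)$ and substituting $r_\alpha^* = (1-q_\alpha^2)r_\alpha$ shows that (\ref{ball2}) is equivalent to the scalar identity
$$
(1-q_\beta^2)(1-q_\gamma^2)\,[p_{\beta,\gamma}+1]
= q^{-p_{\beta,\gamma}}\bigl(1-q^{2(p_{\beta,\gamma}-\beta\cdot\gamma)}\bigr)(1-q_\alpha^2).
$$
Writing $1-q^{2d} = -q^d(q-q^{-1})[d]$ for $d \in \{d_\alpha,\,d_\beta,\,d_\gamma,\,p_{\beta,\gamma}-\beta\cdot\gamma\}$, using $\beta\cdot\gamma = d_\alpha-d_\beta-d_\gamma$ (which follows from $\alpha = \beta+\gamma$), and invoking the two equalities of Lemma~\ref{Cases}, both sides collapse to the same product of a power of $q$ with quantum integers.

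For (\ref{ball1}) I would first show that $r_\gamma r_\beta - q^{-\beta\cdot\gamma}r_\beta r_\gamma = c\,r_\alpha$ for some $c \in \AA$. By the Levendorskii--Soibelman straightening rule (a standard consequence of Theorem~\ref{pbw}; see \cite[\S40]{Lubook}), this bracket lies in the $\AA$-span of the ordered PBW monomials $r_{\delta_1}^{a_1}\cdots r_{\delta_t}^{a_t}$ with $\gamma \prec \delta_1 \prec \cdots \prec \delta_t \prec \beta$; here $\gamma \prec \alpha \prec \beta$ by convexity, so $r_\alpha$ is one of the allowed monomials. As the bracket is homogeneous of degree $\alpha = \beta+\gamma$, each contributing monomial satisfies $\sum_s a_s\delta_s = \alpha$. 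If one had $\sum_s a_s \geq 2$, then by \cite[Lemma~2.1]{Mac} its parts partition into two positive roots $\beta'' \succ \gamma''$ with $\beta''+\gamma'' = \alpha$, and Lemma~\ref{l1}, applied exactly as in the proof of Lemma~\ref{l3}, forces $\beta \succ \beta'' \succ \alpha \succ \gamma'' \succ \gamma$, contradicting the minimality of $(\beta,\gamma)$. So $r_\alpha$ is the only monomial that occurs, which defines the scalar $c$.

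It then remains to check $c = [p_{\beta,\gamma}+1]$, and here I would argue that $c$ depends only on the pair $(\beta,\gamma)$ inside the rank-two subsystem $R' := R \cap (\Z\beta\oplus\Z\gamma)$ (which contains $\beta,\gamma,\alpha$, is irreducible of type $\mathrm{A}_2$, $\mathrm{B}_2$ or $\mathrm{G}_2$, and inherits a convex order on $R'^+$). This independence comes from the braid group action $T_i = T''_{i,+}$ of \cite[\S37.1.3]{Lubook} together with the compatibility of Lusztig's root vectors with sub-expressions of the defining reduced word; concretely one may first move $\gamma$ to a simple root by a product of operators $T_i^{-1}$. One is then left with the finitely many minimal pairs occurring in $\mathrm{A}_2$, $\mathrm{B}_2$, $\mathrm{G}_2$, and for each the value of $c$ is read off from the explicit root-vector formulae in \cite[\S37.1.3, \S37.2.4]{Lubook}: in type $\mathrm{A}_2$ one gets $c = [1] = 1$; in type $\mathrm{B}_2$ one gets $c = [2]$ if $d_\alpha = 2$ and $c = [1]$ if $d_\alpha = 1$; in type $\mathrm{G}_2$ one gets $c \in \{[1],[2],[3]\}$; and in every instance $c = [p_{\beta,\gamma}+1]$ by the case table (\ref{castab}).

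The hard part is the rank-two reduction in the last step. Each individual rank-two verification is mechanical, but one cannot in general conjugate both $\beta$ and $\gamma$ to simple roots (in $\mathrm{B}_2$ the two short roots of a minimal pair are orthogonal), so care is needed in tracking how the convex order and the root vectors $r_\delta$ transform under the braid group action --- equivalently, in pinning down the precise compatibility between Lusztig's root vectors for $R$ and those for the convex subsystem $R'$.
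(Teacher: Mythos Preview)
Your route differs substantially from the paper's. The reduction of (\ref{ball2}) to (\ref{ball1}) via Lemma~\ref{Cases} is the same, and your Levendorskii--Soibelman argument, combined with the minimality of $(\beta,\gamma)$ and the partition argument from Lemma~\ref{l3}, correctly shows that $r_\gamma r_\beta - q^{-\beta\cdot\gamma} r_\beta r_\gamma = c\,r_\alpha$ for some scalar $c$. This is a pleasant purely algebraic alternative to what the paper extracts from categorification at this point (namely, from the fact that every composition factor of the module $\X$ in (\ref{sestwoa}) is a shift of $L(\alpha)$). If your rank-two reduction could be completed you would obtain a proof that bypasses the KLR module theory entirely.

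However, the step you yourself flag as ``the hard part'' is a genuine gap, not a formality. Applying $T_{i_{c-1}}^{-1}\cdots T_{i_1}^{-1}$ does transport the identity to one for a new convex order $\prec'$ in which $\gamma'$ is simple, but $r_{\alpha'}$ and $r_{\beta'}$ are still Lusztig root vectors for the \emph{full} root system, defined via the full reduced expression for $w_0$; you have not explained why they coincide with, or are computable from, root vectors of the rank-two quantum group attached to $R' = R\cap(\Z\beta\oplus\Z\gamma)$, so the sentence ``one is then left with the finitely many minimal pairs occurring in $\mathrm{A}_2$, $\mathrm{B}_2$, $\mathrm{G}_2$'' is not yet justified. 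The paper sidesteps this entirely: it also uses the braid operators, but only to reduce to the case where $\beta$ is simple, and then handles that base case by categorification rather than by any further reduction. Concretely, when $\beta$ is simple \cite[Lemma~3.9]{KL} forces $L(\gamma)\circ L(\beta)$ to have irreducible head, so $\X\cong q^{-p}L(\alpha)$ for a single integer $p$; passing to the Grothendieck group yields $c = [d_\alpha][p-\beta\cdot\gamma]\big/\big([d_\beta][d_\gamma]\big)$, a positivity argument with Lusztig's form gives $p\geq\beta\cdot\gamma$, and specializing at $q=1$ and comparing with the Chevalley commutator $[e_\gamma,e_\beta]=\pm(p_{\beta,\gamma}+1)e_\alpha$ then pins down $p=p_{\beta,\gamma}$. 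Thus the paper trades your unfinished rank-two compatibility statement for a short module-theoretic argument carried out in the full KLR algebra.
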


\begin{proof}
In this paragraph we consider the special case that $\beta$ is simple.
Under this assumption \cite[Lemma 3.9]{KL} shows that
$L(\gamma) \circ L(\beta)$ has irreducible head, and moreover no
composition factors of $\rad (L(\gamma)\circ L(\beta))$ are isomorphic
to its head (up to degree shift).
We deduce that the module $\X$ in (\ref{sestwoa}) 
is isomorphic to $q^{-p} L(\alpha)$ for
some $p \in \Z$. Using this 
and considering the Grothendieck group
identities obtained from
(\ref{sesonea})--(\ref{sestwoa}), 
we deduce that
\begin{equation}\label{id1}
r^*_\gamma r^*_\beta - q^{-\beta\cdot\gamma} r_\beta^* r_\gamma^* =
q^{-p}(1 -q^{2(p-\beta\cdot\gamma)})r_\alpha^*.
\end{equation}
Rearranging this using (\ref{drv})
and $\beta\cdot\gamma = d_\alpha-d_\beta-d_\gamma$, 
we get that
\begin{equation}\label{id2}
r_\gamma r_\beta - q^{-\beta\cdot\gamma} r_\beta r_\gamma
=
\frac{[d_\alpha][p-\beta\cdot\gamma]}{[d_\beta][d_\gamma]} r_\alpha.
\end{equation}
Let us show further that $p \geq \beta\cdot\gamma$.
Applying $(\cdot,r_\alpha^*)$ to (\ref{id2}), we deduce that
$$
[d_\alpha][p-\beta\cdot\gamma] = [d_\beta][d_\gamma]
(r_\gamma r_\beta - q^{-\beta\cdot\gamma} r_\beta r_\gamma,
r_\alpha^*).
$$
Since $(r_\beta r_\gamma, r_\alpha^*) = (r_\beta \otimes r_\gamma,
r(r_\alpha^*))$,
where $r$ is the twisted coproduct from (\ref{twistedco})
which corresponds to restriction under the categorification theorem,
we see that
$(r_\beta r_\gamma, r_\alpha^*)$ is the graded composition
multiplicity
$[\res^\alpha_{\beta,\gamma} L(\alpha), L(\beta)\boxtimes L(\gamma)]$,
which is zero by Lemma~\ref{sc}.
Similarly $(r_\gamma r_\beta, r_\alpha^*) =
[\res^\alpha_{\gamma,\beta} L(\alpha):L(\gamma)\boxtimes L(\beta)]$,
which lies in $\N[q,q^{-1}]$.
Altogether this shows that $[d_\alpha][p - \beta\cdot \gamma]
\in \N[q,q^{-1}]$, hence indeed we must have that 
$p \geq \beta\cdot \gamma$.
Finally, when we specialize at $q=1$, the PBW basis elements $\{r_\alpha\}$
become Chevalley basis elements $\{e_\alpha\}$, and the identity
(\ref{id2})
becomes
$$
[e_\gamma, e_\beta] = \frac{d_\alpha (p-\beta\cdot\gamma)}{d_\beta
  d_\gamma}
e_\alpha.
$$
But for a Chevalley basis, $[e_\gamma, e_\beta] = \pm (p_{\beta,\gamma}+1)
e_\alpha$.
Since $p \geq \beta\cdot\gamma$,
we deduce from Lemma~\ref{Cases} that $p = p_{\beta,\gamma}$.
Hence (\ref{id1}) proves (\ref{ball2}).
Also using Lemma~\ref{Cases} once more
(\ref{id2}) proves (\ref{ball1}).

Now we deduce the general case.
Note it is sufficient just to prove (\ref{ball1}), for then (\ref{ball2})
follows by rearranging using (\ref{drv}) and Lemma~\ref{Cases}.
Let $w_0 = s_{i_1} \cdots s_{i_N}$ be the reduced expression of $w_0$
corresponding to the given convex ordering $\prec$.
Since $\gamma \prec \alpha \prec \beta$, 
there exist $c < a < b$ 
such that $\alpha = s_{i_1} \cdots s_{i_{a-1}}(\alpha_{i_a}),
\beta = s_{i_1} \cdots s_{i_{b-1}}(\alpha_{i_b})$ and
$\gamma = s_{i_1} \cdots s_{i_{c-1}}(\alpha_{i_c})$.
Working now in $U_q(\mathfrak{g})$, we need to prove that
\begin{multline*}
[p_{\beta,\gamma}+1]T_{i_1} \cdots T_{i_{a-1}}(E_{i_a}) =
T_{i_1}\cdots T_{i_{c-1}}(E_{i_c}) 
T_{i_1}\cdots T_{i_{b-1}}(E_{i_b}) \\
- q^{-\beta\cdot\gamma}
T_{i_1}\cdots T_{i_{b-1}}(E_{i_b}) 
T_{i_1}\cdots T_{i_{c-1}}(E_{i_c}).
\end{multline*}
Let $s_{j_1} \cdots s_{j_{N-b}}$ be a reduced expression for $w_0 s_{i_b} \cdots s_{i_1}$
and $\prec'$ be the convex ordering corresponding to the decomposition
$w_0 = s_{j_1} \cdots s_{j_{N-b}} s_{i_1} \cdots s_{i_b}$.
Let $\alpha' := s_{j_1} \cdots s_{j_{N-b}}(\alpha)$,
$\beta' := s_{j_1} \cdots s_{j_{N-b}}(\beta)$ and
$\gamma' := s_{j_1} \cdots s_{j_{N-b}}(\gamma)$.
Note that $\beta'$ is simple and $(\beta',\gamma')$ is a minimal pair
for $\alpha'$ with respect to the convex ordering $\prec'$.
Acting with $T_{j_1} \cdots T_{j_{N-b}}$, the identity we are trying
to prove is equivalent to the identity
\begin{multline*}
[p_{\beta,\gamma}+1]T_{j_1} \cdots T_{j_{N-b}} T_{i_1} \cdots T_{i_{a-1}}(E_{i_a}) =\\
T_{j_1} \cdots T_{j_{N-b}} T_{i_1}\cdots T_{i_{c-1}}(E_{i_c}) 
T_{j_1} \cdots T_{j_{N-b}} T_{i_1}\cdots T_{i_{b-1}}(E_{i_b}) \qquad\\
- q^{-\beta\cdot\gamma}
T_{j_1} \cdots T_{j_{N-b}} T_{i_1}\cdots T_{i_{b-1}}(E_{i_b}) 
T_{j_1} \cdots T_{j_{N-b}} T_{i_1}\cdots T_{i_{c-1}}(E_{i_c}).
\end{multline*}
But in $\f$
this is saying simply that
$[p_{\beta',\gamma'}+1]r_{\alpha'} = r_{\gamma'} r_{\beta'} - q^{-\beta'\cdot\gamma'}
r_{\beta'} r_{\gamma'}$,
where the root elements here are defined with respect to the new
convex ordering $\prec'$.
This
follows from the special case treated in  the previous paragraph.
\end{proof}

\begin{Corollary}\label{chprop}
Let $(\beta,\gamma)$ be a minimal pair for $\alpha
\in R^+$. In the Grothendieck group we have that
$\left[\Res^{\alpha}_{\gamma,\beta} L(\alpha)\right] = 
[p_{\beta,\gamma}+1]\big[L(\gamma) \boxtimes L(\beta)\big]$.
\end{Corollary}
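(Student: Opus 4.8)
The plan is to deduce the identity from Theorem~\ref{sf}, specifically the PBW version (\ref{ball1}), together with the standard categorification dictionary of Theorem~\ref{ct}: under that identification the restriction functor $\Res^\alpha_{\delta,\epsilon}$ corresponds to the bidegree $(\delta,\epsilon)$ component of Lusztig's twisted coproduct $r$, while the form $(\cdot,\cdot)$ together with the duality between the PBW and dual PBW bases (Theorem~\ref{pbw}) reads off the relevant multiplicities. The argument has two halves: first pin down the graded multiplicity of $L(\gamma)\boxtimes L(\beta)$ in $\Res^\alpha_{\gamma,\beta}L(\alpha)$, then show that it is the only composition factor.

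First I would pair (\ref{ball1}) with $r_\alpha^*$. Using $(ab,c)=(a\otimes b,r(c))$ and the product form on $\f\otimes\f$, this gives
\[
(r_\gamma r_\beta,\,r_\alpha^*)-q^{-\beta\cdot\gamma}(r_\beta r_\gamma,\,r_\alpha^*)=[p_{\beta,\gamma}+1]\,(r_\alpha,\,r_\alpha^*).
\]
Here $(r_\alpha,r_\alpha^*)=(r_{(\alpha)},r^*_{(\alpha)})=1$ by Theorem~\ref{pbw}. Next, by Lemma~\ref{l4} the Kostant partition $(\alpha)$ is the unique minimal element of $\KP(\alpha)$, so $(\beta,\gamma)\not\preceq(\alpha)$; since $\bar\Delta((\alpha))=L(\alpha)$, Lemma~\ref{sc} gives $\Res^\alpha_{\beta,\gamma}L(\alpha)=0$, hence $(r_\beta r_\gamma,r_\alpha^*)=(r_\beta\otimes r_\gamma,r(r_\alpha^*))=(r_\beta\otimes r_\gamma,[\Res^\alpha_{\beta,\gamma}L(\alpha)])=0$. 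Therefore $(r_\gamma r_\beta,r_\alpha^*)=[p_{\beta,\gamma}+1]$. On the other hand $(r_\gamma r_\beta,r_\alpha^*)=(r_\gamma\otimes r_\beta,[\Res^\alpha_{\gamma,\beta}L(\alpha)])$, and expanding $[\Res^\alpha_{\gamma,\beta}L(\alpha)]$ in the dual PBW basis $\{r_\mu^*\otimes r_\nu^*\}_{\mu\in\KP(\gamma),\,\nu\in\KP(\beta)}$ and using that $r_\gamma=r_{(\gamma)}$ and $r_\beta=r_{(\beta)}$ are dual to $r_\gamma^*$ and $r_\beta^*$, this pairing extracts exactly the coefficient of $r_\gamma^*\otimes r_\beta^*$. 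So that coefficient equals $[p_{\beta,\gamma}+1]$.

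Second I would show that, up to degree shift, $L(\gamma)\boxtimes L(\beta)$ is the only composition factor of $\Res^\alpha_{\gamma,\beta}L(\alpha)$. Granting this, and using $L(\gamma)=\bar\Delta((\gamma))$, $L(\beta)=\bar\Delta((\beta))$ so that $[L(\gamma)]=r_\gamma^*$ and $[L(\beta)]=r_\beta^*$, one gets $[\Res^\alpha_{\gamma,\beta}L(\alpha)]=m(q)\,r_\gamma^*\otimes r_\beta^*$ for some $m(q)\in\N[q,q^{-1}]$, and comparison with the first half forces $m(q)=[p_{\beta,\gamma}+1]$, which is exactly the assertion. To prove the ``only composition factor'' statement, suppose $L(\mu)\boxtimes L(\nu)$ occurs, with $\mu=(\mu_1\succeq\dots\succeq\mu_k)\in\KP(\gamma)$ and $\nu=(\nu_1\succeq\dots\succeq\nu_l)\in\KP(\beta)$. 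Since $L(\mu_1)\boxtimes\dots\boxtimes L(\mu_k)$ lies in the head of $\bar\Delta(\mu)$, Frobenius reciprocity gives $\Res^\gamma_{\mu_1,\dots,\mu_k}L(\mu)\neq0$, and similarly $\Res^\beta_{\nu_1,\dots,\nu_l}L(\nu)\neq0$; applying these functors and using transitivity of restriction, $\Res^\alpha_{\mu_1,\dots,\mu_k,\nu_1,\dots,\nu_l}L(\alpha)\neq0$. Applying McNamara's Lemma (Lemma~\ref{maclem}) to the two-block coarsenings of this restriction, every initial-segment partial sum of $(\mu_1,\dots,\mu_k,\nu_1,\dots,\nu_l)$ is a sum of positive roots $\preceq\alpha$ and every final-segment partial sum is a sum of positive roots $\succeq\alpha$; taking single-root segments yields $\mu_1\preceq\alpha$ and $\nu_l\succeq\alpha$, and since each part of $\mu$ (resp.\ $\nu$) is $\le\gamma$ (resp.\ $\le\beta$), hence strictly below $\alpha$, in $Q^+$, we get $\mu_i\prec\alpha$ for all $i$ and $\nu_j\succ\alpha$ for all $j$. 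Thus $\alpha=\gamma+\beta=\sum_i\mu_i+\sum_j\nu_j$ with all $\mu_i\prec\alpha\prec\nu_j$, and using the description (as in the proof of Lemma~\ref{l1}, via \cite[Ch.~VI, $\S$6, Cor.~2]{Bou}) of $\{\delta\in R^+:\delta\preceq\alpha\}$ as the inversion set of a fixed $w\in W$, together with the minimality of the pair $(\beta,\gamma)$ and Lemma~\ref{l1} applied to these partial sums, I would conclude $k=l=1$, i.e.\ $\mu=(\gamma)$ and $\nu=(\beta)$.

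The main obstacle is this last combinatorial step: converting ``$\alpha$ is a sum of the small roots $\mu_i$ together with the large roots $\nu_j$'' into ``$k=l=1$''. Convexity makes the two order ideals $\{\delta\preceq\alpha\}$ and $\{\delta\succeq\alpha\}$ well behaved under forming roots, and the minimality of $(\beta,\gamma)$ is essential (otherwise longer $\mu,\nu$ would genuinely be possible); but organising the bookkeeping — in particular handling the case where a partial sum such as $\gamma+\nu_1$ does or does not happen to be a root — so that Lemma~\ref{l1} and the minimality condition can be invoked cleanly requires some care, and is where the real work of the corollary lies.
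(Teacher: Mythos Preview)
Your scalar computation is exactly the paper's own argument: pair the identity (\ref{ball1}) against $r_\alpha^*$ using $(ab,c)=(a\otimes b,r(c))$, observe that $(r_\beta r_\gamma,r_\alpha^*)=0$ because $\Res^\alpha_{\beta,\gamma}L(\alpha)=0$ (Lemma~\ref{sc}), and read off $(r_\gamma\otimes r_\beta,r(r_\alpha^*))=[p_{\beta,\gamma}+1]$ as the desired multiplicity via PBW duality (Theorem~\ref{pbw}).

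The divergence is in the second half. The paper does not reprove that $[\Res^\alpha_{\gamma,\beta}L(\alpha)]$ is a scalar multiple of $[L(\gamma)\boxtimes L(\beta)]$; it simply invokes \cite[Lemma~4.1]{Mac}, and then the one-line form computation above finishes the corollary. You instead attempt to rederive that lemma. Your start is correct: the constraints $\mu_i\prec\alpha\prec\nu_j$ do follow from Lemma~\ref{maclem} and Lemma~\ref{l1} exactly as you outline. But the closing step you flag as ``the main obstacle'' --- deducing $k=l=1$ from minimality of $(\beta,\gamma)$ --- is genuinely the content of \cite[Lemma~4.1]{Mac}, and the ingredients you list (Lemma~\ref{l1} on the partial sums, the inversion-set description from the proof of Lemma~\ref{l1}) do not obviously assemble into a proof without further ideas. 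In particular, knowing only that $\sum_i\mu_i=\gamma$, $\sum_j\nu_j=\beta$, and $\mu_i\prec\alpha\prec\nu_j$ does not by itself force $k=l=1$; one needs a comparison in the partial order on $\KP(\alpha)$ that pins the sorted concatenation $(\nu_1,\dots,\nu_l,\mu_1,\dots,\mu_k)$ against $(\beta,\gamma)$, and your sketch does not supply the missing inequality. So your proof, as written, has a gap precisely where you say it does. The fix is to cite \cite[Lemma~4.1]{Mac}; once that is in hand your first paragraph is already a complete proof, identical to the paper's.
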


\begin{proof}
By \cite[Lemma 4.1]{Mac}, $\left[\Res^\alpha_{\gamma,\beta} L(\alpha) \right]$ is a
scalar multiple of $[L(\gamma)\boxtimes L(\beta)]$.
To compute the scalar we make a computation with Lusztig's form
like we did in the proof of Theorem~\ref{sf}:
\begin{align*}
(r_\gamma \otimes r_\beta, r (r_\alpha^*))
&= (r_\gamma r_\beta, r_\alpha^*)
= (r_\gamma r_\beta-q^{-\beta\cdot\gamma} r_\beta r_\gamma, r_\alpha^*)
+ q^{-\beta\cdot\gamma} (r_\beta r_\gamma, r_\alpha^*)\\
&= [p_{\beta,\gamma}+1](r_\alpha, r_\alpha^*) + q (r_\beta \otimes r_\gamma, r(r_\alpha^*))
= [p_{\beta,\gamma}+1].
\end{align*}
\end{proof}

\begin{Remark}\rm
We will show in Theorem~\ref{as} below that the module $\X$ in (\ref{sestwoa})
is isomorphic to $q^{-p_{\beta,\gamma}} L(\alpha)$. Hence
for $(\beta,\gamma) \in \MP(\alpha)$ the module
$L(\gamma)\circ L(\beta)$ has irreducible head $q^{-p_{\beta,\gamma}}
L(\alpha)$.
Applying Frobenius reciprocity, it follows that 
the self-dual
module $\Res^{\alpha}_{\gamma,\beta} L(\alpha)$
has irreducible socle $q^{p_{\beta,\gamma}} L(\gamma)\boxtimes L(\beta)$.
Hence it 
is uniserial with composition factors as described by Corollary~\ref{chprop}.
\end{Remark}

\subsection{Leclerc's algorithm}
Theorems~\ref{ct} and \ref{mac2} imply that
\begin{equation}\label{tr1}
\b^*(r_\lambda^*) = r_\lambda^* + (\text{a $\Z[q,q^{-1}]$-linear
  combination of $r_\mu^*$ for $\mu \prec \lambda$}).
\end{equation}
Hence by duality we also have that 
\begin{equation}\label{tr2}
\b(r_\lambda) = r_\lambda + (\text{a $\Z[q,q^{-1}]$-linear
  combination of $r_\mu$ for $\mu \succ \lambda$}).
\end{equation}
There are several other ways to prove the unitriangularity of the
transition matrices here: the identity
(\ref{tr1}) follows from the
$\b^*$-invariance of the dual root vectors noted earlier together with
the Levendorskii-Soibelman formula \cite[Proposition 5.5.2]{LS}; the
identity (\ref{tr2}) can be deduced directly starting
from \cite[Proposition 1.9]{Lubraid}.
Combining (\ref{tr1})--(\ref{tr2}) 
with Lusztig's lemma, it follows that there exist unique bases
$\{b_\lambda\:|\:\lambda \in \KP\}$ and $\{b_\lambda^*\:|\:\lambda \in
\KP\}$
for $\f_\A$ and $\f_\A^*$, respectively, such that
\begin{align}
\b(b_\lambda) &= b_\lambda,
 &b_\lambda &= r_\lambda + \text{(a $q \Z[q]$-linear combination of
  $r_\mu$ for $\mu \succ \lambda$)},\\
\b^*(b^*_\lambda) &= b^*_\lambda,
&b_\lambda^*  &= r^*_\lambda
 + \text{(a $q \Z[q]$-linear combination of
  $r^*_\mu$ for $\mu \prec \lambda$).}\label{tri}
\end{align}
Of course these are the canonical and dual canonical bases for
$\f$, respectively, as follows by the definition in
\cite{Lu}
in simply-laced types
or \cite{Saito} in non-simply-laced types.

For simply-laced types over fields of characteristic zero, the results
of
Rouquier \cite[$\S$5]{R2} and
Varagnolo-Vasserot \cite{VV}
show 
under the identification
from Theorem~\ref{ct} 
that $b_\lambda = [P(\lambda)]$, hence also $b_\lambda^* =
[L(\lambda)]$, for each $\lambda \in \KP$.
In all types, there is a recursive algorithm essentially due to 
Leclerc \cite[$\S$5.5]{Lec} to compute 
$b_\lambda^*$, or rather its image $\CH b_\lambda^*$ in the quantum
shuffle algebra; we will explain this in more detail shortly.
Putting these two statements together, we obtain an effective
algorithm to compute the characters of the irreducible
$H_\alpha$-modules 
in all simply-laced types over fields of
characteristic zero. At present there is no reasonable way to 
compute the irreducible
characters in non-simply-laced types 
or in positive characteristic; the approach via the contravariant
form illustrated by
Example~\ref{willcex} is seldom feasible in practice.

In the remainder of the subsection, we recall Leclerc's
algorithm for computing dual canonical bases in more detail. 
Actually we describe a slightly
modified version of the algorithm which works for an arbitrary convex ordering
(rather than just for the Lyndon orderings considered in Leclerc's work).
Assume that we have chosen a minimal pair
$\mp(\alpha) \in \MP(\alpha)$ for each $\alpha \in R^+$ of height at
least two.
Dependent on these choices, we
recursively define a word $\bi_\alpha \in \W_\alpha$
and a bar-invariant Laurent polynomial $\kappa_\alpha \in \A$: for $i
\in I$ set $\bi_{\alpha_i} := i$ and $\kappa_{\alpha_i} := 1$; then for $\alpha \in R^+$ of height $\geq 2$
suppose that $(\beta,\gamma) = \mp(\alpha)$ and set
\begin{equation}\label{issue}
\bi_\alpha := \bi_\gamma \bi_\beta,\qquad
\kappa_\alpha := [p_{\beta,\gamma}+1]\kappa_\beta \kappa_\gamma.
\end{equation}
For example, in simply-laced types
we have that $\kappa_\alpha = 1$ for all $\alpha \in R^+$; 
this is also the case in non-simply-laced types for 
{\em multiplicity-free} positive roots, i.e. roots 
$\alpha = \sum_{i \in I} c_i \alpha_i$ with $c_i \in \{0,1\}$ for all
$i$.
Then for $\lambda = (\lambda_1,\dots,\lambda_l) \in \KP$ let
\begin{equation}
\bi_\lambda := \bi_{\lambda_1} \cdots \bi_{\lambda_l},
\qquad
\kappa_\lambda := [\lambda]^! \kappa_{\lambda_1} \cdots
\kappa_{\lambda_l}.
\end{equation}
Part (1) of the following lemma shows that the words $\bi_\lambda$
distinguish irreducible modules,
generalizing \cite[Theorem 7.2(ii)]{KR2}.

\begin{Lemma}\label{rtheory}
The following hold for any $\alpha \in Q^+$ and 
$\lambda, \mu \in \KP(\alpha)$.
\begin{itemize}
\item[(1)] We have that $\DIM 1_{\bi_\mu} L(\lambda) = 0$ if $\mu \not\preceq
\lambda$, and $\DIM 1_{\bi_\lambda} L(\lambda) = \kappa_\lambda$.
\item[(2)]
The $\bi_\mu$-coefficient of $\CH b_\lambda^*$ is zero 
if $\mu \not\preceq
\lambda$, and the $\bi_\lambda$-coefficient of $\CH b_\lambda^*$ is equal
to $\kappa_\lambda$.
\end{itemize}
\end{Lemma}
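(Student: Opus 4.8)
The plan is to deduce both statements from the corresponding triangularity property of the proper standard modules $\bar\Delta(\mu)$, the key combinatorial input being that the word $\bi_\mu$ remembers the Kostant partition $\mu$ through restriction. Indeed, if $\mu=(\mu_1,\dots,\mu_k)\in\KP(\alpha)$ then $\bi_\mu=\bi_{\mu_1}\cdots\bi_{\mu_k}$ is a concatenation with $\bi_{\mu_j}\in\W_{\mu_j}$, so $1_{\bi_\mu}=1_{\mu_1,\dots,\mu_k}1_{\bi_\mu}$ and hence $1_{\bi_\mu}V=1_{\bi_\mu}\bigl(\Res^\alpha_{\mu_1,\dots,\mu_k}V\bigr)$ for every $H_\alpha$-module $V$; in particular $\Res^\alpha_\mu V=0$ forces $1_{\bi_\mu}V=0$. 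Combined with Lemma~\ref{sc} this gives at once that $1_{\bi_\mu}\bar\Delta(\lambda)=0$ whenever $\mu\not\preceq\lambda$, and since $L(\lambda)$ is a quotient of $\bar\Delta(\lambda)$ the same vanishing holds with $L(\lambda)$ in place of $\bar\Delta(\lambda)$, which settles the vanishing half of (1). For the vanishing half of (2) I would expand $\CH b^*_\lambda$: by the defining triangularity (\ref{tri}) of the dual canonical basis, $b^*_\lambda=r^*_\lambda+\sum_{\mu\prec\lambda}c_\mu r^*_\mu$ with $c_\mu\in\A$, and applying the $\A$-linear map $\CH$ of (\ref{charc}) together with $\CH\bar\Delta(\mu)=\CH r^*_\mu$ reduces the claim to the (already established) vanishing of the $\bi_\nu$-coefficient of $\CH\bar\Delta(\mu)$ for $\nu\not\preceq\mu$.

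For the non-vanishing halves the crux is the subclaim that $\DIM 1_{\bi_\mu}\bar\Delta(\mu)=\kappa_\mu$ for all $\mu\in\KP$. Using the restriction observation once more, Lemma~\ref{sc} identifies $\DIM 1_{\bi_\mu}\bar\Delta(\mu)$ with $[\mu]^!$ times the $\bi_\mu$-word space dimension of $L(\mu_1)\boxtimes\cdots\boxtimes L(\mu_k)$ (which factors as the product of the word-space dimensions of the factors), so the subclaim reduces, via the definition $\kappa_\mu=[\mu]^!\kappa_{\mu_1}\cdots\kappa_{\mu_k}$, to the single-root statement $\DIM 1_{\bi_\alpha}L(\alpha)=\kappa_\alpha$ for $\alpha\in R^+$. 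I would prove the latter by induction on $\height(\alpha)$: the base case $\alpha=\alpha_i$ is trivial since $L(\alpha_i)$ is one-dimensional with $\bi_{\alpha_i}=i$ and $\kappa_{\alpha_i}=1$; for the step, with $(\beta,\gamma)=\mp(\alpha)$ so that $\bi_\alpha=\bi_\gamma\bi_\beta$, the observation gives $1_{\bi_\alpha}L(\alpha)=1_{\bi_\gamma\bi_\beta}\bigl(\Res^\alpha_{\gamma,\beta}L(\alpha)\bigr)$, and Corollary~\ref{chprop} tells us $[\Res^\alpha_{\gamma,\beta}L(\alpha)]=[p_{\beta,\gamma}+1]\,[L(\gamma)\boxtimes L(\beta)]$, whence taking $\bi_\gamma\bi_\beta$-word space dimensions and invoking the inductive hypothesis for $\beta$ and $\gamma$ yields $\DIM 1_{\bi_\alpha}L(\alpha)=[p_{\beta,\gamma}+1]\kappa_\beta\kappa_\gamma=\kappa_\alpha$ by (\ref{issue}).

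With the subclaim in hand both non-vanishing statements drop out. Inverting the unitriangular decomposition matrix of Theorem~\ref{mac2} expresses $[L(\lambda)]=r^*_\lambda+\sum_{\mu\prec\lambda}e_{\lambda\mu}r^*_\mu$, so $\CH L(\lambda)=\CH\bar\Delta(\lambda)+\sum_{\mu\prec\lambda}e_{\lambda\mu}\CH\bar\Delta(\mu)$; reading off the coefficient of $\bi_\lambda$, every term with $\mu\prec\lambda$ contributes $0$ (since $\lambda\not\preceq\mu$, by the vanishing above), leaving exactly $\DIM 1_{\bi_\lambda}\bar\Delta(\lambda)=\kappa_\lambda$. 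As $\DIM 1_{\bi_\lambda}L(\lambda)$ is the $\bi_\lambda$-coefficient of $\CH L(\lambda)$, this proves (1). Statement (2) is proved identically, starting from $b^*_\lambda=r^*_\lambda+\sum_{\mu\prec\lambda}c_\mu r^*_\mu$ of (\ref{tri}) rather than the decomposition-matrix expansion; so (1) and (2) are really two instances of one triangularity argument.

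The only genuine obstacle is arranging the logical order to avoid circularity---the subclaim $\DIM 1_{\bi_\mu}\bar\Delta(\mu)=\kappa_\mu$ (equivalently, its single-root case) must come first, and it is precisely here that the new input enters, namely Corollary~\ref{chprop} and behind it the scale-factor identity of Theorem~\ref{sf}. Everything else is routine bookkeeping: tracking the degree shifts $q^{s_\mu}$ and the quantum factorials $[\mu]^!$, and using that left multiplication by an idempotent $1_{\bi_\mu}$ is exact, so that composition series of $\bar\Delta(\mu)$ and $L(\lambda)$ may be restricted to word spaces term by term.
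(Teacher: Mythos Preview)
Your proposal is correct and follows essentially the same route as the paper: both arguments use Lemma~\ref{sc} (via the observation $1_{\bi_\mu}=1_{\mu_1,\dots,\mu_k}1_{\bi_\mu}$) for the vanishing statements, reduce the non-vanishing statements to the single-root claim $\DIM 1_{\bi_\alpha}L(\alpha)=\kappa_\alpha$, and prove the latter by induction on height using Corollary~\ref{chprop}, with the triangularity of Theorem~\ref{mac2} (for (1)) and of (\ref{tri}) (for (2)) handling the passage from $\bar\Delta$ to $L$ and to $b^*$. The only difference is cosmetic: you isolate the intermediate subclaim $\DIM 1_{\bi_\mu}\bar\Delta(\mu)=\kappa_\mu$ explicitly, whereas the paper folds this step directly into the reduction.
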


\begin{proof}
(1) Since $L(\lambda)$ is a quotient of $\bar\Delta(\lambda)$, 
the first statement is immediate from Lemma~\ref{sc}.
Using also the triangularity property from 
Theorem~\ref{mac2}, the same lemma 
reduces the proof of the second statement to showing that
$\DIM 1_{\bi_\alpha} L(\alpha) = \kappa_\alpha$ for $\alpha \in R^+$.
To see this proceed by induction on height. For the induction step,
suppose that
 $(\beta,\gamma)=\mp(\alpha)$ and apply Corollary~\ref{chprop}:
\begin{align*}
\DIM 1_{\bi_\alpha} L(\alpha)
&= [p_{\beta,\gamma}+1]
(\DIM 1_{\bi_\beta} L(\beta))(\DIM 1_{\bi_\gamma} L(\gamma))= [p_{\beta,\gamma}+1] \kappa_\beta \kappa_\gamma = \kappa_\alpha.
\end{align*}

\noindent
(2)
Repeat the proof of (1) working in $\f_\A^*$ rather
than in $[\Rep{H}]$,
using $[\bar\Delta(\lambda)] = r_\lambda^*$,
$[L(\alpha)] = r_\alpha^*$ and the triangularity from (\ref{tri}).
\end{proof}

Now we can explain the algorithm to compute $\CH b_\lambda^*$ for $\lambda
\in \KP$.
This goes by induction on the partial order $\preceq$, so we assume 
$\CH b_\mu^*$ is known for all $\mu \prec \lambda$.
We can compute 
$\CH r_\alpha^* \in \A\W$
for any $\alpha \in R^+$ by using the recursive formula 
obtained by applying $\CH$ to the identity
(\ref{ball2}).
Hence we can compute $\chi := \CH r_\lambda^* \in \A\W$.
Now inspect the $\bi_\mu$-coefficients of $\chi$
for $\mu \prec \lambda$. 
If they are all bar-invariant 
then $\chi = \CH b_\lambda^*$ and we are done. Otherwise, let
$\mu \prec \lambda$ be maximal such that the coefficient $a(q)$ 
of $\bi_\mu$
in $\chi$ is not bar-invariant; Lemma~\ref{rtheory}(2) and (\ref{tri}) imply 
that there is a unique $c(q)
\in q \Z[q]$ such that $a(q) - c(q) \kappa_\mu$ is bar-invariant;
then subtract $c(q) \CH b_\mu^*$ from $\chi$ and repeat.

\begin{Example}\rm\label{G2ex}
The following example is mostly taken from \cite[$\S$5.5.4]{Lec}.
Suppose we are in type G$_2$ with simple roots $\alpha_1$ and $\alpha_2$ 
chosen so that they are short and long, respectively.
There are just two convex orderings. We consider the one in which
$\alpha_1 \prec \alpha_2$. Then the above algorithm gives:

$\CH b^*_{(\alpha_1)} = 1;$

$\CH b^*_{(3\alpha_1+\alpha_2)}=[2]_1[3]_1 1112,$

$\CH b^*_{(2\alpha_1+\alpha_2,\alpha_1)} =[2]_1 1121,$

$\CH b^*_{(\alpha_1+\alpha_2,\alpha_1,\alpha_1)} =[2]_1 1211,$

$\CH b^*_{(\alpha_2,\alpha_1,\alpha_1,\alpha_1)} =[2]_1[3]_1 2111;$

$\CH b^*_{(2\alpha_1+\alpha_2)}=[2]_1 112,$

$\CH b^*_{(\alpha_1+\alpha_2,\alpha_1)} =121,$

$\CH b^*_{(\alpha_2,\alpha_1,\alpha_1)} =[2]_1 211;$

$\CH b^*_{(3\alpha_1+2\alpha_2)} =[2]_2 [2]_1[3]_1 11122+[2]_1[3]_1
11212,$

$\CH b^*_{(\alpha_1+\alpha_2,2\alpha_1+\alpha_2)} =[2]_1 12112,$

$\CH b^*_{(\alpha_1+\alpha_2,\alpha_1+\alpha_2,\alpha_1)} =[2]_1
11212+[2]_2 [2]_1 11221+[2]_1 12121,$

$\CH b^*_{(\alpha_2,3\alpha_1+\alpha_2)} =[2]_1[3]_1 21112,$

$\CH b^*_{(\alpha_2,2\alpha_1+\alpha_2,\alpha_1)}=[2]_1 21121,$

$\CH b^*_{(\alpha_2,\alpha_1+\alpha_2,\alpha_1,\alpha_1)}=[2]_1
12121+[2]_2 [2]_1 12211+[2]_1 21211,$

$\CH b^*_{(\alpha_2,\alpha_2,\alpha_1,\alpha_1,\alpha_1)}=[2]_1[3]_1
21211+[2]_2 [2]_1[3]_1 22111;$

$\CH b^*_{(\alpha_1+\alpha_2)}=12,$

$\CH b^*_{(\alpha_2,\alpha_1)} =21;$

$\CH b^*_{(\alpha_2)} =2.$

\noindent
Here $[2]_1 = q+q^{-1}, [2]_2 = q^3+q^{-3}$ and $[3]_1 =
q^2+1+q^{-2}$.
\end{Example}

\subsection{The length two property}\label{ltp}
Suppose that $\alpha \in R^+$.
The following theorem 
was stated as a conjecture in \cite[Conjecture 2.16]{BrK}.
It shows that the proper standard module $\bar\Delta(\lambda)$
has length two for all minimal pairs $\lambda$ for $\alpha$.

\begin{Theorem}\label{as}
For $\lambda = (\beta,\gamma) \in \MP(\alpha)$
there are short exact sequences
\begin{align}\label{sesoneas}
0 \longrightarrow q^{p_{\beta,\gamma}-\beta\cdot\gamma}L(\alpha) &\longrightarrow L(\beta)\circ L(\gamma)
\longrightarrow L(\lambda) \longrightarrow 0,\\
0 \longrightarrow q^{-\beta\cdot\gamma} L(\lambda)
&\longrightarrow L(\gamma)\circ L(\beta)
\longrightarrow q^{-p_{\beta,\gamma}} L(\alpha)
\longrightarrow 0.\label{sestwoas}
\end{align}
\end{Theorem}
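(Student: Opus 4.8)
\emph{The plan is to pin down the module $X$ from \eqref{sestwoa}.} Recall that $X = q^{-\beta\cdot\gamma}(\rad\bar\Delta(\lambda))^\circledast$ is finite dimensional with every composition factor isomorphic to $L(\alpha)$ up to a shift, so by Corollary~\ref{class} it is a finite direct sum of degree-shifted copies of the modules $\Delta_n(\alpha)$, $n\geq 1$. The theorem is equivalent to the single assertion $X\cong q^{-p_{\beta,\gamma}}L(\alpha)$: granting this, feeding it (and $X^\circledast\cong q^{p_{\beta,\gamma}}L(\alpha)$) back into \eqref{sesonea} and \eqref{sestwoa} produces exactly \eqref{sesoneas} and \eqref{sestwoas}. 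So the whole content is to identify $X$.

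First I would do the Grothendieck-group bookkeeping. Since $s_\lambda = 0$ we have $[\bar\Delta(\lambda)] = r_\beta^* r_\gamma^*$, and by the unitriangularity of Theorem~\ref{mac2} together with the fact that $(\alpha)$ is the only Kostant partition strictly below $\lambda$ (minimality of the pair), $[L(\lambda)] = r_\beta^* r_\gamma^* - g(q)\,r_\alpha^*$ where $g(q):=[\bar\Delta(\lambda):L(\alpha)]\in\N[q,q^{-1}]$. Combining \eqref{sestwoa} and \eqref{sesonea} (translating $\circledast$ into $\b^*$ via Theorem~\ref{ct}, using \eqref{bstar} and the $\b^*$-invariance of $r_\alpha^*,r_\beta^*,r_\gamma^*$) with the identity \eqref{ball2} of Theorem~\ref{sf}, one finds $[X] = q^{-\beta\cdot\gamma}\,\overline{g(q)}\,r_\alpha^*$ and, eliminating $[L(\lambda)]$, the functional equation $g(q) - \overline{g(q)} = q^{p_{\beta,\gamma}-\beta\cdot\gamma} - q^{\beta\cdot\gamma-p_{\beta,\gamma}}$. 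Writing $[X] = f(q)\,r_\alpha^*$ (legitimate since $\f$ is a domain and every composition factor of $X$ is a shift of $L(\alpha)$), the theorem becomes equivalent to $g(q)$ being the \emph{minimal} solution $q^{p_{\beta,\gamma}-\beta\cdot\gamma}$ of this equation, i.e. to $\bar\Delta(\lambda)$ having length two.

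The substantive step is then to show that $\bar\Delta(\lambda)$ has irreducible socle — equivalently, by Lemma~\ref{lvl}, that $L(\gamma)\circ L(\beta)\cong q^{-\beta\cdot\gamma}\bar\nabla(\lambda)$ has irreducible head, equivalently (Frobenius reciprocity together with Corollary~\ref{chprop}) that $\Res^{\alpha}_{\gamma,\beta}L(\alpha)$ has irreducible socle. Once this is known, $\rad\bar\Delta(\lambda)$ is a module with simple socle all of whose composition factors are shifts of $L(\alpha)$, hence by Corollary~\ref{class} a single degree-shifted $\Delta_n(\alpha)$, so $X\cong q^{-r}\Delta_n(\alpha)$ for some $r$; a short case analysis of the functional equation against \eqref{castab} then forces $n=1$ and $r=p_{\beta,\gamma}$ in all but the case $d_\alpha=d_\beta=d_\gamma=1$ inside a $\mathrm{G}_2$-subsystem, where one must additionally invoke the positivity of $\CH L(\lambda) = \CH(L(\beta)\circ L(\gamma)) - g(q)\,\CH L(\alpha)$ (all three characters being computable, the first two as shuffle products and $\CH L(\alpha)$ via \eqref{ball2} and \eqref{charc}) to rule out $n=2$. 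When $\beta$ is simple, the irreducibility of the head of $L(\gamma)\circ L(\beta)$ is \cite[Lemma 3.9]{KL}. For a general minimal pair I would reduce to the simple case by the device used in the second half of the proof of Theorem~\ref{sf}: writing $\beta = s_{i_1}\cdots s_{i_{b-1}}(\alpha_{i_b})$ and passing to the convex ordering $\prec'$ attached to $w_0 = s_{j_1}\cdots s_{j_{N-b}}s_{i_1}\cdots s_{i_b}$ (with $s_{j_1}\cdots s_{j_{N-b}}$ reduced for $w_0 s_{i_b}\cdots s_{i_1}$), under which the image of $(\beta,\gamma)$ is a minimal pair with simple first component and $p_{\beta,\gamma}$ is unchanged. \textbf{The main obstacle} is carrying this reduction out for the module statement rather than inside $\f$: one needs the cuspidal modules, and the induction products among them, for $\prec$ and $\prec'$ to be matched up compatibly. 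I would obtain this from the compatibility of their images under $\CH$ in the quantum shuffle algebra together with the triangularity of Theorem~\ref{mac2} (which characterises the relevant irreducibles by their characters), falling back on reflection-type functors relating the two cuspidal systems if a cleaner argument is available.
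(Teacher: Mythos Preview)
Your overall setup is the same as the paper's: both arguments reduce the theorem to showing $X\cong q^{-p_{\beta,\gamma}}L(\alpha)$, and both use the functional equation coming from \eqref{ball2}. But there are two genuine gaps.

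\textbf{The reduction to simple $\beta$ does not go through at the module level.} The braid-group trick in Theorem~\ref{sf} is an identity in $\f$; to transport the statement ``$L(\gamma)\circ L(\beta)$ has irreducible head'' from one convex ordering to another you would need an equivalence of module categories intertwining the induction products and the cuspidal modules. Your proposed fix via $\CH$ and Theorem~\ref{mac2} does not supply this: characters determine composition factors but not submodule structure, so knowing that two modules have the same character does not let you conclude that one has irreducible head because the other does. Reflection functors of Saito/Kato type would do the job, but those lie outside the elementary framework of this paper (and indeed the point of this paper is to avoid them). The paper instead bounds the head of $X$ \emph{directly}, without changing the ordering, via Frobenius reciprocity and Corollary~\ref{chprop}: $\DIM\HOM_{H_\alpha}(X,L(\alpha))\leq [p_{\beta,\gamma}+1]$, which already forces $X$ to be a sum of at most $p_{\beta,\gamma}+1$ shifted copies of $\Delta_n(\alpha)$'s.

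\textbf{Your case analysis of the functional equation is incomplete.} You claim it forces $n=1$ except in the $\mathrm{G}_2$ subcase with $d_\alpha=d_\beta=d_\gamma=1$. In fact the same ambiguity occurs whenever $\beta,\gamma$ have different lengths in a subsystem of type $\mathrm{B}_r$, $\mathrm{C}_r$ or $\mathrm{F}_4$: there $p_{\beta,\gamma}=0$, $\beta\cdot\gamma=-2$, $d_\alpha=1$, and the equation $g(q)-\overline{g(q)}=q^2-q^{-2}$ admits both $g(q)=q^2$ (i.e.\ $n=1$) and $g(q)=1+q^2$ (i.e.\ $n=2$). This is precisely the case the paper finds hardest: for $\mathrm{B}_r,\mathrm{C}_r$ it is settled by an explicit Mackey computation showing $\Res^\alpha_{\gamma,\beta}(L(\gamma)\circ L(\beta))\cong L(\gamma)\boxtimes L(\beta)$, while for $\mathrm{F}_4$ the paper resorts to a computer verification that no two dual canonical basis elements of $\f_\alpha$ have difference lying in $\N[q,q^{-1}]\langle I\rangle$, ruling out the spurious solution. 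Your positivity argument for $\mathrm{G}_2$ is essentially what the paper does there, but you have not addressed the $\mathrm{B}/\mathrm{C}/\mathrm{F}_4$ cases at all.
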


\begin{proof}
Recall the short exact sequence (\ref{sestwoa}).
The module 
$\X$ has all composition factors isomorphic to $L(\alpha)$ (up to
shift).
To prove the theorem we need to show that $\X \cong
q^{-p_{\beta,\gamma}} L(\alpha)$.

Suppose first that $\alpha,\beta,\gamma$ lie in a subsystem of type
G$_2$.
There are just two convex orderings. For one of these, the character
$b_\alpha^*$ of
$L(\alpha)$ is listed in Example~\ref{G2ex}, and using this
it is
easy to check that 
$\CH L(\gamma) \circ \CH L(\beta)
= \kappa_\lambda \bi_\lambda + q^{-p_{\beta,\gamma}} \CH L(\alpha)$. 
Comparing with (\ref{sestwoa}) we therefore must have that
$\DIM 1_{\bi_\alpha} \X + q^{-\beta\cdot\gamma} 
\DIM 1_{\bi_\alpha} L(\lambda) = q^{-p_{\beta,\gamma}} \DIM
1_{\bi_\alpha} L(\alpha)$. 
The only way this could happen is if
$1_{\bi_\alpha} L(\lambda) =
0$ and $\X \cong q^{-p_{\beta,\gamma}}L(\alpha)$.
The argument for the other convex ordering is similar.

From now on we assume we are not in G$_2$.
Next we treat the case that $d_\alpha = d_\beta = d_\gamma$, when
$p_{\beta,\gamma} = 0$ by (\ref{castab}).
Applying the functor $\HOM_{H_\alpha}(-, L(\alpha))$ to
the short exact sequence (\ref{sestwoa}), using Frobenius
reciprocity
and Corollary~\ref{chprop}, we
deduce that
$$
\HOM_{H_\alpha}(\X, L(\alpha)) \cong 
\HOM_{H_{\gamma}\otimes H_{\beta}}(L(\gamma)
\boxtimes L(\beta), \Res^\alpha_{\gamma,\beta} L(\alpha))
\cong \k.
$$
Hence $\X$ has irreducible head $L(\alpha)$. Therefore by
Corollary~\ref{class}
we have that $\X \cong \Delta_n(\alpha)$ for some $n \geq 1$, i.e.
$[\X] = \frac{1-q_\alpha^{2n}}{1-q_\alpha^2} [L(\alpha)]$.
To prove the theorem we must show that $n=1$.
For this we compute $r_\gamma^* r_\beta^* - q^{-\beta\cdot\gamma}
r_\beta^* r_\gamma^*$ first from (\ref{sesonea})--(\ref{sestwoa})
then from (\ref{ball2}) to deduce that
$$
\frac{1-q_\alpha^{2n}}{1-q_\alpha^2} - q^{-2 \beta\cdot\gamma}
\frac{1-q_\alpha^{-2n}}{1-q_\alpha^{-2}} = 1 - q^{-2 \beta\cdot\gamma}.
$$
This easily implies that $n=1$.

Next we treat the case that $d_\alpha > d_\beta = d_\gamma$,
when $p_{\beta,\gamma}=1$ and $\beta\cdot\gamma = 0$.
Then $\left[\Res^\alpha_{\gamma,\beta} L(\alpha)\right] = (q+q^{-1})
[L(\gamma)\boxtimes L(\beta)]$, and the above calculation and 
Corollary~\ref{class}
show that
$\X \cong q^{-1}\Delta_n(\alpha) \oplus q \Delta_m(\alpha)$
for $n \geq 1$ and $m \geq 0$.
As above, we obtain the identity
$$
q^{-1} \frac{1-q^{4n}}{1-q^4} - q
\frac{1-q^{-4n}}{1-q^{-4}}
+q\frac{1-q^{4m}}{1-q^4} - q^{-1}
\frac{1-q^{-4m}}{1-q^{-4}}
= q^{-1}-q,
$$
which implies $n=1$ and $m=0$ as required.

We are left with the case that $\beta$ and $\gamma$ are of different
lengths in a subsystem of type B$_r$, C$_r$ or F$_4$,
when $p_{\beta,\gamma} = 0$ and $\beta\cdot\gamma = -2$. Unfortunately
here the above method breaks down: it shows only that
$\X \cong L(\alpha)$ (as required) or that $\X \cong \Delta_2(\alpha)$.
To rule out the latter possibility in types B$_r$ or C$_r$
it is enough using (\ref{sestwoa}) and Corollary~\ref{chprop} to show
that $\res^\alpha_{\gamma,\beta} L(\gamma)\circ L(\beta) \cong
L(\gamma)\boxtimes L(\beta)$.
Realizing $R^+$ as $\{\epsilon_i \pm \epsilon_j, d \epsilon_k\:|\:1 \leq i < j \leq r,
1 \leq k \leq r\}$ in the standard way, where $d=1$ for B$_r$ or $2$ for C$_r$,
the assumptions $\beta+\gamma \in R^+$ and $d_\beta \neq d_\gamma$
imply that
$\{\beta,\gamma\} = \{\epsilon_i - \epsilon_j, d \epsilon_j\}$
for some $1 \leq i < j \leq r$. Now apply Theorem~\ref{mackey}, noting
that
there is only one non-zero section in the resulting filtration.

Finally we treat F$_4$ for $\beta$ and $\gamma$ of different
lengths.
Here we must resort to some explicit
calculation. However there are now 
2,144,892 
different convex
orderings!
The following argument avoids the need to make a separate computation
for each one in turn.
Suppose for a contradiction that $\X \cong \Delta_2(\alpha)$.
Then (\ref{sesonea}) implies that
$[L(\lambda)] = r_\beta^* r_\gamma^*- (1+q^{2}) r_\alpha^*$.
This is $\b^*$-invariant, as is $r_\alpha^* = b_{(\alpha)}^*$, hence
$r_\beta^* r_\gamma^*  - q^2 r_\alpha^*$ is $\b^*$-invariant too.
We deduce from (\ref{tri}) that
$b_\lambda^* = r_\beta^* r_\gamma^* - q^2 r_\alpha^*$.
This shows that 
$\CH L(\lambda) =
\CH b_\lambda^*  - \CH b_{(\alpha)}^*$, 
hence $\CH (b_\lambda^* - b_{(\alpha)}^*)$
is an $\N[q,q^{-1}]$-linear combination of words in
$\A\W$.
Now we made an explicit computer calculation of the dual canonical
basis of $\f^*_\alpha$, using the algorithm in the previous subsection
with respect to a particular choice of convex ordering, 
showing at the end 
that no two dual canonical basis elements of $\f_\alpha$
have positive difference in this sense; see \cite{Brun} 
for details. This produces the
desired contradiction.
\end{proof}

\begin{Corollary}
For $\lambda \in \MP(\alpha)$ we have that $[L(\lambda)] =
b_\lambda^*$.
\end{Corollary}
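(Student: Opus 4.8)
The plan is to compute $[L(\lambda)]$ inside $\f_\A^* = [\Rep H]$ directly from Theorem~\ref{as} and then recognize it as $b_\lambda^*$ via the characterization (\ref{tri}).

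First I would record that, writing $\lambda = (\beta,\gamma)$ with $\beta \succ \gamma$, the scale factor $s_\lambda$ vanishes (each part occurs with multiplicity one), so that $\bar\Delta(\lambda) = L(\beta)\circ L(\gamma)$ and hence $[L(\beta)\circ L(\gamma)] = [\bar\Delta(\lambda)] = r_\lambda^*$ by (\ref{rlambda}). Taking classes in the short exact sequence (\ref{sesoneas}) then gives the clean identity
$$
[L(\lambda)] = r_\lambda^* - q^{\,p_{\beta,\gamma}-\beta\cdot\gamma}\,r_{(\alpha)}^*.
$$

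Next I would check the two defining properties in (\ref{tri}) for this element. For the triangularity condition: $(\alpha) \prec \lambda$ by Lemma~\ref{l4}, and the exponent $p_{\beta,\gamma}-\beta\cdot\gamma$ is strictly positive, since Lemma~\ref{Cases} gives $d_\alpha(p_{\beta,\gamma}-\beta\cdot\gamma) = d_\beta d_\gamma(p_{\beta,\gamma}+1) \geq 1$; hence $-q^{\,p_{\beta,\gamma}-\beta\cdot\gamma} \in q\Z[q]$ and $[L(\lambda)] \equiv r_\lambda^*$ modulo a $q\Z[q]$-combination of $r_\mu^*$ with $\mu \prec \lambda$. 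For bar-invariance: $L(\lambda)$ is $\circledast$-self-dual by Theorem~\ref{mac2}, and $\circledast$ corresponds to $\b^*$ under the identification of Theorem~\ref{ct}, so $\b^*([L(\lambda)]) = [L(\lambda)]$. The uniqueness statement in (\ref{tri}) then forces $[L(\lambda)] = b_\lambda^*$.

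There is no genuine obstacle here: all the substance sits in Theorem~\ref{as}. The only points requiring a moment's care are the two pieces of bookkeeping above — that $s_\lambda = 0$, so no stray power of $q$ enters the identification $[L(\beta)\circ L(\gamma)] = r_\lambda^*$, and that the shift $q^{\,p_{\beta,\gamma}-\beta\cdot\gamma}$ has a genuinely positive exponent, so that it contributes to the "$q\Z[q]$" error term rather than to the leading coefficient. Both are immediate from Lemma~\ref{Cases}.
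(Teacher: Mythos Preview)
Your proof is correct and follows essentially the same approach as the paper's: compute $[L(\lambda)] = r_\lambda^* - q^{p_{\beta,\gamma}-\beta\cdot\gamma} r_{(\alpha)}^*$ from the short exact sequence (\ref{sesoneas}), observe the exponent is strictly positive via Lemma~\ref{Cases}, and invoke the characterization (\ref{tri}). You have merely spelled out a few details (that $s_\lambda=0$, that $(\alpha)\prec\lambda$, and the bar-invariance from self-duality) which the paper's proof leaves implicit.
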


\begin{proof}
By (\ref{sesoneas}) we have that $[L(\lambda)] = 
r_\beta^* r_\gamma^* -
q^{p_{\beta,\gamma}-\beta\cdot\gamma}r_\alpha^*$,
and 
$p_{\beta,\gamma}-\beta\cdot\gamma > 0$ by Lemma~\ref{Cases}.
Hence 
using the characterization (\ref{tri}) this is also $b_\lambda^*$.
\end{proof}

\subsection{A short exact sequence}\label{sses}
In this subsection we fix $\alpha \in R^+$
of height $n \geq 2$.
Let $(\beta,\gamma)$ be a minimal pair for $\alpha$
and set $m := \hgt(\gamma)$.

\begin{Lemma}\label{inj1}
Let $w \in S_{n}$ be
$(1,\dots,n) \mapsto (n-m+1,\dots,n,1,\dots,n-m)$, so that
$\tau_w 1_{\gamma,\beta} = 1_{\beta,\gamma} \tau_w$.
There is a unique homogeneous homomorphism 
$$
\phi:q^{-\beta\cdot\gamma}\Delta(\beta)\circ\Delta(\gamma)
\rightarrow \Delta(\gamma) \circ \Delta(\beta)
$$
such that 
$\phi(1_{\beta,\gamma} \otimes (v_1 \otimes v_2))
= \tau_w 1_{\gamma,\beta} \otimes (v_2 \otimes v_1)$
for all $v_1 \in \Delta(\beta), v_2 \in \Delta(\gamma)$.
\end{Lemma}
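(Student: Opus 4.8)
The plan is to build $\phi$ by Frobenius reciprocity, after computing $\Res^{\beta+\gamma}_{\beta,\gamma}(\Delta(\gamma)\circ\Delta(\beta))$ with the Mackey theorem, exactly in the spirit of the proof of Lemma~\ref{dpl1}. Since $\Ind^{\beta+\gamma}_{\beta,\gamma}$ is left adjoint to $\Res^{\beta+\gamma}_{\beta,\gamma}$ and commutes with grading shifts, giving a homomorphism $\phi:q^{-\beta\cdot\gamma}\Delta(\beta)\circ\Delta(\gamma)\to\Delta(\gamma)\circ\Delta(\beta)$ is the same as giving a homomorphism of $H_\beta\otimes H_\gamma$-modules $\bar\phi:q^{-\beta\cdot\gamma}\Delta(\beta)\boxtimes\Delta(\gamma)\to\Res^{\beta+\gamma}_{\beta,\gamma}(\Delta(\gamma)\circ\Delta(\beta))$, related by $\phi(1_{\beta,\gamma}\otimes u)=\bar\phi(u)$. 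As the elements $1_{\beta,\gamma}\otimes(v_1\otimes v_2)$ range over a generating set of the source, the asserted property of $\phi$ is equivalent to $\bar\phi(v_1\otimes v_2)=\tau_w 1_{\gamma,\beta}\otimes(v_2\otimes v_1)$ for all $v_1\in\Delta(\beta),v_2\in\Delta(\gamma)$; so both existence and uniqueness of $\phi$ reduce to producing a unique such $\bar\phi$.

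Next I would analyze $\Res^{\beta+\gamma}_{\beta,\gamma}(\Delta(\gamma)\circ\Delta(\beta))=\Res^{\beta+\gamma}_{\beta,\gamma}\circ\Ind^{\beta+\gamma}_{\gamma,\beta}(\Delta(\gamma)\boxtimes\Delta(\beta))$ via Theorem~\ref{mackey}. A section of the Mackey filtration attached to a quadruple $(\beta_1,\beta_2,\gamma_1,\gamma_2)$ with $\beta_1+\beta_2=\gamma$, $\gamma_1+\gamma_2=\beta$, $\beta_1+\gamma_1=\beta$, $\beta_2+\gamma_2=\gamma$ is nonzero only if $\Res^{\gamma}_{\beta_1,\beta_2}\Delta(\gamma)\neq0$ and $\Res^{\beta}_{\gamma_1,\gamma_2}\Delta(\beta)\neq0$; since $[\Delta(\gamma)]=[L(\gamma)]/(1-q_\gamma^2)$ and $[\Delta(\beta)]=[L(\beta)]/(1-q_\beta^2)$ by Theorem~\ref{rm}(1), this forces $\Res^{\gamma}_{\beta_1,\beta_2}L(\gamma)\neq0$ and $\Res^{\beta}_{\gamma_1,\gamma_2}L(\beta)\neq0$. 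McNamara's Lemma~\ref{maclem} then makes $\beta_1$ a sum of roots $\preceq\gamma\prec\alpha$ and, since $\beta_1=\gamma_2$, also a sum of roots $\succeq\beta\succ\alpha$; by Lemma~\ref{l1} this gives $\beta_1=\gamma_2=0$, hence $\beta_2=\gamma$ and $\gamma_1=\beta$. Thus there is exactly one nonzero section, the top one, so the whole filtration equals that section and the Mackey isomorphism becomes an isomorphism $\Res^{\beta+\gamma}_{\beta,\gamma}(\Delta(\gamma)\circ\Delta(\beta))\stackrel{\sim}{\rightarrow}q^{-\beta\cdot\gamma}\Delta(\beta)\boxtimes\Delta(\gamma)$ carrying $\tau_w 1_{\gamma,\beta}\otimes(v_2\otimes v_1)$ to $v_1\otimes v_2$. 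Here one checks that the minimal double coset representative singled out by Theorem~\ref{mackey} in this situation is precisely the permutation $w$ of the statement (it moves the block spelling $\gamma$ past the block spelling $\beta$, so that $\tau_w 1_{\gamma,\beta}=1_{\beta,\gamma}\tau_w$), and that the grading shift $q^{-\beta_2\cdot\gamma_1}=q^{-\beta\cdot\gamma}$ matches; in particular $\phi$ comes out homogeneous of degree $0$.

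Finally I would take $\bar\phi$ to be the inverse of this Mackey isomorphism: it is an isomorphism of $H_\beta\otimes H_\gamma$-modules with $\bar\phi(v_1\otimes v_2)=\tau_w 1_{\gamma,\beta}\otimes(v_2\otimes v_1)$, and the homomorphism $\phi$ corresponding to it under Frobenius reciprocity is the required map. Uniqueness is then immediate, since any $\phi$ with the stated property induces this same $\bar\phi$ on the generating set and hence coincides with it. I expect the only genuine work to be the bookkeeping in the previous step — pinning down the minimal coset representative, the action of the idempotents $1_{\beta_1,\beta_2,\gamma_1,\gamma_2}$ and of the twist $I^*$ in the Mackey isomorphism, and tracking the shift $q^{-\beta\cdot\gamma}$ — rather than anything conceptual; the vanishing of all but the top Mackey section is the one new point, and it follows cleanly from Lemmas~\ref{maclem} and \ref{l1} together with the minimality of the pair $(\beta,\gamma)$.
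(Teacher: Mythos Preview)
Your proposal is correct and follows essentially the same route as the paper: reduce via Frobenius reciprocity to constructing $\bar\phi$, then use the Mackey theorem together with Lemma~\ref{maclem} and Lemma~\ref{l1} to see that $\Res^{\alpha}_{\beta,\gamma}(\Delta(\gamma)\circ\Delta(\beta))$ has a single nonzero section isomorphic to $q^{-\beta\cdot\gamma}\Delta(\beta)\boxtimes\Delta(\gamma)$, and take $\bar\phi$ to be the inverse of that Mackey isomorphism. The only cosmetic difference is which of the two symmetric applications of Lemma~\ref{l1} you invoke (you force $\beta_1=\gamma_2=0$, the paper forces $\gamma_1=0$ and $\beta_1=\beta$), and your closing remark about ``minimality of the pair'' is unnecessary: only $\gamma\prec\alpha\prec\beta$ is used.
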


\begin{proof}
It suffices by Frobenius reciprocity to show that there is an isomorphism
$$
q^{-\beta\cdot\gamma}\Delta(\beta)\boxtimes \Delta(\gamma)
\stackrel{\sim}{\rightarrow}
\Res^{\alpha}_{\beta,\gamma} \Delta(\gamma)\circ\Delta(\beta),\quad
v_1 \otimes v_2 \mapsto \tau_w 1_{\gamma,\beta} \otimes (v_2 \otimes
v_1).
$$
To see this we apply Theorem~\ref{mackey}.
Suppose we are given
$\beta_1,\beta_2,\gamma_1,\gamma_2 \in Q^+$
such that
$\gamma = \gamma_1+\gamma_2 = \gamma_2+\beta_2$,
$\beta = \beta_1+\beta_2 = \gamma_1+\beta_1$, 
and both of the restrictions 
$\Res^\gamma_{\gamma_1,\gamma_2} \Delta(\gamma)$
and
$\Res^\beta_{\beta_1,\beta_2} \Delta(\beta)$
are non-zero.
By Lemma~\ref{maclem},
$\gamma_1$ is
a sum of positive roots $\preceq \gamma \prec \beta$ and 
$\beta_1$ is a sum of positive roots $\preceq \beta$.
Since $\gamma_1+\beta_1 = \beta$ 
we deduce from Lemma~\ref{l1} that $\beta_1 =
\beta,
\beta_2 = 0, \gamma_1 = 0$ and $\gamma_2 = \gamma$.
Thus the only non-zero section in the Mackey filtration is the top section, which
is isomorphic to
$q^{-\beta\cdot\gamma} \Delta(\beta) \boxtimes \Delta(\gamma)$.
\end{proof}

\begin{Theorem}\label{inj3}
For $(\beta,\gamma) \in \MP(\alpha)$
there is a short exact sequence
$$
0 \longrightarrow q^{-\beta\cdot\gamma} \Delta(\beta)\circ\Delta(\gamma)
\stackrel{\phi}{\longrightarrow} \Delta(\gamma)\circ\Delta(\beta)
\longrightarrow [p_{\beta,\gamma}+1]\Delta(\alpha) \longrightarrow 0.
$$
\end{Theorem}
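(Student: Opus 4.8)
The plan is to prove simultaneously the dual statement: I will produce a surjection $\psi\colon\Delta(\gamma)\circ\Delta(\beta)\twoheadrightarrow[p_{\beta,\gamma}+1]\Delta(\alpha)$ with $\psi\circ\phi=0$, then show $\ker\psi=\im\phi$ and that $\phi$ is injective; this yields the displayed exact sequence. The key inputs are generalized Frobenius reciprocity, the $\EXT$-vanishing of Theorem~\ref{rm}(3), the character identity of Theorem~\ref{sf}, Corollaries~\ref{chprop} and~\ref{class}, and Lemma~\ref{rtheory}.

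To build $\psi$: Frobenius reciprocity identifies $\HOM_{H_\alpha}(\Delta(\gamma)\circ\Delta(\beta),L(\alpha))$ with $\HOM_{H_\gamma\otimes H_\beta}(\Delta(\gamma)\boxtimes\Delta(\beta),\Res^\alpha_{\gamma,\beta}L(\alpha))$, which has graded dimension $[p_{\beta,\gamma}+1]$ since by Corollary~\ref{Equiv} the module $\Delta(\gamma)\boxtimes\Delta(\beta)$ is a projective cover of $L(\gamma)\boxtimes L(\beta)$ in the subcategory of $H_\gamma\otimes H_\beta$-modules with all composition factors $\cong q^n(L(\gamma)\boxtimes L(\beta))$, and $\Res^\alpha_{\gamma,\beta}L(\alpha)$ lies in that subcategory with graded multiplicity $[p_{\beta,\gamma}+1]$ by Corollary~\ref{chprop}. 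Since $\rad\Delta(\alpha)$ has all irreducible subquotients $\cong q^nL(\alpha)$ and $\EXT^1_{H_\alpha}(\Delta(\gamma)\circ\Delta(\beta),q^nL(\alpha))=0$ for all $n$ — which follows from Frobenius reciprocity for $\EXT$ (using that $\Ind$ is exact and preserves projectives), the Künneth formula, Theorem~\ref{rm}(3) applied to $\Delta(\beta)$ and $\Delta(\gamma)$, Corollary~\ref{chprop}, and Lemma~\ref{mittagleffler} — every homogeneous homomorphism $\Delta(\gamma)\circ\Delta(\beta)\to q^nL(\alpha)$ lifts through $\Delta(\alpha)\twoheadrightarrow L(\alpha)$. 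Lifting a homogeneous basis of $\HOM_{H_\alpha}(\Delta(\gamma)\circ\Delta(\beta),L(\alpha))$ assembles into $\psi\colon\Delta(\gamma)\circ\Delta(\beta)\to[p_{\beta,\gamma}+1]\Delta(\alpha)$, which is surjective because it is surjective on heads (graded Nakayama). Moreover $\psi\circ\phi=0$ automatically: $\HOM_{H_\alpha}(\Delta(\beta)\circ\Delta(\gamma),\Delta(\alpha))\cong\HOM_{H_\beta\otimes H_\gamma}(\Delta(\beta)\boxtimes\Delta(\gamma),\Res^\alpha_{\beta,\gamma}\Delta(\alpha))$, and this vanishes since $\Res^\alpha_{\beta,\gamma}L(\alpha)=0$ by Lemma~\ref{maclem} (because $\beta\succ\alpha$ is not a sum of positive roots $\preceq\alpha$: a single positive root is a sum of positive roots only trivially, and convexity rules out $\beta=\beta_1+\beta_2$ with $\beta_1,\beta_2\preceq\alpha$), hence $\Res^\alpha_{\beta,\gamma}\Delta(\alpha)=0$ by exactness. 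So $\im\phi\subseteq\ker\psi$.

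It remains to prove $\ker\psi=\im\phi$; then $[\im\phi]=[\Delta(\gamma)\circ\Delta(\beta)]-[p_{\beta,\gamma}+1][\Delta(\alpha)]=q^{-\beta\cdot\gamma}[\Delta(\beta)\circ\Delta(\gamma)]$ by Theorems~\ref{shead} and~\ref{sf}, so $\ker\phi$ has zero character, hence $\phi$ is injective and the theorem follows. Applying $\HOM_{H_\alpha}(-,L(\alpha))$ to $0\to\ker\psi\to\Delta(\gamma)\circ\Delta(\beta)\xrightarrow{\psi}[p_{\beta,\gamma}+1]\Delta(\alpha)\to0$ and using that $\HOM_{H_\alpha}([p_{\beta,\gamma}+1]\Delta(\alpha),L(\alpha))$ and $\HOM_{H_\alpha}(\Delta(\gamma)\circ\Delta(\beta),L(\alpha))$ both have graded dimension $[p_{\beta,\gamma}+1]$ while $\EXT^1_{H_\alpha}([p_{\beta,\gamma}+1]\Delta(\alpha),L(\alpha))=0$ by Theorem~\ref{rm}(3), the injection $\psi^{*}$ between these two equidimensional graded spaces is an isomorphism, so $\HOM_{H_\alpha}(\ker\psi,L(\alpha))=0$. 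On the other hand, a Mackey computation exactly as in the proof of Lemma~\ref{inj1} (using Lemmas~\ref{maclem} and~\ref{l1}) shows that $\Res^\alpha_{\beta,\gamma}$ sends both source and target of $\phi$ to $q^{-\beta\cdot\gamma}\Delta(\beta)\boxtimes\Delta(\gamma)$ with $\Res^\alpha_{\beta,\gamma}\phi$ the identity, whereas $\Res^\alpha_{\beta,\gamma}([p_{\beta,\gamma}+1]\Delta(\alpha))=0$; hence $\Res^\alpha_{\beta,\gamma}(\ker\psi/\im\phi)=0$, so the $1_{\bi_\lambda}$-weight space of $\ker\psi/\im\phi$ vanishes for $\lambda=(\beta,\gamma)$. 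Since every composition factor of $\Delta(\gamma)\circ\Delta(\beta)$ is of the form $q^nL(\lambda)$ or $q^nL(\alpha)$ (by Theorems~\ref{shead}, \ref{mac2}, the minimality of $(\beta,\gamma)$, and Theorem~\ref{sf} for the character), and since $\DIM 1_{\bi_\lambda}L(\lambda)=\kappa_\lambda\ne0$ while $\DIM 1_{\bi_\lambda}L(\alpha)=0$ because $\lambda\not\preceq(\alpha)$ (Lemma~\ref{rtheory}), this forces every composition factor of the finitely generated subquotient $\ker\psi/\im\phi$ to be $\cong q^nL(\alpha)$; by Corollary~\ref{class} it is then a finite direct sum of shifts of $\Delta(\alpha)$ and the $\Delta_m(\alpha)$. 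If it were nonzero it would have a simple quotient $\cong q^nL(\alpha)$, contradicting $\HOM_{H_\alpha}(\ker\psi/\im\phi,L(\alpha))\hookrightarrow\HOM_{H_\alpha}(\ker\psi,L(\alpha))=0$. Hence $\ker\psi=\im\phi$. I expect this last paragraph to be the main obstacle: the identification $\ker\psi=\im\phi$ combines the $\HOM(-,L(\alpha))$ dimension match (which rests on Corollary~\ref{chprop} and Theorem~\ref{rm}(3)) with pinning down the composition factors of the subquotient $\ker\psi/\im\phi$ through restriction functors and the word-combinatorics of Lemma~\ref{rtheory}; the construction of $\psi$, the vanishing $\psi\circ\phi=0$, and the final injectivity of $\phi$ are then routine bookkeeping with Frobenius reciprocity and the character identity of Theorem~\ref{sf}.
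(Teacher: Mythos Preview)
Your argument is correct and uses the same core inputs as the paper (Corollary~\ref{chprop}, Corollary~\ref{class}, the character identity~(\ref{ball1}) of Theorem~\ref{sf}, and the Mackey analysis from Lemma~\ref{inj1}), but it is organized quite differently. The paper never constructs the surjection $\psi$ in advance; instead it analyzes $\phi$ directly. Using the $\k[x]$-actions on $\Delta(\beta)$ and $\Delta(\gamma)$ it defines commuting endomorphisms $y,z$ of both source and target, takes a monomial filtration by ideals of $\k[y,z]$, and observes that on each section $\phi$ induces the unique (up to scalar) map $q^{-\beta\cdot\gamma}L(\beta)\circ L(\gamma)\to L(\gamma)\circ L(\beta)$ carrying head onto socle. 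This matches all $L(\lambda)$-constituents across $\phi$, so $\operatorname{coker}\phi$ has only $L(\alpha)$ as composition factors; then the inequality $\DIM\HOM(\operatorname{coker}\phi,L(\alpha))\le[p_{\beta,\gamma}+1]$ from Corollary~\ref{chprop} together with Corollary~\ref{class} and the dimension identity from~(\ref{ball1}) squeezes out both the injectivity of $\phi$ and the isomorphism $\operatorname{coker}\phi\cong[p_{\beta,\gamma}+1]\Delta(\alpha)$ in one stroke. Your route trades that filtration argument for heavier use of Ext-vanishing (Lemma~\ref{mittagleffler}, Theorem~\ref{rm}(3)) to build and control $\psi$, plus the word combinatorics of Lemma~\ref{rtheory} to eliminate $L(\lambda)$-factors from $\ker\psi/\im\phi$; the paper's approach is a bit more self-contained, while yours makes the quotient map and its kernel explicit from the outset. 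One small wording point: your justification that $\beta$ is not a sum of positive roots $\preceq\alpha$ is really Lemma~\ref{l1} (applied with $\gamma_1=\beta$), not just the two-summand convexity you cite.
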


\begin{proof}
In Lemma~\ref{inj1} we have already constructed the
map $\phi$. We need to show that it
is injective and compute its cokernel.
Let $V := q^{-\beta\cdot\gamma} \Delta(\beta)\cdot \Delta(\gamma)$
and $W := \Delta(\gamma)\circ \Delta(\beta)$.
The endomorphism $x$ of $\Delta(\beta)$ from Lemma~\ref{dpl4}
induces injective endomorphisms
\begin{align*}
y&:= x \circ 1 \in 
\END_{H_\alpha}(V)_{2 d_\beta},&
y &:= 1 \circ x\in \END_{H_\alpha}(W)_{2d_\beta}.
\end{align*}
Similarly the 
endomorphism $x$ of $\Delta(\gamma)$
gives us
\begin{align*}
z &:= 1 \circ x
\in \END_{H_\alpha}(V)_{2 d_\gamma},&
z &:= x \circ 1
\in \END_{H_\alpha}(W)_{2 d_\gamma}.
\end{align*}
We then have that $y \circ z = z \circ y$,
$\phi \circ y = y\circ \phi$ and $\phi \circ z = z\circ\phi$.
Thus we have defined algebra embeddings $\k[y,z]
\hookrightarrow \END_{H_\alpha}(V)$
and $\k[y,z]
\hookrightarrow \END_{H_\alpha}(W)$.

Let $\k[y,z] = I_0 \supset I_1 \supset \cdots$
be a chain of ideals of $\k[y, z]$ 
such that for all $m \geq 0$ 
there 
exist $b, c \geq 0$ with $I_m = \k y^b z^c \oplus
I_{m+1}$,
and set 
$d_m := b d_\beta + c d_\gamma$ for $b,c$ associated to $m$ in this way.
This chain of ideals induces filtrations $V = V_0 \supset V_1
\supset\cdots$ and $W = W_0 \supset W_1 \supset \cdots$
with 
$V_m := I_m (V)$
and $W_m := I_m(W)$.
The sections of these 
filtrations are $V_m / V_{m+1} \cong q^{2d_m-\beta\cdot\gamma}
L(\beta)\circ L(\gamma)$
and $W_m / W_{m+1}\cong q^{2d_m}L(\gamma) \circ L(\beta)$. 
Recalling (\ref{sesonea})--(\ref{sestwoa}), the map $\phi$ induces a 
map $V_m / V_{m+1} \rightarrow W_m / W_{m+1}$
which corresponds to the unique (up to a scalar) 
map $q^{2d_m-\beta\cdot\gamma}L(\beta)\circ
L(\gamma)
\rightarrow q^{2d_m}L(\gamma) \circ L(\beta)$ sending the head
of the first onto the socle of the second.
Hence $\phi$ induces a bijection between occurrences of $L(\lambda)$ 
as sections of a filtration of $V$ and of $W$.
This shows that the cokernel of $\phi$ can only 
have irreducible subquotients 
of the form $L(\alpha)$ (up to degree shift).

Now observe using Corollary~\ref{chprop} and Theorem~\ref{rm}(2) that 
\begin{align*}
\DIM \HOM_{H_\alpha}(\operatorname{coker}\phi, L(\alpha)) &\leq 
\DIM \HOM_{H_\alpha}(\Delta(\gamma)\circ \Delta(\beta), L(\alpha))\\
&=
\DIM \HOM_{H_\gamma \otimes H_\beta}(\Delta(\gamma) \boxtimes
\Delta(\beta),
\Res^\alpha_{\gamma,\beta} L(\alpha))\\
&\leq [p_{\beta,\gamma}+1].
\end{align*}
Applying Corollary~\ref{class}, we deduce that
$\DIM \operatorname{coker} \phi \leq [p_{\beta,\gamma}+1]
\DIM \Delta(\alpha)$.
Then by (\ref{ball1}) we have that
\begin{multline*}
[p_{\beta,\gamma}+1]\DIM \Delta(\alpha)
= 
\DIM
\Delta(\gamma)\circ\Delta(\beta)
-
q^{-\beta\cdot\gamma}
\DIM \Delta(\beta)\circ\Delta(\gamma)\\
= \DIM \operatorname{coker}\phi
-\DIM \operatorname{ker} \phi \leq \DIM \operatorname{coker}\phi
\leq [p_{\beta,\gamma}+1]\DIM \Delta(\alpha).
\end{multline*}
Equality must hold in both places, showing that
$\operatorname{ker}\phi = 0$ hence $\phi$ is injective, and
$\DIM \operatorname{coker} \phi = [p_{\beta,\gamma}+1] \DIM
\Delta(\alpha)$.
Finally, now that we have worked out both the head of $\operatorname{coker} \phi$
and its graded dimension, another application of
Corollary~\ref{class} 
shows indeed that $\operatorname{coker}\phi \cong [p_{\beta,\gamma}+1] \Delta(\alpha)$.
\end{proof}

\begin{Corollary}\label{pd}
For $\alpha \in Q^+$ of height $n$ 
and $\lambda = (\lambda_1,\dots,\lambda_l) \in \KP(\alpha)$, the projective dimension of $\Delta(\lambda)$
satisfies $\operatorname{pd} \Delta(\lambda) \leq n-l$.
\end{Corollary}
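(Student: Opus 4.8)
The plan is to deduce the bound from the short exact sequence of Theorem~\ref{inj3} by an induction on height, using that projective dimension is sub-multiplicative under the induction product. The first ingredient I would establish is: for finitely generated modules $U$ over $H_\beta$ and $V$ over $H_\gamma$,
$$
\operatorname{pd}_{H_{\beta+\gamma}}(U \circ V) \le \operatorname{pd}_{H_\beta} U + \operatorname{pd}_{H_\gamma} V .
$$
The point is that by the basis theorem~(\ref{basisthm}) the bimodule $H_{\beta+\gamma}1_{\beta,\gamma}$ is free as a right $H_\beta \otimes H_\gamma$-module, so $\Ind^{\beta+\gamma}_{\beta,\gamma}$ is exact and preserves projectives. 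Since $\k$ is a field, the total complex of the $\k$-tensor product of projective resolutions of $U$ and $V$ is, by the K\"unneth theorem, a projective resolution of $U \boxtimes V$ over $H_\beta \otimes H_\gamma$ of length $\operatorname{pd} U + \operatorname{pd} V$; applying the exact functor $\Ind^{\beta+\gamma}_{\beta,\gamma}$ to it yields a projective resolution of $U \circ V$ of the same length. Iterating this gives $\operatorname{pd}(\Delta(\lambda_1) \circ \cdots \circ \Delta(\lambda_l)) \le \sum_{i=1}^l \operatorname{pd}\Delta(\lambda_i)$ for every $\lambda = (\lambda_1,\dots,\lambda_l) \in \KP$.

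Next I would prove the key special case $\operatorname{pd}_{H_\alpha}\Delta(\alpha) \le \hgt(\alpha) - 1$ for all $\alpha \in R^+$, by induction on $\hgt(\alpha)$. The base case $\hgt(\alpha) = 1$ is immediate, since then $\alpha = \alpha_i$ is simple and $\Delta(\alpha_i) = H_{\alpha_i}$ is projective. For the inductive step, with $\hgt(\alpha) = n \ge 2$, I would choose a minimal pair $(\beta,\gamma) \in \MP(\alpha)$, so $\hgt(\beta) + \hgt(\gamma) = n$ with both heights $\ge 1$; by the inductive hypothesis and the previous paragraph, $\Delta(\beta)\circ\Delta(\gamma)$ and $\Delta(\gamma)\circ\Delta(\beta)$ each have projective dimension at most $(\hgt(\beta)-1)+(\hgt(\gamma)-1) = n-2$. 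Feeding this into the short exact sequence of Theorem~\ref{inj3} together with the standard estimate $\operatorname{pd} C \le \max(\operatorname{pd} B, \operatorname{pd} A + 1)$ for $0 \to A \to B \to C \to 0$ gives $\operatorname{pd}([p_{\beta,\gamma}+1]\Delta(\alpha)) \le \max(n-2,\,n-1) = n-1$, and hence $\operatorname{pd}\Delta(\alpha) \le n-1$ because $[p_{\beta,\gamma}+1]\Delta(\alpha)$ is merely a direct sum of degree-shifted copies of $\Delta(\alpha)$.

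Finally, for a general $\lambda = (\lambda_1,\dots,\lambda_l) \in \KP(\alpha)$ with $\hgt(\alpha) = n$, Theorem~\ref{shead} gives $\Delta(\lambda_1)\circ\cdots\circ\Delta(\lambda_l) \cong [\lambda]^!\Delta(\lambda)$, so $\Delta(\lambda)$ is a direct summand of this induction product, whence
$$
\operatorname{pd}\Delta(\lambda) \le \sum_{i=1}^l \operatorname{pd}\Delta(\lambda_i) \le \sum_{i=1}^l (\hgt(\lambda_i) - 1) = n - l ,
$$
using $\hgt(\lambda_1)+\cdots+\hgt(\lambda_l) = \hgt(\alpha) = n$. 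I do not expect any serious obstacle: the substantive content sits entirely in Theorem~\ref{inj3}, and the only step requiring a little care is the sub-multiplicativity of projective dimension under $\circ$, which is routine once one observes that $H_{\beta+\gamma}1_{\beta,\gamma}$ is free over $H_\beta \otimes H_\gamma$ so that induction is exact and preserves projectives.
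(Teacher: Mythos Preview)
Your proof is correct and follows essentially the same approach as the paper: reduce to the root-module case via the product structure, then induct on height using the short exact sequence of Theorem~\ref{inj3}. The only cosmetic difference is that the paper phrases the reduction via ``generalized Frobenius reciprocity'' (i.e.\ the Eckmann--Shapiro isomorphism $\EXT^d_{H_\alpha}(\Ind W, V)\cong\EXT^d(W,\Res V)$, valid because restriction is exact), whereas you instead bound $\operatorname{pd}(U\circ V)$ directly by tensoring projective resolutions and inducing; these are two packagings of the same underlying fact that $H_{\beta+\gamma}1_{\beta,\gamma}$ is free on the right and projective on the left.
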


\begin{proof}
We need to show that $\ext^d_{H_\alpha}(\Delta(\lambda), V) = 0$ for
any $H_\alpha$-module $V$ and $d > n-l$.
Using Theorem~\ref{shead} and generalized Frobenis reciprocity, this
reduces 
to checking in the case that $\alpha$ is a positive root that
$\ext^d_{H_\alpha}(\Delta(\alpha), V) = 0$
for all $d > n-1$.
To see this, apply $\hom_{H_\alpha}(-, V)$ to the short exact
sequence from Theorem~\ref{inj3}
and use generalized Frobenius reciprocity and induction.
\end{proof}

\subsection{Projective resolutions}\label{ssres}
Implicit in the proof of Corollary~\ref{pd} are some
interesting projective resolutions.
To explain this, we again fix a choice of minimal pairs $\mp(\alpha)
\in \MP(\alpha)$ for each $\alpha \in R^+$ of height at least two, and
define $\kappa_\alpha$ and $\kappa_\lambda$ as in (\ref{issue}).
Let
\begin{equation}
\tilde{\Delta}(\alpha) := \kappa_\alpha \Delta(\alpha),
\qquad
\tilde{\Delta}(\lambda) := \kappa_\lambda \Delta(\lambda).
\end{equation}
In the next paragraph, we construct a 
projective resolution $P_*(\alpha)$ of $\tilde{\Delta}(\alpha)$
for each $\alpha \in R^+$, i.e.
a complex
$$\cdots \rightarrow P_2(\alpha) \stackrel{\partial_2}{\rightarrow}
P_1(\alpha)
\stackrel{\partial_1}{\rightarrow} 
P_0(\alpha)\stackrel{\partial_0}{\rightarrow} 0
$$ 
of projective modules with
$\H_0(P_*(\alpha)) \cong \tilde{\Delta}(\alpha)$
and $\H_d(P_*(\alpha)) = 0$
for $d \neq 0$.
More generally, given $\alpha \in Q^+$ of height $n$ and
$\lambda = (\lambda_1,\dots,\lambda_l) \in \KP(\alpha)$,
the total complex of the tensor product of the complexes
$P_*(\lambda_1),\dots,P_*(\lambda_l)$ gives a projective resolution
$P_*(\lambda)$
of $\tilde\Delta(\lambda)$.
It will be clear from its definition that $P_d(\lambda) = 0$ for $d >
n-l$, consistent with Corollary~\ref{pd}.

The construction of $P_*(\alpha)$ is recursive. To start with
for $i \in I$ we have that $\tilde{\Delta}(\alpha_i) = H_{\alpha_i}$,
which is projective already. 
So we just have to set $P_0(\alpha_i) := H_{\alpha_i}$
and $P_d(\alpha_i) := 0$ for $d \neq 0$ to obtain the required
resolution.
Now suppose that $\alpha \in R^+$ is of height at least two
and let $(\beta,\gamma) := \mp(\alpha)$.
We may assume by induction that the projective resolutions $P_*(\beta)$
and $P_*(\gamma)$ are already defined.
Taking the total complex of their tensor product using \cite[Acyclic
Assembly Lemma 2.7.3]{Wei}, 
we obtain a projective resolution
$P_*(\beta,\gamma)$ of $\tilde{\Delta}(\beta) \circ
\tilde{\Delta}(\gamma)$
with
\begin{align*}
P_d(\beta,\gamma) &:= \bigoplus_{d_1+d_2= d} P_{d_1}(\beta) \circ
P_{d_2}(\gamma),\\
\qquad \partial_d &:= \left(\id \circ \partial_{d_2}
-(-1)^{d_2}\partial_{d_1} \circ \id
\right)_{d_1+d_2=d}:P_d(\beta,\gamma) \rightarrow P_{d-1}(\beta,\gamma).
\end{align*}
Similarly 
we 
obtain a projective resolution $P_*(\gamma,\beta)$ of
$\tilde{\Delta}(\gamma)\circ\tilde{\Delta}(\beta)$ with
\begin{align*}
P_d(\gamma,\beta) &:= \bigoplus_{d_1+d_2= d} P_{d_1}(\gamma) \circ
P_{d_2}(\beta),\\
\qquad \partial_d &:= \left(\partial_{d_1} \circ \id + (-1)^{d_1}
\id \circ \partial_{d_2}\right)_{d_1+d_2=d}:P_d(\gamma,\beta) \rightarrow P_{d-1}(\gamma,\beta).
\end{align*}
(We have chosen signs carefully here so that 
Theorem~\ref{expres} works out nicely.)
There is an injective homomorphism
$$
\tilde{\phi}:
q^{-\beta\cdot\gamma}
\tilde{\Delta}(\beta)\circ\tilde{\Delta}(\gamma)
\hookrightarrow \tilde{\Delta}(\gamma) \circ
\tilde{\Delta}(\beta)
$$
defined in exactly the same way as the map $\phi$ in Lemma~\ref{inj1},
indeed, it
is just a direct sum of copies of the map $\phi$ from there.
Applying \cite[Comparision Theorem 2.2.6]{Wei}, 
$\tilde{\phi}$ lifts to a 
chain map $\tilde{\phi}_*:q^{-\beta\cdot\gamma} P_*(\beta,\gamma)
\rightarrow P_*(\gamma,\beta)$.
Then we take the mapping cone of $\tilde{\phi}_*$ to obtain a complex
$P_*(\alpha)$ with
\begin{align*}
\qquad P_d(\alpha) &:= P_d(\gamma,\beta) \oplus q^{-\beta\cdot\gamma} P_{d-1}(\beta,\gamma),\\
\partial_d &:= (\partial_d, \partial_{d-1}+(-1)^{d-1} \tilde{\phi}_{d-1}):
P_d(\alpha) \rightarrow P_{d-1}(\alpha).
\end{align*}
In view of Theorem~\ref{inj3} and \cite[Acyclic Assembly Lemma 2.7.3]{Wei}
once again, 
$P_*(\alpha)$ is a projective resolution of $\tilde{\Delta}(\alpha)$.

Let us describe
$P_*(\alpha)$ more explicitly.
First for $i \in I$ and the empty tuple $\sigma$,
set $\bi_{\alpha_i,\sigma} := i$.
Now suppose that $\alpha$ is of height $n \geq 2$ and
that $(\beta,\gamma) = \mp(\alpha)$ with $\gamma$ of height $m$.
For $\sigma  = (\sigma_1,\dots,\sigma_{n-1})
\in \{0,1\}^{n-1}$,
let $|\sigma| := \sigma_1+\cdots+\sigma_{n-1}$,
$\sigma_{< m} := (\sigma_1,\dots,\sigma_{m-1})$
and $\sigma_{> m} := (\sigma_{m+1},\dots,\sigma_{n-1})$.
Define $\bed_{\alpha,\sigma} \in \W_\alpha$ 
and $d_{\alpha,\sigma} \in \N$
recursively from
\begin{align*}
\bed_{\alpha,\sigma} &:= \left\{
\begin{array}{ll}
\bed_{\gamma,\sigma_{< m}}
\bed_{\beta,\sigma_{> m}}\hspace{16.5mm}&\text{if $\sigma_m = 0$,}\\
\bed_{\beta,\sigma_{> m}}
\bed_{\gamma,\sigma_{< m}}
&\text{if $\sigma_m = 1$;}
\end{array}\right.\\
d_{\alpha,\sigma} &:= \left\{
\begin{array}{ll}
d_{\beta,\sigma_{> m}}
+d_{\gamma,\sigma_{< m}}
&\text{if $\sigma_m = 0$,}\\
d_{\beta,\sigma_{> m}}+
d_{\gamma,\sigma_{< m}}-\beta\cdot\gamma
&\text{if $\sigma_m = 1$.}
\end{array}\right.
\end{align*}
Note in particular that $d_{\alpha,\sigma} = |\sigma|$ in simply-laced types.
Also if $\sigma = (0,\dots,0)$ then $\bed_{\alpha,\sigma}$ is the tuple $\bed_\alpha$ from
(\ref{issue}) and $d_{\alpha,\sigma}=0$.
Then 
we have that
\begin{equation}\label{exp}
P_d(\alpha) =\bigoplus_{\substack{\sigma \in \{0,1\}^{n-1}\\ 
|\sigma| = d}}
q^{d_{\alpha,\sigma}} H_\alpha 1_{\bed_{\alpha,\sigma}}.
\end{equation}
For the differentials 
$\partial_d:P_d(\alpha)\rightarrow P_{d-1}(\alpha)$,
 there are elements $\tau_{\sigma,\rho} \in
 1_{\bed_{\alpha,\sigma}} 
H_\alpha 1_{\bed_{\alpha,\rho}}$
for each $\sigma,\rho \in \{0,1\}^{n-1}$
with $|\sigma|=d, |\rho| = d-1$
such that, on
viewing elements of (\ref{exp}) as row vectors,
the differential $\partial_d$ is 
defined by right multiplication by the matrix
$\left(\tau_{\sigma,\rho}
\right)_{|\sigma|=d,|\rho|=d-1}$.
Moreover 
$\tau_{\sigma,\rho} = 0$ unless the tuples $\sigma$ and $\rho$ 
differ in just one entry.
Unfortunately we have not been able to find a satisfactory description
of such elements $\tau_{\sigma,\rho}$, except in the following special case.

\begin{Theorem}\label{expres}
Suppose that $\alpha \in R^+$ is {multiplicity-free}, so that
$\kappa_\alpha=1$
and $P_*(\alpha)$
is a projective resolution of the root module $\Delta(\alpha)$ itself.
Then the elements $\tau_{\sigma,\rho}$
inducing the differential
$\partial_d:P_d(\alpha) \rightarrow P_{d-1}(\alpha)$
as above may be chosen
so that
$$
\tau_{\sigma,\rho} := (-1)^{\sigma_1+\cdots+\sigma_{r-1}} \tau_w
$$
if $\sigma$ and $\rho$ differ just in the $r$th entry,
where $w\in S_n$ is the {\em unique} permutation with
$1_{\bed_{\alpha,\sigma}} \tau_w = \tau_w 1_{\bed_{\alpha,\rho}}$.
\end{Theorem}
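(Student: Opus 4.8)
The plan is to argue by induction on $n=\hgt(\alpha)$, the base case $\alpha\in I$ being vacuous since then $P_*(\alpha_i)$ is just $H_{\alpha_i}$ in degree $0$. Before the induction I would record the one structural feature that makes the multiplicity-free case tractable: since $\alpha=\sum_i c_i\alpha_i$ with all $c_i\in\{0,1\}$, every word $\bed_{\alpha,\sigma}$ occurring in (\ref{exp}) consists of pairwise distinct letters. Consequently, for any two such words $\bi,\bj$ that are rearrangements of one another there is a unique $w\in S_n$ with $1_\bi\tau_w=\tau_w1_\bj$; the element $\tau_w1_\bj$ is independent of the chosen reduced expression for $w$ (the braid correction terms in the defining relations of $H_\alpha$ all vanish because $i_k=i_{k+2}$ can never occur along a repetition-free word); and $\tau_u\tau_v1_\bj=\tau_{uv}1_\bj$ whenever $\ell(uv)=\ell(u)+\ell(v)$ and $v(\bj)$ is the intervening word. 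Moreover any $\beta$ (resp.\ $\gamma$) arising in a minimal pair for $\alpha$ is again multiplicity-free with $\kappa_\beta=\kappa_\gamma=1$, so $\tilde\Delta=\Delta$ throughout and the induction hypothesis applies to $P_*(\beta)$ and $P_*(\gamma)$.

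For the inductive step, write $(\beta,\gamma)=\mp(\alpha)$ with $m=\hgt(\gamma)$, and split a tuple $\sigma\in\{0,1\}^{n-1}$ as $(\sigma_{<m},\sigma_m,\sigma_{>m})$. A direct comparison of the degree shifts, using $H_\gamma1_\bi\circ H_\beta1_\bj\cong H_\alpha1_{\bi\bj}$, identifies the summand $q^{d_{\alpha,\sigma}}H_\alpha1_{\bed_{\alpha,\sigma}}$ of (\ref{exp}) with $P_{d_1}(\gamma)\circ P_{d_2}(\beta)\subset P_*(\gamma,\beta)$ when $\sigma_m=0$ (with $d_1=|\sigma_{<m}|$, $d_2=|\sigma_{>m}|$) and with $q^{-\beta\cdot\gamma}P_{d_1}(\beta)\circ P_{d_2}(\gamma)\subset q^{-\beta\cdot\gamma}P_*(\beta,\gamma)$ when $\sigma_m=1$ (with $d_1=|\sigma_{>m}|$, $d_2=|\sigma_{<m}|$). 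The crux is to make the right choice of the lifting chain map: I would \emph{define} $\tilde\phi_k$ to be, on the summand $q^{-\beta\cdot\gamma}P_{d_1}(\beta)\circ P_{d_2}(\gamma)$ with $d_1+d_2=k$, the map $(-1)^{d_1}$ times right multiplication by $\tau_w$, where $w\in S_n$ is the unique permutation sliding the $\beta$-block of strands past the $\gamma$-block (so at $k=0$ this is $\phi$ by Lemma~\ref{inj1}, and homogeneity is built into the shifts $d_{\alpha,\sigma}$). Checking this is a chain map reduces, via the preliminary remarks, to the identities $\tau_v\tau_w=\tau_w\tau_{v'}$ in $H_\alpha$, where $v$ (resp.\ $v'=w^{-1}vw$) is a permutation defining a differential of $P_*(\beta)$ or $P_*(\gamma)$ in the $(\beta,\gamma)$- (resp.\ $(\gamma,\beta)$-) order; these follow from the length-additivity $\ell(vw)=\ell(v)+\ell(w)=\ell(w)+\ell(v')$, which holds because $w$ is a minimal-length double coset representative for the relevant parabolic subgroups of $S_n$. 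The prefactor $(-1)^{d_1}$ is exactly what reconciles the differential of $P_*(\beta,\gamma)$ (whose $\id\circ\partial_{d_2}$ term has coefficient $+1$ and whose $\partial_{d_1}\circ\id$ term has coefficient $-(-1)^{d_2}$) with that of $P_*(\gamma,\beta)$ (coefficients $+1$ and $(-1)^{d_1}$).

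With this $\tilde\phi_*$ fixed, the mapping-cone differential $\partial_d=(\partial^{(\gamma,\beta)}_d,\ \partial^{(\beta,\gamma)}_{d-1}+(-1)^{d-1}\tilde\phi_{d-1})$ produces every matrix entry $\tau_{\sigma,\rho}$ in closed form, and I would finish with the (mechanical) sign computation in the four cases: $\sigma,\rho$ differing in an entry $r<m$ or $r>m$ with $\sigma_m=\rho_m$ (a differential of $P_*(\gamma)$ or $P_*(\beta)$ sitting inside $P_*(\gamma,\beta)$ or inside $P_*(\beta,\gamma)$), and $r=m$ (the $\tilde\phi$ component). In each case the induction hypothesis supplies a sign $(-1)^{(\text{partial sum of the relevant sub-tuple})}$ and the ambient total-complex / cone signs supply a correction by $(-1)^{d_1}$ or $(-1)^{d-1}$; since $\sigma_m\in\{0,1\}$ one checks (e.g.\ for $\sigma_m=1$, $r>m$: $-(-1)^{|\sigma_{<m}|}(-1)^{\sigma_{m+1}+\cdots+\sigma_{r-1}}=(-1)^{\sigma_1+\cdots+\sigma_{r-1}}$; for $r=m$: $(-1)^{d-1}(-1)^{d_1}=(-1)^{d_2}=(-1)^{\sigma_1+\cdots+\sigma_{m-1}}$) that these always combine to exactly $(-1)^{\sigma_1+\cdots+\sigma_{r-1}}$. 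Because each of the three constituent pieces of $\partial_d$ alters exactly one coordinate of $\sigma$, automatically $\tau_{\sigma,\rho}=0$ unless $\sigma,\rho$ differ in a single entry, and $P_*(\alpha)$ remains a projective resolution of $\Delta(\alpha)$ since $\tilde\phi_*$ is a genuine chain map lifting $\phi$ (whose cokernel is $\Delta(\alpha)$ by Theorem~\ref{inj3}, as $p_{\beta,\gamma}=0$ in the multiplicity-free case).

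I expect the main obstacle to be the chain-map verification for $\tilde\phi_*$: establishing that sliding the block-swap $\tau_w$ past the inductively-described $\pm\tau_v$ differentials yields $\pm\tau_{v'}$ with no lower-order corrections—this is precisely where multiplicity-freeness is indispensable—and with exactly the signs dictated by the two total-complex conventions and the cone. Once that step and the minimal-double-coset length identities are in place, the bookkeeping in the last paragraph is routine.
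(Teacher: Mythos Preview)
Your proposal is correct and follows essentially the same approach as the paper's own proof: induction on height, with the key step being the explicit choice of the lifting chain map $\tilde\phi_*$ as $(-1)^{d_1}$ times right multiplication by the block-swap $\tau_w$, justified by the fact that the braid relations hold exactly in $H_\alpha$ when $\alpha$ is multiplicity-free. You have carefully fleshed out the details (uniqueness of $w$, well-definedness of $\tau_w$, length-additivity for the chain-map check, and the four-case sign bookkeeping) that the paper leaves to the reader.
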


\begin{proof}
This goes by induction on height.
The key point for the induction step is that the chain map
$\tilde{\phi}_*:q^{-\beta\cdot\gamma} P_*(\beta,\gamma) \rightarrow P_*(\gamma,\beta)$ in the
above construction
can be chosen so that $\tilde{\phi}_d:q^{-\beta\cdot\gamma} P_{d_1}(\beta)\circ
P_{d_2}(\gamma)
\rightarrow P_{d_2}(\gamma) \circ P_{d_1}(\beta)$ is defined by right multiplication by
$(-1)^{d_1}\tau_w$, where $w$ is the permutation from Lemma~\ref{inj1}.
The proof that this is indeed a chain map relies on the fact that the
braid relations hold exactly in $H_\alpha$ under the assumption that
$\alpha$ is multiplicity-free.
\end{proof}

\appendix
\section*{Appendix: Simply-laced types}\label{slt}
\setcounter{section}{1}
\setcounter{Proposition}{0}
In this appendix we perform 
the calculations needed to
fill in the gap in the proof of
Theorem~\ref{source} in the simply-laced types.
We assume that $\mathfrak{g}$ is of
type A$_r$, D$_r$ or E$_r$
and index the simple roots by $I = \{1,\dots,r\}$ as follows:
$$
\begin{picture}(108,60)
\put(0,15){$\stackrel{1}{\bullet}$}
\put(2,17.5){\line(1,0){20}}
\put(20,15){$\stackrel{2}{\bullet}$}
\dashline{3}(20,17.5)(59,17.5)
\put(55,15){$\stackrel{r-1}{\bullet}$}
\put(62,17.5){\line(1,0){20}}
\put(80,15){$\stackrel{r}{\bullet}$}
\put(0,40){$\operatorname{A}_r\quad(r\geq 1)$}
\end{picture}
\begin{picture}(108,60)
\put(0,15){$\stackrel{1}{\bullet}$}
\put(2,17.5){\line(1,0){20}}
\put(20,15){$\stackrel{2}{\bullet}$}
\dashline{3}(20,17.5)(59,17.5)
\put(55,15){$\stackrel{r-2}{\bullet}$}
\put(62,17.5){\line(1,0){20}}
\put(62.5,17.5){\line(0,-1){20}}
\put(75,15){$\stackrel{r-1}{\bullet}$}
\put(59.8,-5){$\bullet\,{\scriptstyle r}$}
\put(0,40){$\operatorname{D}_r\quad(r\geq 4)$}
\end{picture}
\begin{picture}(128,60)
\put(0,15){$\stackrel{1}{\bullet}$}
\put(2,17.5){\line(1,0){20}}
\put(20,15){$\stackrel{2}{\bullet}$}
\dashline{3}(20,17.5)(59,17.5)
\put(55,15){$\stackrel{r-3}{\bullet}$}
\put(62,17.5){\line(1,0){20}}
\put(62.5,17.5){\line(0,-1){20}}
\put(75,15){$\stackrel{r-2}{\bullet}$}
\put(82,17.5){\line(1,0){20}}
\put(95,15){$\stackrel{r-1}{\bullet}$}
\put(59.8,-5){$\bullet\,{\scriptstyle r}$}
\put(0,40){$\operatorname{E}_r\quad(r=6,7,8)$}
\end{picture}
$$
The natural ordering on $I$ induces a lexicographic
total order $<$ on the set $\W$ of words.
In \cite[$\S$4.3]{Lec}, Leclerc shows that there is
a well-defined injective map 
$l:R^+ \hookrightarrow \W$ defined recursively
by setting $l(\alpha_i) := i$ for each $i \in I$, then
\begin{equation*}
l(\alpha) := \max\left(l(\gamma) l(\beta)\:|\:\beta,\gamma \in R^+,
\beta+\gamma=\alpha,
l(\beta) > l(\gamma)\right)
\end{equation*}
for non-simple $\alpha \in R^+$.
The words $\{l(\alpha)\:|\:\alpha \in R^+\}$ are 
the {\em good Lyndon words} associated to the order $<$.
We define the corresponding {\em Lyndon ordering} $\prec$ on $R^+$ 
by declaring that $\alpha \prec \beta$
if and only if $l(\alpha) < l(\beta)$.
This is known to be a convex
ordering thanks to a general result from \cite{Rosso}.
In the following examples we make it explicit by
listing the words $\{l(\alpha)\:|\:\alpha \in R^+\}$ in each type. 

\begin{Example}\rm\label{eg1}
In type A$_r$ for the above numbering of the simple roots,
the positive roots are $\alpha_{i,j} := \alpha_i+\cdots+\alpha_j$
for $1 \leq i \leq j \leq r$.
The good Lyndon word $l(\alpha_{i,j})$ is the
{\em increasing segment} $i \cdots j$.
Thus the convex ordering $\prec$ satisfies $\alpha_{i,j} \prec
\alpha_{k,l}$
if and only if $i < k$, or $i=k$ and $j < l$. 
The corresponding reduced expression for $w_0$
is $(s_1 \cdots s_r) (s_1 \cdots s_{r-1}) \cdots (s_1 s_2) s_1$.
\end{Example}

\begin{Example}\rm
In type D$_r$
the good Lyndon words corresponding to the positive roots are
the words
$i \cdots j$ for $1 \leq i \leq j \leq r-1$
and the words 
$i \cdots (r-2) \underline{r \cdots j}$ 
for $1 \leq i < j \leq r$ (where the
underline denotes a {\em decreasing segment}).
\end{Example}

\begin{Example}\rm\label{eg3}
In E$_6$
the good Lyndon words arranged in lexicographic order are:

\small
\noindent\:
1, 12, 123, 
1234, 
12345, 
1236, 
12364, 
123643, 
1236432, 
123645, 
1236453,
12364532,

\noindent\:
12364534, 
123645342, 
1236453423, 
12364534236, 
2, 
23, 
234, 
2345, 
236, 
2364, 
23643,

\noindent\:
23645, 
236453, 
2364534, 
3, 
34, 
345, 
36, 
364, 
3645, 
4, 
45, 
5, 
6.
\normalsize
\end{Example}

\begin{Example}\rm\label{eg4}
In E$_7$
the good Lyndon words are:

\small
\noindent\:
1,
12,
123,
1234,
12345,
123456,
12347,
123475,
1234754,
12347543,
123475432,
1234756,

\noindent\:
12347564,
123475643,
1234756432,
123475645,
1234756453,
12347564532,
12347564534,

\noindent\:
123475645342,
1234756453423,
123475645347,
1234756453472,
12347564534723,

\noindent\:
123475645347234,
1234756453472345,
12347564534723456,
2,
23,
234,
2345,
23456,

\noindent\:
2347,
23475,
234754,
2347543,
234756,
2347564,
23475643,
23475645,
234756453,

\noindent\:
2347564534,
23475645347,
3,
34,
345,
3456,
347,
3475,
34754,
34756,
347564,
3475645,

\noindent\:
4,
45,
456,
47,
475,
4756,
5,
56,
6,
7.
\normalsize
\end{Example}

\begin{Example}\rm\label{eg5}
In E$_8$
the good Lyndon words are:

\small
\noindent\:
1,
12,
123,
1234,
12345,
123456,
1234567,
123458,
1234586,
12345865,
123458654,

\noindent\:
1234586543,
12345865432,
12345867,
123458675,
1234586754,
12345867543,

\noindent\:
123458675432,
1234586756,
12345867564,
123458675643,
1234586756432,

\noindent\:
123458675645,
1234586756453,
12345867564532,
12345867564534,
123458675645342,

\noindent\:
1234586756453423,
12345867564534231234586756458,
1234586756458,

\noindent\:
12345867564583,
123458675645832,
123458675645834,
1234586756458342,

\noindent\:
12345867564583423,
1234586756458345,
12345867564583452,
123458675645834523,

\noindent\:
1234586756458345234,
12345867564583456,
123458675645834562,

\noindent\:
1234586756458345623,
12345867564583456234,
123458675645834562345,

\noindent\:
1234586756458345623458,
123458675645834567,
1234586756458345672,

\noindent\:
12345867564583456723,
123458675645834567234,
1234586756458345672345,

\noindent\:
12345867564583456723456,
12345867564583456723458,
123458675645834567234586,

\noindent\:
1234586756458345672345865,
12345867564583456723458654,

\noindent\:
123458675645834567234586543,
1234586756458345672345865432,
2,
23,
234,
2345,

\noindent\:
23456,
234567,
23458,
234586,
2345865,
23458654,
234586543,
2345867,
23458675,

\noindent\:
234586754,
2345867543,
234586756,
2345867564,
23458675643,
23458675645,

\noindent\:
234586756453,
2345867564534,
234586756458,
2345867564583,
23458675645834,

\noindent\:
234586756458345,
2345867564583456,
23458675645834567,
3,
34,
345,
3456,
34567,

\noindent\:
3458,
34586,
345865,
3458654,
345867,
3458675,
34586754,
34586756,
345867564,

\noindent\:
3458675645,
34586756458,
4,
45,
456,
4567,
458,
4586,
45865,
45867,
458675,
4586756,

\noindent\:
5,
56,
567,
58,
586,
5867,
6,
67,
7,
8.
\normalsize
\end{Example}

Working always now with the Lyndon ordering $\prec$ just defined,
we also fix the following
choice of a minimal pair $\mp(\alpha) \in
\MP(\alpha)$ for each $\alpha \in R^+$ of height at least two:
let it be the two-part Kostant partition $(\beta,\gamma)$ of $\alpha$ 
for which $\gamma$ is maximal in the ordering $\prec$.

\begin{Lemma}\label{a1}
Suppose that $\alpha \in R^+$ is of height at least two
and let $(\beta,\gamma) := \mp(\alpha)$.
Apart from the highest root in $\operatorname{E}_8$,
the word $l(\gamma)$ is obtained from $l(\alpha)$ by removing its last
letter, 
and $l(\beta)$ is the singleton consisting just of this last letter.
For the highest root in $\operatorname{E}_8$,
we have that
$l(\gamma) = 1234586756453423$
and $l(\beta) = 1234586756458$.
In all cases, $l(\alpha)= l(\gamma)l(\beta)$ is the costandard
factorization of $l(\alpha)$ from \cite[$\S$3.2]{Lec}, and 
the word $l(\alpha)$ coincides with the word $\bi_\alpha$ from (\ref{issue}).
\end{Lemma}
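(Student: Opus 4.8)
The plan is to argue by induction on $\hgt(\alpha)$, the case $\hgt(\alpha)=1$ being trivial since $l(\alpha_i)=i=\bi_{\alpha_i}$ and there are no minimal pairs to consider. For the inductive step I would let $l(\alpha)=\ell_1\ell_2$ be the costandard factorization of the good Lyndon word $l(\alpha)$, so that $\ell_1$ is the longest proper prefix of $l(\alpha)$ which is itself a Lyndon word and $\ell_2$ is the complementary suffix. Appealing to Leclerc's analysis of good Lyndon words in \cite[$\S$3.2]{Lec} (see also \cite{KR2}), both $\ell_1$ and $\ell_2$ are again good Lyndon words, say $\ell_1=l(\gamma)$ and $\ell_2=l(\beta)$; comparing the multisets of letters, which for a good Lyndon word $l(\mu)$ record the coordinates of $\mu$, one gets $\gamma+\beta=\alpha$, while $l(\gamma)<l(\alpha)<l(\beta)$ because $\ell_1$ is a proper prefix and $\ell_2$ a proper suffix of the Lyndon word $l(\alpha)$, so that $\gamma\prec\alpha\prec\beta$. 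Granting that $(\beta,\gamma)=\mp(\alpha)$, the equality $\bi_\alpha=l(\alpha)$ is then immediate from \eqref{issue} and the inductive hypothesis, since $\bi_\alpha=\bi_\gamma\bi_\beta=l(\gamma)l(\beta)=l(\alpha)$; and the statement that $l(\alpha)=l(\gamma)l(\beta)$ is the costandard factorization is just how $(\beta,\gamma)$ was produced.

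So the first real step is to identify $(\beta,\gamma)$ with $\mp(\alpha)$, i.e. to show that it is the minimal pair whose second entry is largest. The combinatorial input I would invoke here, from \cite{Lec}, \cite{KR2} and \cite[$\S$4]{Mac}, is that the minimal pairs $(\beta',\gamma')$ of $\alpha$ are precisely the pairs arising from a factorization $l(\alpha)=l(\gamma')l(\beta')$ of $l(\alpha)$ into a concatenation of two good Lyndon words; in particular $l(\gamma')$ is always a proper prefix of $l(\alpha)$. Since a longer prefix of a fixed word is always lexicographically larger, and $\ell_1=l(\gamma)$ is the longest proper Lyndon prefix of $l(\alpha)$, it follows that $l(\gamma')\le l(\gamma)$, hence $\gamma'\preceq\gamma$, for every minimal pair. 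Moreover $(\beta,\gamma)$ is itself a minimal pair: the associated two-part Kostant partition satisfies $(\beta,\gamma)\succ(\alpha)$, so if it were not a minimal element of $\{\mu\in\KP(\alpha)\mid\mu\succ(\alpha)\}$ then, $\KP(\alpha)$ being finite, there would be a minimal such $\mu=(\beta'',\gamma'')$ below it---a minimal pair by Lemma~\ref{l3}---and unravelling the definition of $\preceq$ on two-part partitions forces $\gamma''\succ\gamma$, contradicting the previous sentence. Thus $(\beta,\gamma)\in\MP(\alpha)$ with $\gamma$ maximal, i.e. $(\beta,\gamma)=\mp(\alpha)$.

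The remaining assertion---that, apart from the highest root in E$_8$, $\ell_1$ is $l(\alpha)$ with its last letter deleted and $\ell_2$ is that last letter---says exactly that the set $\{l(\alpha)\mid\alpha\in R^+\}$ is prefix-closed (apart from the one exception). In type A$_r$ this is clear: by Example~\ref{eg1}, $l(\alpha_{i,j})=i(i+1)\cdots j$, and deleting $j$ gives $l(\alpha_{i,j-1})$. In type D$_r$ one runs through the two shapes of good Lyndon words recorded after Example~\ref{eg1} and checks case by case that deleting the final letter lands back on the list---a short, routine finite computation. For E$_6$ and E$_7$ one simply inspects the explicit lists in Examples~\ref{eg3}--\ref{eg4}. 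For E$_8$ one inspects Example~\ref{eg5}: every good Lyndon word other than $l(\alpha_{\max})=12345867564534231234586756458$ (here $\alpha_{\max}$ is the highest root, of height $29$) has the property that deleting its last letter produces another entry on the list; for $l(\alpha_{\max})$ one checks instead that its prefix $1234586756453423123458675645$ of length $28$ has the word $123458675645$ of length $12$ as a proper suffix, which is strictly smaller (being a prefix of it), so this prefix is not Lyndon. The longest proper Lyndon prefix of $l(\alpha_{\max})$ is instead $1234586756453423$, and together with its complement $1234586756458$---both of which do appear in Example~\ref{eg5}---this yields the stated factorization $l(\gamma)=1234586756453423$, $l(\beta)=1234586756458$, and then $\bi_\alpha=l(\alpha)$ follows as before.

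The points I expect to require genuine care are twofold: first, extracting cleanly from \cite{Lec} and \cite{KR2} the exact correspondence between $\MP(\alpha)$ and concatenation-factorizations of $l(\alpha)$ into two good Lyndon words (including the fact that the costandard factorization of a good Lyndon word again has good Lyndon word factors with the root data adding up); and second, the finite verification in E$_8$, where one must run all $120$ positive roots past Example~\ref{eg5} and, for the highest root, confirm that no prefix of length strictly between $16$ and $29$ is a good Lyndon word---a check most conveniently carried out with the computer computation recorded in \cite{Brun}. The D$_r$ case analysis and the non-Lyndon-ness of the length-$28$ prefix of $l(\alpha_{\max})$ are routine once the relevant good Lyndon words are written down.
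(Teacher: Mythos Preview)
The paper's proof is a single line: ``This follows by inspection of the data in the examples.'' All the relevant good Lyndon words are listed explicitly in Examples~\ref{eg1}--\ref{eg5}, and the lemma is then a finite check against those lists. Your approach is genuinely different: you try to replace part of this inspection by Lyndon-word theory, arguing abstractly that the costandard factorization of $l(\alpha)$ must realize $\mp(\alpha)$.

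The gap is in the ``first real step.'' Your argument that $(\beta,\gamma)=\mp(\alpha)$ rests on the claim that every minimal pair $(\beta',\gamma')$ for $\alpha$ arises from a concatenation factorization $l(\alpha)=l(\gamma')l(\beta')$ into two good Lyndon words, so that $l(\gamma')$ is a proper prefix of $l(\alpha)$. You cite \cite{Lec}, \cite{KR2} and \cite[$\S$4]{Mac} for this, and you flag it yourself as one of the two points ``requiring genuine care,'' but you never actually extract or prove it. It is not stated in \cite[$\S$4]{Mac} (which treats arbitrary convex orderings, not just Lyndon ones), and it is not an immediate consequence of the results on costandard factorizations in \cite{Lec} or \cite{KR2} either: those references tell you that the costandard factorization of a good Lyndon word has good Lyndon factors, but they do not give you the converse direction you need, namely that \emph{every} minimal pair yields such a factorization. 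Without this, your argument that $\gamma$ is maximal among the $\gamma'$ collapses, and with it your proof that $(\beta,\gamma)$ is a minimal pair at all (since that step also appeals to the same claim).

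Even if the correspondence could be established, note that it would buy you rather little here: you still fall back on type-by-type inspection of Examples~\ref{eg1}--\ref{eg5} to verify the prefix-closure property and to handle the $\operatorname{E}_8$ highest root. Since the direct inspection already proves the whole lemma (including the identification of $\mp(\alpha)$) in essentially the same amount of work, the extra Lyndon-word machinery is not pulling its weight. The simplest fix is to do what the paper does: for each $\alpha$ in the lists, read off the pair with $\gamma$ maximal directly and check it is a minimal pair---this is routine once the lists are in front of you.
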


\begin{proof}
This follows by inspection of the data in the examples.
\end{proof}

Finally we recall the construction of {\em homogeneous representations}
from \cite{KR1}.
Let $\sim$ be the equivalence relation on $\W$
generated by
interchanging an adjacent pair of letters $i$ and $j$ which are not
connected by an edge in the Dynkin diagram.
A word $\bi \in \W$ of length $n$
is said to be {\em homogeneous} if 
it is impossible to find $\bj \sim \bi$
such that either $j_r = j_{r+1}$ for some $1 \leq r \leq n-1$
or $j_s = j_{s+2}$ for some $1 \leq s \leq n-2$.
If $\alpha \in R^+$ is such that $l(\alpha)$ is homogeneous,
then the module $L(\alpha)$ can be constructed explicitly
as the graded 
vector space with basis $\{v_\bi\:|\:\bi \sim l(\alpha)\}$ concentrated in degree zero, such that
each $v_\bi$ is in the $\bi$-word space,
all $x_j$ act as zero, 
and 
$\tau_k v_\bi := v_{(k\:k\!+\!1)(\bi)}$
if $i_k$ and $i_{k+1}$ are not connected by an edge in the Dynkin
diagram,
$\tau_k v_\bi := 0$ otherwise.

\begin{Lemma}\label{a2}
For all $\alpha \in R^+$ except for the highest root in type
$\operatorname{E}_8$, the word $l(\alpha)$ listed in
Examples~\ref{eg1}--\ref{eg5} is homogeneous, hence the module
$L(\alpha)$ is a homogeneous representation.
\end{Lemma}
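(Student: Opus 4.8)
The plan is to verify, for each good Lyndon word $l(\alpha)$ recorded in Examples~\ref{eg1}--\ref{eg5}, the local combinatorial characterisation of homogeneity from \cite{KR1}: a word $\bi = i_1 \cdots i_n$ is homogeneous (in the sense of the definition preceding the lemma) if and only if for every pair $1 \leq a < b \leq n$ with $i_a = i_b$, at least two of the intermediate letters $i_{a+1}, \dots, i_{b-1}$ are joined to $i_a$ by an edge of the Dynkin diagram. One direction of this equivalence is immediate: a commutation can carry a letter past a fixed occurrence of $i_a$ only when that letter is not joined to $i_a$, so the letters joined to $i_a$ lying between positions $a$ and $b$ can never escape that interval; hence if there are at least two of them, no member of the $\sim$-class of $\bi$ has these two occurrences of $i_a$ at distance $\le 2$. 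We will use the criterion in this local form.

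The first step is to phrase it inductively. By Lemma~\ref{a1}, every $\alpha \in R^+$ of height $\geq 2$ other than the highest root of $\operatorname{E}_8$ satisfies $l(\alpha) = l(\gamma)\,i$ for a single letter $i \in I$ and a positive root $\gamma$ of strictly smaller height; note that $l(\gamma)$ is again one of the words in our lists, and that the excluded word has the maximal length occurring among all the $l(\alpha)$, so it never appears as a proper prefix $l(\gamma)$ and the induction is undisturbed by leaving it out. Assuming inductively that $l(\gamma)$ is homogeneous, the criterion above reduces the homogeneity of $l(\alpha) = l(\gamma)\,i$ to the single check: either $i$ does not occur in $l(\gamma)$, or, writing $a$ for the position of the rightmost occurrence of $i$ in $l(\gamma)$, at least two of the letters of $l(\gamma)$ lying after position $a$ are joined to $i$. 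The base case $l(\alpha_i) = i$ is trivially homogeneous.

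It then remains to run this check over all non-simple positive roots. In types $\operatorname{A}_r$ and $\operatorname{D}_r$ this is dispatched uniformly from the shapes in Examples~\ref{eg1}--\ref{eg3}: the increasing-segment words $i\,(i{+}1)\cdots j$ have pairwise distinct letters, so the condition is vacuous, while for a $\operatorname{D}_r$ word $i\cdots(r{-}2)\underline{r\cdots j}$ one sees directly that between any two equal letters there lie at least two earlier neighbours of it --- for the letter $r{-}2$ these are the branch neighbours $r$ and $r{-}1$, and for a letter $k < r{-}2$ they are the two copies of $k{+}1$ produced by the increasing and the decreasing segment. For $\operatorname{E}_6$, $\operatorname{E}_7$ and $\operatorname{E}_8$ one proceeds down Examples~\ref{eg3}--\ref{eg5}, stripping the last letter from each listed word, locating its rightmost earlier occurrence (if any), and confirming the two-neighbour condition; each such test is immediate. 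The main obstacle is sheer volume: there is no closed form for the exceptional-type good Lyndon words, so all positive roots of $\operatorname{E}_6$ and $\operatorname{E}_7$ and all but the highest root of $\operatorname{E}_8$ must be inspected in turn, the relevant data being already tabulated in \cite{Brun}. The point of the inductive reformulation is precisely that it reduces each such inspection to reading a short suffix of $l(\gamma)$ rather than analysing the full $\sim$-class of $l(\alpha)$, and it also makes transparent that the single case set aside in the statement --- the highest root of $\operatorname{E}_8$, for which Lemma~\ref{a1} does not supply a factorisation $l(\gamma)\,i$ --- is exactly the one the induction cannot, and need not, reach.
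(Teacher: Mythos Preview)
Your proof is correct and, at its core, takes the same approach as the paper: the paper's entire argument is the single line ``Again this follows by checking each case in turn.'' Your contribution is to organise that check more systematically via the local two-neighbour criterion (whose needed direction you justify) and the inductive reduction along the costandard factorisation $l(\alpha)=l(\gamma)\,i$ from Lemma~\ref{a1}; this is a genuine expository improvement but not a different method. One small correction: the reference to \cite{Brun} is misplaced---that file records $\mathrm{F}_4$ dual-canonical-basis data, not the $\mathrm{E}$-type Lyndon words or homogeneity checks---so the exceptional-type verification rests on the lists in Examples~\ref{eg3}--\ref{eg5} themselves.
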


\begin{proof}
Again this follows by checking each case in turn.
\end{proof}

The following lemma is a special case of Theorem~\ref{as}, but of course we
cannot use that here as the proof of Theorem~\ref{as} depends on Theorem~\ref{source}.

\begin{Lemma}\label{a3}
For $\alpha \in R^+$ of height at least two and $\lambda = 
(\beta,\gamma) := \mp(\alpha)$,
there are non-split short exact sequences
\begin{align*}
0 \longrightarrow q L(\alpha) &\longrightarrow L(\beta)\circ L(\gamma)
\longrightarrow L(\lambda) \longrightarrow 0,\\
0 \longrightarrow q L(\lambda)
&\longrightarrow L(\gamma)\circ L(\beta)
\longrightarrow L(\alpha)
\longrightarrow 0.
\end{align*}
\end{Lemma}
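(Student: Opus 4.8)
The plan is to deduce both sequences from the general short exact sequences (\ref{sesonea})--(\ref{sestwoa}) attached to the minimal pair $\lambda=(\beta,\gamma)$. Since we are in a simply-laced type, $d_\alpha=d_\beta=d_\gamma=1$, so $\beta\cdot\gamma=d_\alpha-d_\beta-d_\gamma=-1$ and $p_{\beta,\gamma}=0$ by (\ref{castab}). Thus (\ref{sesonea})--(\ref{sestwoa}) read
$$0\longrightarrow q\X^\circledast\longrightarrow L(\beta)\circ L(\gamma)\longrightarrow L(\lambda)\longrightarrow 0,\qquad 0\longrightarrow qL(\lambda)\longrightarrow L(\gamma)\circ L(\beta)\longrightarrow \X\longrightarrow 0,$$
where $\X$ is finite dimensional with every composition factor isomorphic to $L(\alpha)$ up to degree shift. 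So everything comes down to proving $\X\cong L(\alpha)$: granting this, $\X^\circledast\cong L(\alpha)$ since $L(\alpha)$ is $\circledast$-self-dual, the two displayed sequences become the ones in the lemma, and they are non-split because $L(\beta)\circ L(\gamma)=\bar\Delta(\lambda)$ has irreducible head $L(\lambda)$ while $L(\gamma)\circ L(\beta)\cong q\bar\nabla(\lambda)$ (by Lemma~\ref{lvl}) has irreducible head $L(\alpha)$ (next paragraph), so neither direct sum can occur.

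First I would pin down the head of $L(\gamma)\circ L(\beta)$. By Corollary~\ref{chprop} (with $[p_{\beta,\gamma}+1]=[1]=1$), $[\Res^\alpha_{\gamma,\beta}L(\alpha)]=[L(\gamma)\boxtimes L(\beta)]$, so in fact $\Res^\alpha_{\gamma,\beta}L(\alpha)\cong L(\gamma)\boxtimes L(\beta)$, and Frobenius reciprocity gives $\DIM\HOM_{H_\alpha}(L(\gamma)\circ L(\beta),L(\alpha))=1$, concentrated in degree $0$. Since $L(\gamma)\circ L(\beta)\cong q\bar\nabla(\lambda)$ has socle $qL(\lambda)$ and $L(\lambda)$ occurs only once as a composition factor, $L(\lambda)$ is not in its head; as all remaining composition factors are $L(\alpha)$ (using that $\rad\bar\Delta(\lambda)$ has all factors $L(\alpha)$, noted at the start of $\S$\ref{ltp}), $L(\gamma)\circ L(\beta)$, hence also its quotient $\X$, has irreducible head $L(\alpha)$ in degree $0$.

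Next I would reduce $\X\cong L(\alpha)$ to a multiplicity computation. Write $[\X]=f(q)[L(\alpha)]$ and $[\rad\bar\Delta(\lambda)]=e(q)[L(\alpha)]$ with $e(q),f(q)\in\N[q,q^{-1}]$. Theorem~\ref{sf} gives $r_\gamma^*r_\beta^*=qr_\lambda^*+(1-q^2)r_\alpha^*$, and the second displayed sequence yields $[L(\lambda)]=r_\lambda^*-e(q)r_\alpha^*$ and $f(q)=1-q^2+qe(q)$. As $\circledast$ corresponds to $\b^*$, the element $[L(\lambda)]$ is $\b^*$-invariant; combined with $\b^*(r_\lambda^*)=r_\lambda^*+(q^{-1}-q)r_\alpha^*$ (from (\ref{bstar}), (\ref{drv}) and Theorem~\ref{sf}) this forces $\overline{e(q)}-e(q)=q^{-1}-q$, hence $e(q)=q+s(q)$ for some bar-invariant $s(q)\in\N[q,q^{-1}]$, and $f(q)=1+qs(q)$. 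Now, since $\kappa_\alpha=1$ in simply-laced types, Lemma~\ref{rtheory}(1) gives $\DIM 1_{\bi_\alpha}L(\alpha)=1$, whence $\DIM 1_{\bi_\alpha}\rad\bar\Delta(\lambda)=q+s(q)$ and so $\DIM 1_{\bi_\alpha}\bar\Delta(\lambda)=\DIM 1_{\bi_\alpha}L(\lambda)+q+s(q)\ge q+s(q)$. On the other hand $\bar\Delta(\lambda)=L(\beta)\circ L(\gamma)$, and by Lemmas~\ref{a1} and~\ref{a2} neither $\beta$ nor $\gamma$ is the highest root of $\operatorname{E}_8$, so $L(\beta)$ and $L(\gamma)$ are homogeneous representations with explicit characteristic-free characters $\CH L(\beta)=\sum_{\bk\sim l(\beta)}\bk$ and $\CH L(\gamma)=\sum_{\bj\sim l(\gamma)}\bj$; thus $\DIM 1_{\bi_\alpha}\bar\Delta(\lambda)$ is the coefficient of $\bi_\alpha=l(\gamma)l(\beta)$ in the shuffle product $\CH L(\beta)\circ\CH L(\gamma)$. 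The claim is that this coefficient equals exactly $q$, the only contributing interleaving being the one that places the block $l(\gamma)$ before the block $l(\beta)$, contributing $q^{-\beta\cdot\gamma}=q$ by (\ref{shuffle}). Granting this, $q\ge q+s(q)$, so $s(q)=0$, $f(q)=1$, $[\X]=r_\alpha^*=[L(\alpha)]$, and $\X\cong L(\alpha)$ as required.

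The main obstacle is the last character claim. When $\beta$ is simple — all cases except $\alpha$ the highest root of $\operatorname{E}_8$, by Lemma~\ref{a1} — the shuffle $\CH L(\beta)\circ\CH L(\gamma)=i\circ\CH L(\gamma)$ is a one-line computation (only the term with $i$ pushed to the end of $l(\gamma)$ gives $l(\gamma)l(\beta)$); alternatively one can simply invoke \cite[Lemma~3.9]{KL} to see that $L(\alpha)$ occurs exactly once in $L(\gamma)\circ L(\beta)$, forcing $f(q)=1$ directly. For $\alpha$ the highest root of $\operatorname{E}_8$, where $\beta$ has height $13$, both $L(\beta)$ and $L(\gamma)$ are still homogeneous representations, but verifying that no other interleaving of a word $\sim l(\beta)$ with a word $\sim l(\gamma)$ produces $l(\gamma)l(\beta)$ is a lengthy, though finite and characteristic-independent, check; I expect this to be the only real difficulty, everything else being formal.
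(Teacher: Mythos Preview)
Your proof is correct and rests on the same core computation as the paper's, namely a shuffle-product check on the coefficient of the word $l(\alpha)=l(\gamma)l(\beta)$ in the character of an induced product of $L(\beta)$ and $L(\gamma)$. The paper proceeds slightly more directly: it computes $\DIM 1_{l(\alpha)}\bigl(L(\gamma)\circ L(\beta)\bigr)=1$ (rather than your dual computation $\DIM 1_{l(\alpha)}\bigl(L(\beta)\circ L(\gamma)\bigr)=q$), and since $\X\neq 0$ and every composition factor of $\X$ contributes at least one to this word space, this forces $\X\cong L(\alpha)$ immediately, without any need to introduce $e(q),f(q),s(q)$ or to invoke $\b^*$-invariance. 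Your detour through these Grothendieck-group identities is valid but unnecessary; the combinatorial verification (no $\bk\sim l(\gamma)$ ends in the letter $l(\beta)$ when $\beta$ is simple, plus a finite check for the highest $\operatorname{E}_8$ root) is exactly what the paper does, and everything you cite from \S\ref{minp}--\S\ref{ltp} is independent of Theorem~\ref{source}, so there is no circularity.
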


\begin{proof}
We must show that the module $\X$ in (\ref{sestwoa})
is isomorphic to $L(\alpha)$. This follows if we can show that
$\DIM 1_{l(\alpha)} (L(\gamma) \circ L(\beta)) = 1$.
We know by Lemmas~\ref{a1}--\ref{a2} that $l(\alpha) = l(\gamma) l(\beta)$ and
$$
\CH L(\gamma) = \sum_{\bk \sim l(\gamma)} \bk,
\qquad
\CH L(\beta) = \sum_{\bj \sim l(\beta)} \bj.
$$
Hence we are reduced to showing that the only pair $(\bk,\bj)$
with
$\bk \sim l(\gamma)$ and $\bj \sim l(\beta)$ 
such that $l(\alpha)$ has non-zero coefficient in the shuffle product
$\bk\circ \bj$ is $(\bk,\bj) = (l(\gamma), l(\beta))$,
and moreover for this pair the only shuffle of $\bk$ and $\bj$ that
produces $l(\alpha)$ is the identity.
Apart from the highest root of E$_8$, this follows 
because in all cases no $\bk \sim 
l(\gamma)$ ends in the letter $l(\beta)$.
The highest root of E$_8$ takes only a little more combinatorial
analysis
using the explicit descriptions of $l(\gamma)$ and $l(\beta)$ from Lemma~\ref{a1}.
\end{proof}

\begin{Theorem}\label{con}
Assume that $\mathfrak{g}$ is simply-laced and that
$\prec$ is the Lyndon ordering fixed above.
Let $\alpha \in R^+$ have height at least two and set $(\beta,\gamma)
:= \mp(\alpha)$.
Then there exists an $H_\alpha$-module $X$ such that
$X / \soc X \cong L(\gamma)\circ L(\beta)$ and
$\soc X \cong q^2 L(\alpha)$.
\end{Theorem}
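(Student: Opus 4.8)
The plan is to prove the theorem by an explicit construction of $X$, treating first the generic case in which $\beta$ is a simple root --- by Lemma~\ref{a1} this covers every $\alpha\in R^+$ of height $\ge 2$ except the highest root of $\operatorname{E}_8$ --- and then the remaining exceptional root separately. Throughout I will freely use the structural facts already available: by Lemma~\ref{a3}, $L(\gamma)\circ L(\beta)$ is uniserial with socle $qL(\lambda)$ and head $L(\alpha)$, while $\bar\Delta(\lambda)=L(\beta)\circ L(\gamma)$ is uniserial with socle $qL(\alpha)$ and head $L(\lambda)$; by Corollary~\ref{chprop} together with the table (\ref{castab}) (so that $p_{\beta,\gamma}=0$ in simply-laced type) one has $\Res^{\alpha}_{\gamma,\beta}L(\alpha)\cong L(\gamma)\boxtimes L(\beta)$; and by Lemma~\ref{maclem} combined with Lemma~\ref{l1} (taking its distinguished root to be $\alpha$ and recalling $\beta\succ\alpha\succ\gamma$) one has $\Res^{\alpha}_{\beta,\gamma}L(\alpha)=0$. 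In the generic case Lemma~\ref{a2} moreover tells us that $L(\gamma)$, $L(\beta)$ and $L(\alpha)$ are homogeneous representations, and Lemma~\ref{a1} gives $l(\alpha)=l(\gamma)l(\beta)$.

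In the generic case, write $\beta=\alpha_i$, so $L(\alpha_i)$ is one-dimensional and $L(\gamma)\circ L(\alpha_i)$ is cyclic, generated by a degree-zero vector $v$ in the word space $l(\alpha)=l(\gamma)i$ on which every $\x_k$ acts by zero, and concentrated in internal degrees $0$ and $1$. I would let the underlying graded vector space of $X$ be that of $L(\gamma)\circ L(\alpha_i)$ together with an extra degree-$2$ copy $V$ of the underlying space of $L(\alpha)$, keep the given $H_\alpha$-actions on these two pieces, and extend the action by letting the $\x_k$ carry the degree-$0$ part of $X$ into $V$; the required values are forced by insisting that the quadratic KLR relation hold at the unique ``junction'' position $n-1$ of $l(\alpha)=l(\gamma)l(\beta)$, which makes an appropriate combination $\sum_k\pm\x_k v$ equal the cyclic generator of $V$ (exactly as the step $\x_1 v=\eps^{-1}(\text{socle generator})$ already appears in type $\operatorname{A}_2$). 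One then checks that all the defining relations of $H_\alpha$ survive this modification: since $\beta$ is simple the corrected action only interacts with the relations at the index $n-1$, so it reduces to verifying the quadratic relation and the two cubic braid-type relations there. Granting this, $V$ is a submodule isomorphic to $q^2L(\alpha)$ with $X/V\cong L(\gamma)\circ L(\beta)$, so $[X]=(1+q^2)[L(\alpha)]+q[L(\lambda)]$ as desired. (An alternative that sidesteps the explicit bases --- and so also handles the highest root of $\operatorname{E}_8$, where $L(\alpha)$ is not homogeneous --- is to note that by induction on $\hgt(\alpha)$ Theorem~\ref{source} holds for $\gamma$, so $\Delta_2(\gamma)$ exists and is uniserial with socle $q^2L(\gamma)$, and to set $X:=\bigl(\Delta_2(\gamma)\circ L(\beta)\bigr)\big/\soc\bigl(q^2L(\gamma)\circ L(\beta)\bigr)$; this has the right character and the right quotient $L(\gamma)\circ L(\beta)$.)

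The substantive point, and the one the appendix is really about, is to verify that $\soc X=q^2L(\alpha)$. Since the only irreducible submodule of $L(\gamma)\circ L(\beta)$ is its socle $qL(\lambda)$, the socle of $X$ can fail to equal $q^2L(\alpha)$ only if the preimage $Y$ in $X$ of $qL(\lambda)\subseteq L(\gamma)\circ L(\beta)$ splits as $q^2L(\alpha)\oplus qL(\lambda)$; the modification was arranged precisely so that $Y$ is instead the unique non-split extension $0\to q^2L(\alpha)\to Y\to qL(\lambda)\to 0$, i.e. $Y\cong q\bar\Delta(\lambda)$, whose socle is $q^2L(\alpha)$. For the $\Delta_2(\gamma)\circ L(\beta)$ model, one first uses $\Res^{\alpha}_{\beta,\gamma}L(\alpha)=0$ to see that $L(\alpha)$ does not occur in the head of $(\Delta_2(\gamma)\circ L(\beta))^{\circledast}\cong q^{-3}L(\beta)\circ\Delta_2(\gamma)$, hence not in $\soc(\Delta_2(\gamma)\circ L(\beta))$, and the remaining task is again to rule out a copy of $qL(\lambda)$ in $\soc X$, i.e. the non-splitness of the same subextension. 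In homological terms this is the assertion that the connecting map $\EXT^1_{H_\alpha}(L(\gamma)\circ L(\beta),q^2L(\alpha))\to\EXT^1_{H_\alpha}(qL(\lambda),q^2L(\alpha))$ attached to $0\to qL(\lambda)\to L(\gamma)\circ L(\beta)\to L(\alpha)\to 0$ is nonzero; by generalized Frobenius reciprocity and $\Res^{\alpha}_{\gamma,\beta}L(\alpha)\cong L(\gamma)\boxtimes L(\beta)$ the source is $\EXT^1_{H_\gamma\otimes H_\beta}(L(\gamma)\boxtimes L(\beta),q^2L(\gamma)\boxtimes L(\beta))\cong\k^2$, and non-vanishing of the connecting map is equivalent to $\DIM\EXT^1_{H_\alpha}(L(\alpha),L(\alpha))\le 1$ --- which is itself part of Theorem~\ref{source}, so the statement genuinely cannot be obtained by homological algebra alone without circularity, and the hands-on module computation is unavoidable.

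Accordingly, the main obstacle is exactly this last verification --- that $Y$ is non-split, equivalently $\soc X=q^2L(\alpha)$ --- which comes down to an explicit straightening calculation in $H_\alpha$, organized in the generic case by the homogeneous representations $L(\gamma),L(\beta),L(\alpha)$ and by Lemma~\ref{a3}. The fiddliest instance is the highest root of $\operatorname{E}_8$: there $\beta$ is not simple and $L(\alpha)$ is not homogeneous, the minimal pair is the exceptional one of Lemma~\ref{a1}, and the combinatorial bookkeeping has to be carried out by hand using the explicit words $l(\gamma)=1234586756453423$ and $l(\beta)=1234586756458$ (one still has enough control on $L(\gamma)\circ L(\beta)$, as in the proof of Lemma~\ref{a3} where $\DIM 1_{l(\alpha)}(L(\gamma)\circ L(\beta))=1$ was established).
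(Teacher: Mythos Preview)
Your outline is on the right track and you correctly diagnose the crux --- that the socle computation cannot be reduced to homological algebra without circularity with Theorem~\ref{source} --- but you never actually carry out that computation, and it is the entire content of the theorem. Constructing \emph{some} $X$ with a submodule $q^2L(\alpha)$ and quotient $L(\gamma)\circ L(\beta)$ is easy; what is hard is ruling out a copy of $qL(\lambda)$ in $\soc X$. You assert your first construction is ``arranged precisely'' to make the relevant sub-extension non-split, but give no verification; worse, that construction is not well-defined as stated, since $L(\gamma)\circ L(\alpha_i)$ is \emph{not} concentrated in degrees $0$ and $1$ in general (e.g.\ already for the highest root of $\operatorname{D}_4$ there are basis vectors of degree $2$), so ``letting $\x_k$ carry the degree-$0$ part into $V$'' does not single out a consistent modification. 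Your second construction via $\Delta_2(\gamma)\circ L(\beta)$ is better-defined but relies on an inductive appeal to Theorem~\ref{source} for $\gamma$, and in any case you still owe the socle verification for it.

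The paper instead takes $X:=\bigl(L(\gamma)\circ\Delta_2(\beta)\bigr)/S$, where $\Delta_2(\beta)$ is built \emph{directly} from the homogeneous structure of $L(\beta)$ (Lemma~\ref{a2} applies to $\beta$ even when $\alpha$ is the $\operatorname{E}_8$ highest root), so no induction is needed; then $S\cong q^3L(\lambda)$ and one has generators $v^\pm$ with $v^+$ generating $Y\cong q^2L(\alpha)$. The missing idea for the socle is this: one exhibits a word $\bi$ with $1_\bi L(\alpha)=0$ together with elements $a,b\in H_\alpha$ (explicitly $\tau_w,\tau_{w^{-1}}$ for a specific cycle $w$ determined by the last ``junction'' letter of $l(\alpha)$) such that $bv^-\in 1_\bi X$ and $abv^-=v^+$. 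If a submodule $Z\cong qL(\lambda)$ existed, then $bv^-$ would lie in $Z$ (since $1_\bi$ annihilates both copies of $L(\alpha)$ among the composition factors), whence $v^+=abv^-\in Z$, contradicting $Y\cap Z=0$. For the $\operatorname{E}_8$ highest root the same scheme works with an explicit length-$12$ cycle, and $1_\bi L(\alpha)=0$ is checked from the character identity $(1-q^2)[L(\alpha)]=[L(\gamma)\circ L(\beta)]-q[L(\beta)\circ L(\gamma)]$ coming from Lemma~\ref{a3}. This concrete mechanism is what your proposal lacks.
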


\begin{proof}
By Lemma~\ref{a2},
the modules $L(\beta)$ and $L(\gamma)$ can be constructed
explicitly 
as above as they are homogeneous representations.
In a similar way we construct
a module $\Delta_2(\beta)$ with
$\Delta_2(\beta) / \soc \Delta_2(\beta) \cong L(\beta)$ and
$\soc \Delta_2(\beta) \cong q^2 L(\beta)$, by declaring
that it has homogeneous basis $\{v^{\pm}_\bi\:|\:\bi \sim l(\beta)\}$
with $v_\bi^{\pm} \in 1_\bi \Delta_2(\beta)_{1 \pm 1}$, such that
$x_j v^-_\bi := v_{\bi}^+$, $x_j v_{\bi}^+ := 0$, and
$\tau_k 
v^{\pm}_\bi := v^{\pm}_{(k\:k\!+\!1)(\bi)}$
if $i_k$ and $i_{k+1}$ are not connected by an edge in the Dynkin diagram,
$\tau_k v^{\pm}_\bi := 0$ otherwise.

Now consider the second short exact sequence from Lemma~\ref{a3}.
Combined with exactness of induction, it follows that $L(\gamma)
\circ \Delta_2(\beta)$
has a unique submodule $S \cong q^3 L(\lambda)$.
Set $X := L(\gamma) \circ \Delta_2(\beta) / S$
and $v^\pm := 1_{\gamma,\beta} \otimes (v_{l(\gamma)} \otimes
v_{l(\beta)}^\pm) + S$.
Then $X$ has a unique submodule $Y \cong q^2 L(\alpha)$ generated by
$v^+$, and
$X / Y \cong L(\gamma) \circ L(\beta)$.
It remains to show that $Y = \soc X$.
Suppose for a contradiction that the socle is larger. Then $X$ must
also have a 
submodule $Z \cong q L(\lambda)$.
In the next two paragraphs, we prove that there exists a word $\bi \in \W_\alpha$
and  elements $a, b \in
H_\alpha$ such that $1_\bi L(\alpha) = 0$,
$b v^- \in 1_\bi X$,
and $ab v^- = v^+$.
This is enough to complete the proof, 
for then we must have that $b v^- \in Z$, hence $v^+ = ab v^- \in
Z$ too, contradicting $Y \cap Z = 0$.

To construct $a, b$ and $\bi$, we first assume that $\alpha$ is {\em
  not}
the highest root in type E$_8$.
Suppose that $\alpha$ is of height $n$.
Let $1 \leq p < n$ be maximal such that the $p$th letter $i$
of $l(\alpha)$ is connected to its $n$th letter $j$ 
in the Dynkin diagram.
By inspection of the information in Examples~\ref{eg1}--\ref{eg5},
this is always possible and moreover
none of the letters in between $i$ and $j$ are equal to $j$.
Let $w$ be the cycle
$(p\:p+1\:\cdots\:n)$
and set $a := \tau_{w^{-1}}, b := \tau_{w}$.
Finally let $\bi$ be the word obtained from $l(\alpha)$ by deleting
the $n$th letter $j$ then reinserting it just before the $p$th
letter $i$; then we have that $b v^- \in 1_\bi X$.
An easy application of the relations shows that 
$ab v^- = v^+$ (up to a sign).
We are left with showing that $1_\bi L(\alpha) = 0$.
But in all these cases $l(\alpha)$ is also homogeneous so this follows
as $\bi \not\sim l(\alpha)$ by construction.

It remains to treat the highest root for E$_8$.
Here Lemma~\ref{a2} tells us that $l(\beta) = 1234586756458$ and
$l(\gamma) = 
1234586756453423$, but the word $l(\alpha) = l(\gamma)l(\beta)$ 
is no longer homogeneous.
We set
$\bi := 
123458675645342 12345867564 358$,
$a := \tau_{w}$ and $b := \tau_{w^{-1}}$
where $w$ is the cycle $(16\:\:\: 17\:\:\: \cdots\:\:\: 27)$; again we have that $b v^- \in 1_\bi V$.
Another explicit relation check (best made now by drawing a picture)
shows
that 
$ab v^- = v^+$ (up to a sign).
It remains to show that $1_\bi L(\alpha) = 0$.
From Lemma~\ref{a3}, we deduce that
$(1-q^2)[L(\alpha)] = 
[L(\gamma)\circ L(\beta)] - q [L(\beta)\circ
L(\gamma)]$, hence
\begin{equation*}
\CH L(\alpha) = \sum_{\bk \sim l(\gamma), \bj \sim l(\beta)} (\bk \circ \bj -
q \bj \circ \bk) / (1-q^2).
\end{equation*}
Now one more
calculation shows that the $\bi$-coefficient on the right hand side is
indeed zero.
\end{proof}

\begin{Corollary}\label{speccase}
Theorem~\ref{source} holds 
for simply-laced $\mathfrak{g}$
and the Lyndon ordering $\prec$ 
defined above.
\end{Corollary}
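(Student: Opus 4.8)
The plan is to transcribe the proof of Theorem~\ref{source} from the non-simply-laced case of \cite{Mac}, whose only type-specific ingredient is a structural description of the cuspidal module $L(\alpha)$ together with the induction products $L(\beta) \circ L(\gamma)$ and $L(\gamma) \circ L(\beta)$ for a minimal pair $(\beta,\gamma)$ of $\alpha$. As recalled in the proof of Theorem~\ref{source}, that argument reduces everything to the finiteness of the global dimension of $H_\alpha$, so the substantive task is to run the inductive proof of \cite[Theorem 4.6]{Mac} in simply-laced types over a field of positive characteristic; once $H_\alpha$ is known to have finite global dimension, Theorem~\ref{source} follows by the second half of the proof of \cite[Proposition 4.5]{Mac}, exactly as in the cases already treated. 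All the structural input has now been assembled: Lemma~\ref{a3} supplies the two short exact sequences tying $L(\alpha)$, $L(\beta) \circ L(\gamma)$, $L(\gamma) \circ L(\beta)$ and $L(\lambda)$ together for $\lambda = (\beta,\gamma) = \mp(\alpha)$, while Theorem~\ref{con} supplies the module $X$ with $X / \soc X \cong L(\gamma) \circ L(\beta)$ and $\soc X \cong q^2 L(\alpha)$ that plays the role of the geometric input relied on in \cite{Kato} for simply-laced characteristic zero.

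Concretely, I would argue by induction on $\hgt(\alpha)$. The base case $\alpha = m \alpha_i$ is the nil Hecke algebra $\NH_m$, a matrix algebra over a polynomial ring, which has finite global dimension. For the inductive step it is enough to bound the projective dimension of each irreducible $L(\mu)$ with $\mu \in \KP(\alpha)$. For $\mu$ with at least two parts one writes $\bar\Delta(\mu) = q^{s_\mu} L(\mu_1) \circ \cdots \circ L(\mu_l)$, whose projective dimension is controlled by generalized Frobenius reciprocity and the inductive hypothesis, the tensor factors having strictly smaller height; one then passes from the modules $\bar\Delta(\mu)$ to the $L(\mu)$ using the unitriangularity of the decomposition matrix from Theorem~\ref{mac2}. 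For the cuspidal $L(\alpha)$ with $\alpha$ non-simple, one combines the exact sequences of Lemma~\ref{a3} with the module $X$ of Theorem~\ref{con}---which is uniserial with head $L(\alpha)$, middle composition factor $q L(\lambda)$ and socle $q^2 L(\alpha)$, so that it fits into short exact sequences whose other terms are induction products of cuspidals of strictly smaller height---exactly as in \cite[$\S$4]{Mac}, to deduce that $L(\alpha)$ also has finite projective dimension. This completes the induction, whence $H_\alpha$ has finite global dimension and Theorem~\ref{source} follows as explained above.

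The main obstacle is not this homological bookkeeping, which copies \cite{Mac}, but ensuring that Theorem~\ref{con} genuinely delivers the object that argument demands: that $\soc X$ is \emph{precisely} $q^2 L(\alpha)$ rather than something larger, that the extension of $L(\gamma) \circ L(\beta)$ by $q^2 L(\alpha)$ defining $X$ is non-split, and that the degree shift is the one dictated by the simply-laced normalization $d_\alpha = 1$. Establishing this is the content of the proof of Theorem~\ref{con} in the appendix, whose delicate point is the highest root of type $\operatorname{E}_8$: there $l(\alpha)$ fails to be homogeneous, so the vanishing $1_{\bi} L(\alpha) = 0$ for the relevant word $\bi$ cannot be read off the homogeneous-representation model and must instead be extracted from the shuffle-product formula for $\CH L(\alpha)$.
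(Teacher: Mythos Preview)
Your proposal is essentially correct and identifies the same two key ingredients the paper uses, namely Lemma~\ref{a3} and Theorem~\ref{con}. The only difference is the route you take through them. The paper's proof is one sentence: it invokes the \emph{first half} of the proof of \cite[Proposition 4.5]{Mac}, which is a direct computation of $\EXT^d_{H_\alpha}(L(\alpha),L(\alpha))$ from precisely those two inputs, with no appeal to global dimension. You instead propose to run the inductive proof of \cite[Theorem 4.6]{Mac} to obtain finite global dimension first, and then deduce the Ext statement via the \emph{second half} of \cite[Proposition 4.5]{Mac}. That is a longer path, and indeed in the paper the logical dependence goes the other way: finite global dimension in simply-laced types (Corollary~\ref{fgd}) is obtained only \emph{after} Corollary~\ref{speccase}, using the machinery of Sections~\ref{secsm}--\ref{secmp} that rests on Theorem~\ref{source}. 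Your route is not circular, since the inductive argument you sketch for bounding $\operatorname{pd} L(\alpha)$ uses only Lemma~\ref{a3} and Theorem~\ref{con} and not the results of Sections~\ref{secsm}--\ref{secmp}; but it is a genuine detour. One small imprecision: you say $X$ ``fits into short exact sequences whose other terms are induction products of cuspidals of strictly smaller height,'' but in the sequence $0\to q^2 L(\alpha)\to X\to L(\gamma)\circ L(\beta)\to 0$ the left term is cuspidal of the \emph{same} height, so the relevant step in \cite[\S4]{Mac} is a little more delicate than your summary suggests. Your final paragraph correctly isolates the real difficulty, the highest root of $\operatorname{E}_8$ in Theorem~\ref{con}.
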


\begin{proof}
This follows 
by mimicking the argument explained
in the first half of the proof of \cite[Proposition 4.5]{Mac}.
The essential ingredients needed for this are provided by
Lemma~\ref{a3} and Theorem~\ref{con}.
\end{proof}

\begin{Corollary}\label{fgd}
For simply-laced $\mathfrak g$ and
$\alpha \in Q^+$ of height $n$,
the KLR algebra $H_\alpha$ has finite global dimension $n$,
i.e. $\sup \operatorname{pd} V = n$ where the supremum is taken over all
$H_\alpha$-modules $V$ and $\operatorname{pd}$ denotes projective
dimension in the category of graded modules.
\end{Corollary}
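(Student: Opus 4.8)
The purpose of the appendix is that Corollary~\ref{speccase} completes the proof of Theorem~\ref{source} for simply-laced $\mathfrak g$ over a field of arbitrary characteristic; with Theorem~\ref{source} available, all of the results of Sections~\ref{secsm} and \ref{secmp} hold in this generality. The plan is to bound the global dimension of $H_\alpha$ both above and below by $n:=\hgt(\alpha)$, arguing by induction on $n$. I will use without further comment that $H_\alpha$ is Noetherian (it is module-finite over its Noetherian centre) and that, being graded, locally finite dimensional and bounded below, every finitely generated graded module has a projective cover; by the standard fact for such algebras this reduces the computation of the global dimension to the graded simple modules, so it suffices to show $\operatorname{pd}_{H_\alpha}L(\mu)\le n$ for all $\mu\in\KP(\alpha)$, together with $\operatorname{pd}_{H_\alpha}V\ge n$ for at least one module $V$. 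I will also use that induction $\Ind$ along a parabolic subalgebra $H_{\beta_1}\otimes\cdots\otimes H_{\beta_r}\hookrightarrow H_{\beta_1+\cdots+\beta_r}$ is exact and preserves projectivity, so that $\operatorname{pd}$ cannot increase under $\Ind$, and that generalized Frobenius reciprocity holds at the level of $\EXT$.

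For the upper bound, fix $\alpha$ of height $n\ge 2$ (the cases $n\le 1$ being clear, as $H_\alpha$ is then $\k$ or $\k[x]$) and let $\mu=(\mu_1,\dots,\mu_l)\in\KP(\alpha)$. The first step is to bound $\operatorname{pd}_{H_\alpha}\bar\nabla(\mu)$: iterating Lemma~\ref{lvl} and using that the $L(\mu_i)$ are self-dual, $\bar\nabla(\mu)=\bar\Delta(\mu)^\circledast$ is, up to a degree shift, the induced module $L(\mu_l)\circ\cdots\circ L(\mu_1)$, so by additivity of projective dimension under $\boxtimes$ over the parabolic, $\operatorname{pd}_{H_\alpha}\bar\nabla(\mu)\le\sum_i\operatorname{pd}_{H_{\mu_i}}L(\mu_i)$. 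If $l\ge 2$ then each $\hgt(\mu_i)<n$ and the inductive hypothesis gives $\operatorname{pd}_{H_{\mu_i}}L(\mu_i)\le\hgt(\mu_i)$, so the sum is $\le\sum_i\hgt(\mu_i)=n$; if $l=1$, so that $\mu=(\alpha)$ for a root $\alpha$ and $\bar\nabla(\mu)=L(\alpha)$, then the short exact sequence of Theorem~\ref{rm} together with Corollary~\ref{pd} gives $\operatorname{pd}_{H_\alpha}L(\alpha)\le\operatorname{pd}_{H_\alpha}\Delta(\alpha)+1\le n$. The second step is an inner induction on $\mu$ along the order $\preceq$ on $\KP(\alpha)$; the base case ($\mu$ minimal, so $\bar\Delta(\mu)=L(\mu)=\bar\nabla(\mu)$) is covered by the first step. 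For the inductive step, dualizing Theorem~\ref{mac2} provides a short exact sequence $0\to L(\mu)\to\bar\nabla(\mu)\to C\to 0$ in which every composition factor of $C$ has the form $q^kL(\nu)$ with $\nu\prec\mu$; hence $\operatorname{pd}_{H_\alpha}C\le n$ by the inner hypothesis, and the long exact sequence yields $\operatorname{pd}_{H_\alpha}L(\mu)\le\max(\operatorname{pd}_{H_\alpha}\bar\nabla(\mu),\operatorname{pd}_{H_\alpha}C-1)\le\max(n,n-1)=n$. The essential point is that $C$ enters as a \emph{quotient}, so it contributes $\operatorname{pd}C-1$ and the bound does not accumulate with $\mu$; this is why one resolves $L(\mu)$ by $\bar\nabla(\mu)$ rather than by $\bar\Delta(\mu)$ or $P(\mu)$. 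Thus the global dimension of $H_\alpha$ is at most $n$.

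For the lower bound it suffices to exhibit one $H_\alpha$-module of projective dimension $\ge n$. Let $\mu=(\mu_1,\dots,\mu_n)\in\KP(\alpha)$ be the Kostant partition all of whose parts are simple roots, so that the parabolic $H_{\mu_1}\otimes\cdots\otimes H_{\mu_n}$ is the polynomial ring $\k[x_1,\dots,x_n]$ and, up to a degree shift, $\bar\Delta(\mu)=\Ind^{\alpha}_{\mu_1,\dots,\mu_n}(\k)$, induced from the module $\k$ on which all $x_i$ act as $0$. By generalized Frobenius reciprocity, $\EXT^n_{H_\alpha}(\bar\Delta(\mu),N)\cong\EXT^n_{\k[x_1,\dots,x_n]}(\k,\Res^{\alpha}_{\mu_1,\dots,\mu_n}N)$ for every $H_\alpha$-module $N$, and the Koszul resolution of $\k$ over $\k[x_1,\dots,x_n]$ identifies the right-hand side with $\Res^{\alpha}_{\mu_1,\dots,\mu_n}N$ modulo $(x_1,\dots,x_n)\Res^{\alpha}_{\mu_1,\dots,\mu_n}N$. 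Taking $N=\bar\Delta(\mu)$, which is finite dimensional with $\Res^{\alpha}_{\mu_1,\dots,\mu_n}\bar\Delta(\mu)\ne 0$, Nakayama's lemma makes this quotient nonzero; hence $\EXT^n_{H_\alpha}(\bar\Delta(\mu),\bar\Delta(\mu))\ne 0$, so $\operatorname{pd}_{H_\alpha}\bar\Delta(\mu)\ge n$, and together with the upper bound the global dimension of $H_\alpha$ is exactly $n$.

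I expect the main obstacle to be the upper bound, specifically organizing the double induction (on $\hgt(\alpha)$ and along $\preceq$ on $\KP(\alpha)$) so that the estimate stays at exactly $n$. The two observations that make it go through are that $\bar\nabla(\mu)$, like $\bar\Delta(\mu)$, is an induced module, so its projective dimension is controlled by the height induction, and that $L(\mu)$ should be approximated from below by its proper costandard cover $\bar\nabla(\mu)$, whose cokernel involves only strictly smaller $L(\nu)$, rather than from above by $\bar\Delta(\mu)$ or $P(\mu)$. The remaining ingredients---the reduction to graded simples, exactness and projectivity-preservation of $\Ind$, additivity of $\operatorname{pd}$ under $\boxtimes$, and the $\EXT$-form of Frobenius reciprocity---are routine but should each be invoked with care.
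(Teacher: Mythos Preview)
Your proposal is correct and takes essentially the same approach as the paper for the upper bound: both reduce to graded simples, use the embedding $L(\mu)\hookrightarrow\bar\nabla(\mu)$ and induction on $\preceq$ to pass to proper costandard modules, invoke Lemma~\ref{lvl} and Frobenius reciprocity to reduce to cuspidals, and then bound $\operatorname{pd}_{H_\alpha}L(\alpha)$ via the short exact sequence (\ref{ses3}) together with Corollary~\ref{pd}. The paper phrases this as a single reduction (citing \cite[Theorem 4.6]{Mac}) rather than an explicit outer induction on $\hgt(\alpha)$, but the content is identical; in particular your observation that the cokernel $C$ contributes only $\operatorname{pd}(C)-1$ from the sequence $0\to L(\mu)\to\bar\nabla(\mu)\to C\to 0$ is exactly the mechanism the paper's inner induction relies on.

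The one genuine difference is the lower bound: the paper simply cites \cite{Mac}, whereas you give a self-contained argument by taking $\mu$ to be the Kostant partition into simple roots, identifying the parabolic with a polynomial ring, and computing $\EXT^n_{H_\alpha}(\bar\Delta(\mu),\bar\Delta(\mu))$ via the Koszul resolution and Nakayama. This is a nice direct verification and works for any $\alpha\in Q^+$ (not just roots); it is worth noting that it reproves, in this special case, the content of the projective resolutions of $\S$\ref{ssres} without building them explicitly.
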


\begin{proof}
We choose the convex ordering 
to be the Lyndon ordering as in Corollary~\ref{speccase},
so that Theorem~\ref{source} is proved. This is all that is needed
for all the subsequent results
from sections
\ref{secsm} and \ref{secmp} to be proved for this ordering.
Then we argue as in the proof of \cite[Theorem 4.6]{Mac} to reduce to showing
that 
$\ext^d_{H_\alpha}(L(\lambda), V) = 0$ for any
$H_\alpha$-module $V$
and $d > n$. Since $L(\lambda)$ is the socle of $\bar\nabla(\lambda)$
and all its other composition factors are of the form $L(\mu)$ (up to
degree shift) for
$\mu \prec \lambda$, this follows by induction on the ordering if we can show
that $\ext^d_{H_\alpha}(\bar\nabla(\lambda), V) = 0$ and
$d > n$.
To prove this, note 
by Lemma~\ref{lvl} and up to a degree shift that 
$\bar\nabla(\lambda)$ is induced from 
$L(\lambda_l) \boxtimes\cdots\boxtimes L(\lambda_1)$, 
so we can use generalized Frobenius reciprocity to reduce  further to 
showing for a positive
root $\alpha$ of height $n$ that
$\ext^d_{H_\alpha}(L(\alpha), V) = 0$
for any $V$ and $d > n$. Finally this follows by applying 
$\hom_{H_\alpha}(-, V)$ to (\ref{ses3}) and using Corollary~\ref{pd}.
\end{proof}

\begin{Remark}\rm
As noted already in the introduction, 
the global dimension of $H_\alpha$ is equal to $\height(\alpha)$
in non-simply-laced types too;
see \cite[Theorem 4.6]{Mac}.
\end{Remark}

\end{document}